\tikzset{
    labl/.style={anchor=south, rotate=90, inner sep=.5mm}
}
\setlist[enumerate,1]{label={(\arabic*)}}
\newtheorem{thm}{Theorem}[section]
\newtheorem{lem}[thm]{Lemma}
\newtheorem{prop}[thm]{Proposition}
\newtheorem{cor}[thm]{Corollary}
\numberwithin{equation}{section}
\theoremstyle{definition}
\newtheorem{defn}[thm]{Definition} 
\newtheorem{exmp}[thm]{Example} 
\newtheorem{remk}[thm]{Remark}
\newtheorem{notation}[thm]{Notation}
\newcommand{\bbS}{\mathbb{S}}
\newcommand{\bbX}{\mathbb{X}}
\newcommand{\bbA}{\mathbb{A}}
\newcommand{\tto}{\mathtt{o}}
\newcommand{\bfP}{\mathbf{P}}
\newcommand{\bfX}{\mathbf{X}}
\newcommand{\bfY}{\mathbf{Y}}
\newcommand{\calX}{\mathcal{X}}
\newcommand{\calL}{\mathcal{L}}
\newcommand{\Str}{\operatorname{Str}}
\newcommand{\lk}{\operatorname{lk}}
\newcommand{\Cay}{\operatorname{Cay}}
\newcommand{\Aut}{\operatorname{Aut}}
\newcommand{\Isom}{\operatorname{Isom}}
\newcommand{\wt}[1]{\widetilde{#1}}
\newcommand{\cO}{\mathcal{O}}
\newcommand{\Zv}{\mathbb{Z}_v}
\newcommand{\scrG}{\mathscr{G}}
\newenvironment{claim}[1]{\par\noindent\underline{Claim:}\space#1}{}
\newenvironment{claimproof}[1]{\par\noindent\underline{Proof:}\space#1}{\leavevmode\unskip\penalty9999 \hbox{}\nobreak\hfill\quad\hbox{$\blacksquare$}}
\newcommand{\morp}[1]{\pmb{[ #1 ]}}
\newsavebox{\@brx}
\newcommand{\llangle}[1][]{\savebox{\@brx}{\(\m@th{#1\langle}\)}%
  \mathopen{\copy\@brx\kern-0.5\wd\@brx\usebox{\@brx}}}
\newcommand{\rrangle}[1][]{\savebox{\@brx}{\(\m@th{#1\rangle}\)}%
  \mathclose{\copy\@brx\kern-0.5\wd\@brx\usebox{\@brx}}}
\begin{document}

	\thispagestyle{firststyle}
	\begin{center}
{\LARGE\bf
Quasi-isometric rigidity for graphs of virtually free groups with two-ended edge groups}\\
\bigskip
\bigskip
{\large Sam Shepherd and Daniel J. Woodhouse}
\end{center}
\bigskip

\begin{abstract}
We study the quasi-isometric rigidity of a large family of finitely generated groups that split as graphs of groups with virtually free vertex groups and two-ended edge groups.
Let $G$ be a group that is one-ended, hyperbolic relative to virtually abelian subgroups, and has JSJ decomposition over two-ended subgroups containing only virtually free vertex groups that aren't quadratically hanging. 
Our main result is that any group quasi-isometric to $G$ is abstractly commensurable to $G$. In particular, our result applies to certain ``generic'' HNN extensions of a free group over cyclic subgroups.	
\end{abstract}

\bigskip
\tableofcontents
\bigskip
\section{Introduction}

 Theorem~\ref{thm:main}, the main result of this paper, is necessarily technical in a way that obscures how generic the positive results are. 
 To quickly give a reader a meaningful sense of what is proven in this paper we present the following ``sample theorem'' that illustrates our results in an accessible manner.
 
 \begin{thm} \label{thm:sample}
  Let $G$ be a cyclic HNN extension or amalgamation of a finite rank free groups of either of the following forms:
 \[
    \mathbb{F}_n\ast_{\mathbb{Z}}  =  \langle \mathbb{F}_n  \mid tw_1t^{-1} = w_2 \rangle \; \; \textrm{ or } \; \;
   \mathbb{F}_m \ast_\mathbb{Z} \mathbb{F}_n = \langle \mathbb{F}_m, \mathbb{F}_n \mid w_1 = w_2 \rangle
  \]
   where $n,m \geq 2$ and $w_1,w_2 \in \mathbb{F}_m \cup \mathbb{F}_n$ are suitably random/generic elements that are not proper powers.
   If a finitely generated group $G'$ is quasi-isometric to $G$, then $G'$ is abstractly commensurable to $G$.
 \end{thm}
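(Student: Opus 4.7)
The plan is to deduce Theorem~\ref{thm:sample} from Theorem~\ref{thm:main} by verifying the three hypotheses of the main theorem for $G$: that $G$ is one-ended, that $G$ is hyperbolic relative to a collection of virtually abelian subgroups, and that $G$ admits a JSJ decomposition over two-ended subgroups whose vertex groups are all virtually free and non-quadratically-hanging (QH).

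First I would establish word-hyperbolicity of $G$ via the Bestvina--Feighn combination theorem applied to the given cyclic splitting. The input required is that each cyclic subgroup $\langle w_i \rangle$ is malnormal in its containing free vertex group and, in the HNN case, that the two associated subgroups are not conjugate in $\mathbb{F}_n$ except by powers of themselves. Both conditions follow from standard genericity properties for non-power $w_i$, so $G$ is hyperbolic; in particular $G$ is hyperbolic relative to the empty collection, which is vacuously virtually abelian, yielding the second hypothesis. One-endedness then follows from hyperbolicity combined with the facts that $G$ contains $\mathbb{F}_n$ with $n \geq 2$ (so is infinite and not virtually cyclic), together with a verification that $G$ is not virtually free; the latter can be seen from the given splitting having infinite edge groups and properly contained edge inclusions, or from a positive-deficiency computation.

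The remaining hypothesis asks that the JSJ decomposition of $G$ over two-ended subgroups has only virtually free non-QH vertex groups. The given cyclic splitting is over two-ended subgroups with free vertex groups, and for generic $w_i$ it is expected to coincide, up to an equivalent refinement, with the JSJ. The crucial point is to rule out QH vertices: a QH vertex would require a free vertex group to be the fundamental group of a compact surface with boundary, with the incident edge groups realizing the peripheral subgroups; this would force $w_1$ (respectively the pair $w_1, w_2$) to be conjugate into a product of commutators or an analogous surface-boundary word in the vertex free group. For generic non-power $w_i$ this fails.

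The main obstacle I anticipate is this final step: precisely identifying the JSJ decomposition of $G$ and ruling out QH vertices under a workable notion of genericity (the verifications of hyperbolicity and one-endedness are comparatively routine). Once all three hypotheses of Theorem~\ref{thm:main} are in place, the conclusion that any group quasi-isometric to $G$ is abstractly commensurable to $G$ follows immediately.
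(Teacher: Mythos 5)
Your high-level plan is correct: deduce Theorem~\ref{thm:sample} from Theorem~\ref{thm:main} by checking one-endedness, relative hyperbolicity with virtually abelian peripherals, and the JSJ condition. This is also the paper's plan, but there are two genuine gaps in your execution.

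First, your hyperbolicity argument via the Bestvina--Feighn combination theorem requires, in the HNN case, that $\langle w_1\rangle$ and $\langle w_2\rangle$ are not conjugate in $\mathbb{F}_n$. But the theorem as stated (and as the paper explicitly spells out in Remark~\ref{rem:random} and Example~\ref{exmp:Cbullet}) includes the case $w_1\sim w_2^{\pm1}$, where $G$ is \emph{not} hyperbolic but only hyperbolic relative to $\langle w_1,t\rangle\cong\mathbb{Z}^2$ or the Klein bottle group. Your proposal silently discards this case by only verifying ``hyperbolic relative to the empty collection,'' so it does not cover the full statement. You would need to separately observe that in this case $G$ is still hyperbolic relative to a virtually $\mathbb{Z}^2$ peripheral (and that balancedness/subgroup separability still hold, e.g.\ via Theorem~\ref{thm:separableBalancedRelHyp}).

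Second, and more centrally, you never pin down what ``random/generic'' means and consequently never prove the JSJ condition; you admit this yourself. The paper is precise here: ``random'' means that the line pattern $\mathcal{L}_{\{\langle w_i\rangle\}}$ induced on each free vertex group is rigid in the sense of Cashen--Macura (Remark~\ref{rem:random} gives a checkable sufficient condition: the cyclic word $w_i$ contains every length-3 reduced subword). Rigidity of the line pattern is exactly what Proposition~\ref{prop:riglineisJSJ} needs in order to conclude \emph{simultaneously} that $G$ is one-ended and that the given splitting \emph{is} a JSJ decomposition with no QH vertex groups (rigidity of the line pattern rules out free and cyclic splittings relative to the edge groups by Theorem~\ref{thm:rigidEquivalence}, which handles both QH exclusion and free indecomposability). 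Your one-endedness argument (``hyperbolic, contains $\mathbb{F}_n$, not virtually free'') is not sufficient on its own — a hyperbolic group can be freely decomposable without being virtually free — and your QH exclusion is a plausibility sketch rather than an argument. Both are subsumed by Proposition~\ref{prop:riglineisJSJ}, which is the key ingredient your proposal is missing.
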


 For the HNN extension, if $g^{-1}w_1g\in\{w_2,w_2^{-1}\}$ for some $g\in G$, then $G$ is hyperbolic relative to $\langle w_1,gt\rangle$ -- which is isomorphic to either $\mathbb{Z}^2$ or the Klein bottle group (and the latter has an index two $\mathbb{Z}^2$ subgroup). Otherwise $G$ is hyperbolic; and the amalgamation is always hyperbolic.
 When we say that $w_1$ and $w_2$ are suitably random, what we really mean is that the induced line patterns on the vertex groups $\mathbb{F}_n$ and $\mathbb{F}_m$ are rigid, see Section~\ref{Riglinesec} for more about rigid line patterns, and Remark~\ref{rem:random} for a simple sufficient condition (which justifies the use of the word random). Theorem \ref{thm:sample} follows from our more general Theorem \ref{thm:main} and Example \ref{exmp:Cbullet}.
 
In his seminal essay, Gromov introduced a program for understanding finitely generated groups up to quasi-isometry~\cite{Gromov87}. 
If $G$ is a finitely presented group with finite generating set $S$, then the induced path metric on the associated Cayley graph $\Cay(G,S)$ is the \emph{word metric} on $G$.
Different finite generating sets give distinct word metrics, but up to the equivalence relation given by \emph{quasi-isometry} these metrics are all equivalent (see Section \ref{sec:quasi-isometries} for the definition of quasi-isometry). The set of quasi-isometries of $G$ up to Hausdorff equivalence forms the \emph{quasi-isometry group} of $G$, denoted $\mathscr{G} := \mathcal{QI}(G)$ (see \cite[Remark 5.1.12]{Loeh17} for a discussion of why we consider equivalence classes).
Note that a quasi-isometry $\psi: G \rightarrow G'$ induces an isomorphism of the quasi-isometry groups given by $[f]\mapsto[\psi f \psi^{-1}]$ (where $\psi^{-1}$ is any quasi-inverse to $\psi$).

This program is usually formulated in terms of studying the \emph{quasi-isometric rigidity} of groups, although this may mean a number of things.
Recall that we say two groups $G$ and $G'$ are \emph{virtually isomorphic} if there exists finite index subgroups $H \leqslant G$ and $H' \leqslant G'$ and finite, normal subgroups $F \trianglelefteq H$ and $F' \trianglelefteq H'$ such that $H / F$ is isomorphic to $H'/F'$
(we note that if both $G$ and $G'$ are residually finite this is equivalent to being abstractly commensurable).
 A finitely generated group $G$ is said to be \emph{quasi-isometrically rigid} if any other finitely generated group $G'$ that is quasi-isometric to $G$ is virtually isomorphic to $G$.
Rigidity in this form holds for finite groups, two-ended groups, abelian groups \cite{Pansu83,Drutu09}, free groups \cite{Dunwoody85, DrutuKapovich18}, cocompact Fuchsian groups~\cite{Tukia88, Gabai92, CassonJungreis94}, uniform lattices in right angled Fuchsian buildings (or Bourdon buildings) \cite{BourdonPajot00, Haglund06}, word hyperbolic surface-by-free groups~\cite{FarbMosher02}, non-uniform lattices in semisimple lie groups~\cite{Schwartz95, Eskin98}, mapping class groups~\cite{Behrstock06, BehrstockKleinerMinskyMosher12}, and many RAAGS~\cite{Huang18}.

A class of groups $\mathscr{C}$ is \emph{quasi-isometrically rigid} if any finitely generated group $G$ that is quasi-isometric to some  $G' \in \mathscr{C}$ is virtually isomorphic to a group in $\mathscr{C}$.
The classes of nilpotent groups, closed surface groups, closed $3$-manifold groups, finitely presented groups, hyperbolic groups, amenable groups, closed hyperbolic $n$-manifold groups, one-ended groups, and groups with solvable word problem are all known to be quasi-isometrically rigid. See~\cite{Kapovich14} for an introduction and survey.


We note that in general, a hyperbolic group $G$ is not quasi-isometrically rigid.
The most classical examples are given by uniform lattices in rank-1 symmetric spaces.
Other counterexamples include free products of surface groups~\cite{Whyte99, PapasogluWhyte02} and simple surface amalgams~\cite{Malone10, Stark17, DaniStarkThomas18}.
We note that quasi-isometric rigidity of hyperbolic groups is only known to fail in three particular cases: 1) $G$ is quasi-isometric to a rank-1 symmetric space, 2) $G$ has infinitely many ends, 3) $G$ has \emph{quadratically hanging} vertex groups in its JSJ decomposition (see Section~\ref{sec:JSJofFPgroups}).

\subsection{The family of groups $\mathscr{C}$}

Let $\mathscr{C}$ denote the family of finitely presented groups which split as finite graphs of groups with virtually free vertex groups and two-ended edge groups.
The subfamily of torsion-free groups $\mathscr{C}_{tf} \subseteq \mathscr{C}$ has finitely generated free vertex groups and infinite cyclic edge groups. 
These are very wide families of groups containing many surface groups, Baumslag-Solitar groups, and one-relator groups.
Many of these groups will not be one-ended, or Gromov hyperbolic, or relatively hyperbolic, or residually finite, or quasi-isometrically rigid.

In~\cite{omnipotence}, Wise showed that subgroup separability of a group $G \in \mathscr{C}_{tf}$ is equivalent to the non-existence of a non-Euclidean Baumslag-Solitar subgroup, or the non-existence of $1\neq g, t \in G$ such that $tg^pt^{-1} = g^q$, where $|p| > |q|$. 
In Section~\ref{sec:virtuallyTorsionFree}, we will show that this criterion can be extended to all groups in $\mathscr{C}$, by showing that if $G \in \mathscr{C}$ is \emph{balanced}, then it is virtually torsion-free.
In~\cite{HsuWise10} Wise and Hsu showed that all subgroup separable $G \in \mathscr{C}_{tf}$ are cocompactly cubulated, and moreover in the hyperbolic case are virtually special.
We show that all subgroup separable $G \in \mathscr{C}_{tf}$ are virtually special, and generalise further to the case with torsion:

\begin{thm} \label{thm:separableBalancedRelHyp}
 Let $G \in \mathscr{C}$. The following are equivalent.
 \begin{enumerate}
  \item \label{item:separable} $G$ is subgroup separable,
  \item \label{item:balanced} $G$ is balanced (in the sense of Definition~\ref{balanced}, and with respect to any finite graph of groups decomposition of $G$ with virtually free vertex groups and two-ended edge groups),
  \item \label{item:relHyperbolic} $G$ is hyperbolic relative to peripheral subgroups that are virtually $\mathbb{Z}\times\mathbb{F}_n$ ($n\geq0$).
  \item \label{item:special} $G$ is virtually the fundamental group of a finite special cube complex.
 \end{enumerate}
\end{thm}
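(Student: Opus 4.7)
The natural strategy is to prove the cycle of implications
$(4) \Rightarrow (1) \Rightarrow (2) \Rightarrow (3) \Rightarrow (4)$.

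The implication $(4) \Rightarrow (1)$ is classical: Haglund--Wise showed that fundamental groups of compact special cube complexes are subgroup separable, and subgroup separability is inherited by finite-index overgroups. For $(1) \Rightarrow (2)$, the plan is to argue by contraposition. If $G \in \mathscr{C}$ is not balanced then the graph of groups decomposition yields a relation $t g^p t^{-1} = g^q$ with $|p|\neq|q|$ for some infinite-order $g$ and some $t$, producing a subgroup containing a copy of the non-Euclidean Baumslag--Solitar group $BS(p,q)$ (possibly after passing through finite kernels, using the two-endedness of edge groups). Such a subgroup is well known to fail subgroup separability, and since subgroup separability descends to subgroups, $G$ cannot be subgroup separable.

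For $(2)\Rightarrow(3)$, I would first invoke the result promised in Section~\ref{sec:virtuallyTorsionFree} that a balanced $G\in\mathscr{C}$ is virtually torsion-free, so I may pass to a finite-index torsion-free $G_0\in\mathscr{C}_{tf}$; it suffices to show $G_0$ is hyperbolic relative to subgroups of the form $\mathbb{Z}\times\mathbb{F}_n$, since the peripheral structure then ascends to $G$ by commensuration. Working in the Bass--Serre tree $T$ of $G_0$, the key claim is that any pair of non-trivially commuting elements must stabilize a common geodesic axis whose stabilizer is exactly a $\mathbb{Z}\times\mathbb{F}_n$ subgroup: the balanced hypothesis rules out ``ascending'' strips in $T$ and forces all commuting behavior to come from genuine $\mathbb{Z}^2$ flats built from concatenated edge groups that are conjugate under vertex-group elements. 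One then collects these commuting subgroups into a malnormal, almost-malnormal collection of maximal virtually $\mathbb{Z}\times\mathbb{F}_n$ subgroups, and verifies the Bestvina--Feighn/Dahmani relative hyperbolicity criterion (bounded coset penetration, or equivalently the annuli/acylindricality conditions that the balanced hypothesis provides).

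For $(3)\Rightarrow(4)$, I would use Wise's cubulation machinery for relatively hyperbolic groups: the two-ended edge groups of the graph of groups decomposition supply a rich collection of codimension-one subgroups satisfying the Hsu--Wise/Bergeron--Wise criterion, and Sageev's construction yields a proper cocompact action of $G$ on a CAT(0) cube complex. Since the peripheral subgroups are virtually $\mathbb{Z}\times\mathbb{F}_n$, hence themselves virtually compact special, Wise's Relatively Hyperbolic Virtually Special Theorem applies and gives (4).

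The main obstacle is $(2)\Rightarrow(3)$: one has to turn the purely algebraic balanced condition into the geometric annuli/BCP conditions needed to invoke a combination theorem, and in particular rule out any exotic ``thick'' configurations in the Bass--Serre tree that are not already captured by a virtually $\mathbb{Z}\times\mathbb{F}_n$ peripheral. Carefully identifying the maximal peripheral subgroups and verifying malnormality of the resulting collection is where most of the technical work will sit.
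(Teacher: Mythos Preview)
Your cycle breaks at $(4)\Rightarrow(1)$. It is \emph{not} true that virtually compact special implies subgroup separable (LERF): $F_2\times F_2$ is the fundamental group of a compact special cube complex yet is famously not LERF. Haglund--Wise gives separability of \emph{quasiconvex} subgroups, which is much weaker. Since your argument is a cycle, this single gap leaves all four equivalences unproven.

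The paper avoids this trap by making $(2)$ the hub rather than running a cycle. It proves $(1)\Leftrightarrow(2)$ directly (your $(1)\Rightarrow(2)$ idea is essentially right, though note that $tg^pt^{-1}=g^q$ need not produce an \emph{embedded} $BS(p,q)$; one argues directly that $\langle h^{|pq|}\rangle$ cannot be separated). For $(4)\Rightarrow(2)$ the paper uses the elementary fact that a virtually cubulated group cannot contain a relation $gh^pg^{-1}=h^q$ with $|p|\neq|q|$, by conjugation-invariance of combinatorial translation length on a CAT(0) cube complex. For $(2)\Rightarrow(3)$ your outline is close in spirit: the paper makes the ``maximal commuting subgroups'' precise as the \emph{cylinder stabilisers} in the Bass--Serre tree, shows these are virtually $\mathbb{Z}\times\mathbb{F}_n$ using the balanced hypothesis (Lemma~\ref{lem:cylinderproduct}), and then applies Bowditch and Dahmani's combination theorems (Proposition~\ref{prop:hyprelcylinders}). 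For $(2)\Rightarrow(4)$ the paper does not invoke Wise's relatively hyperbolic machinery (which would require care when peripherals are $\mathbb{Z}\times\mathbb{F}_n$ rather than virtually abelian); instead it uses Hsu--Wise to cubulate and then Huang--Wise's finite-stature criterion, verifying finite stature directly from the cylinder structure.
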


As a consequence of Theorem~\ref{thm:separableBalancedRelHyp} we can deduce that subgroup separability is an invariant up to quasi-isometry within $\mathscr{C}$ (an application of~\cite[Theorem 1.6]{DrutuSapir05} to \ref{item:relHyperbolic}, and of~\cite{Margolis21} the quasi-isometric rigidity of $ \mathbb{Z}\times\mathbb{F}_n$).

Cashen-Macura describe in \cite{LinePat} how a finite collection of cyclic subgroups of a free group $F$ define a line pattern $\mathcal{L}$ on $F$, and they call such a line pattern \emph{rigid} if the group of quasi-isometries of $F$ that respect $\mathcal{L}$ acts by isometries on some model space for $F$. We give precise definitions in Section \ref{Riglinesec}.
We will see in Section \ref{Riglinesec} that a non-abelian free vertex group in a JSJ decomposition of a one-ended  group is either \emph{quadratically hanging} or the line pattern induced from the incident edge groups and their cosets is rigid.
 The aim of this paper is to start from the rigidity of line patterns in free groups to arrive at the quasi-isometric rigidity of graphs of free groups.

In this paper we will restrict to the subfamily $\mathscr{C}^\bullet \subseteq \mathscr{C}$ consisting of one-ended, subgroup separable groups that have JSJ decompositions over two-ended subgroups containing only virtually free vertex groups and no quadratically hanging (QH) vertex groups.
We refer to Section~\ref{sec:JSJofFPgroups} for background on JSJ decompositions. It follows from Theorem \ref{thm:QIpreserveJSJ}, and the quasi-isometric invariance of subgroup separability mentioned above, that $\mathscr{C}^\bullet$ is a quasi-isometrically rigid class of groups.
If $G \in \mathscr{C}^\bullet$ is hyperbolic relative to $\mathbb{Z}^2$-peripheral subgroups, we prove that $G$ is quasi-isometrically rigid.

\begin{thm} \label{thm:main}
 Let $G$ be a one-ended group, with JSJ decomposition over two-ended subgroups containing only virtually free vertex groups and no QH vertex groups.
 If $G$ is hyperbolic relative to virtually abelian subgroups, then any group quasi-isometric to $G$ is abstractly commensurable to $G$. 
\end{thm}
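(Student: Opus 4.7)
The plan is to run a tree-of-spaces argument in the style of Mosher-Sageev-Whyte and Cashen-Martin, with Cashen-Macura rigidity of line patterns on free groups as the key input at each vertex space. Let $H$ be a finitely generated group quasi-isometric to $G$; my task is to exhibit an abstract commensuration between $H$ and $G$.

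First, I would transport the structural hypotheses from $G$ to $H$. By Theorem \ref{thm:QIpreserveJSJ}, together with the quasi-isometric invariance of subgroup separability within $\mathscr{C}$ noted after Theorem \ref{thm:separableBalancedRelHyp} and the known quasi-isometric invariance of relative hyperbolicity with virtually abelian peripherals, $H$ is itself one-ended, lies in $\mathscr{C}^\bullet$, is hyperbolic relative to virtually abelian subgroups, and admits a JSJ decomposition of the same combinatorial type as $G$'s. I would then model each of $G$ and $H$ by a tree of spaces $X_G, X_H$ whose underlying tree is the JSJ Bass-Serre tree, whose vertex spaces are Cayley-tree models for the virtually free vertex groups equipped with the line pattern induced by incident edge groups, whose edge spaces are lines, and in which maximal peripheral virtually abelian subgroups appear as flats glued along families of these lines. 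Standard tree-of-spaces quasi-isometry analysis, using the canonicity of the JSJ, shows that any quasi-isometry $G \to H$ lifts to a quasi-isometry $X_G \to X_H$ that coarsely preserves the tree structure, maps vertex spaces to vertex spaces and edge spaces to edge spaces, and restricts on each vertex space to a quasi-isometry of virtually free groups that respects the induced line pattern.

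Next, I would apply Cashen-Macura rigidity at each vertex space. Since no vertex group is QH, the induced line pattern on each vertex space is rigid in the sense recalled in Section \ref{Riglinesec}, so the vertex-space quasi-isometry above is a bounded distance from a genuine isometry of a fixed model tree-with-line-pattern. Assembling these vertex-level isometries with the Bass-Serre tree isomorphism yields a candidate isometry (or at least a coarsely-defined ``virtual'' isomorphism) $\Phi : X_G \to X_H$.

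The main obstacle, and where I expect most of the work to live, is in checking compatibility of these vertex isometries along edges and then promoting $\Phi$ to an honest abstract commensuration. At a non-peripheral edge, the rigidity of the line patterns on both incident vertex spaces forces the matching up to a discrete ambiguity that can be handled combinatorially; at an edge sitting inside a peripheral $\mathbb{Z}^2$-subgroup, however, the ambient flat has non-discrete symmetries, and I would use that the quasi-isometry sends peripherals to peripherals together with quasi-isometric rigidity of $\mathbb{Z}^2$ and of the adjacent tree data to cut the identifications back to a discrete set of choices. Once edge compatibility is arranged, possibly after passing to a finite-sheeted cover of $X_G$ and a finite-index subgroup of $H$, the group $H$ acts geometrically on this cover; the virtual specialness supplied by Theorem \ref{thm:separableBalancedRelHyp}, combined with Haglund-Wise style separability of the vertex and edge stabilizers, then converts this geometric action into the desired abstract commensuration between finite-index subgroups of $G$ and $H$.
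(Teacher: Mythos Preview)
Your outline captures the opening moves correctly: one does transport the hypotheses to $H$, one does use the canonical tree of cylinders so that $G$ and $H$ act on the same bipartite tree $T_c$, and Cashen--Macura rigidity does promote each vertex quasi-isometry to an isometry of a rigid model tree with line pattern. But the proposal has a genuine gap at the step where you ``assemble vertex isometries'' into a global object and then ``promote'' to a commensuration.

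The issue is that having $G$ and $H$ both acting cocompactly and isometrically on the same tree of rigid model spaces is not sufficient for commensurability --- think of two incommensurable uniform lattices in the same symmetric space. What the paper does instead is pass to the \emph{quotients}: for each rigid vertex $u$ one has finite graphs with fins $\mathbf{X}_u = \mathbf{Y}_u/G_u$ and $\mathbf{X}'_{u'} = \mathbf{Y}_u/H_{u'}$, and the problem becomes constructing a common finite cover of the resulting graphs of spaces $\mathcal{X}$ and $\mathcal{X}'$. This requires three ingredients you do not mention. First, Leighton's theorem for graphs with coloured oriented fins (Section~\ref{sec:LeightonAgain}), which produces common finite covers of the rigid vertex spaces satisfying a precise symmetry equation~(\ref{finequation}) governed by the Haar measure on $\mathcal{QI}(G_u,\mathcal{L}_u)$. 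Second, two numerical quasi-isometry invariants must be shown to match: the \emph{stretch ratios} along cylinders (Lemma~\ref{stretch}, due to Cashen--Martin) and the \emph{cylinder numbers} at $\mathbb{Z}^2$-cylinders (Lemma~\ref{lem:torusratio}); matching the latter on the nose requires a Dehn-filling argument via the relatively hyperbolic MSQT. Third, even after these match one must solve a system of \emph{Gluing Equations} (Lemma~\ref{lem:GluingEquations}) to decide how many copies of each local cover to take, using omnipotence of free groups to equalize fin lengths; the solvability of this system is what the symmetry equation~(\ref{finequation}) was engineered to guarantee.

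Your final paragraph --- ``$H$ acts geometrically on a cover of $X_G$, then virtual specialness plus separability gives the commensuration'' --- elides all of this. Separability is used, but only to pass to finite-index subgroups with controlled vertex stabilisers at specific junctures (Proposition~\ref{prop:sepvertexstabs}, Lemma~\ref{lem:torusmatch}); it does not by itself produce a common finite cover.
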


\begin{remk}
If $T$ is a JSJ tree for $G$, then being hyperbolic relative to virtually abelian subgroups is equivalent to the stabilisers of the cylinders in $T$ being virtually abelian (in fact virtually $\mathbb{Z}$ or $\mathbb{Z}^2$) -- see Proposition \ref{prop:hyprelcylinders} and Remark \ref{rem:peripheralIsCylindrical}.
\end{remk}

Conversely, in Section~\ref{sec:counterExample} we give $G \in \mathscr{C}^\bullet$ that is not quasi-isometrically rigid.
Outside of $\mathscr{C}^{\bullet}$ there are many examples of groups known not to be quasi-isometrically rigid.
Although solvable Baumslag-Solitar groups are quasi-isometrically rigid~\cite{FarbMosher98, FarbMosher99}, ``higher'' Baumslag-Solitar groups are not quasi-isometrically rigid~\cite{Whyte01}, so subgroup separability is a natural assumption to make for our class $\mathscr{C}^\bullet$ in light of Wise's results.
Quasi-isometries between infinite-ended groups are very flexible as shown by the work of Papasoglu and Whyte \cite{PapasogluWhyte02}, so infinite-ended groups are typically not quasi-isometrically rigid; indeed combining this work with \cite{Whyte99} shows that free products of surface groups are not quasi-isometrically rigid. So one-endedness is also a natural assumption for our class $\mathscr{C}^\bullet$.
Finally, simple surface amalgams are shown to not be quasi-isometrically rigid in ~\cite{Malone10, Stark17, DaniStarkThomas18}, so it is necessary to exclude QH vertex groups in Theorem \ref{thm:main}.

Closely related to Theorem~\ref{thm:main} are the recent results of Taam-Touikan~\cite{TaamTouikan19}.
They prove the corresponding result in the hyperbolic setting with rigid vertex groups that are hyperbolic (closed) surface groups instead of rigid free vertex groups.
So the analogue of Theorem~\ref{thm:sample} for their work would replace the free groups with closed hyperbolic surface groups.
They conjecture our main theorem in the hyperbolic case, and have further speculations as to the extent of the rigidity of hyperbolic groups.
The main obstruction to proving their conjecture was the inability to prove Leighton's theorem for graphs with fins, subsequently proven by the second author~\cite{FinLeighton}, and reformulated in this paper for our own purposes.
We note that Taam and Touikan's strategy differs from our own and we do not recover their result, although our list of ingredients is near identical: subgroup separability, rigidity of line patterns, considering stretch ratios (or \emph{clutching ratios} in their own terminology).
We note that there are differences between our strategies in the final stages when commensurability is deduced.
They construct a common model geometry that is a cube complex, then prove the existence of what they call \emph{flat groupings}.
On the other hand, we construct graphs of spaces and explicitly construct a common cover.

\subsection{Summary of the proof}

We now give a summary of the proof of Theorem \ref{thm:main}. Let $G\in\mathscr{C}^\bullet$ be hyperbolic relative to virtually abelian subgroups, and let $G'$ be another group quasi-isometric to $G$. As a consequence of work of \cite{Pap05}, $G$ and $G'$ both act on the same canonical \emph{tree of cylinders} $T_c$, and for each vertex $v\in VT$ the stabilisers $G_v$ and $G'_v$ are quasi-isometric. Moreover, the vertices of $T_c$ come in two types, \emph{rigid} and \emph{cylindrical}, and the rigid vertex stabilisers are exactly the rigid vertex groups from (any) JSJ decompositions of $G$ and $G'$ -- normally there would also be QH vertex stabilisers, but in our case the definition of $\mathscr{C}^\bullet$ excludes them. Trees of cylinders are discussed in Section \ref{sec:Cyl}.

By Theorem \ref{thm:separableBalancedRelHyp} and Section \ref{sec:ProofOfSeparableEquiv}, we may assume that $G$ and $G'$ are torsion-free, that their rigid vertex stabilisers are non-abelian free, and that their cylindrical vertex stabilisers are isomorphic to either $\mathbb{Z}$ or $\mathbb{Z}^2$. Each rigid vertex stabiliser acts freely cocompactly on a tree with incident edge stabilisers inducing what we call a \emph{line pattern}. Work of Cashen \cite{Cash}, Cashen and Macura \cite{LinePat}, and Hagen and Touikan \cite{Panel} shows that we can choose this tree with line pattern to be a \emph{rigid model space}, meaning that any quasi-isometry of the tree that respects the line pattern is at bounded distance from an isometry. In particular, for a rigid vertex $v\in VT_c$ the stabilisers $G_v$ and $G'_v$ both act isometrically on the same rigid model space. Rigid line patterns are discussed in Sections \ref{Riglinesec} and \ref{GammaJSJ}.

A line pattern on a tree can be encoded by attaching a fin to each line. We do this for the rigid model spaces above to obtain a tree of trees with fins $\bfY$, one tree with fins for each rigid vertex in $T_c$, and we have an isometric action of the quasi-isometry group $\mathscr{G}=\mathcal{QI}(G)$ on $\bfY$. This is done in Section \ref{sec:treeoftrees}. Viewing $G,G'\leqslant\mathscr{G}$, we have actions of $G$ and $G'$ on $\bfY$, and the quotients can be used to define graphs of spaces $\calX$ and $\calX'$ for $G$ and $G'$ (adding suitable circles and tori for the cylindrical vertex groups), with the property that any pair of vertex spaces in $\calX$ and $\calX'$ corresponding to the same vertex in $T_c$ have a common universal cover. The rigid vertex spaces will be \emph{graphs with fins}, and the fins correspond to the edge spaces. This is all done in Section \ref{sec:graphsofspacesforGG'}.

The goal is then to construct a common finite cover $\widehat{\calX}$ of $\calX$ and $\calX'$, thus proving that $G$ and $G'$ are commensurable. Leighton's Theorem for graphs with fins was proven by the second author in \cite{FinLeighton}, which allows us to build common finite covers of the graphs with fins that appear as the rigid vertex spaces of $\calX$ and $\calX'$. The incident edge space structure on the cylindrical vertex spaces is controlled by what we call \emph{cylinder numbers}; in Section \ref{sec:CylNumbers} we arrange for the cylinder numbers in $G$ and $G'$ to be equal, an argument combining subgroup separability with some elementary coarse geometry. These common finite covers of vertex spaces in $\calX$ and $\calX'$ will form the vertex spaces of $\widehat{\calX}$ -- details given in Section \ref{sec:vertexedgecovers} -- so it remains to glue them together.

If we want to glue together a pair of rigid vertex spaces in $\widehat{\calX}$ along a pair of fins, then we need these fins to admit covering maps to fins in $\calX$ of the same degree, and likewise for covering maps to fins in $\calX'$. We thus need the ratio of the lengths of the two fins in $\calX$ to equal the ratio of the lengths of the two fins in $\calX'$. 
Fortunately these \emph{stretch ratios} and were considered by Cashen and Martin in \cite{CashMar} and were shown to be equal; this is essentially a consequence of the trees with fins upstairs in $\bfY$ being rigid model spaces -- we give a self contained explanation of this in Section \ref{sec:stretchratio}. The equality of these ratios is still not enough however to be able to glue together the fins in $\widehat{\calX}$, as they might have different lengths; we fix this by passing to further finite covers of the vertex spaces in $\widehat{\calX}$ and applying the omnipotence of free groups due to Wise \cite{omnipotence}.

The arguments so far allow us to glue together pairs of vertex spaces in $\widehat{\calX}$, but this does not guarantee that we can glue all of them together -- indeed an attempt to do so might leave unglued edge spaces that don't match up. What we need to do is take a suitable number of copies of each vertex space in $\widehat{\calX}$ so that everything can be glued up -- this reduces to solving a set of \emph{Gluing Equations} as we describe in Section \ref{sec:LocaltoGlobal}. Solving these Gluing Equations is arguably the crux of the whole proof. The key strategy is to \emph{colour} the fins in the trees with fins that make up $\bfY$ according to their $\mathscr{G}$-orbits (in fact we colour \emph{oriented fins}, but we will ignore this distinction for the purpose of this summary). These colours then descend to the vertex spaces in $\calX$ and $\calX'$, and we require the covering maps from the vertex spaces in $\widehat{\calX}$ to the vertex spaces in $\calX$ and $\calX'$ to respect colours. Furthermore, for a pair of rigid vertex spaces $\calX_u$ and $\calX'_{u'}$ covered by a vertex space in $\widehat{\calX}$, we require each pair of fins in $\calX_u$ and $\calX'_{u'}$ of the same colour to be covered by fins upstairs, and for the total length of these fins upstairs to be in proportion to the product of the lengths of the pair of fins downstairs. This symmetry property of the rigid vertex spaces in $\widehat{\calX}$, combined with notions of \emph{density} that measure the relative abundance of different colours of fins, allows us to solve the Gluing Equations -- this is also done in Section \ref{sec:LocaltoGlobal}. The existence of these symmetrically coloured common covers of graphs with coloured fins is proven in Section \ref{sec:LeightonAgain}. As it turns out, the common cover constructed by the second author in \cite{FinLeighton} already has this symmetry property, essentially because it was built from a canonical collection of pieces (polyhedral pairs) determined by the Haar measure of the appropriate group acting on the universal cover -- the extra work in this paper is converting the symmetry of these pieces into symmetry of the fins.

\subsection{Outline of the paper}

In Section~\ref{sec:preliminaries} we establish the required background on Bass-Serre theory, JSJ theory, and rigid line patterns in free groups.
In Section~\ref{sec:ProofOfSeparableEquiv} we prove Theorem~\ref{thm:separableBalancedRelHyp} by extending previous results in the torsion-free case.
In Section~\ref{sec:LeightonAgain} we will revisit Leighton's Theorem for graphs with oriented coloured fins to establish certain properties of the common covers we will use.
We prove Theorem \ref{thm:main} in Sections \ref{sec:GraphOfSpaces} and \ref{sec:CommonFiniteCover}.
In Section~\ref{sec:GraphOfSpaces} we construct graphs of spaces for $G$ and $G'$, where $G$ is quasi-isometric to $G'$, noting that certain geometric invariants -- the stretch ratios of the edge groups and cylinder numbers for $\mathbb{Z}^2$-peripheral subgroups -- match for both graphs of spaces.
In Section~\ref{sec:CommonFiniteCover} we explicitly construct a common finite cover by taking common finite covers of the vertex spaces given by our solution to Leighton's Theorem for graphs with fins, then solving a set of equations to show that we can glue them all together.
Finally, in Section~\ref{sec:counterExample} we provide a counterexample to quasi-isometric rigidity in the situation where the group is hyperbolic relative to a $\mathbb{Z}\times\mathbb{F}_n$ peripheral subgroup with $n\geq2$.

{\bf Acknowledgements:} 
We thank the referee for helpful comments and corrections, as well as Martin Bridson, Dawid Kielak and Mark Hagen.
The second author is grateful for the support of a Glasstone Research fellowship.

\bigskip
\section{Preliminaries} \label{sec:preliminaries}

\subsection{Quasi-isometries}\label{sec:quasi-isometries}

A $(Q, \epsilon)$-\emph{quasi-isometry} between two metric spaces $f: (X, d_X) \rightarrow (Y, d_Y)$ is a function that satisfies the following two conditions:
\begin{enumerate}
	\item For all $x,x' \in X$ the following inequality is satisfied:
	\[
	\frac{1}{Q} d_X(x,x') - \epsilon \leq d_Y(f(x), f(x')) \leq Q d_X(x, x') + \epsilon.
	\]
	\item For all $y \in Y$, there exists $x \in X$ such that $d_Y(f(x),y) < \epsilon$.
\end{enumerate}
Two quasi-isometries $f, h: X \rightarrow Y$ are said to be \emph{Hausdorff equivalent} if there exists some $B < \infty$ such that $d_Y(f(x), h(x)) < B$ for all $x \in X$ -- we will write this as $f\approx h$, and denote the equivalence class of $f$ by $[f]$.
If $A$ and $B$ are subsets of a metric space, we define the \emph{Hausdorff distance} between $A$ and $B$ to be 
$$d_H(A,B):=\sup \{d(a,B),d(A,b)\mid a\in A,\,b\in B\}.$$
We will also write $A\sim B$ for $d_H(A,B)<\infty$ and $A\sim_D B$ for $d_H(A,B)\leq D$.

\subsection{Bass-Serre theory} \label{sec:BassSerre}

A \emph{graph} $\Gamma$ is a set of \emph{vertices} $V\Gamma$ and \emph{edges} $E\Gamma$.
An edge $e$ is oriented with an \emph{initial vertex} $\iota(v)$ and a \emph{terminal vertex} $\tau(v)$.
Associated to $e$ is the edge $\bar{e}$ with reversed orientation: $\iota(e) = \tau(\bar{e})$ and $\tau(e) = \iota(\bar{e})$. We have $e\neq\bar{e}$ and $e=\bar{\bar{e}}$ for all $e\in E\Gamma$. 
For $v$ a vertex in a graph, the \emph{link} of $v$ is defined to be the following set:
\[
 \lk(v) = \{ e \mid \tau(e) = v\}
\]


We refer to~\cite{SerreTrees, ScottWall79, Bass93} for full background on Bass-Serre theory and graphs of groups.
A \emph{graph of groups} is a finite graph $\Gamma$ with the following data:
\begin{enumerate}
 \item each vertex $v \in V\Gamma$ has an associated \emph{vertex group} $G_v$
 \item each edge $e \in E\Gamma$ has an associated \emph{edge group} $G_e$ such that $G_{\bar{e}} \cong G_e$.
 \item an injective homomorphism $\zeta_e : G_e \rightarrow G_{\tau(e)}$ for each $e \in E\Gamma$.
\end{enumerate}

Associated to a graph of groups we have a \emph{graph of spaces} $X$ which is a topological space constructed from $\Gamma$ with the following data
\begin{enumerate}
 \item  a \emph{vertex space} $X_v$ for each $v \in V\Gamma$ such that $\pi_1(X_v) \cong G_v$ 
 \item an \emph{edge space}  $X_e$ for each $e \in E\Gamma$ such that $\pi_1(X_e) \cong G_e$ and $X_{\bar{e}} \cong X_e$
 \item  an inclusion map $\phi_e : X_e \rightarrow X_{\tau(v)}$ for each $e\in E\Gamma$ that induces the homomorphism $\zeta_e$ on the fundamental groups.
\end{enumerate}

Then we construct $X$ as the following quotient space:
\[
 X = \bigsqcup_{v \in V \Gamma} X_v \bigsqcup_{e \in E\Gamma} X_e \times [0,1] / \sim 
\]
where $\sim$ is the relation that identifies $X_e \times \{0\}$ with $X_{\bar{e}} \times \{0\}$ via the identification of $X_e$ with $X_{\bar{e}}$, and the point $(x, 1)\in X_e \times [0,1]$ with $\phi_e(x)$.

Then $G = \pi_1(X)$ is the \emph{fundamental group} of the graph of groups.
We will refer to the pair $(G, \Gamma)$ as a \emph{graph of groups decomposition} for the group $G$ (here $\Gamma$ implicitly comes with the associated data of vertex and edge groups and edge morphisms).
The associated \emph{Bass-Serre tree} $T$ is the simplicial tree derived from the tree of spaces decomposition of the universal cover $\widetilde{X} \rightarrow X$.
For vertices and edges $v, e$ in $T$ we denote these stabilisers by $G_v$ and $G_e$.
This notation is justified by the fact that the conjugacy classes of vertex and edge stabilisers correspond precisely to the vertex and edge groups in $\Gamma$. 
Given an action of a group $G$ on a tree $T$ without edge inversions we obtain an associated graph of spaces decomposition $(G ,\Gamma)$ with the underlying graph $\Gamma$ given by the quotient $T/G$.

\subsection{JSJ decompositions of finitely presented groups} \label{sec:JSJofFPgroups}

Inspired by the JSJ decomposition of $3$-manifolds there has been extensive work generalizing such results to finitely generated groups.
We refer to~\cite{JSJ} for the most modern and comprehensive overview of this field.
All of the JSJ decompositions in this paper will be decompositions of one-ended groups over two-ended, and frequently just infinite cyclic, groups -- keep in mind that the definitions and theorems that we cite from \cite{JSJ} are far more general.

\begin{defn}(JSJ tree \cite{JSJ})\label{defn:JSJ}\\
	Let $G$ be a one-ended group. Consider trees with a minimal $G$-action, without inversion, and such that all edge stabilisers are two-ended. 
	A tree is \emph{universally elliptic} if its edge stabilisers are elliptic in every tree. 
	A tree $T$ is a \emph{JSJ tree} for $G$ if it is universally elliptic and its vertex stabilisers are elliptic in every other universally elliptic tree.
	A vertex group $G_v$, where $v$ is a vertex in a JSJ tree, is \emph{rigid} if it is elliptic in every splitting over two-ended edge groups, and \emph{flexible} otherwise.
\end{defn}

The idea of a JSJ decomposition over two-ended subgroups is that we wish to find a ``maximal'' splitting over two-ended subgroups and claim that it is essentially canonical.
Certain vertex groups in the decomposition, the \emph{rigid} vertex groups, are unambiguously going to belong to any maximal splitting as they cannot be split any further.
There is potential for ambiguity when there may be many ways to split a vertex group over two-ended groups.
Consider, for example, a vertex group coming from a hyperbolic surface with boundary, such that the incident edge groups correspond to the boundary components. The multitude of pants decompositions of the surface give many ways to split the vertex group relative to its boundary subgroups; but the edge groups in one such splitting will not be elliptic in other such splittings, hence splitting this vertex group would not give a \emph{universally elliptic} tree. This would be an example of a \emph{flexible} vertex group. Definition \ref{defn:JSJ} thus gives us a splitting that is canonical in the sense that the collection of non-two-ended vertex stabilisers is the same for every JSJ tree (easy exercise). The great success of JSJ theory is that hyperbolic surface vertex groups as described above are in some sense the \emph{only} examples of flexible vertex groups. We make this precise with the following definition.

\begin{defn}(Quadratically hanging vertex group \cite[Definition 5.13]{JSJ})\\ \label{QH}
Let $G_v$ be a vertex group for a splitting of a one-ended group $G$ over two-ended subgroups. We say that $G_v$ is \emph{quadratically hanging} (QH) if it surjects to a compact hyperbolic 2-orbifold group $\pi_1\Sigma$, with finite kernel, such that images of incident edge groups in $\pi_1\Sigma$ are contained in boundary subgroups. If $G$ is torsion-free, this reduces to saying that $G_v$ is the fundamental group of a compact hyperbolic surface with boundary, such that incident edge groups are contained in boundary subgroups.
\end{defn}

\begin{remk}\label{rem:Cvertexrigid}
For a one-ended group $G\in\mathscr{C}$, it follows from \cite[Theorem 6.2]{JSJ} that flexible vertex groups are always QH, thus vertex groups of $G \in \mathscr{C}^\bullet$ are always rigid. 
In fact there are only finitely many possibilities for rigid QH vertex groups \cite[5.12, 5.16 and 5.18]{JSJ}, and if $G$ is torsion-free then the pair of pants is the only possibility -- so the assumption that there are no QH vertex groups is very close to assuming that all vertex groups are rigid.
\end{remk}

\begin{lem}\label{lem:CindJSJ}
	The definition of $\mathscr{C}^\bullet$ is independent of the choice of JSJ decomposition: if $G\in\mathscr{C}^\bullet$, then the vertex stabilisers of \emph{any} JSJ tree $T$ for $G$ are virtually free and not QH.
\end{lem}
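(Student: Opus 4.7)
The plan is to use the fact that in a JSJ decomposition over two-ended subgroups, the conjugacy classes of non-two-ended vertex stabilisers are an invariant of $G$, independent of the choice of JSJ tree. Let $T_0$ be a JSJ tree witnessing $G\in\mathscr{C}^\bullet$, so every vertex stabiliser of $T_0$ is virtually free and none is QH. Let $T$ be any JSJ tree for $G$, and consider an arbitrary vertex stabiliser $G_v$ of $T$.

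If $G_v$ is two-ended then it is virtually free and cannot be QH, since a QH vertex group surjects with finite kernel onto a non-elementary hyperbolic $2$-orbifold group. So assume $G_v$ is non-two-ended. By Definition~\ref{defn:JSJ}, both $T$ and $T_0$ are universally elliptic with their vertex stabilisers elliptic in every universally elliptic tree; in particular the vertex stabilisers of each are elliptic in the other, so $T$ and $T_0$ have the same elliptic subgroups. Since all edge stabilisers are two-ended, a non-two-ended vertex stabiliser cannot be properly contained in another vertex stabiliser (the smaller would then be contained in an edge stabiliser), so every non-two-ended vertex stabiliser of either tree is a maximal elliptic subgroup. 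Maximal elliptic subgroups depend only on the class of elliptic subgroups, so $G_v$ is conjugate in $G$ to a non-two-ended vertex stabiliser of $T_0$, and in particular is virtually free.

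It remains to rule out $G_v$ being QH in $T$. By Remark~\ref{rem:Cvertexrigid}, in $\mathscr{C}$ every flexible non-two-ended vertex stabiliser is QH, so the assumption on $T_0$ forces every non-two-ended vertex stabiliser of $T_0$ to be rigid. Since rigidity is an intrinsic property of a subgroup of $G$ (ellipticity in every two-ended splitting of $G$), the subgroup $G_v$ is rigid in $T$ too. The main obstacle is then verifying that a rigid non-two-ended vertex stabiliser of $T$ conjugate to a non-QH vertex stabiliser of $T_0$ is itself non-QH; since QH-ness as defined in Definition~\ref{QH} involves the incident edge groups, one needs that the collection of incident edge groups (up to conjugacy in $G_v$) of a rigid vertex stabiliser is itself an invariant of the JSJ deformation space. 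This follows from the standard JSJ structure theory in \cite{JSJ}, or alternatively by noting that the canonical tree of cylinders (Section~\ref{sec:Cyl}) endows each rigid vertex stabiliser with a well-defined peripheral structure, against which QH-ness becomes a property of $G_v$ alone as a subgroup of $G$.
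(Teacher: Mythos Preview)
Your argument is correct, but the paper takes a shorter route. For non-QH, the paper simply cites \cite[Proposition 5]{JSJ}, which directly gives that the QH vertex stabilisers of a JSJ tree are determined by $G$ (not by the particular tree); your appeal to ``standard JSJ structure theory'' and the tree of cylinders is pointing at the same fact, just less precisely. For virtual freeness, the paper does something slicker: it does \emph{not} take $T_0$ to be a JSJ tree, but rather any tree with virtually free vertex stabilisers witnessing $G\in\mathscr{C}$. Having already established that all vertex stabilisers of $T$ are non-QH (hence rigid by Remark~\ref{rem:Cvertexrigid}), they are elliptic in every two-ended splitting, in particular in $T_0$, so each $G_v$ sits inside some virtually free $T_0$-vertex stabiliser and is itself virtually free. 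This avoids your maximal-elliptic argument entirely.

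Your route---showing that two JSJ trees share the same elliptic subgroups and hence the same non-two-ended vertex stabilisers up to conjugacy---is a legitimate alternative, and the parenthetical justification (a non-two-ended vertex stabiliser fixing two distinct vertices would lie in an edge stabiliser) is sound. What your approach buys is that you get the stronger statement that the non-two-ended vertex stabilisers literally coincide as subgroups of $G$; what the paper's approach buys is brevity and the observation that rigidity alone (once established) lets you compare against \emph{any} $\mathscr{C}$-splitting, not just a JSJ one.
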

\begin{proof}
	The fact that the vertex stabilisers are not QH follows from \cite[Proposition 5]{JSJ}.
	As for the virtual freeness, $G\in\mathscr{C}$, so has some splitting over two-ended subgroups by acting on a tree $T_0$ with virtually free vertex stabilisers. By Remark \ref{rem:Cvertexrigid}, the vertex stabilisers of the JSJ tree $T$ are rigid, hence they are elliptic in $T_0$. Then each vertex stabiliser of $T$ is contained in a vertex stabiliser of $T_0$, so the former must be virtually free.
\end{proof}

\subsection{Trees of cylinders}\label{sec:Cyl}

In general there is no canonical JSJ decomposition.
Instead, we will use the tree of cylinders, and we will also show that it is preserved by quasi-isometries.

Subgroups $A, A' \leqslant G$ are \emph{commensurable} (in $G$) if $A \cap A'$ is a finite index subgroup of $A$ and $A'$.
Commensurability is an equivalence relation.
Let $G$ act on a tree $T$ with two-ended edge stabilisers. We say that two edges $e_1, e_2 \in ET$ are \emph{equivalent} if $G_{e_1}$ and $G_{e_2}$ are commensurable.
The union of all edges in an equivalence class gives a subtree (that is to say a connected subcomplex of $T$), which is called a \emph{cylinder}.
The \emph{tree of cylinders} $T_c$ is the bipartite tree with vertex set $V_0T_c \sqcup V_1T_c$, where $V_0T_c$ are the vertices of $T$ which lie in at least two cylinders, and $V_1T_c$ is the set of cylinders.
The edges of $T_c$ are of the form $(v, Y)$ where $v$ is a vertex in $T$ that lies in the cylinder $Y \subset T$.

\begin{notation}\label{not:cosets}
Let a finitely generated group $G$ act minimally on a tree $T$ without edge inversions, and let $\{v_1,...,v_n\}\subset VT$, $\{e_1,...,e_m\}\subset ET$ be orbit representatives for the vertices and edges, with $\{e_1,...,e_m\}$ closed under edge inversions. For $v\in VT$ in the same orbit as $v_i$, let $G(v)$ denote the left coset of $G_{v_i}$ consisting of $g\in G$ with $g(v_i)=v$. Similarly, define cosets $G(e)$ for $e\in ET$, noting that $G(e)=G(\bar{e})$. In the rest of this section we always pick orbit representatives for vertices and edges so that we can define the cosets $G(v)$ and $G(e)$; the choice of such representatives will not affect the results of this section, only the size of the constants contained within them.
\end{notation}

\begin{remk}\label{rem:cosetvstabiliser}
We always have $G(v)\sim G_v$ and $G(e)\sim G_e$, just not with a uniform constant (recall from Section \ref{sec:quasi-isometries} that $\sim$ denotes finite Hausdorff distance between subsets).
\end{remk}

The following theorem of Papasoglu says that quasi-isometries coarsely preserve vertex stabilisers of $JSJ$ trees.

\begin{thm}(Papasoglu \cite[Theorem 7.1]{Pap05})\\\label{thm:QIpreserveJSJ}
	Let $\psi:G\to G'$ be a quasi-isometry of finitely presented one-ended groups, with JSJ trees $T$ and $T'$. Then there exists a constant $D>0$, such that for each $v\in VT$ there exists $v'\in VT'$ with $\psi(G(v))\sim_DG'(v')$, and for each $e\in ET$ there exists $e'\in ET'$ with $\psi(G(e))\sim_D G'(e')$. Moreover, the type of vertex stabiliser is preserved, so $G_v$ is QH if and only if $G'_{v'}$ is QH.
\end{thm}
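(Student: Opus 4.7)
The plan is to follow Papasoglu's strategy from \cite{Pap05}: extract the edge and vertex combinatorics of the JSJ tree from coarse-geometric invariants that any quasi-isometry must preserve. Each edge coset $G(e)$ is a \emph{quasi-line} in $G$ (because $G_e$ is two-ended) which, moreover, \emph{coarsely separates} $G$: there is an $R$ depending only on the $G$-orbit of $e$ such that the $R$-neighbourhood of $G(e)$ disconnects $G$ into (at least) two complementary components that are deep and non-Hausdorff-equivalent, corresponding to the two half-trees of $T\setminus\{e\}$. Both ``being a quasi-line'' and ``being a coarsely separating quasi-line with deep complementary components'' are manifestly QI-invariant properties, so $\psi(G(e))$ inherits them in $G'$.

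The central technical input, which is the main content of \cite{Pap05}, is that in a one-ended finitely presented group the coarsely separating quasi-lines are, up to finite Hausdorff distance, precisely the edge cosets of a canonical cylinder-refined JSJ tree. Applying this in $G'$ to the quasi-line $\psi(G(e))$ produces some $e'\in ET'$ with $\psi(G(e))\sim G'(e')$. A bound $D$ uniform over all edges then comes from taking the maximum over the finitely many $G$-orbits of edges in $T$, combined with the fixed quasi-isometry constants of $\psi$ and Remark~\ref{rem:cosetvstabiliser}. For vertex cosets, I would use the characterization that $G(v)$ is coarsely a complementary region of the union of the $R$-neighbourhoods of all edge cosets non-incident to $v$: transporting this picture via $\psi$ selects a vertex $v'\in VT'$ with $\psi(G(v))\sim_D G'(v')$, with uniformity again obtained from the finitely many orbits.

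For the preservation of QH type, once the vertex correspondence $v\leftrightarrow v'$ is in place, the restriction of $\psi$ (adjusted by a bounded error) gives a quasi-isometry between $G_v$ and $G'_{v'}$ that respects their peripheral patterns of incident edge cosets. Being QH is equivalent to being QI, relative to this peripheral structure, to a compact hyperbolic $2$-orbifold group relative to its boundary components -- a property visibly invariant under relative QI. The main obstacle is the quasi-isometric detection theorem for two-ended splittings in \cite{Pap05}: one must rule out spurious coarsely separating quasi-lines and show that every such quasi-line actually comes from an edge of the canonical tree, and this requires a delicate analysis of coarse intersection patterns of quasi-lines in one-ended groups rather than any soft QI-invariance argument. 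A secondary subtlety is ensuring uniformity of $D$ across all vertices and edges, which ultimately rests on the cocompactness of the $G$- and $G'$-actions on $T$ and $T'$.
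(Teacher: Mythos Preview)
The paper does not give its own proof of this statement: Theorem~\ref{thm:QIpreserveJSJ} is quoted directly from Papasoglu \cite[Theorem 7.1]{Pap05} and used as a black box. So there is no proof in the paper to compare your proposal against.

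That said, your sketch is a fair high-level summary of Papasoglu's strategy, and you correctly identify the crux: the detection theorem that coarsely separating quasi-lines in a one-ended finitely presented group must track the edge cosets of the JSJ tree. You also correctly flag this as the hard step that cannot be obtained by soft QI-invariance alone. One caution: your characterization of vertex cosets as ``a complementary region of the union of the $R$-neighbourhoods of all edge cosets non-incident to $v$'' is not quite how Papasoglu proceeds, and would need care to make precise (infinitely many edge cosets are involved, and the complementary pieces need not be coarsely well-defined without further argument); in \cite{Pap05} the vertex correspondence is instead read off from the incidence pattern of the already-matched edge cosets. But since both you and the paper ultimately defer to \cite{Pap05} for the substance, there is no discrepancy to resolve here.
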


Note that one needs to use cosets rather than the vertex stabilisers themselves in order to obtain the uniform constant D (this was not stated correctly in \cite{Pap05}). In the following theorem we deduce from Papasoglu's theorem that quasi-isometries also preserve the tree of cylinders decomposition -- in fact this time we can say something even stronger than in Papasoglu's theorem, namely that a quasi-isometry induces an \emph{isomorphism} between trees of cylinders. In this sense we can think of trees of cylinders as being canonical. This theorem appears as \cite[Theorem 2.8]{CashMar}, but with the same mistake regarding vertex stabilisers versus cosets mentioned above. 
 Margolis~\cite[Theorem 2.9]{Margolis20} avoids this confusion in his statement; since we state the theorem in slightly different terms, we include a brief explanation of how we deduce our version from Margolis' version. 

\begin{thm}\label{thm:QIpreserveCyl}
	Let $\psi:G\to G'$ be a quasi-isometry of finitely presented one-ended groups, with JSJ trees $T$ and $T'$, and let $T_c$ and $T'_c$ be the corresponding trees of cylinders. Then there is a unique isomorphism $\hat{\psi}:T_c\to T'_c$ such that:
	\begin{enumerate}
		\item $\hat{\psi}(V_0 T_c)=V_0 T'_c$ and $\hat{\psi}(V_1 T_c)=V_1 T'_c$.
		\item There is a constant $K>0$, such that $\psi(G(v))\sim_KG'(\hat{\psi}(v))$ for $v\in VT_c$, and $\psi(G(e))\sim_KG'(\hat{\psi}(e))$ for $e\in ET_c$.
		\item $\psi(G_v)\sim G'_{\hat{\psi}(v)}$ for $v\in VT_c$ and $\psi(G_e)\sim G'_{\hat{\psi}(e)}$ for $e\in ET_c$. Moreover, the restrictions $\psi: G_v\to G'_{\hat{\psi}(v)}$ and $\psi:G_e\to G'_{\hat{\psi}(e)}$ are quasi-isometries with respect to the intrinsic metrics of the vertex and edge stabilisers.
	\end{enumerate}  
\end{thm}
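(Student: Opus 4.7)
The plan is to invoke Margolis' Theorem 2.9 \cite{Margolis18}, which provides the isomorphism $\hat{\psi}\colon T_c \to T'_c$ together with uniqueness, property (1), and the essential content of property (2) phrased in slightly different terms from ours. Translating Margolis' formulation to the coset notation $G(v)$ of the present paper is a routine matter of unwinding definitions and using the finiteness of the $G$-orbits on $VT_c$ and $ET_c$ to package the orbit-dependent bounds into a single uniform constant $K$.

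For property (3), the Hausdorff equivalence $\psi(G_v) \sim G'_{\hat{\psi}(v)}$ follows from (2) by taking an orbit representative $v_0$ (so that $G(v_0) = G_{v_0}$), for which (2) yields $\psi(G_{v_0}) \sim_K G'_{\hat{\psi}(v_0)}$; the general case $v = g v_0$ is then handled by conjugation, noting that $G_v = g G_{v_0} g^{-1}$ and that the image under $\psi$ is Hausdorff-close to the corresponding conjugate, which coincides up to bounded error with $G'_{\hat{\psi}(v)}$. To upgrade to a quasi-isometry in the intrinsic metrics, we use that the vertex and edge stabilisers of $T_c$ act cocompactly on their natural subtrees (stars of rigid vertices, cylinders, and individual edges), so by the Milnor--Schwarz lemma applied in the tree-of-spaces model they are finitely generated and quasi-isometrically embedded in the ambient group, making their intrinsic word metrics coarsely equivalent to the induced subset metrics from $G$ and $G'$. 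After adjusting $\psi|_{G_v}$ by a bounded amount so that its image lies in $G'_{\hat{\psi}(v)}$ (using the Hausdorff equivalence just established), the quasi-isometry inequalities for $\psi$ together with the quasi-isometric embeddings on both sides yield a quasi-isometry in intrinsic metrics. The subtlest point is verifying the quasi-isometric embedding claim for cylindrical vertex stabilisers, which can be virtually $\mathbb{Z}^2$ or more exotic (for instance virtually $\mathbb{Z}\times F_n$ in the setting of $\mathscr{C}^\bullet$); this is handled by the cocompact action of the cylinder stabiliser on its cylinder subtree, independent of the isomorphism type of the stabiliser.
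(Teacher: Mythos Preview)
Your overall strategy matches the paper's: both invoke \cite[Theorem 2.9]{Margolis18} to obtain $\hat{\psi}$ together with (1) and the vertex part of (2), then deduce (3). Two points deserve comment.

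First, you gloss over the edge case of (2). The paper handles this explicitly: in the tree-of-spaces model, each edge space is coarsely the intersection of $R$-neighbourhoods of its two adjacent vertex spaces, and since $\psi$ coarsely preserves vertex spaces it must coarsely preserve these intersections. This is not hard, but calling it ``routine'' without indicating the mechanism leaves the reader to reinvent it.

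Second, and more seriously, your argument for the ``moreover'' clause of (3) has a gap. You claim that because the stabilisers act cocompactly on natural subtrees, the Milnor--Schwarz lemma yields that they are quasi-isometrically embedded in the ambient group. But Milnor--Schwarz only tells you that $G_v$ is quasi-isometric to the subspace it acts on (with that subspace's \emph{intrinsic} metric); it says nothing about whether that subspace is undistorted in the ambient tree of spaces. In full generality---the theorem is stated for arbitrary finitely presented one-ended groups---vertex and edge spaces of a graph-of-groups decomposition over two-ended subgroups need not be quasi-isometrically embedded (think of Baumslag--Solitar-type distortion). The paper sidesteps this entirely by citing \cite[Lemma 2.1]{FarbMosher00}, which gives the restriction statement directly without going through undistortion. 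If you want to run your argument, you must either supply an independent proof that the stabilisers of $T_c$ are undistorted in this generality, or replace the Milnor--Schwarz step with a different mechanism.
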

\begin{proof}
	\cite[Theorem 2.9]{Margolis20} gives a unique isomorphism $\hat{\psi}:T_c\to T'_c$ that satisfies (1) and satisfies (2) for the vertex spaces in the trees of spaces corresponding to $(G,T_c)$ and $(G',T'_c)$. But \cite[Proposition 2.3]{Margolis20} allows us to transfer between the edge and vertex spaces in these trees of spaces and the corresponding cosets in $G$ and $G'$ as described in Notation \ref{not:cosets}. Hence we get a constant $K>0$, such that $\psi(G(v))\sim_KG'(\hat{\psi}(v))$ for $v\in VT_c$. Because of the tree structure of the trees of spaces, each edge space is $\sim$-equivalent to the intersection of the $R$-neighbourhoods of the adjacent vertex spaces for all $R\geq1$; so we deduce that $\psi$ coarsely maps edge spaces to edge spaces, and that $\psi(G(e))\sim_KG'(\hat{\psi}(e))$ for $e\in ET_c$ (possibly increasing $K$). Lastly, (3) follows from (2) and Remark \ref{rem:cosetvstabiliser}, and the moreover part follows from \cite[Lemma 2.1]{FarbMosher00}.
\end{proof}

\begin{cor}\label{cor:scrGactTc}
	If $G$ is a finitely presented one-ended group with JSJ tree $T$, then the group $\mathscr{G}$ of quasi-isometries of $G$ acts on the tree of cylinders $T_c$ by
	\begin{align*}
	\mathscr{G}&\to\Aut(T_c)\\
	[f]&\mapsto \hat{f}.
	\end{align*}
\end{cor}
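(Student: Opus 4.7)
The plan is to apply Theorem~\ref{thm:QIpreserveCyl} in the case $G' = G$, $T' = T$, which for each quasi-isometry $f:G\to G$ produces the isomorphism $\hat{f}:T_c\to T_c$ characterised by properties (1)--(3) of that theorem. All that then remains is to verify that the assignment $[f]\mapsto\hat{f}$ is well-defined on Hausdorff equivalence classes, sends the identity to the identity, and respects composition. Each of these three verifications will be a short application of the uniqueness clause in Theorem~\ref{thm:QIpreserveCyl}.

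For well-definedness, suppose $f\approx f'$, so that $d(f(x),f'(x))\leq B$ for all $x\in G$ and some $B<\infty$. Then for every $v\in VT_c$ the sets $f(G(v))$ and $f'(G(v))$ lie at Hausdorff distance at most $B$, so the condition $f(G(v))\sim_K G(\hat{f}(v))$ given by Theorem~\ref{thm:QIpreserveCyl}(2) implies $f'(G(v))\sim G(\hat{f}(v))$, and similarly for edges; by a corresponding absorption of $B$ we also retain (3). The uniqueness clause of Theorem~\ref{thm:QIpreserveCyl} applied to $f'$ then forces $\hat{f}' = \hat{f}$.

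For the homomorphism property, note first that the identity map on $T_c$ trivially satisfies (1)--(3) relative to $\mathrm{id}_G$, so uniqueness gives $\widehat{\mathrm{id}_G}=\mathrm{id}_{T_c}$. For composition, given $[f],[g]\in\mathscr{G}$, we claim $\hat{f}\hat{g}$ satisfies the characterising properties for the quasi-isometry $fg$. Property (1) is immediate since each of $\hat{f}$, $\hat{g}$ preserves the bipartition of $T_c$. For (2), apply Theorem~\ref{thm:QIpreserveCyl}(2) to $g$ to obtain $g(G(v))\sim G(\hat{g}(v))$; since $f$ is a quasi-isometry it sends finite-Hausdorff-distance sets to finite-Hausdorff-distance sets, hence
\[
fg(G(v)) \sim f(G(\hat{g}(v))) \sim G(\hat{f}\hat{g}(v)),
\]
where the second step uses Theorem~\ref{thm:QIpreserveCyl}(2) for $f$. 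The analogous statement for edges follows identically, and (3) is obtained by combining these with Remark~\ref{rem:cosetvstabiliser}. By the uniqueness of $\widehat{fg}$ we conclude $\widehat{fg}=\hat{f}\hat{g}$, so $[f]\mapsto\hat{f}$ is a homomorphism $\mathscr{G}\to\Aut(T_c)$.

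There is essentially no substantive obstacle: the corollary is just the observation that the already established naturality and uniqueness from Theorem~\ref{thm:QIpreserveCyl} formally upgrade into an action. The only mildly delicate point is tracking how Hausdorff constants behave under composition of quasi-isometries, but this is absorbed by the $\sim$ relation in the statement and so requires no bookkeeping.
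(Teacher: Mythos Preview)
Your proof is correct and follows essentially the same approach as the paper: both argue well-definedness and the homomorphism property by checking that the candidate isomorphism satisfies properties (1)--(3) of Theorem~\ref{thm:QIpreserveCyl} and then invoking uniqueness. You have simply spelled out in more detail what the paper summarises as ``$\hat{f}_1\circ\hat{f}_2$ clearly satisfies properties (1)--(3)''.
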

\begin{proof}
	Properties (1)-(3) of Theorem \ref{thm:QIpreserveCyl} remain true if we perturb $f$ by a bounded amount, hence $\hat{f}$ is determined by the Hausdorff class $[f]$. If $[f_1],[f_2]\in\mathscr{G}$, then $\hat{f}_1\circ\hat{f}_2$ clearly satisfies properties (1)-(3) of Theorem \ref{thm:QIpreserveCyl} with respect to the quasi-isometry $f_1\circ f_2$, therefore $\widehat{f_1\circ f_2}=\hat{f}_1\circ\hat{f}_2$ by the uniqueness in Theorem \ref{thm:QIpreserveCyl}. 	
\end{proof}

\begin{remk}
	$G$ acts on itself by left translations, which induces a homomorphism $G\to\scrG$, and the restriction of the action of $\scrG$ on $T_c$ recovers the original action of $G$ on $T_c$.
\end{remk}

In the particular case that $G$ acts on $T$ with hyperbolic vertex stabilisers, we get that the edge stabilisers of $T_c$ are two-ended. This holds for example if $G\in\mathscr{C}^\bullet$ and $T$ is a JSJ tree.

\begin{lem}\label{lem:cyledgetwoended}
	Let $G$ act on a tree $T$ with two-ended edge stabilisers and hyperbolic vertex stabilisers. Let $H$ be the stabiliser of an edge $(v,Y)\in ET_c$. Then for every $e\in EY\subset ET$ incident at $v$, $G_e$ is a finite index subgroup of $H$ -- so in particular $H$ is two-ended. $H$ is also equal to its normaliser in $G_v$.
\end{lem}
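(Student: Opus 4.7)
The plan is to identify $H$ with the commensurator of $G_e$ in $G_v$, using the hyperbolicity of $G_v$ to conclude that this commensurator is two-ended. Write $H = G_v \cap \operatorname{Stab}_G(Y)$, since stabilising the edge $(v,Y)$ of $T_c$ means fixing $v$ and setwise preserving the cylinder $Y$.

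First I would verify the two containments $G_e \leqslant H \leqslant \operatorname{Comm}_{G_v}(G_e)$. For the lower containment: $G_e \leqslant G_v$ because $v$ is an endpoint of $e$, and $G_e$ stabilises $e \in EY$ pointwise, hence preserves the cylinder containing $e$, which is $Y$. For the upper containment, take $h \in H$. Since $h$ fixes $v$ and preserves $Y$, the edge $h(e)$ lies in $Y$ and is incident at $v$. Because $h(e)$ and $e$ lie in the same cylinder, their stabilisers $h G_e h^{-1} = G_{h(e)}$ and $G_e$ are commensurable, so $h \in \operatorname{Comm}_{G_v}(G_e)$.

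The key geometric step is the following: in the hyperbolic group $G_v$, the two-ended subgroup $G_e$ fixes a unique pair of points in $\partial G_v$, and the commensurator of $G_e$ in $G_v$ coincides with the stabiliser of this pair, which is itself two-ended and contains $G_e$ with finite index. Combined with the sandwich $G_e \leqslant H \leqslant \operatorname{Comm}_{G_v}(G_e)$, this immediately yields that $H$ is two-ended and $G_e$ has finite index in $H$. I don't expect any real obstacle here; the main thing to be careful about is just invoking the correct standard fact about commensurators of two-ended subgroups in hyperbolic groups.

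For the moreover part, I will in fact strengthen the upper containment above to an equality $H = \operatorname{Comm}_{G_v}(G_e)$. Indeed, if $g \in \operatorname{Comm}_{G_v}(G_e)$, then $G_{g(e)} = g G_e g^{-1}$ is commensurable to $G_e$, so the edges $g(e)$ and $e$ lie in the same cylinder; since $g$ fixes $v$ this cylinder is $Y$, and because $g$ permutes cylinders it must send $Y$ to $Y$. Finally, since $G_e$ and $H$ are commensurable they have the same commensurator, so
\[
N_{G_v}(H) \leqslant \operatorname{Comm}_{G_v}(H) = \operatorname{Comm}_{G_v}(G_e) = H,
\]
and the reverse inclusion is trivial.
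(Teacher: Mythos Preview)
Your proof is correct and follows essentially the same route as the paper: both identify $H$ with the commensurator $\operatorname{Comm}_{G_v}(G_e)$ and then use hyperbolicity of $G_v$ to conclude this is two-ended with $G_e$ of finite index. The only cosmetic difference is that you invoke the standard fact about commensurators of two-ended subgroups in hyperbolic groups as a black box, whereas the paper proves it directly via the Morse Lemma (the cosets $hG_e$ for $h\in H$ are uniform quasi-geodesics at finite Hausdorff distance from $G_e$, hence at \emph{uniformly} bounded Hausdorff distance, so there are only finitely many). Your normaliser argument is also the same as the paper's, just phrased in commensurator language.
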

\begin{proof}
	$H$ consists of those elements $h\in G_v$ such that $h(e)\in EY$, or equivalently that $G_e$ is commensurable to $hG_eh^{-1}$ in $G$, or equivalently that $G_e\sim hG_e$. Since $G_e$ is two-ended, the cosets $hG_e$ are all quasi-geodesics with uniform constants, so they must all be at uniform Hausdorff distance from $G_e$ by the Morse Lemma. This implies that there are only finitely many cosets $hG_e$ with $h\in H$, so $G_e$ has finite index in $H$. If $g\in G_v$ normalises $H$, then $G_e$ and $gG_eg^{-1}$ will both be finite index subgroups of $H$, so they will be commensurable and $g\in H$; hence $H$ is equal to its normaliser in $G_v$.
\end{proof}

\subsection{Rigid line patterns}\label{Riglinesec}

Given a vertex group of $G\in\mathscr{C}^\bullet$, we need to understand the structure of its incident edge groups from a coarse geometry perspective. For this we review the notion of rigid line pattern.

\begin{defn}(Line pattern)\\
	A \emph{line pattern} $\mathcal{L}$ on a metric space $X$ is a collection of bi-infinite quasi-geodesics in distinct $\sim$-equivalence classes. If $(X,\mathcal{L}_X)$ and $(Y,\mathcal{L}_Y)$ are spaces with line patterns, then we say that a quasi-isometry $f:X\to Y$ \emph{respects line patterns} if there is an associated bijection $f_*:\mathcal{L}_X\to\mathcal{L}_Y$ such that $ f (l)\sim f _*(l)$ for all $l\in\mathcal{L}_X$. In this case we write $ f :(X,\mathcal{L}_X)\to(Y,\mathcal{L}_Y)$. Observe that a composition of quasi-isometries respecting line patterns is itself a quasi-isometry respecting line patterns.
\end{defn}

\begin{defn}(Free group with line pattern, \cite{LinePat})\label{linepatdef}\\
	Consider a finitely generated free group $F$ of rank greater than one.
	Let  $\mathcal{H}$ be finite collection of cyclic subgroups of $F$.
	The \emph{line pattern} $\mathcal{L}=\mathcal{L}_{\mathcal{H}}$ generated by $\mathcal{H}$ is the collection of  quasi-geodesics corresponding to left cosets of the subgroups in $\mathcal{H}$.
	Note that the $(F, \mathcal{H})$ depends on a choice of finite generating sets, but all such choices are equivalent up to quasi-isometry respecting line patterns.
\end{defn}

\begin{remk}
	In \cite{LinePat}, all line patterns came from free groups as in Definition \ref{linepatdef}, and a quasi-isometry respecting line patterns was required to have $ f _*(l)$ at \emph{uniformly} bounded Hausdorff distance from $ f (l)$ for given free bases of the free groups. But this is equivalent to our definition because cosets of a cyclic subgroup will be uniform quasi-geodesics, and in a tree any uniform quasi-geodesic is at uniform Hausdorff distance from a unique geodesic. 
\end{remk}

\begin{defn}(Vertex group with induced line pattern)\label{vertexlinepat}\\
	If the graph of groups $(G,\Gamma)$ contains a non-abelian free vertex group $G_u$ whose incident edge groups are all cyclic, then the collection of $G_u$-conjugates of incident edge groups forms a line pattern $\mathcal{L}_u$ for $G_u$. 
	If $u$ lifts to a vertex $\tilde{u}$ in the Bass-Serre covering tree $T$ for $(G, \Gamma)$, then $G_{\tilde{u}}$ is non-abelian free and has cyclic incident edge groups, and the collection of these incident edge groups forms a line pattern $\mathcal{L}_{\tilde{u}}$ for $\tilde{u}$ (this time the collection of incident edge groups is already closed under conjugation in $G_{\tilde{u}}$).
\end{defn}

\begin{defn}(Rigid line pattern, \cite{LinePat})\label{riglinepatdef}\\	
	If $X$ is a space with line pattern $\mathcal{L}_X$, let $\mathcal{QI}(X,\mathcal{L}_X)$ denote the group of quasi-isometries from $X$ to itself that respect the line pattern $\mathcal{L}_X$ (formally an element of $\mathcal{QI}(X,\mathcal{L}_X)$ is an $\approx$-equivalence class of quasi-isometries, but when we write $ f \in\mathcal{QI}(X,\mathcal{L}_X)$ we will mean $ f $ to be a particular choice of quasi-isometry). Similarly, let $\Isom(X,\mathcal{L}_X)$ denote the group of isometries of $X$ that respect $\mathcal{L}_X$. We say that $(X,\mathcal{L}_X)$ is a \emph{rigid model space} if the natural map $\iota: \Isom(X,\mathcal{L}_X)\to\mathcal{QI}(X,\mathcal{L}_X)$ is an isomorphism.
	
	A free group with line pattern $(F,\mathcal{L})$ is \emph{rigid} if there is a quasi-isometry $\phi:(F,\mathcal{L})\to (X,\mathcal{L}_X)$ to a rigid model space. If the group $F$ is clear, then we will simply say that $\mathcal{L}$ is \emph{rigid}.
\end{defn}

Building on the work of \cite{CashMar}, Cashen proved the following characterization of rigidity for a free group with line patterns:

\begin{thm}(Cashen \cite[Theorem 4.29]{Cash})\\ \label{thm:rigidEquivalence}
 Let $F$ be a finitely generated free group and $\mathcal{H}$ a finite set of cyclic subgroups in $F$.
 Then we have three mutually exclusive cases:
 \begin{enumerate}
 	\item $(F, \mathcal{L}_{\mathcal{H}})$ is rigid,
 	\item $F$ is the fundamental group of a hyperbolic surface with boundary, with the boundary components corresponding to the subgroups $\mathcal{H}$,
 	\item $F$ is not of type (2), and admits a non-trivial free or cyclic splitting relative to $\mathcal{H}$.
 \end{enumerate}
\end{thm}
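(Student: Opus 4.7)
The statement is a trichotomy, so I would prove it by establishing mutual exclusivity of the three cases and showing that failure of both (2) and (3) forces (1). Mutual exclusivity of (2) and (3) is built into the phrasing of (3). To see that (1) excludes (2): if $F$ is a hyperbolic surface group with $\mathcal{H}$ the boundary, then Dehn twists along essential simple closed curves of the surface give quasi-isometries of $F$ respecting $\mathcal{L}_{\mathcal{H}}$, generating an infinite subgroup of $\mathcal{QI}(F,\mathcal{L}_{\mathcal{H}})$ that acts effectively on $\partial F$ in a way that cannot be realised by isometries of any fixed model, so rigidity fails. To see that (1) excludes (3): a non-trivial free splitting relative to $\mathcal{H}$ yields a Bass-Serre tree whose Papasoglu--Whyte style rearrangements produce line-pattern-preserving quasi-isometries at unbounded distance from any isometry, and a non-trivial cyclic splitting permits shearing along the edge group with the same effect.

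For the substantive direction, assume that neither (2) nor (3) holds. I would take a JSJ decomposition of $F$ relative to $\mathcal{H}$ over cyclic subgroups. The absence of any non-trivial free or cyclic splitting relative to $\mathcal{H}$ forces this decomposition to be trivial, leaving only a single vertex group, which must be either rigid or quadratically hanging; the QH alternative is precisely case (2), which is also excluded. So $F$ itself is JSJ-rigid relative to $\mathcal{H}$, and the remaining task is to promote this algebraic rigidity to genuine quasi-isometric rigidity.

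To do this I would pass to the Cashen--Macura decomposition space $\mathcal{D}(F,\mathcal{L}_{\mathcal{H}})$, obtained from the Gromov boundary $\partial F$ by identifying each pair of endpoints of a line in $\mathcal{L}_{\mathcal{H}}$. Any line-pattern-preserving quasi-isometry descends to a self-homeomorphism of this Peano continuum, and conversely quasi-isometries are determined up to Hausdorff equivalence by their boundary maps. Cut points and inseparable cut pairs of $\mathcal{D}(F,\mathcal{L}_{\mathcal{H}})$ correspond respectively to free and cyclic splittings of $F$ relative to $\mathcal{H}$; with both absent, a Bowditch-style analysis yields a topologically rigid decomposition of the continuum, from which one constructs a canonical polyhedral model space $(X,\mathcal{L}_X)$ whose isometry group realises every element of $\mathcal{QI}(F,\mathcal{L}_{\mathcal{H}})$, giving the desired isomorphism $\Isom(X,\mathcal{L}_X)\to\mathcal{QI}(X,\mathcal{L}_X)$. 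The main obstacle is this final step: extracting a genuinely rigid metric model from an abstract Peano continuum, which in Cashen's work requires a Bowditch-type JSJ of the decomposition space together with delicate control over the metric combinatorics that emerge from the absence of cut points and cut pairs. Everything upstream of this is essentially formal, but isolating the right canonical model object is the heart of the argument.
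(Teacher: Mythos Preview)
The paper does not prove this theorem; it is quoted from \cite{Cash} as a black box and used only as input to later arguments (Lemma~\ref{lem:RigidImpliesRigid}, Proposition~\ref{prop:riglineisJSJ}). So there is no proof in the paper to compare your proposal against.

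That said, your sketch is broadly aligned with how the result is actually obtained in the cited literature, and in particular with the companion Theorem~\ref{rigidequiv} that the paper also quotes. A few corrections and caveats. First, your argument that (1) excludes (2) is not quite right: Dehn twists act trivially on the boundary $\partial F$, so they do not produce nontrivial elements of $\mathcal{QI}(F,\mathcal{L}_{\mathcal{H}})$; the genuine obstruction in the surface case is that the decomposition space is a circle, which has many cut pairs, and correspondingly there are many line-pattern-preserving quasi-isometries coming from distinct pants decompositions that cannot all be isometries of a single model. Second, your exclusion of (3) from (1) is essentially correct in spirit but should be phrased via the decomposition space: a free splitting relative to $\mathcal{H}$ gives a disconnection or a cut point of $\mathcal{D}_{\mathcal{L}}$, and a cyclic splitting gives a cut pair, contradicting Theorem~\ref{rigidequiv}(2). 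Third, for the main implication you invoke a relative JSJ of $F$; this is unnecessary and slightly circular, since the trichotomy you are proving is itself what underlies the relevant JSJ analysis. The cleaner route, and the one actually taken, is to work directly with $\mathcal{D}_{\mathcal{L}}$: connectedness follows from the absence of a free splitting, absence of cut points and cut pairs from the absence of cyclic splittings and the non-surface hypothesis, and then the rigid model space is built from the topology of $\mathcal{D}_{\mathcal{L}}$ as in \cite{LinePat} and \cite{Panel}. You have correctly identified this last construction as the substantial step.
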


\begin{defn}($\phi$-conjugacy action)\label{phiconj}\\
	If $\phi:(F,\mathcal{L})\to (X,\mathcal{L}_X)$ is a quasi-isometry to a rigid model space, then we get an isomorphism $\mathcal{QI}(F,\mathcal{L})\to \Isom(X,\mathcal{L}_X)$ given by $ f \mapsto\iota^{-1}(\phi f \phi^{-1})$, where $\iota: \Isom(X,\mathcal{L}_X)\to\mathcal{QI}(X,\mathcal{L}_X)$ is the isomorphism as above. We call the corresponding isometric action of $\mathcal{QI}(F,\mathcal{L})$ on $(X,\mathcal{L}_X)$ the \emph{$\phi$-conjugacy action}.
\end{defn}
\begin{remk}\label{phiconjrem} 
	The $\phi$-conjugacy action is independent of $\phi$ in the sense that if $\phi_1,\phi_2:(F,\mathcal{L})\to (X,\mathcal{L}_X)$ are two different quasi-isometries, then the isometry $\iota^{-1}(\phi_2\phi_1^{-1}):(X,\mathcal{L}_X)\to(X,\mathcal{L}_X)$ is equivariant with respect to the $\phi_1$-conjugacy action on the left hand side and the $\phi_2$-conjugacy action on the right hand side. In particular, the translation length of an element $ f \in\mathcal{QI}(F,\mathcal{L})$ with respect to the $\phi$-conjugacy action is independent of $\phi$. Sometimes we will just say the action of $\mathcal{QI}(F,\mathcal{L})$ on $(X,\mathcal{L}_X)$ if we do not wish to refer to a particular $\phi$.
\end{remk}

\begin{remk}\label{enlargelinepat}
	If $\mathcal{L}\subset\mathcal{L}'$ are two line patterns on $F$, and $\mathcal{L}$ is rigid, then $\mathcal{L}'$ must also be rigid. This is because if $\phi:(F,\mathcal{L})\to (X,\mathcal{L}_X)$ is a quasi-isometry to a rigid model space, then $\phi:(F,\mathcal{L}')\to (X,\phi(\mathcal{L}'))$ is also a quasi-isometry to a rigid model space -- as $\mathcal{L}_X\subset\phi(\mathcal{L}')$ and $\mathcal{QI}(X,\phi(\mathcal{L}'))\leqslant\mathcal{QI}(X,\mathcal{L}_X)\cong \Isom(X,\mathcal{L}_X)$.
\end{remk}
\bigskip

The main theorem we will need about rigid line patterns is the following. Part (2) is due to Cashen-Macura, and part (3) is due to Hagen-Touikan (which also relies on the construction of Cashen-Macura).

\begin{thm}(Cashen-Macura \cite[Main Theorem]{LinePat}, Hagen-Touikan \cite[Theorem C]{Panel})\\\label{rigidequiv}
	Let $(F,\mathcal{L})$ be a free group with line pattern. The following are equivalent:
	\begin{enumerate}
		\item $\mathcal{L}$ is rigid.
		\item The \emph{decomposition space} $\mathcal{D_L}$, obtained from $\partial F$ by identifying the two limit points of each line $l\in\mathcal{L}$ and taking the quotient topology, is connected, has no cut points and no cut pairs.
		\item There is a quasi-isometry $\alpha:(F,\mathcal{L})\to (Y,\mathcal{L}_Y)$ to a rigid model space, where $Y$ is a locally finite tree with no leaves and $\mathcal{L}_Y$ is a collection of bi-infinite geodesics.
	\end{enumerate}
\end{thm}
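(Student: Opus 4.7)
The plan is to prove the equivalence via the cyclic implications $(3)\Rightarrow(1)\Rightarrow(2)\Rightarrow(3)$.

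The implication $(3)\Rightarrow(1)$ is immediate from Definition \ref{riglinepatdef}: the quasi-isometry $\alpha$ to the stated rigid model space witnesses rigidity of $\mathcal{L}$ directly.

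For $(1)\Rightarrow(2)$, I would proceed by contrapositive and appeal to Theorem \ref{thm:rigidEquivalence}. The central observation is that the decomposition space $\mathcal{D_L}$ is a quasi-isometry invariant of $(F,\mathcal{L})$: any line-pattern-respecting quasi-isometry $f:(F,\mathcal{L})\to(F',\mathcal{L}')$ induces a homeomorphism of Gromov boundaries that carries the endpoint-pair of each $l\in\mathcal{L}$ to the endpoint-pair of $f_*(l)\in\mathcal{L}'$, and therefore descends to a homeomorphism $\mathcal{D_L}\to\mathcal{D}_{\mathcal{L}'}$. In particular, connectedness and the existence of cut points or cut pairs are quasi-isometry invariants. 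Now if $\mathcal{D_L}$ is disconnected or has a cut point, Bowditch-style boundary arguments produce a non-trivial free splitting of $F$ relative to $\mathcal{H}$, placing us in case (3) of Theorem \ref{thm:rigidEquivalence}. If $\mathcal{D_L}$ is connected without cut points but has a cut pair, then either $F$ is a surface group with boundary corresponding to $\mathcal{H}$ (case (2) of Theorem \ref{thm:rigidEquivalence}) or $F$ admits a non-trivial cyclic splitting relative to $\mathcal{H}$ (case (3)). In every case $\mathcal{L}$ fails to be rigid by Theorem \ref{thm:rigidEquivalence}.

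For $(2)\Rightarrow(3)$, I would follow the Hagen--Touikan approach building on Cashen--Macura. Given $\mathcal{D_L}$ connected with no cut points or cut pairs, the plan is to extract a canonical locally finite tree $Y$ with no leaves from the topology of $\mathcal{D_L}$, equipped with an induced line pattern $\mathcal{L}_Y$, together with a quasi-isometry $\alpha:(F,\mathcal{L})\to(Y,\mathcal{L}_Y)$. The construction is sufficiently canonical that any self-quasi-isometry of $(F,\mathcal{L})$ induces a self-homeomorphism of $\mathcal{D_L}$, which in turn acts isometrically on $(Y,\mathcal{L}_Y)$; this forces the map $\iota:\Isom(Y,\mathcal{L}_Y)\to\mathcal{QI}(Y,\mathcal{L}_Y)$ from Definition \ref{riglinepatdef} to be surjective, while injectivity is straightforward since distinct isometries of a locally finite tree are at positive Hausdorff distance.

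The main obstacle is the $(2)\Rightarrow(3)$ direction. Producing a \emph{locally finite} tree model from the abstract topology of $\mathcal{D_L}$, and verifying that the assembled space is genuinely rigid, is the substantive technical content of the Hagen--Touikan construction, whereas the other two implications reduce to relatively direct applications of Theorem \ref{thm:rigidEquivalence} and the definitions.
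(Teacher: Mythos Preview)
The paper does not prove this theorem at all: it is stated as a citation, with the equivalence $(1)\Leftrightarrow(2)$ attributed to Cashen--Macura \cite{LinePat} and the implication to $(3)$ attributed to Hagen--Touikan \cite{Panel}. So there is no ``paper's own proof'' to compare against; your proposal is an attempt to reconstruct the cited arguments.

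Your outline is broadly faithful to how the original literature is organised, and you correctly identify $(2)\Rightarrow(3)$ as the substantive construction. One point to flag in your $(1)\Rightarrow(2)$ sketch: the passage from topological features of $\mathcal{D_L}$ (disconnection, cut points, cut pairs) to relative splittings of $F$ is itself the core content of Cashen--Macura's analysis, not a routine ``Bowditch-style'' consequence. In particular, the cut-pair case requires their rigid/flexible dichotomy for line patterns and does not follow from general boundary theory for hyperbolic groups. Invoking Theorem \ref{thm:rigidEquivalence} here is legitimate within this paper's framework since both theorems are cited as black boxes, but be aware that in the original literature Theorem \ref{thm:rigidEquivalence} is itself derived from the decomposition-space analysis underlying Theorem \ref{rigidequiv}, so the dependence runs the other way.
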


We will call $(Y,\mathcal{L}_Y)$ from Theorem \ref{rigidequiv}(3) a \emph{rigid tree} for $(F,\mathcal{L})$. Note that distinct bi-infinite geodesics in $Y$ cannot be at finite Hausdorff distance, so the $\alpha$-conjugacy action of $\mathcal{QI}(F,\mathcal{L})$ on $Y$ isometrically maps each geodesic in $\mathcal{L}_Y$ onto another geodesic in $\mathcal{L}_Y$. $F$ acts on itself by left multiplication, preserving $\mathcal{L}$, and so we can view it as a subgroup of $\mathcal{QI}(F,\mathcal{L})$. The corresponding action of $F$ on a rigid tree $Y$ satisfies the following lemma.

\begin{lem}\label{freeco}
	Let $(F,\mathcal{L})$ be a rigid line pattern with a quasi-isometry  $\alpha:(F,\mathcal{L})\to (Y,\mathcal{L}_Y)$ to a rigid tree. Then the action of $F$ on $Y$ is free and cocompact, and $\alpha$ is at bounded distance from any orbit map of $F$.
\end{lem}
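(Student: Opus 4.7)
The plan is to use the $\alpha$-conjugacy action from Definition \ref{phiconj} to produce the isometric action of $F$ on $Y$, and then extract the orbit-map statement from the conjugacy relation itself. Left multiplication gives a homomorphism $F \to \mathcal{QI}(F,\mathcal{L})$, and the $\alpha$-conjugacy isomorphism $\mathcal{QI}(F,\mathcal{L}) \to \Isom(Y,\mathcal{L}_Y)$ then supplies the isometric action of $F$ on $Y$. For $g \in F$, write $g^*$ for the resulting isometry of $Y$; by construction $\iota(g^*) = [\alpha L_g \alpha^{-1}]$, where $L_g$ denotes left multiplication by $g$. Unpacking the definition of $\iota$, there is a uniform constant $C$ (depending only on $\alpha$ and its quasi-inverse) such that $d_Y(g^*(y),\, \alpha(g \cdot \alpha^{-1}(y))) \leq C$ for every $y \in Y$ and every $g \in F$.

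Next I would fix a basepoint $x_0 = \alpha(e)$. Applying the bound above at $y = x_0$ and using that $\alpha^{-1}(x_0)$ lies within bounded distance of $e$ (because $\alpha^{-1}$ is a quasi-inverse to $\alpha$), it follows that $d_Y(g \cdot x_0,\, \alpha(g))$ is bounded uniformly in $g$. Hence $\alpha$ is at bounded distance from the orbit map $g \mapsto g \cdot x_0$. Because $F$ acts by isometries, any two orbit maps differ by at most the distance between their basepoints, so $\alpha$ is at bounded distance from every orbit map; in particular every orbit map is itself a quasi-isometry $F \to Y$.

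Cocompactness and freeness are then standard consequences. For cocompactness, the quasi-isometric orbit map is coarsely surjective, so $F \cdot x_0$ is $\epsilon$-dense in $Y$ for some $\epsilon$; since $Y$ is a locally finite tree the closed ball $\overline{B}(x_0,\epsilon)$ is compact, and its $F$-translates cover $Y$, whence $F \backslash Y$ is compact. For freeness, suppose $g \in F$ fixes a point $x \in Y$; the triangle inequality gives $d_Y(g \cdot x_0, x_0) \leq 2 d_Y(x_0, x)$, and the quasi-isometric orbit map converts this into a uniform bound on the word length of $g$. Thus every point stabiliser is finite, and since $F$ is torsion-free every point stabiliser is trivial.

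The only real subtlety is making sure the constant $C$ in the conjugacy relation is uniform in $g$; this amounts to unwinding that elements of $\mathcal{QI}(X,\mathcal{L}_X)$ are $\approx$-classes, so once a representative $g^*$ of $\iota(g^*)$ is chosen, the distance to $\alpha L_g \alpha^{-1}$ is bounded by a constant coming from the equivalence class, and this constant depends only on $\alpha$ rather than on $g$. Once this is in place the rest of the argument is essentially a Milnor--\v{S}varc computation carried out inside a tree.
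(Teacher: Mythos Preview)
Your overall strategy matches the paper's, and the steps deducing cocompactness and freeness from the orbit-map statement are fine. The gap is in the step you yourself flag as ``the only real subtlety'': establishing that the constant $C$ in $d_Y(g^*(y),\,\alpha L_g\alpha^{-1}(y))\leq C$ is uniform in $g$. Saying this ``amounts to unwinding that elements of $\mathcal{QI}(X,\mathcal{L}_X)$ are $\approx$-classes'' only tells you that $g^*$ and $\alpha L_g\alpha^{-1}$ are at \emph{finite} Hausdorff distance, not that this distance is bounded independently of $g$. Nothing in the definition of $\iota$ or of the $\approx$-relation supplies such a bound; on $\mathbb{R}$, for instance, the identity and $x\mapsto x+n$ are isometries in the same $\approx$-class but at distance $n$.

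The paper closes this gap by using the tree structure, and this is where the hypothesis actually enters. The commuting-up-to-bounded-distance diagram yields two quasi-isometries $F\to Y$ (equivalently, after composing with $\alpha^{-1}$, two quasi-isometries $F\to F$) that are at finite Hausdorff distance and have quasi-isometry constants depending only on $\alpha$. The paper then invokes that $F$ has a regular tree as Cayley graph: in a tree with branching, two $(Q,\epsilon)$-quasi-isometries at finite Hausdorff distance induce the same boundary map, and since every point is the centre of a tripod, each quasi-isometry sends that point to within a $(Q,\epsilon)$-controlled distance of the centre of the image tripod---which is the same for both maps. This forces the Hausdorff distance to be bounded in terms of $Q,\epsilon$ alone. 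So the tree structure is needed \emph{before} the Milnor--\v{S}varc step, to get uniformity of $C$, not merely afterwards.
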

\begin{proof}
	By definition of the $\alpha$-conjugacy action, for $g\in F$ the diagram of quasi-isometries 
	\begin{equation}
	\begin{tikzcd}[
	ar symbol/.style = {draw=none,"#1" description,sloped},
	isomorphic/.style = {ar symbol={\cong}},
	equals/.style = {ar symbol={=}},
	subset/.style = {ar symbol={\subset}}
	]
	F\ar{d}{g}\ar{r}{\alpha}&Y\ar{d}{g}\\
	F\ar{r}{\alpha}&Y
	\end{tikzcd}
	\end{equation}
	commutes up to bounded distance. The two $g$ maps are actually isometries, so this diagram defines two quasi-isometries $F\to F$ at bounded distance from each other, with quasi-isometry constants only depending on $\alpha$. As $F$ has Cayley graph a regular tree, one can easily deduce that the distance between these two quasi-isometries $F\to F$ also just depends on $\alpha$. This implies that $\alpha$ is at bounded distance from any orbit map of $F$. It immediately follows that the action of $F$ on $Y$ is cocompact, and it must also be free because $F$ is torsion-free.
\end{proof}

\begin{remk}\label{rem:random}
	``Random line patterns'' are rigid line patterns in the following sense: working in the Cayley graph of $F$ with respect to a given free basis $\mathcal{B}$, and taking geodesic representatives for the lines in $\mathcal{L_H}$, if some $l\in\mathcal{L_H}$ contains every reduced word of length 3 as a subsegment, then $\mathcal{L_H}$ is rigid. It follows from \cite[Corollary 5.5]{CashenManning15} that $F$ does not split freely or cyclically relative to the subgroup corresponding to this $l$, so Theorem \ref{thm:rigidEquivalence} implies that $\mathcal{L_H}$ is rigid. (Note that the only possibility of being in case (2) of Theorem \ref{thm:rigidEquivalence}, but not splitting cyclically relative to $\mathcal{H}$, is if the hyperbolic surface is a pair of pants, but then $F$ will admit a free splitting relative to each subgroup in $\mathcal{H}$ individually.) In particular, if $w\in F$ is a random word of length $n$ with respect to $\mathcal{B}$, then the probability that $\mathcal{L}_{\{\langle w\rangle\}}$ is rigid tends to 1 exponentially quickly as $n\to\infty$.
\end{remk}

\subsection{Rigid decompositions are JSJ decompositions}\label{GammaJSJ}

In this section we explore the close relation between rigid line patterns and vertex groups of $G\in\mathscr{C}_{tf}^\bullet$.

\begin{lem} \label{lem:RigidImpliesRigid}
  Let $G \in \mathscr{C}_{tf}^\bullet$ with a JSJ tree $T$.
  Then for each $u \in V_0T_c$ the group $G_u$ is a non-abelian free group and the induced line pattern $(G_u,\mathcal{L}_u)$ is rigid.
\end{lem}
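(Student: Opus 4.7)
The plan is to apply Theorem \ref{thm:rigidEquivalence} (Cashen's trichotomy) to $G_u$ together with a finite set $\mathcal{H}$ of conjugacy-class representatives of the incident edge groups at $u$, after first establishing that $G_u$ is non-abelian free. This naturally splits the argument into two steps.

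For the first step, since $V_0T_c \subseteq VT$, the subgroup $G_u$ is a vertex stabiliser of the JSJ tree $T$, so Lemma \ref{lem:CindJSJ} gives that $G_u$ is virtually free and not QH; the torsion-freeness of $G$ upgrades this to $G_u$ being free. If $G_u$ were trivial or cyclic, then every incident edge stabiliser, being a non-trivial two-ended subgroup of $G_u$, would be of finite index in $G_u$, and hence all incident edge stabilisers at $u$ would be pairwise commensurable. All edges incident to $u$ in $T$ would therefore lie in a single cylinder, contradicting $u \in V_0T_c$. So $G_u$ is non-abelian free.

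For the second step, I would rule out cases (2) and (3) of Theorem \ref{thm:rigidEquivalence}. Case (2) exhibits $G_u$ as the fundamental group of a hyperbolic surface with boundary subgroups corresponding to $\mathcal{H}$; this makes $G_u$ a QH vertex group in the sense of Definition \ref{QH}, contradicting Lemma \ref{lem:CindJSJ}. For case (3), a non-trivial free or cyclic splitting of $G_u$ relative to $\mathcal{H}$ can be used to blow up the vertex $u$ in $T$: since each incident edge stabiliser is conjugate into one side of the splitting, we distribute the incident edges at $u$ across the Bass-Serre tree of the splitting of $G_u$, yielding a new $G$-tree $T'$ in which $G_u$ is not elliptic. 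In the free-splitting case $T'$ contains an edge with trivial stabiliser, giving a splitting of $G$ over the trivial group and contradicting one-endedness of $G$ via Stallings' theorem. In the cyclic-splitting case, every edge stabiliser of $T'$ is two-ended, so $T'$ is a splitting of $G$ over two-ended subgroups in which $G_u$ acts non-trivially, contradicting the rigidity of vertex groups of $\mathscr{C}^\bullet$ noted in Remark \ref{rem:Cvertexrigid}. Only case (1) survives, so $(G_u,\mathcal{L}_u)$ is rigid.

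The main obstacle is ensuring that the blow-up construction in case (3) actually produces a $G$-equivariant splitting with the claimed edge stabilisers and with $G_u$ genuinely non-elliptic. This should be routine Bass-Serre theory: the relative splitting of $G_u$ is $G_u$-equivariant, and the assignment of each incident edge at $u$ to a side of the splitting is well-defined modulo the $G_u$-action, so the blow-up extends $G$-equivariantly.
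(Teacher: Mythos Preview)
Your proof is correct and follows essentially the same strategy as the paper: apply Cashen's trichotomy (Theorem \ref{thm:rigidEquivalence}) and rule out cases (2) and (3) using the no-QH assumption, one-endedness, and the rigidity of vertex groups from Remark \ref{rem:Cvertexrigid}. You are more thorough than the paper in two respects: you explicitly verify that $G_u$ is non-abelian free (the paper simply jumps to the trichotomy without justifying this hypothesis), and you spell out the blow-up construction that underlies the appeal to Remark \ref{rem:Cvertexrigid} in case (3), whereas the paper leaves this implicit.
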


\begin{proof}
	The line pattern $(G_u, \mathcal{L}_u)$ must be in one of the three cases of Theorem~\ref{thm:rigidEquivalence}. We cannot be in case (2) because the splitting of $G$ has no QH vertex groups. $G_u$ cannot split freely relative to its incident edge groups because $G$ is one-ended. $G_u$ cannot admit a cyclic splitting relative to its incident edge groups by Remark \ref{rem:Cvertexrigid}, so case (3) can't happen either. Therefore we must be in case (1), which means that $(G_u,\mathcal{L}_u)$ is rigid.
\end{proof}

We know from Section \ref{sec:Cyl} that the group $\scrG$ of quasi-isometries $G\to G$ acts on the tree of cylinders $T_c$, and that each quasi-isometry restricts to maps between the vertex groups in $G$, we record here that these maps also respect the line patterns.

\begin{lem} \label{lem:restrictingToRigidVertices}
	Let $G \in \mathscr{C}_{tf}^\bullet$ and $u \in V_0 T_c$. 
	Then $[f] \in \mathscr{G}$ induces a $\approx$-class of quasi-isometries $[f]_u : (G_u,\calL_u) \rightarrow (G_{\hat{f}(u)},\calL_{\hat{f}(u)})$ that respect line patterns.
\end{lem}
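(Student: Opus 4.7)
The plan is to deduce this essentially as a corollary of Theorem~\ref{thm:QIpreserveCyl}, setting $u' := \hat{f}(u)$ and defining $[f]_u$ to be the $\approx$-class of the restriction $f|_{G_u}: G_u \to G_{u'}$. Part~(3) of that theorem already provides this restriction as a quasi-isometry in the intrinsic metrics; the first task is to verify that the $\approx$-class is well-defined from $[f]$, and the second is to show $f|_{G_u}$ respects the line patterns $\calL_u$ and $\calL_{u'}$.

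To show line patterns are respected, I would describe a line $l\in\calL_u$ as the $\sim$-class of a coset $gG_e$ for $g\in G_u$ and $e\in ET$ incident at $u$. By Lemma~\ref{lem:cyledgetwoended}, $G_e$ has finite index in the stabiliser $H$ of the unique edge $(u,Y)\in ET_c$ with $e\in EY$, so $gG_e\sim gH$ in $G_u$ (intrinsic). The image edge $\hat{f}((u,Y))=(u',Y')\in ET_c$ has stabiliser $H'\leqslant G_{u'}$. Theorem~\ref{thm:QIpreserveCyl}(2) combined with Remark~\ref{rem:cosetvstabiliser}, applied to the left-translate $g\cdot(u,Y)$, yields $f(gH)\sim g'H'$ in the ambient $G$-metric for some $g'\in G_{u'}$. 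Defining $f_*(l)$ to be the line of $\calL_{u'}$ represented by $g'H'$ gives the required bijection of lines, provided that the relation $f(gH)\sim g'H'$ can be promoted from the ambient $G$-metric to the intrinsic $G_{u'}$-metric. Well-definedness (i.e.\ that $[f]_u$ depends only on $[f]$) is an analogous ambient-to-intrinsic upgrade, starting from the observation that $f_1\approx f_2$ in the ambient metric forces their restrictions to $G_u$ to stay at bounded ambient distance.

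The main obstacle, present in both parts, is exactly this ambient-to-intrinsic upgrade, which reduces to checking that the inclusion $G_{u'}\hookrightarrow G$ is a quasi-isometric embedding. Since $G\in\mathscr{C}_{tf}^\bullet$, Theorem~\ref{thm:separableBalancedRelHyp} makes $G$ hyperbolic relative to virtually $\mathbb{Z}\times\mathbb{F}_n$ peripheral subgroups, and the rigid vertex stabiliser $G_{u'}$ is a non-abelian free group whose intersection with any peripheral is contained in a two-ended edge stabiliser. Standard relative-quasiconvexity arguments should then give that $G_{u'}$ is quasi-isometrically embedded in $G$, after which everything else is a bookkeeping exercise combining Theorem~\ref{thm:QIpreserveCyl} with Lemma~\ref{lem:cyledgetwoended} and Corollary~\ref{cor:scrGactTc}.
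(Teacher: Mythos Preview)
Your proposal is correct and follows the same route as the paper, which dispatches the lemma in one line: ``This follows immediately from Theorem~\ref{thm:QIpreserveCyl}(3).'' Part~(3) already gives both the intrinsic quasi-isometry $f|_{G_u}:G_u\to G_{\hat f(u)}$ and, for every $T_c$-edge $e$ incident at $u$, the relation $f(G_e)\sim G_{\hat f(e)}$; since $\hat f$ is a tree isomorphism this supplies the bijection $\calL_u\to\calL_{\hat f(u)}$ directly. You can skip the detour through $T$-edges and Lemma~\ref{lem:cyledgetwoended}: the lines of $\calL_u$ are already indexed by $T_c$-edges at $u$, which is the setting of Theorem~\ref{thm:QIpreserveCyl}.

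The ambient-to-intrinsic upgrade you isolate is a genuine point the paper glosses over: to pass from $f(G_e)\sim G_{\hat f(e)}$ in the $G$-metric to the same relation in the intrinsic $G_{u'}$-metric (and likewise for well-definedness of $[f]_u$), one needs $G_{u'}\hookrightarrow G$ to be undistorted. Your resolution via relative hyperbolicity and relative quasiconvexity is valid. The paper presumably regards this as already handled by the tree-of-spaces machinery behind Theorem~\ref{thm:QIpreserveCyl} (the Margolis framework together with \cite[Lemma~2.1]{FarbMosher00}), where the vertex spaces of the tree of cylinders are quasi-isometrically embedded; this is exactly what makes the ``moreover'' clause of part~(3) meaningful and is why the Baumslag--Solitar pathology does not arise (its tree of cylinders collapses to a point). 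So your extra care is justified, but you may cite the undistortion as a byproduct of that framework rather than rebuilding it from relative hyperbolicity.
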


\begin{proof}
	This follows immediately from Theorem~\ref{thm:QIpreserveCyl}(3).	
\end{proof}

\bigskip

We also have a converse to Lemma \ref{lem:RigidImpliesRigid} as follows.

\begin{prop}\label{prop:riglineisJSJ}
	Let $G$ be a finitely generated group that splits over two-ended subgroups by acting minimally on a tree $T$. 
	Suppose that the vertex stabilisers are all either 
	\begin{enumerate} 
	 \item virtually non-abelian free with incident edge stabilisers inducing rigid line patterns, 
	 \item virtually infinite cyclic, 
	\end{enumerate}
    with at least one vertex stabiliser of the first type.
	Then $G$ is one ended and $T$ is a JSJ tree for $G$ with no QH vertex groups.
	
\end{prop}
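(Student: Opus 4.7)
The plan is to establish the three conclusions — that $T$ is a JSJ tree, that $T$ has no QH vertex groups, and that $G$ is one-ended — by exploiting the rigidity of the induced line patterns at the type (1) vertices.

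For $T$ being a JSJ tree, I first verify universal ellipticity. Any two-ended subgroup $H \leq G$ acting on a tree with two-ended edge stabilisers is elliptic, since a translation axis would have edge stabilisers that are simultaneously two-ended and proper (hence finite) subgroups of $H$, a contradiction. Applied to the edge stabilisers of $T$, this yields universal ellipticity. For the vertex condition, let $T'$ be any universally elliptic tree. The type (2) stabilisers are two-ended and hence elliptic by the same argument. If a type (1) $G_v$ were non-elliptic in $T'$, I pass to a torsion-free finite-index free subgroup $F \leq G_v$, which remains non-elliptic and therefore admits a non-trivial cyclic or free splitting from its minimal subtree in $T'$. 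Since each incident edge stabiliser $G_e$ (two-ended) is elliptic in $T'$ by the first step, its finite-index infinite cyclic subgroup $F \cap G_e$ is elliptic too. These subgroups generate the induced line pattern $\mathcal{L}_F$, which inherits rigidity from $\mathcal{L}_v$ via the finite-index quasi-isometry. We thus obtain a non-trivial cyclic or free splitting of $F$ relative to $\mathcal{L}_F$, contradicting case (3) of Theorem \ref{thm:rigidEquivalence}.

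For the absence of QH vertex groups: a type (2) stabiliser is virtually $\mathbb{Z}$, which cannot be virtually a non-elementary hyperbolic $2$-orbifold group. If a type (1) $G_v$ were QH, then after passing to a finite-index subgroup we would obtain a non-abelian surface group with boundary components exactly the incident edge groups — this is case (2) of Theorem \ref{thm:rigidEquivalence}, contradicting the rigidity of $\mathcal{L}_v$.

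For one-endedness, by Stallings's theorem it suffices to rule out non-trivial splittings of $G$ over a finite subgroup; suppose $G$ acts non-trivially on such a tree $T''$. The plan is first to show each type (1) stabiliser $G_v$ is elliptic in $T''$, then to propagate this ellipticity across $T$. The first step runs through a torsion-free finite-index free $F \leq G_v$: a non-elliptic $F$-action produces a non-trivial free splitting of $F$, which combined with ellipticity of the line-pattern generators in $T''$ would contradict rigidity. Ellipticity then propagates to type (2) vertices by induction along paths in $T$ to a type (1) vertex, using that each edge stabiliser is of finite index in both endpoints and inherits ellipticity. Finally, two adjacent elliptic vertex stabilisers $G_v, G_{v'}$ must fix a common vertex of $T''$, for otherwise their shared infinite edge stabiliser would fix the separating segment, contradicting the finite edge stabilisers of $T''$. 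By connectedness of $T$, all of $G$ fixes a single vertex, contradicting non-triviality. The main obstacle is the ellipticity of the line-pattern generators in $T''$: when an incident $G_e$ is infinite dihedral it might a priori translate along an axis in $T''$, and resolving this requires either the decomposition-space characterisation of rigidity from Theorem \ref{rigidequiv} (any non-trivial splitting of $F$ induces a cut point or cut pair in $\mathcal{D}_{\mathcal{L}_F}$) or an early passage to a torsion-free finite-index subgroup of $G$.
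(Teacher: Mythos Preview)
Your argument for universal ellipticity conflates $H$-stabilisers with $G$-stabilisers: along a translation axis for a two-ended $H$ in a tree $T'$, the $G$-stabiliser of an edge is two-ended by hypothesis on $T'$, but there is no reason it should be a subgroup of $H$; the $H$-stabiliser is the intersection $H \cap G_{e'}$, and no contradiction arises from that being finite. The general claim is simply false --- take $G = \mathbb{Z}^2$ acting on a line with one $\mathbb{Z}$ factor as edge stabiliser, so that the other factor translates. The paper takes a different route: it first proves one-endedness, then invokes an existing JSJ tree $T_J$ and cites \cite[Lemma~2.6(3)]{JSJ} to get that edge stabilisers of $T$ are elliptic in $T_J$. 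It then argues that each edge stabiliser of $T$ is actually contained in an edge stabiliser of $T_J$ (otherwise two adjacent vertex stabilisers of $T$ would land in the same $T_J$-vertex, which would then be flexible hence QH, forcing a surface structure on the rigid vertex groups in contradiction with Theorem~\ref{thm:rigidEquivalence}). Universal ellipticity of $T$ then follows from that of $T_J$.

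Your one-endedness argument has the gap you flag, and neither proposed fix closes it. A non-relative free splitting of $F$ does not in general produce a cut point or cut pair in $\mathcal{D}_{\mathcal{L}_F}$: every free group splits freely over a basis regardless of the line pattern, while for a rigid pattern $\mathcal{D}_{\mathcal{L}_F}$ has no cut points or pairs by Theorem~\ref{rigidequiv}. Passing to a torsion-free subgroup does not help either, since a subgroup $G_e \cong \mathbb{Z}$ can still translate on a tree with finite (now trivial) edge stabilisers. The paper's proof here is substantially more involved: working in a Dunwoody--Stallings tree $T_{DS}$, it first handles directly the case where all edge stabilisers of $T$ are elliptic in $T_{DS}$, and in the remaining case proves a combinatorial Claim locating an edge $e_{DS} \in ET_{DS}$ and a type~(1) vertex $v$ such that exactly one axis $\ell_e$ with $e \in \lk(v)$ passes through $e_{DS}$. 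This uniqueness is what permits the construction of an auxiliary tree $S$ on which $G_v$ splits over finite subgroups \emph{relative to its incident edge stabilisers}, finally yielding the desired contradiction with rigidity via Theorem~\ref{thm:rigidEquivalence}.
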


\begin{proof}
First suppose that $G$ is not one-ended.
Let $T_{DS}$ be a $G$-tree with finite edge stabilisers and one ended vertex stabilisers (the \emph{Dunwoody-Stallings decomposition}). Each vertex stabiliser $G_v$ for $T$ acts on a minimal subtree $S_v\subset T_{DS}$. If $u,v\in VT$ are the endpoints of an edge $e$, then $S_u$ and $S_v$ must intersect, else $G_e$ would stabilise the arc between them, contradicting the finiteness of edge stabilisers in $T_{DS}$. The union of all $S_v$ is then a $G$-invariant subtree of $T_{DS}$, and so by minimality it is the whole of $T_{DS}$. In particular, at least one of the $S_v$ is non-trivial. 

If all edge stabilisers for $T$ are elliptic in $T_{DS}$, then the type (2) vertex stabilisers are also elliptic in $T_{DS}$, and so there must be a type (1) vertex stabiliser $G_v$ that acts non-trivially on $S_v$ relative to its incident edge stabilisers, and the same is true of any finite index subgroup of $G_v$. But $G_v$ has a finite index subgroup with incident edge stabilisers inducing a rigid line pattern, contradicting Theorem \ref{thm:rigidEquivalence}. Hence at least some edge stabilisers for $T$ are not elliptic in $T_{DS}$, but such an edge stabiliser $G_e$ is two-ended, so must stabilise a unique axis $\ell_e\subset T_{DS}$, and moreover any finite index $\mathbb{Z}\leqslant G_e$ will act on $\ell_e$ by translations. Also note that $\ell_e\subset S_v$ for a vertex $v$ incident at $e$. We now have the following claim.\\

\begin{claim}
	There exists an edge $e_{DS}\in ET_{DS}$ and a type (1) vertex stabiliser $G_v$ such that $e_{DS}\subset\ell_e$ for a unique edge $e\in\lk(v)$. 
\end{claim}\\
\begin{claimproof}
	Suppose not. 
	Let $e_{DS}\in ET_{DS}$ be contained in at least one axis $\ell_e$. Given an axis $\ell_e$, if $G_e$ is incident at a vertex stabiliser $G_v$, if $G_v$ is type (1) then by assumption there is another $e'\in\lk(v)$ with $e_{DS}\subset\ell_{e'}$, while if $G_v$ is type (2) then all edge stabilisers incident at $G_v$ will be commensurable in $G$ and have the same axis $\ell_e$, so again there is another $e'\in\lk(v)$ with $e_{DS}\subset\ell_{e'}$. 
	Therefore, for any axis $\ell_e$ containing $e_{DS}$, there are two more edges incident at either end of $e$ whose stabilisers have axes that also contain $e_{DS}$.
	Thus $e_{DS}$ is contained in infinitely many axes $\ell_e$. 
	There are finitely many $G$-orbits of edges in $T$, so there exists $e\in ET$ with $e_{DS}\subset\ell_e$ and an infinite sequence $(g_n)$ in $G$ such that the edges $g_n(e)$ are all distinct and $e_{DS}\subset\ell_{g_n(e)}$ for all $n$. Noting that $g_n(\ell_e)=\ell_{g_n(e)}$, we can precompose the $g_n$ by elements of $G_e$ that translate along $\ell_e$ and assume that the edges $g_n(e_{DS})$ lie at bounded distance from $e_{DS}$. Passing to a subsequence of $(g_n)$, we can assume that the edges $g_n(e_{DS})$ are all at distance $d$ from $e_{DS}$ and all lie in the same component of $T_{DS}-e_{DS}$. But then there is an edge $e'_{DS}\subset\ell_e$ at distance $d$ from $e_{DS}$ such that $g_n(e'_{DS})=e_{DS}$ for all $n$, and so $e_{DS}$ has infinite stabiliser, a contradiction.
\end{claimproof}\\

Taking $e_{DS}$, $e$ and $G_v$ as from the claim, we will now convert the action of $G_v$ on $S_v$ into an action on a different tree $S$ that gives a splitting of $G_v$ over finite subgroups relative to its incident edge stabilisers, contradicting Theorem \ref{thm:rigidEquivalence} as before. $S$ will be bipartite with respect to vertex sets $VS=V_0S\sqcup V_1 S$, and is defined as follows:
\begin{itemize}
	\item $V_0S$ is the collection of components of $S_v-G_v\cdot e_{DS}$.
	\item $V_1S$ is the collection of axes $\ell_{g(e)}$ for $g\in G_v$.
	\item $U\in V_0S$ and $\ell_{g(e)}\in V_1 S$ form an edge if they intersect.
\end{itemize}
$S_v$ has a tree of spaces decomposition formed by the components $U\in V_0S$ and edges $g(e_{DS})$ for $g\in G_v$, and each edge $g(e_{DS})$ is contained in the unique axis $\ell_{g(e)}\in V_1S$, therefore $S$ is indeed a tree. The action of $G_v$ on $S_v$ induces an action on $S$. Each edge group $G_{g(e)}$ for $g\in G_v$ stabilises the axis $\ell_{g(e)}\in V_1S$, while each edge $e'\in\lk(v)-G_v\cdot e$ has axis $\ell_{e'}$ contained in some component $U\in V_0S$, and so $G_{e'}$ stabilises $U$. On the other hand, the $G_v$-stabiliser of an edge $(U,\ell_{g(e)})\in ES$ must stabilise (setwise) the two $G_v$-translates of $e_{DS}$ contained in $\ell_{g(e)}$ that touch $U$, and so this stabiliser must be finite. Therefore $S$ gives a splitting of $G_v$ over finite subgroups relative to its incident edge stabilisers, as required.
\bigskip

We now show that $T$ is a JSJ tree for $G$ with no QH vertex groups. Let $T_J$ be a JSJ tree for $G$ over two-ended subgroups. By \cite[Lemma 2.6(3)]{JSJ}, the edge stabilisers of $T$ are all elliptic in $T_J$, and hence so are the vertex stabilisers of type (2). For a type (1) vertex stabiliser $G_v$, we can apply Theorem \ref{thm:rigidEquivalence} to a finite index free subgroup of $G_v$ whose incident edge stabilisers induce a rigid line pattern, and deduce that $G_v$ is elliptic in $T_J$. Therefore each edge stabiliser of $T$ is either contained in an edge stabiliser of $T_J$, or has both its adjacent vertex stabilisers contained in the same vertex stabiliser $G^J_x$ of $T_J$. The second case can't happen, as then $G^J_x$ would be flexible, and hence QH (Remark \ref{rem:Cvertexrigid}); one can then argue that the vertex stabilisers of $T$ contained in $G^J_x$ would have line patterns coming from compact hyperbolic surfaces with boundary, contradicting rigidity of the line patterns by Theorem \ref{thm:rigidEquivalence}. We conclude that every edge stabiliser of $T$ is contained in an edge stabiliser of $T_J$, making $T$ universally elliptic. We already showed that the vertex stabilisers of $T$ are elliptic in $T_J$, hence they are elliptic in every universally elliptic tree for $G$, and so $T$ is a JSJ tree for $G$. Finally, there are no QH vertex stabilisers of $T$ by Theorem \ref{thm:rigidEquivalence}.
\end{proof}

\begin{exmp}\label{exmp:Cbullet}
Proposition \ref{prop:riglineisJSJ} allows us to construct explicit examples of groups in $\mathscr{C}^\bullet$, especially when combined with Remark \ref{rem:random}. For example if $\mathbb{F}_m$ and $\mathbb{F}_n$ are finitely generated free groups, and $1\neq w_1\in\mathbb{F}_m$, $1\neq w_2\in\mathbb{F}_n$ are not proper powers, and $w_1,w_2$ can each be represented by cyclically reduced words that contain every possible length three subword, then the following amalgam is in $\mathscr{C}^\bullet$.
$$G=\mathbb{F}_m*_{\mathbb{Z}}\mathbb{F}_n:=\langle \mathbb{F}_m,\mathbb{F}_n\mid w_1=w_2\rangle$$
The assumption that $w_1$ and $w_2$ are not proper powers ensures that $G$ is hyperbolic, and hence subgroup separable by Theorem \ref{thm:separableBalancedRelHyp}.

If instead we have $1\neq w_1,w_2\in\mathbb{F}_n$, but otherwise with the same properties, then the following HNN extension is in $\mathscr{C}^\bullet$.
$$G=\mathbb{F}_n*_{\mathbb{Z}}:=\langle \mathbb{F}_n,t\mid tw_1t^{-1}=w_2\rangle$$
If $g^{-1}w_1g\in\{w_2,w_2^{-1}\}$ for some $g\in G$, then $G$ is hyperbolic relative to $\langle w_1,gt\rangle$ -- which is isomorphic to either $\mathbb{Z}^2$ or the Klein bottle group (and the latter has an index two $\mathbb{Z}^2$ subgroup). Otherwise $G$ is hyperbolic. $G$ is subgroup separable in all of these cases by Theorem \ref{thm:separableBalancedRelHyp}.
\end{exmp}

\bigskip
\section{Balanced groups, separability, and torsion} \label{sec:ProofOfSeparableEquiv}

In this section we prove Theorem \ref{thm:separableBalancedRelHyp}. In \cite{omnipotence} Wise characterised subgroup separable groups in $\mathscr{C}_{tf}$ as being \emph{balanced}.
We generalise the notion of balanced in the obvious way to all groups in $\mathscr{C}$, and in Theorem~\ref{thm:torsionbalancedseparable} we prove that being balanced is equivalent to subgroup separability. The other implications of Theorem \ref{thm:separableBalancedRelHyp} are dealt with in Section \ref{sec:relhypvspecial}.

\begin{defn}(Separable and subgroup separable)\\
	A subgroup $H$ of a group $G$ is \emph{separable} if for any $g\in G-H$ there is a homomorphism $\rho:G\to\bar{G}$ to a finite group such that $\rho(g)\notin\rho(H)$. A group $G$ is \emph{subgroup separable} if all of its finitely generated subgroups are separable.
\end{defn}

\begin{remk}\label{rem:subgpsep}
	If $G$ is subgroup separable and $H\leqslant G$, then $H$ is subgroup separable. If $\hat{G}\leqslant G$ is finite index and $\hat{G}$ is subgroup separable, then $G$ is subgroup separable.
\end{remk}

The main way that we use subgroup separability in this paper is via the following proposition.

\begin{prop}\label{prop:sepvertexstabs}
	Let $G$ be a subgroup separable group acting on a tree $T$. Suppose $U\subset VT$ is a finite set of vertices, and for each $u\in U$ let $\dot{G}_u$ be a finite index subgroup of $G_u$. Then $G$ contains a finite index normal subgroup $\hat{G}$ such that $\hat{G}_u\leqslant \dot{G}_u$ for all $u\in U$. This also implies that $\hat{G}_{gu}=g\hat{G}_u g^{-1}\leqslant g\dot{G}_u g^{-1}\leqslant G_{gu}$ for all $u\in U$ and $g\in G$.
\end{prop}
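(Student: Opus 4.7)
The plan is to apply subgroup separability of $G$ to each $\dot{G}_u$ individually, intersect the resulting finite-index subgroups, and then pass to a normal core. Before doing so, note that in the intended applications the vertex stabilisers $G_u$ will be finitely generated (for instance virtually free when $G \in \mathscr{C}$), so each $\dot{G}_u$ is a finitely generated subgroup of $G$ and subgroup separability applies to it.

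For each $u \in U$, pick left coset representatives $1 = c_{u,1}, c_{u,2}, \ldots, c_{u,n_u}$ of $\dot{G}_u$ in $G_u$. For each $i \geq 2$ we have $c_{u,i} \notin \dot{G}_u$, so by separability there exists a finite-index subgroup $N_{u,i} \leqslant G$ containing $\dot{G}_u$ but not $c_{u,i}$. Let $N$ be the intersection of these finitely many $N_{u,i}$'s; then $N$ has finite index in $G$, contains each $\dot{G}_u$, and misses each $c_{u,i}$ with $i \geq 2$. Any subgroup of $G_u$ containing $\dot{G}_u$ is a disjoint union of left $\dot{G}_u$-cosets, and since the non-identity coset representatives $c_{u,i}$ are excluded from $N$, we conclude that $N \cap G_u = \dot{G}_u$ for every $u \in U$. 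Finally take $\hat{G}$ to be the normal core of $N$ in $G$, i.e.\ the intersection of the (finitely many) conjugates of $N$; this is a finite-index normal subgroup of $G$ with $\hat{G} \leqslant N$, so $\hat{G}_u = \hat{G} \cap G_u \leqslant N \cap G_u = \dot{G}_u$ for each $u \in U$, and the final equivariance statement is immediate from normality.

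The only real obstacle is the finite-generation issue: subgroup separability is only useful for finitely generated subgroups, so one has to know that each $\dot{G}_u$ is finitely generated (which holds whenever $G_u$ is). Once this is in hand the argument is a routine \emph{separate, intersect, normalise} construction, and the tree $T$ plays no essential role beyond providing the subgroups $G_u$.
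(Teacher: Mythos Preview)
Your proof is correct and follows essentially the same approach as the paper: separate each non-trivial coset representative of $\dot G_u$ in $G_u$ from $\dot G_u$, intersect, and pass to a normal subgroup (the paper phrases this via homomorphisms to finite groups and takes the kernel, while you use finite-index subgroups and the normal core, but these are equivalent). Your explicit remark on the finite-generation hypothesis for $\dot G_u$ is a fair point --- the paper's proof also silently uses this, and it holds in all the applications since the vertex stabilisers are virtually free.
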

\begin{proof}
	We know that $\dot{G}_u\leqslant G$ is separable, so for any $g\in G - \dot{G}_u$ there is a homomorphism $\rho:G\to\bar{G}$ to a finite group such that $\rho(g)\notin\rho(\dot{G}_u)$. By taking products of these homomorphisms, we can produce a homomorphism $\rho:G\to\bar{G}$ to a finite group such that $\rho(g_i)\notin\rho(\dot{G}_u)$ for $\{g_i\}$ a set of representatives for the left cosets of $\dot{G}_u$ in $G_u$ that are not equal to $\dot{G}_u$. This implies that $\ker\rho\cap G_u \leqslant\dot{G}_u$. The proposition then follows by taking products of these homomorphisms for all of the vertices in $U$, and setting $\hat{G}$ equal to the kernel.
\end{proof}

\begin{defn}(Balanced graph of groups)\label{balanced}\\
	A finite graph of groups $(G, \Gamma)$ with two-ended edge groups is \emph{balanced} if the following equation holds for any loop in $\Gamma$ given by edges $e_0,e_1,...,e_n=e_0$, where $\iota(e_i) = v_i$ and $\tau(e_i) = v_{i+1}$, 
	 and $\zeta_{e_{i-1}}(G_{e_{i-1}})$ is commensurable to $g_i \zeta_{\bar{e}_i}(G_{e_i}) g_i^{-1}$ in $G_{v_i}$ for some $g_i\in G_{v_i}$. 
	\begin{equation}\label{balanceprod}
	1 = \prod_{i=1}^n \frac{\big[g_i\zeta_{\bar{e}_i}(G_{e_i})g_i^{-1} : \zeta_{e_{i-1}}(G_{e_{i-1}})\cap g_i\zeta_{\bar{e}_i}(G_{e_i})g_i^{-1} \big]}
	{\big[ \zeta_{e_{i-1}}(G_{e_{i-1}}): \zeta_{e_{i-1}}(G_{e_{i-1}})\cap g_i\zeta_{\bar{e}_i}(G_{e_i})g_i^{-1}\big]}.
	\end{equation}	
\end{defn}

\begin{lem}\label{lem:balanceequiv}
	$(G,\Gamma)$ is balanced if and only if there is no relation $gh^pg^{-1}=h^q$ for $h$ an infinite order element of an edge group and $|p|\neq|q|$.
\end{lem}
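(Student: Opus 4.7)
The plan is to translate both conditions into statements about the Bass-Serre tree $T$, so that loops in $\Gamma$ correspond to elements $g \in G$ mapping an edge $\tilde{e} \in T$ to another edge $g\tilde{e}$ projecting to the same edge in $\Gamma$, and so that the product (\ref{balanceprod}) is computed as a ratio of powers along the path in $T$ from $\tilde{e}$ to $g\tilde{e}$.

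For the direction ``not balanced $\Rightarrow$ bad relation exists'', I would take a loop $e_0, \ldots, e_n = e_0$ with elements $g_i \in G_{v_i}$ making the product in (\ref{balanceprod}) unequal to $1$. Using the standard correspondence between loops in $\Gamma$ and Bass-Serre paths, I would lift to a path in $T$ starting at some edge $\tilde{e}_0$ projecting to $e_0$, using the $g_i$'s to specify successive edge lifts, and producing an element $g \in G$ such that the path terminates at $g\tilde{e}_0$. Pick an infinite order element $h \in G_{\tilde{e}_0}$ (exists since $G_{e_0}$ is two-ended). Then $ghg^{-1} \in G_{g\tilde{e}_0}$, and $h$ and $ghg^{-1}$ are both infinite order elements whose powers both eventually lie in $G_{\tilde{e}_0} \cap G_{g\tilde{e}_0}$ (the stabiliser of the whole path). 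Pushing through positive powers $h^p$ and $(ghg^{-1})^q = g h^q g^{-1}$ landing in a common cyclic subgroup, I would show that $p/q$ equals (up to sign) the loop product, hence $|p| \neq |q|$, yielding the relation $gh^q g^{-1} = h^{\pm p}$.

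For the converse, suppose $gh^pg^{-1} = h^q$ with $h$ of infinite order in $G_e$ and $|p| \neq |q|$. Pick a lift $\tilde{e} \in T$ with $h \in G_{\tilde{e}}$. Then $h^q = g h^p g^{-1} \in g G_{\tilde{e}} g^{-1} = G_{g\tilde{e}}$, so $h^q$ fixes both $\tilde{e}$ and $g\tilde{e}$, hence fixes the entire geodesic path $\tilde{e} = \tilde{f}_0, \tilde{f}_1, \ldots, \tilde{f}_n = g\tilde{e}$ in $T$. This path projects to a closed loop in $\Gamma$, and choosing the ``turning elements'' $g_i$ from the relationship between successive lifts, the adjacent edge groups $G_{\tilde{f}_{i-1}}$ and $G_{\tilde{f}_i}$ are commensurable at each interior vertex since they both contain the infinite-order element $h^q$. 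I would then evaluate (\ref{balanceprod}) along this loop; the product telescopes, because at each vertex the local index ratio is just the ratio of the powers of $h$ needed to jump between the maximal cyclic subgroups of the two adjacent edge groups, and after cancellation the total product equals $|p|/|q| \neq 1$.

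The main obstacle will be the bookkeeping at the vertices of the loop, particularly because the edge groups are two-ended rather than infinite cyclic, so the relevant indices $[A : A \cap B]$ must be compared on the maximal infinite cyclic subgroups to line up cleanly with powers of $h$. Specifically, in each two-ended edge group one should pick a generator of a maximal infinite cyclic subgroup and verify that $[A : A \cap B]/[B : A \cap B]$ equals $\beta/\alpha$ whenever $a^\alpha = b^{\pm \beta}$ for such generators $a, b$; the torsion quotients of the two-ended groups contribute equally to numerator and denominator at each step and so cancel. Once this local computation is checked, both directions reduce to elementary path-tracing in $T$.
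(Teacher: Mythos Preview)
Your approach is correct and essentially the same as the paper's: both lift the loop to an edge path in the Bass--Serre tree ending at a $g$-translate of the initial edge, pick an infinite order $h$ in the initial edge stabiliser, and identify the balancedness product with $|p|/|q|$ for the resulting relation $gh^pg^{-1}=h^q$. The paper's execution is slightly more economical: it telescopes (\ref{balanceprod}) in one step to the single ratio $[G_{e_n}:G_{e_0}\cap G_{e_n}]/[G_{e_0}:G_{e_0}\cap G_{e_n}]$ and then computes this once via $[G_{e_n}:\langle gh^pg^{-1}\rangle]/[G_{e_0}:\langle h^q\rangle]=|p|/|q|$, using that conjugation by $g$ preserves the index $[G_{e_0}:\langle h\rangle]$; this simultaneously handles both directions and absorbs the two-ended-versus-cyclic bookkeeping you flagged, without tracking indices vertex by vertex.
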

\begin{proof}
	Let $T$ be the Bass-Serre tree corresponding to $(G,\Gamma)$. The edge loop of Definition \ref{balanced} corresponds to an edge path $e_0,e_1,...,e_n$ in $T$ such that the edge stabilisers $G_{e_i}$ are all commensurable in $G$ and there exists $g\in G$ with $g(e_0)=e_n$. The product (\ref{balanceprod}) becomes:
	\begin{equation}\label{liftbalanceprod}
	\prod_{i=1}^n \frac{\big[G_{e_i} : G_{e_i}\cap G_{e_{i-1}} \big]}
	{\big[ G_{e_{i-1}}: G_{e_i}\cap G_{e_{i-1}}\big]}=
	\frac{\big[G_{e_n} : G_{e_0}\cap G_{e_n} \big]}
	{\big[ G_{e_0}: G_{e_0}\cap G_{e_n}\big]}
	\end{equation}
	Let $h\in G_{e_0}$ be infinite order, so $ghg^{-1}\in G_{e_n}$, and $\langle h\rangle\leqslant G_{e_0}$ and $\langle ghg^{-1}\rangle\leqslant G_{e_n}$ are finite index subgroups. Suppose $\langle h\rangle\cap\langle ghg^{-1}\rangle$ is generated by $h^q=gh^pg^{-1}$. Then (\ref{liftbalanceprod}) is equal to
	\begin{align*}
\frac{\big[G_{e_n} : \langle gh^pg^{-1}\rangle \big]}
{\big[ G_{e_0}:\langle h^q\rangle\big]}&=\frac{|p|\big[G_{e_n} : \langle ghg^{-1}\rangle \big]}{|q|\big[ G_{e_0}:\langle h\rangle\big]}\\
&=\frac{|p|\big[gG_{e_0}g^{-1} : g\langle h\rangle g^{-1} \big]}{|q|\big[ G_{e_0}:\langle h\rangle\big]}\\
&=\frac{|p|}{|q|},
	\end{align*}
	thus completing the proof of the lemma.
\end{proof}
\begin{remk}\label{hypisbalanced}
	Hyperbolic groups and CAT(0) groups are always balanced as they cannot contain a relation $gh^pg^{-1}=h^q$ for $h$ an infinite order element and $|p|\neq|q|$ (see~\cite{BridsonHaefliger99}). 
\end{remk}

\begin{remk}\label{fibalanced}
	It follows from Lemma \ref{lem:balanceequiv} that, given $(\hat{G},\hat{\Gamma})\to(G,\Gamma)$ a finite cover of graphs of groups (or equivalently $\hat{G}\leqslant G$ finite index with the restricted action on the Bass-Serre tree $T$), $(\hat{G},\hat{\Gamma})$ is balanced if and only if $(G,\Gamma)$ is balanced.
\end{remk}

\begin{thm}(Wise \cite[Theorem 5.1]{omnipotence})\label{balancedseparable}\\
Suppose a finitely generated group $G$ splits as a finite graph of groups $(G,\Gamma)$, where the edge groups are cyclic and the vertex groups are free. Then $G$ is subgroup separable if and only if $(G,\Gamma)$ is balanced.
\end{thm}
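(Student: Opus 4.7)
The plan is to handle the two implications separately: the ``only if'' direction by a quick residual finiteness obstruction, and the ``if'' direction by building compatible finite covers of the underlying graph of spaces using the omnipotence of free groups.

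For the ``only if'' direction, suppose $(G,\Gamma)$ is not balanced. By Lemma \ref{lem:balanceequiv} there exist $g,h\in G$ with $h$ of infinite order in an edge group and $gh^pg^{-1}=h^q$ with $|p|\neq|q|$; say $|p|>|q|\geq 1$. I would show that $G$ is not residually finite. In any homomorphism $\phi\colon G\to \bar{G}$ to a finite group, $\phi(h^p)$ and $\phi(h^q)$ are conjugate, hence have equal orders; writing $m$ for the order of $\phi(h)$, this forces $\gcd(m,p)=\gcd(m,q)$. Iterating the defining relation yields $g^nh^{p^n}g^{-n}=h^{q^n}$, so $\gcd(m,p^n)=\gcd(m,q^n)$ must hold for every $n\geq 1$. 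Choosing a prime $\ell$ dividing $p$ but not $q$ (or vice versa), this bounds the $\ell$-part of $m$ independently of $n$, which in turn forces a fixed nontrivial power $h^N$ into the kernel of every finite quotient. Hence the trivial subgroup of $\langle g,h\rangle\leqslant G$ is not separable, contradicting subgroup separability of $G$.

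For the ``if'' direction, assume $(G,\Gamma)$ is balanced. Given a finitely generated subgroup $H\leqslant G$ and an element $g\in G\setminus H$, I would construct a finite-index subgroup of $G$ containing $H$ but not $g$. Working with the graph of spaces $X$ associated to $(G,\Gamma)$, the subgroup $H$ is represented by a based immersion $Y\to X$ of a graph of spaces $Y$ with finite underlying graph. After enlarging $Y$ so that $g$ is not represented in $\pi_1 Y$, the task is to realise $Y$ as a subcomplex of a finite cover $\hat{X}\to X$. At each vertex space $X_v$ (a graph with free fundamental group), Marshall Hall's theorem lets one extend the finite subgraph inside $X_v$ to a finite-sheeted cover. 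The obstacle is then to match these covers at the edge spaces: the degrees with which the two incident vertex covers wrap the corresponding edge circles must agree.

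The main obstacle is realising all such edge-compatibility constraints simultaneously, and this is precisely where the balanced hypothesis enters. Around any loop $e_0,\dots,e_n=e_0$ in $\Gamma$, the local degree ratios required to match covers multiply to $1$ exactly when the balanced equation \eqref{balanceprod} holds. The technical tool that converts this numerical compatibility into an actual lift is the \emph{omnipotence of free groups}: given finitely many pairwise non-commensurable maximal cyclic subgroups $\langle c_1\rangle,\dots,\langle c_k\rangle$ of a free group $F$ and positive integers $n_1,\dots,n_k$, there exist a uniform $\kappa>0$ and a finite-index subgroup $F'\leqslant F$ with $F'\cap\langle c_i\rangle=\langle c_i^{\kappa n_i}\rangle$. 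Applying omnipotence vertex by vertex, one can prescribe the edge-group indices freely up to a single common factor, and the balanced condition is exactly what makes the prescriptions around each loop of $\Gamma$ globally consistent. Assembling the resulting compatible covers of the vertex spaces produces a finite cover $\hat{X}\to X$ whose fundamental group is a finite-index subgroup of $G$ containing $H$ but not $g$, completing the proof.
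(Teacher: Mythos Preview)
The paper does not prove this theorem; it is cited as Wise's result. The paper does, however, give the ``only if'' argument for the generalisation Theorem~\ref{thm:torsionbalancedseparable} (in the paragraph preceding Lemma~\ref{lem:G'e}), and comparing your argument to that one reveals a genuine error.

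Your ``only if'' direction tries to show that an unbalanced $G$ is not \emph{residually finite}. This is too strong and in general false: the unbalanced condition only gives $|p|\neq|q|$, and if $|q|=1$ then $\langle g,h\rangle$ is a quotient of $BS(p,1)\cong BS(1,p)$, which is residually finite (indeed metabelian) for every $p$. Concretely, your $\gcd$ argument shows only that the order $m$ of $\phi(h)$ is coprime to each prime dividing exactly one of $p$ and $q$; primes dividing neither $p$ nor $q$ may divide $m$ to arbitrarily high power, so no fixed $h^N$ lies in every kernel. The paper's argument avoids this by using subgroup separability directly rather than residual finiteness: separating $\langle h^{|pq|}\rangle$ produces a finite quotient in which $\phi(h)$ has order a multiple $k|pq|$, and then $\phi(h^p)$ and $\phi(h^q)$ are conjugate elements of distinct orders $k|q|$ and $k|p|$, a contradiction. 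The point is that subgroup separability is what forces the order of $\phi(h)$ to be large; mere residual finiteness does not.

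Your sketch of the ``if'' direction --- completing an immersion of graphs of spaces to a finite cover using Marshall Hall's theorem at the vertices and omnipotence of free groups to match edge degrees, with the balanced condition guaranteeing global consistency around loops --- is indeed the strategy of Wise's original proof in \cite{omnipotence}. The paper does not reproduce this argument.
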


We generalise Theorem~\ref{balancedseparable} to the following, which gives us the equivalence of \ref{item:separable} and \ref{item:balanced} in Theorem~\ref{thm:separableBalancedRelHyp}.

\begin{thm}\label{thm:torsionbalancedseparable}
Let $G \in \mathscr{C}$ split as a finite graph of groups $(G,\Gamma)$, where the edge groups are two-ended and the vertex groups are virtually free. Then $G$ is subgroup separable if and only if $(G,\Gamma)$ is balanced, and in this case $G$ is virtually torsion-free.
\end{thm}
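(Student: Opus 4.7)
The plan is to prove the two implications separately, with virtual torsion-freeness emerging as a byproduct of the backward direction.

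For the forward implication, I would argue contrapositively using Lemma \ref{lem:balanceequiv}: an unbalanced $(G,\Gamma)$ yields a relation $g h^p g^{-1} = h^q$ with $h$ of infinite order and $|p| \neq |q|$. The subgroup $\langle g, h \rangle \leqslant G$ is then a quotient of the Baumslag-Solitar group $\mathrm{BS}(p,q)$, and I would adapt the argument of Wise \cite{omnipotence} used in the torsion-free setting of Theorem \ref{balancedseparable} to produce a finitely generated subgroup of $\langle g, h\rangle$ that is not separable. The Wise argument depends only on the infinite-order elements $g$ and $h$ and the scaling relation they satisfy, so torsion in surrounding vertex groups plays no role. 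Since $\langle g, h\rangle \leqslant G$, this non-separable subgroup is also non-separable in $G$, contradicting subgroup separability of $G$ by Remark \ref{rem:subgpsep}.

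For the backward implication, the plan is to produce a torsion-free finite-index subgroup $\hat G \leqslant G$ whose induced graph of groups decomposition $(\hat G, \hat\Gamma)$ automatically has free vertex groups and infinite cyclic edge groups (since virtually free plus torsion-free is free, and two-ended plus torsion-free is $\mathbb{Z}$). By Remark \ref{fibalanced}, $(\hat G, \hat\Gamma)$ remains balanced, so Theorem \ref{balancedseparable} gives that $\hat G$ is subgroup separable; then Remark \ref{rem:subgpsep} lifts this to $G$. The same $\hat G$ witnesses the virtual torsion-freeness claim, so both remaining conclusions are established simultaneously.

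The main obstacle is constructing this $\hat G$. The approach is to specify finite-index subgroups of the vertex and edge groups that are compatible at each edge inclusion, thereby defining a finite cover of the graph of groups $(G,\Gamma)$. At each vertex $v$, the group $G_v$ is virtually free, hence is itself subgroup separable by classical results of Hall and Scott; consequently, for any prescribed finite-index cyclic subgroup of any incident $\zeta_e(G_e)$, one can find a finite-index torsion-free normal subgroup of $G_v$ whose intersection with each $\zeta_e(G_e)$ is exactly the prescribed cyclic subgroup (by intersecting with appropriate kernels of maps to finite quotients separating cosets). The global obstruction to assembling these local choices into a genuine finite cover is precisely the product in (\ref{balanceprod}) around each loop of $\Gamma$: the prescribed indices along any loop must multiply to $1$ in order to be realisable. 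The balance hypothesis asserts exactly this, so a compatible global choice exists and yields the required $\hat G$.
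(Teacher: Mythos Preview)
Your forward implication is fine in principle but more circuitous than needed. The paper dispatches it in two lines: if $gh^pg^{-1}=h^q$ with $|p|\neq|q|$, separate the cosets $h^i$ from $\langle h^{|pq|}\rangle$ for $1\leq i<|pq|$ to force $\rho(h)$ to have order a multiple $k|pq|$ in some finite quotient; then $\rho(h^p)$ and $\rho(h^q)$ are conjugate but have orders $k|q|\neq k|p|$, a contradiction. No appeal to Wise's construction of a specific non-separable subgroup is required.

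The backward implication has a genuine gap at the local step. Your global architecture matches the paper's (Proposition~\ref{prop:virtuallyTorsionFree}): choose torsion-free finite-index $G'_v\trianglelefteq G_v$ and $G'_e\trianglelefteq G_e$, then assemble a finite cover $(\hat G,\hat\Gamma)$. The problem is your claim that Hall--Scott separability of virtually free groups lets you arrange that $G'_v\cap\zeta_e(G_e)$ is \emph{exactly} a prescribed cyclic subgroup. Separability only lets you force this intersection to lie \emph{inside} a prescribed subgroup, by killing finitely many unwanted cosets in a finite quotient; it gives no mechanism to ensure the intersection is no smaller. Getting equality is an omnipotence-type statement for virtually free groups, and the paper invokes substantially heavier machinery to obtain it: the Malnormal Special Quotient Theorem (Theorem~\ref{thm:MSQT}) to Dehn-fill $G_v\to\bar G_v$ with virtually special quotient, Delzant's theorem \cite{delzant96} to make the filling kernel free (so the preimage $G'_v$ of a torsion-free finite-index subgroup of $\bar G_v$ is itself torsion-free), and Osin's injectivity theorem \cite{Osin07} to guarantee $G'_v\cap\zeta_e(G_e)=\zeta_e(G'_e)$ on the nose (equation~(\ref{G'vcap})). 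Note also that balancedness does not enter only as a global gluing obstruction as you describe: it is already used locally in Lemma~\ref{lem:G'e}(2) to make the choices $G'_e$ agree whenever two edge groups are commensurable at a common vertex, which is a prerequisite for the Dehn filling to even be well-posed. The final gluing equations (\ref{gluehatG}) are then solved trivially by a least-common-multiple argument.
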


\subsection{Removing torsion} \label{sec:virtuallyTorsionFree}

In this section we prove Theorem \ref{thm:torsionbalancedseparable}.
A key ingredient in the proof of Theorem~\ref{balancedseparable} is the \emph{omnipotence} of free groups.
 The omnipotence of free groups can be viewed as a special case of Wise's Malnormal Special Quotient Theorem (see ~\cite{WiseQCH, WiseRiches, AgolGrovesManning16}).
 In particular it will apply to virtually free groups.
 
 \begin{defn} \label{defn:almostMalnormal}
  Let $G$ be a group and $\mathcal{P}$ a collection of subgroups.
  The subgroups $\mathcal{P}$ are \emph{almost malnormal} if for $P, P' \in \mathcal{P}$ the intersection $P^g \cap P'$ being infinite implies that $P = P'$ and $g \in P$.
  We note that if a group is hyperbolic relative to $\mathcal{P}$, then it is an immediate consequence of Bowditch's fine graph condition for relative hyperbolicity~\cite{Bowditch12} that $\mathcal{P}$ is an almost malnormal family.
 \end{defn}

  \begin{thm}[Malnormal Special Quotient Theorem] \label{thm:MSQT}
   Let $G$ be a virtually special hyperbolic group.
   Let $\{H_1, \ldots, H_m \}$ be an almost malnormal collection of quasi-convex subgroups.
   Then there exist finite index subgroups $\dot{H}_i \trianglelefteq H_i$, such that for any further finite index subgroups $H_i' \leqslant \dot{H}_i$, the quotient  $G / \llangle H'_1, \ldots, H'_m \rrangle$ is hyperbolic and virtually special.
  \end{thm}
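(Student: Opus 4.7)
The statement is a cornerstone result of geometric group theory whose proof is distributed across large portions of Wise's monograph, with an alternate route due to Agol--Groves--Manning; the plan is to outline Wise's cubical small cancellation approach.

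First, I would reduce to the case where $G$ itself is a special hyperbolic group. Let $\hat G \trianglelefteq G$ be a finite index normal special subgroup and set $\hat H_i := H_i \cap \hat G$. These remain quasi-convex and almost malnormal in $\hat G$. Given subgroups $\dot{\hat H}_i \trianglelefteq \hat H_i$ produced by the theorem for $\hat G$, I would set $\dot H_i := \dot{\hat H}_i$; any $H_i' \leqslant \dot H_i$ is then also finite index in $\hat H_i$, and one transfers hyperbolicity and virtual specialness from $\hat G / \llangle H_i' \rrangle$ to $G / \llangle H_i' \rrangle$ using that the latter is a finite extension of the former, and that finite extensions of virtually special hyperbolic groups are virtually special.

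Next, write $G = \pi_1 X$ for a compact nonpositively curved special cube complex $X$. Using the Haglund--Wise canonical completion/retraction and quasi-convex subgroup separability in virtually special hyperbolic groups, pass to a finite regular cover $X' \to X$ in which each $H_i$ is realised by a locally isometric embedding $Y_i \hookrightarrow X'$ of a compact nonpositively curved cube complex with $\pi_1 Y_i = H_i$. Almost malnormality of $\{H_i\}$ translates into a geometric malnormal intersection condition on the translates of the convex subcomplexes $\tilde Y_i \subset \tilde X'$, and finite index subgroups of $H_i$ correspond precisely to finite regular covers of $Y_i$.

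The heart of the proof is to choose finite covers $\dot Y_i \to Y_i$ so deep that for any further finite covers $Y_i' \to \dot Y_i$ the cubical presentation $\langle X' \mid Y_1', \ldots, Y_m' \rangle$ satisfies arbitrarily strong cubical $C'(1/n)$ metric small cancellation. Wise's cubical small cancellation theorem then yields hyperbolicity of the quotient, and the construction of a wallspace on the cone-off built from the hyperplanes of $X'$ together with the hyperplanes of the $Y_i'$ — where malnormality is used essentially to rule out problematic wall identifications — yields a proper cocompact action of the quotient on a CAT(0) cube complex. Virtual specialness is then deduced by controlling self-intersections and osculations of the resulting hyperplanes, exploiting the separability inputs coming from specialness of $X'$. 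Taking $\dot H_i$ to be the subgroups of $H_i$ corresponding to $\dot Y_i$ yields the statement. The main obstacle, and essentially the entire content of the theorem, is this cubical small cancellation plus virtual specialness step; there is no shortcut, and both the Wise and Agol--Groves--Manning proofs run to monograph length.
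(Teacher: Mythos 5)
The paper does not prove this statement at all: Theorem~\ref{thm:MSQT} is imported as a black box from the literature (Wise's \emph{The structure of groups with a quasiconvex hierarchy} and the Agol--Groves--Manning treatment), and the paper only ever \emph{applies} it, in Section~\ref{sec:virtuallyTorsionFree}, to the peripheral cyclic subgroups of the virtually free vertex groups. So there is no in-paper argument to compare yours against. Judged on its own terms, your outline correctly identifies the architecture of Wise's proof --- reduction to the special case, realising the $H_i$ by locally convex compact subcomplexes via canonical completion and retraction, choosing the $\dot H_i$ deep enough that the cubical presentation satisfies strong cubical small cancellation, and then building a wallspace on the coned-off complex to cubulate and specialise the quotient --- but it is an outline of a monograph-length argument, not a proof, and you say as much yourself.

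One substantive caveat about the only step you do spell out, the reduction to $G$ special. Passing to a finite-index normal special subgroup $\hat G$ and setting $\hat H_i = H_i \cap \hat G$ is not quite enough as stated: the normal closure $\llangle H_1',\ldots,H_m'\rrangle$ taken in $G$ intersects $\hat G$ in the $\hat G$-normal closure of \emph{all $G$-conjugates} of the $H_i'$, not just of the $H_i'$ themselves, so $G/\llangle H_i'\rrangle$ is a finite extension of a Dehn filling of $\hat G$ along a larger (though still finite, and still almost malnormal) collection of quasi-convex subgroups. One has to enlarge the peripheral collection in $\hat G$ to a full set of $G$-conjugacy representatives, apply the special-case theorem to that collection, and use normality of $\dot H_i$ in $H_i$ to see that the filling kernels are $G$-invariant. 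This is standard and fixable, but as written your transfer of hyperbolicity and virtual specialness from $\hat G/\llangle H_i'\rrangle$ to $G/\llangle H_i'\rrangle$ quotients by the wrong subgroup.
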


 \noindent The quotient $G / \llangle H'_1, \ldots, H'_m \rrangle$ is an example of a \emph{Dehn filling}.

\bigskip
The direction of Theorem \ref{thm:torsionbalancedseparable} where we assume that $G$ is subgroup separable is straightforward. Indeed if $(G,\Gamma)$ is not balanced then by Lemma \ref{lem:balanceequiv} we have a relation $gh^pg^{-1}=h^q$ for $h$ an infinite order element of an edge group and $|p|\neq|q|$. $\langle h^{|pq|}\rangle$ is separable in $G$, so there is a homomorphism $\rho:G\to \bar{G}$ to a finite group such that $\rho(h^i)\notin\rho(\langle h^{|pq|}\rangle)$ for $1\leq i<|pq|$, which implies that $\rho(h)$ has order $k|pq|$ for some integer $k$. But then $\rho(h^p)$ and $\rho(h^q)=\rho(gh^pg^{-1})$ are conjugate elements in $\bar{G}$ with distinct orders $k|q|$ and $k|p|$ respectively, a contradiction.

In the rest of this section we prove the other direction of Theorem \ref{thm:torsionbalancedseparable}, so suppose $G$ has a balanced graph of groups decomposition $(G,\Gamma)$ with virtually free vertex groups and two-ended edge groups. We will show that $G$ is virtually torsion-free, subgroup separability then follows from Remark \ref{fibalanced} and Theorem \ref{balancedseparable}. Note that some vertex groups in $(G,\Gamma)$ might be two-ended, and others infinite-ended, but this does not matter to us, as our arguments in this section will work for both.

Let $v\in V\Gamma$. We can assume that incident edge groups in $G_v$ that are commensurable up to conjugacy in $G_v$ are actually commensurable in $G_v$ (one can always modify a graph of groups to arrange this, without changing the fundamental group). Hence there exists an almost malnormal collection of maximal two-ended subgroups $\mathbb{P}_v$ in $G_v$, such that each incident subgroup $\zeta_e(G_e)$ is contained in exactly one $H\in\mathbb{P}_v$, call this subgroup $H_e$. Note that $H_{e_1}=H_{e_2}$ if and only if $\zeta_{e_1}(G_{e_1})$ and $\zeta_{e_2}(G_{e_2})$ are commensurable in $G_v$. These subgroups $H$ will also be quasi-convex in $G_v$ since $G_v$ is hyperbolic. We can thus apply Theorem \ref{thm:MSQT} to the collection $\mathbb{P}_v$ to produce finite index subgroups $\dot{H} \trianglelefteq H$ for each $H\in\mathbb{P}_v$. We may assume that the $\dot{H}$ are cyclic by passing to further finite index subgroups if necessary. We do this for each $v\in V\Gamma$.

\begin{lem}\label{lem:G'e}
	There exist finite index subgroups $G'_{\bar{e}}=G'_e\trianglelefteq G_e$ for each $e\in E\Gamma$ such that:
	\begin{enumerate}
		\item $\zeta_e(G'_e)\leqslant\dot{H}_e$ (in particular $G'_e\cong\mathbb{Z}$),
		\item if $\tau(e_1)=\tau(e_2)=v$ with $\zeta_{e_1}(G_{e_1})$ and $\zeta_{e_2}(G_{e_2})$ commensurable in $G_v$, then $\zeta_{e_1}(G'_{e_1})=\zeta_{e_2}(G'_{e_2})$,
		\item the normal subgroup $\llangle \zeta_e(G_{e}') \mid \tau(e) = v \rrangle \leqslant G_v$ is a free subgroup for each $v\in V\Gamma$,
		\item $\zeta_e$ induces an injection $G_e/G'_e\hookrightarrow G_v/\llangle \zeta_e(G_{e}') \mid \tau(e) = v \rrangle$.
	\end{enumerate}		
\end{lem}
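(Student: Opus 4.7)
The plan is to produce, for each pair $(v,H)$ with $v\in V\Gamma$ and $H\in\mathbb{P}_v$, a finite-index subgroup $K_{v,H}\leqslant \dot{H}$ chosen coherently along the edges of $\Gamma$, and then to take $G'_e:=\zeta_e^{-1}(K_{\tau(e),H_e})$. The three main ingredients will be a finite-index normal free subgroup $F_v\trianglelefteq G_v$ at each vertex (for condition (3)), the balanced hypothesis (to solve the coherence equations that force $G'_e=G'_{\bar e}$, i.e.\ condition (2)), and the flexibility of the MSQT together with the standard peripheral-injectivity property of sufficiently deep Dehn fillings of hyperbolic groups along quasi-convex almost malnormal subgroups (for conditions (1) and (4)).

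First, fix a finite-index normal free subgroup $F_v\trianglelefteq G_v$ for each $v$, and, using the ``any further finite-index subgroup'' clause of the MSQT, replace each $\dot{H}$ by $\dot{H}\cap F_v$ so that $\dot{H}\leqslant F_v$ for all $H\in\mathbb{P}_v$. Parameterise the targets by $K_{v,H}=H^{n_{v,H}}$ for positive integers $n_{v,H}$. For each oriented edge $e$, the isomorphism $\theta_e:=\zeta_{\bar e}\circ\zeta_e^{-1}:\zeta_e(G_e)\to\zeta_{\bar e}(G_e)$ carries $H_e$ to a finite-index cyclic subgroup of $\zeta_{\bar e}(G_e)$ commensurable with $H_{\bar e}$; set $p_e=[\theta_e(H_e):\theta_e(H_e)\cap H_{\bar e}]$ and $q_e=[H_{\bar e}:\theta_e(H_e)\cap H_{\bar e}]$. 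A short computation in the cyclic group generated by a generator of $\theta_e(H_e)\cap H_{\bar e}$ shows that the symmetry requirement $\theta_e(K_{\tau(e),H_e})=K_{\iota(e),H_{\bar e}}$ — which is exactly what forces $\zeta_e^{-1}(K_{\tau(e),H_e})=\zeta_{\bar e}^{-1}(K_{\iota(e),H_{\bar e}})$ and so makes $G'_e$ well-defined and satisfy $G'_e=G'_{\bar e}$ — is equivalent to the linear equation $n_{\tau(e),H_e}\,q_e = n_{\iota(e),H_{\bar e}}\,p_e$.

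A positive rational (and hence, after clearing denominators, positive integer) solution $\{n_{v,H}\}$ to this finite edge system exists if and only if the cycle product $\prod p_{e_i}/q_{e_i}$ equals $1$ on every loop $e_1,\ldots,e_n=e_1$ in $\Gamma$, which is precisely what the balanced hypothesis yields via Lemma \ref{lem:balanceequiv}. Having fixed such a solution, scale all $n_{v,H}$ by a common large positive integer $N$ chosen so that (a) $K_{v,H}=H^{Nn_{v,H}}\leqslant \dot{H}$ for every $(v,H)$, and (b) the Dehn filling $G_v\twoheadrightarrow G_v/\llangle K_{v,H}\mid H\in\mathbb{P}_v\rrangle$ is deep enough that each peripheral map $\zeta_e(G_e)/K_{\tau(e),H_e}\to G_v/\llangle K_{v,H}\mid H\in\mathbb{P}_v\rrangle$ is injective — a standard consequence of deep Dehn filling of hyperbolic groups along an almost malnormal family. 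Scaling by $N$ preserves the coherence equation, so setting $G'_e:=\zeta_e^{-1}(K_{\tau(e),H_e})$ delivers (1), (2), (4) immediately, while (3) follows because $K_{v,H}\leqslant F_v$ and $F_v\trianglelefteq G_v$ force the normal closure $\llangle \zeta_e(G'_e)\mid \tau(e)=v\rrangle$ to lie inside the free group $F_v$, and is therefore itself free. The main obstacle is the coherence step, where one must extract positive integer solutions to the edge system $n_{\tau(e),H_e}\,q_e = n_{\iota(e),H_{\bar e}}\,p_e$ from the balanced condition; the remaining steps are either bookkeeping or direct invocations of known deep Dehn filling results.
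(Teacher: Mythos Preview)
Your argument is correct and runs parallel to the paper's for parts (1), (2) and (4): both set up a multiplicative system of equations whose solvability is exactly the balanced hypothesis, and both invoke Osin's deep Dehn filling theorem for peripheral injectivity. Your parametrisation by pairs $(v,H)$ rather than by edges (the paper assigns integers $K_e$ to edges with $[G_e:G'_e]=NK_e$) is a harmless reorganisation of the same linear algebra, and your cycle-product condition unwinds to the same product of index ratios as in Definition~\ref{balanced}.

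The one genuine difference is (3). The paper appeals to Delzant's theorem \cite[Theorem~1]{delzant96} on free kernels of sufficiently deep cyclic Dehn fillings of hyperbolic groups (with a separate remark for the two-ended vertex case). Your route is more elementary and exploits the standing hypothesis that each $G_v$ is \emph{virtually} free: after pushing each $\dot H$ into a fixed finite-index free normal subgroup $F_v\trianglelefteq G_v$, the normal closure $\llangle \zeta_e(G'_e)\mid\tau(e)=v\rrangle$ is forced inside $F_v$ and is therefore free by Nielsen--Schreier. This is a neat simplification that avoids an external citation at no cost.

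One small omission: you do not check that $G'_e=\zeta_e^{-1}(K_{\tau(e),H_e})$ is \emph{normal} in $G_e$. Since $G_e$ is two-ended but possibly has torsion, an arbitrary finite-index infinite cyclic subgroup need not be normal (e.g.\ $\langle(\sigma,1)\rangle$ in $S_3\times\mathbb{Z}$). This is easily repaired within your framework: every two-ended group contains a characteristic infinite cyclic subgroup, and by further enlarging your global scaling factor $N$ you can force each $K_{v,H}$ (and hence each $G'_e$) to land inside it, whereupon normality is automatic.
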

\begin{proof}
	\begin{enumerate}
	\item This property holds provided we pick $G'_e\leqslant\zeta^{-1}_e(\dot{H}_e),\zeta^{-1}_{\bar{e}}(\dot{H}_{\bar{e}})$.
	
	\item The fact that $(G,\Gamma)$ is balanced implies there exist positive integers $K_e$ for $e\in E\Gamma$, with $K_e=K_{\bar{e}}$, such that
	\begin{equation}
	\frac{K_{e_1}}{[\zeta_{e_1}(G_{e_1}):\zeta_{e_1}(G_{e_1})\cap \zeta_{e_2}(G_{e_2})]}=\frac{K_{e_2}}{[\zeta_{e_2}(G_{e_2}):\zeta_{e_1}(G_{e_1})\cap \zeta_{e_2}(G_{e_2})]}\in\mathbb{N}
	\end{equation}
	whenever $\tau(e_1)=\tau(e_2)=v\in V\Gamma$ with $\zeta_{e_1}(G_{e_1})$ and $\zeta_{e_2}(G_{e_2})$ commensurable in $G_v$. If we choose the $G'_e$ such that $[G_e:G'_e]=NK_e$ for some fixed $N$, then
	\begin{equation}
	\frac{[\zeta_{e_1}(G'_{e_1}):\zeta_{e_1}(G'_{e_1})\cap \zeta_{e_2}(G'_{e_2})]}{[\zeta_{e_2}(G'_{e_2}):\zeta_{e_1}(G'_{e_1})\cap \zeta_{e_2}(G'_{e_2})]}=
	\frac{K_{e_2}[\zeta_{e_1}(G_{e_1}):\zeta_{e_1}(G'_{e_1})\cap \zeta_{e_2}(G'_{e_2})]}{K_{e_1}[\zeta_{e_2}(G_{e_2}):\zeta_{e_1}(G'_{e_1})\cap \zeta_{e_2}(G'_{e_2})]}=1,
	\end{equation}
	and as $\zeta_{e_1}(G'_{e_1}),\zeta_{e_2}(G'_{e_2})\leqslant\dot{H}_{e_1}=\dot{H}_{e_2}\cong\mathbb{Z}$ by (1), we deduce that $\zeta_{e_1}(G'_{e_1})=\zeta_{e_2}(G'_{e_2})$.
	
	\item \cite[Theorem 1]{delzant96} tells us that there exist integers $N_e$ such that, if $[\zeta^{-1}_e(\dot{H}_e):G'_e]$ is a multiple of $N_e$ for each $e\in E\Gamma$, then property (3) holds (\cite{delzant96} doesn't apply to the case where $G_v$ is two-ended, but in this case $\mathbb{P}_v$ will contain just one subgroup $H=G_v$ and (3) will follow from (1)).
	
	\item \cite[Theorem 1.1 (1)]{Osin07} tells us that property (4) holds provided each $[\zeta^{-1}_e(\dot{H}_e):G'_e]$ is sufficiently large.
	
	The four conditions described above can evidently be satisfied simultaneously, so the lemma follows.\qedhere
    \end{enumerate}
\end{proof}

Define $\bar{G}_v:=G_v/\llangle \zeta_e(G_{e}') \mid \tau(e) = v \rrangle$ for $v\in V\Gamma$. Since the $\dot{H}$ came from Theorem \ref{thm:MSQT}, Lemma \ref{lem:G'e}(1) implies that $\bar{G}_v$ is virtually special. As a result, there is a finite index torsion-free normal subgroup $\bar{G}'_v\trianglelefteq\bar{G}_v$. Let $G'_v$ be the preimage of $\bar{G}'_v$ under the quotient map $G_v\to\bar{G}_v$. The image of an incident edge group $\zeta_e(G_e)$ in $\bar{G}_v$ is finite, so has trivial intersection with $\bar{G}'_v$; Lemma \ref{lem:G'e}(4) then implies that \begin{equation}\label{G'vcap}
G'_v\cap\zeta_e(G_e)=\zeta_e(G'_e).
\end{equation}

Lemma \ref{lem:G'e}(3) implies that the kernel of $G_v\to\bar{G}_v$ is torsion-free, and $\bar{G}'_v\trianglelefteq\bar{G}_v$ is torsion-free by construction, hence $G'_v$ is torsion-free.

\begin{prop} \label{prop:virtuallyTorsionFree}
	Let $G \in \mathscr{C}$. If $G$ is balanced, then $G$ is virtually torsion-free and therefore supgroup separable.
\end{prop}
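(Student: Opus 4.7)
The strategy is to first establish that $G$ is virtually torsion-free. Granting this, let $\hat{G} \leqslant G$ be a finite-index torsion-free subgroup. Its action on the Bass-Serre tree of $(G,\Gamma)$ yields an induced splitting with finitely generated free vertex groups (torsion-free finite-index subgroups of the virtually free $G_v$) and infinite cyclic edge groups (torsion-free finite-index subgroups of the two-ended $G_e$); this splitting is balanced by Remark \ref{fibalanced}. Theorem \ref{balancedseparable} then yields subgroup separability of $\hat{G}$, and Remark \ref{rem:subgpsep} transfers subgroup separability to $G$.

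To produce a torsion-free finite-index subgroup of $G$, I will exploit the compatibility of the subgroups $G'_v \trianglelefteq G_v$ and $G'_e \trianglelefteq G_e$ already in hand: $G'_v$ is torsion-free and finite-index in $G_v$; $G'_e = G'_{\bar{e}}$ is finite-index in $G_e$; $\zeta_e(G'_e) \leqslant G'_v$; and $G'_v \cap \zeta_e(G_e) = \zeta_e(G'_e)$ by (\ref{G'vcap}). This is exactly the data needed to define a quotient graph of finite groups $(\bar{G},\Gamma)$ with vertex groups $\bar{G}_v := G_v/G'_v$, edge groups $\bar{G}_e := G_e/G'_e$, and injective edge maps $\bar{\zeta}_e : \bar{G}_e \hookrightarrow \bar{G}_{\tau(e)}$ induced by $\zeta_e$ (injectivity is exactly (\ref{G'vcap})). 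The vertex quotients $G_v \twoheadrightarrow \bar{G}_v$ then assemble into a surjective homomorphism $\rho : G \twoheadrightarrow \bar{G}$, where $\bar{G} := \pi_1(\bar{G},\Gamma)$, and by construction $\ker(\rho) \cap G_v = G'_v$ for each $v$.

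Since $\bar{G}$ is the fundamental group of a finite graph of finite groups, it acts cocompactly on a tree with finite vertex stabilizers and is therefore virtually free; in particular it has a finite-index torsion-free \emph{normal} subgroup $\bar{G}_0 \trianglelefteq \bar{G}$. Set $\hat{G} := \rho^{-1}(\bar{G}_0)$; this is a normal finite-index subgroup of $G$. To verify $\hat{G}$ is torsion-free, recall that any torsion element of $G$ stabilises a vertex of the Bass-Serre tree and is therefore conjugate to an element of some $G_v$. For a torsion $h = gxg^{-1} \in \hat{G}$ with $x \in G_v$, normality of $\hat{G}$ gives $x \in \hat{G} \cap G_v$; then $\rho(x)$ is a torsion element of the finite $\bar{G}_v \leqslant \bar{G}$ lying in the torsion-free $\bar{G}_0$, so $\rho(x) = 1$ and $x \in \ker(\rho) \cap G_v = G'_v$. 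Since $G'_v$ is torsion-free, $x = 1$ and hence $h = 1$.

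The main conceptual content is in ensuring that $(\bar{G},\Gamma)$ is well-defined as a graph of finite groups and that $\rho$ exists as a surjection: this amounts precisely to the compatibility properties recorded in Lemma \ref{lem:G'e} and (\ref{G'vcap}), which have already been arranged. The remaining steps -- extracting a torsion-free normal finite-index subgroup of the virtually free $\bar{G}$ and checking torsion-freeness of its pullback -- are routine.
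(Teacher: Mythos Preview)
Your proof is correct and takes a genuinely different route from the paper's.

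The paper proves virtual torsion-freeness by explicitly constructing a finite cover of graphs of groups $(\hat G,\hat\Gamma)\to(G,\Gamma)$ whose vertex and edge groups are precisely the torsion-free subgroups $G'_v$ and $G'_e$. This requires specifying the graph $\hat\Gamma$ together with double-coset data for the edge attachments, and then solving the resulting combinatorial gluing equations $|p^{-1}(v)|\,[G_v:G'_v]/[G_e:G'_e]=|p^{-1}(e)|$ by an elementary common-multiple argument.

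You instead pass to the quotient graph of finite groups $(\bar G,\Gamma)$ with $\bar G_v=G_v/G'_v$, $\bar G_e=G_e/G'_e$; the compatibility condition~(\ref{G'vcap}) is exactly what makes the induced edge maps injective and hence $(\bar G,\Gamma)$ a genuine graph of groups. The functorial surjection $\rho:G\to\bar G$ then has $\ker\rho\cap G_v=G'_v$ because vertex groups inject into $\pi_1(\bar G,\Gamma)$. Since $\bar G$ is the fundamental group of a finite graph of finite groups it is virtually free, and pulling back a torsion-free normal finite-index subgroup yields the desired $\hat G$; your torsion check using ellipticity of finite-order elements is clean.

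Both arguments rest on the same preparatory data (Lemma~\ref{lem:G'e} and~(\ref{G'vcap})). Your approach is more conceptual and sidesteps the gluing-equation bookkeeping entirely; the paper's approach is more explicit and hands-on, and has the incidental virtue of rehearsing in miniature the gluing-equation method that becomes essential in Section~\ref{sec:CommonFiniteCover}.
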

\begin{proof}
	We define a finite cover of graphs of groups $(\hat{G},\hat{\Gamma})\to(G,\Gamma)$, so that $\hat{G}\leqslant G$ is a finite index subgroup. The edge and vertex groups of $(\hat{G},\hat{\Gamma})$ will be copies of the $G'_e$ and $G'_v$ constructed earlier, which are torsion-free, so $\hat{G}$ will be torsion-free.
	
	The data for constructing the cover $(\hat{G},\hat{\Gamma})\to(G,\Gamma)$ is as follows.
	\begin{itemize}
		\item Have a surjective graph morphism $p:\hat{\Gamma}\to \Gamma$.
		\item For $\hat{v}\in V\hat{\Gamma}$ and $p(\hat{v})=v$, have an inclusion $\iota_{\hat{v}}:\hat{G}_{\hat{v}}\hookrightarrow G_v$ with image $G'_v$. For $\hat{e}\in E\hat{\Gamma}$ and $p(\hat{e})=e$, have an inclusion $\iota_{\hat{e}}:\hat{G}_{\hat{e}}\hookrightarrow G_e$ with image $G'_e$.
		\item If $\tau(\hat{e})=\hat{v}\in V\hat{\Gamma}$, $p(\hat{e})=e$ and $p(\hat{v})=v$, then there is $h_{\hat{e}}\in G_v$ such that the following diagram commutes
		\begin{equation}
		\begin{tikzcd}[
		ar symbol/.style = {draw=none,"#1" description,sloped},
		isomorphic/.style = {ar symbol={\cong}},
		equals/.style = {ar symbol={=}},
		subset/.style = {ar symbol={\subset}}
		]
		\hat{G}_{\hat{e}}\ar{rr}{\zeta_{\hat{e}}}\ar{d}{\iota_{\hat{e}}}&&\hat{G}_{\hat{v}}\ar{d}{\iota_{\hat{v}}}\\
		G_e\ar{r}{\zeta_e}&G_v\ar{r}{h_{\hat{e}}(-)h_{\hat{e}}^{-1}}&G_v.
		\end{tikzcd}
		\end{equation}
		Moreover, the elements $h_{\hat{e}}$ provide a complete set of double coset representatives $G'_vh_{\hat{e}}\zeta_e(G_e)$ as $\hat{e}$ ranges over edges in $p^{-1}(e)$ with $\tau(\hat{e})=\hat{v}$.
	\end{itemize}
One can check that this is indeed the correct data by thinking in terms of graphs of spaces and considering elevations of the various edge maps (we omit an explanation of this), or alternatively one can compare this data with \cite[Definitions 2.1 and 2.6]{Bass93}.

An alternative characterisation of the $h_{\hat{e}}$ (again with fixed $e$ and $\hat{v}$) is that they provide a complete set of coset representatives for the subgroup $G'_v\zeta_e(G_e)/G'_v$ in the finite quotient $G_v/G'_v$. Now
\begin{align}
\left|\frac{G'_v\zeta_e(G_e)}{G'_v}\right|&=[\zeta_e(G_e):\zeta_e(G_e)\cap G'_v]\\\nonumber
&=[\zeta_e(G_e):\zeta_e(G'_e)]&\text{by (\ref{G'vcap})}\\\nonumber
&=[G_e:G'_e],
\end{align}
so there will be $[G_v:G'_v]/[G_e:G'_e]$ such cosets, and hence the same number of $\hat{e}$.

As a result, we must satisfy the gluing equation
\begin{equation}\label{gluehatG}
|p^{-1}(v)|\frac{[G_v:G'_v]}{[G_e:G'_e]}=|p^{-1}(e)|
\end{equation}
whenever $\tau(e)=v\in V\Gamma$; and conversely, if we have numbers $|p^{-1}(v)|$ and $|p^{-1}(e)|$ that solve the equations (\ref{gluehatG}), then such a finite cover $(\hat{G},\hat{\Gamma})$ can be constructed. But such a solution is easy, just set
\begin{align}
|p^{-1}(v)|=\frac{M}{[G_v:G'_v]},&&|p^{-1}(e)|=\frac{M}{[G_e:G'_e]},
\end{align}
where $M$ is a common multiple of the $[G_v:G'_v]$ and $[G_e:G'_e]$.
\end{proof}

\subsection{Relative hyperbolicity and virtual specialness}\label{sec:relhypvspecial}

In this section we prove the other implications of Theorem \ref{thm:separableBalancedRelHyp}.

\begin{lem}\label{lem:cylinderproduct}
Let $G \in \mathscr{C}$ split as a finite balanced graph of groups $(G,\Gamma)$, where the edge groups are two-ended and the vertex groups are virtually free. Then, replacing $G$ by a finite index torsion-free subgroup, we can arrange that each cylinder $Y$ in the corresponding Bass-Serre tree $T$ has stabiliser $G_Y$ which admits a product splitting $G_Y=\mathbb{Z}\times F_n$ ($n\geq0$) such that the $\mathbb{Z}$ factor pointwise fixes $Y$ and the $F_n$ factor acts freely cocompactly on $Y$.
\end{lem}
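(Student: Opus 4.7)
My plan is as follows. By Theorem~\ref{thm:torsionbalancedseparable}, $G$ is virtually torsion-free and subgroup separable, so after replacing $G$ by a finite-index torsion-free subgroup I may assume all vertex groups are free and all edge groups are infinite cyclic; by Remark~\ref{fibalanced} the induced splitting remains balanced. Fix a cylinder $Y\subset T$. Since the vertex stabilisers are free (hence hyperbolic), Lemma~\ref{lem:cyledgetwoended} applies: for every $v\in VY$ the subgroup $H_v := G_Y\cap G_v$ is infinite cyclic, and every incident edge stabiliser $G_e$ ($e\in EY$) is a finite-index subgroup of $H_v$. Since $G$ acts cocompactly on $T$ and $Y$ is $G_Y$-invariant, $G_Y$ is finitely generated and acts cocompactly on $Y$ with all vertex and edge stabilisers infinite cyclic; that is, $G_Y$ is a finitely generated generalised Baumslag--Solitar (GBS) group with Bass--Serre tree $Y$, and the induced graph-of-groups decomposition is balanced.

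I would then appeal to the standard classification of finitely generated balanced GBS groups (going back to Levitt and Forester) to produce a finite-index subgroup $\dot{G}_Y\cong\mathbb{Z}\times F_n$ ($n\geq 0$) in which the $\mathbb{Z}$-factor equals the common intersection $K:=\bigcap_{e\in EY}G_e$ of all edge stabilisers -- equivalently, the kernel of the $G_Y$-action on $Y$ -- and thus pointwise fixes $Y$, while the $F_n$-factor acts freely cocompactly on $Y$. The essential steps are: first pass to $\ker(\mu)$, of index at most two, where $\mu\colon G_Y\to\mathbb{Q}^*$ is the modular homomorphism, which by balancedness has image in $\{\pm 1\}$; on this subgroup $K$ is infinite cyclic and central; take a further finite-index refinement so that every $H_v/K$ becomes trivial, whence the quotient acts freely on $Y$ and is therefore free of some rank $n$; and finally split the resulting central extension $1\to K\to\dot{G}_Y\to F_n\to 1$ using the vanishing of $H^2(F_n;\mathbb{Z})$ for the free group $F_n$.

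Finally, since $G$ acts cocompactly on $T_c$ there are only finitely many $G$-orbits of cylinders $Y_1,\dots,Y_k$. Applying Proposition~\ref{prop:sepvertexstabs} to the $G$-action on $T_c$ (using subgroup separability of $G$) with the prescribed finite-index subgroups $\dot{G}_{Y_i}\leqslant G_{Y_i}$ yields a finite-index normal subgroup $\bar{G}\trianglelefteq G$ with $\bar{G}_{Y_i}\leqslant\dot{G}_{Y_i}$ for every $i$. A finite-index subgroup of $\mathbb{Z}\times F_n$ is again of the form $\mathbb{Z}\times F_m$ (its intersection with the $\mathbb{Z}$-factor is infinite cyclic, its projection to $F_n$ is free by Nielsen--Schreier, and once more $H^2$ of a free group vanishes), and the $\mathbb{Z}$-factor still pointwise fixes $Y_i$; since $\bar{G}$ is normal, conjugation by elements of $G$ transports this product structure to every cylinder. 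The main obstacle is the middle paragraph: executing the balanced-GBS argument, in particular producing the central $\mathbb{Z}$ and verifying that the central extension by the free quotient splits, is standard but requires careful bookkeeping of the balanced ratios along edge paths in $Y/G_Y$.
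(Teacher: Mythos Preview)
Your proposal is correct and follows essentially the same route as the paper: reduce to the torsion-free case, recognise each cylinder stabiliser as a balanced GBS group and extract a finite-index $\mathbb{Z}\times F_n$ subgroup with the stated action, then globalise via Proposition~\ref{prop:sepvertexstabs} applied to the $G$-action on $T_c$. The only difference is packaging: where you sketch the balanced-GBS step via the modular homomorphism and $H^2(F_n;\mathbb{Z})=0$, the paper simply cites \cite[Proposition~2.6]{Levitt2007}, and also notes as an alternative that one can apply Proposition~\ref{prop:sepvertexstabs} directly to $(G_Y,Y)$ to force each vertex stabiliser to equal its incident edge stabilisers, after which the product splitting is immediate.
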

\begin{proof}
	$G_Y$ acts cocompactly on $Y$, and all edge and vertex stabilisers are two-ended by Lemma \ref{lem:cyledgetwoended}.
	By Theorem \ref{balancedseparable} we can assume that $G$ is torsion-free, so then $G_Y$ splits as a graph of groups with all edge and vertex stabilisers isomorphic to $\mathbb{Z}$
	- such groups are called \emph{generalised Baumslag-Solitar groups} (or GBS groups). It follows from the proof of \cite[Proposition 2.6]{Levitt2007} that $G_Y$ contains a finite index subgroup $\dot{G}_Y$ which admits a product splitting $\dot{G}_Y=\mathbb{Z}\times F_n$ ($n\geq0$) such that the $\mathbb{Z}$ factor pointwise fixes $Y$ and the $F_n$ factor acts freely cocompactly on $Y$. Alternatively, we can apply Proposition \ref{prop:sepvertexstabs} to $(G_Y,Y)$ to produce a finite index subgroup $\dot{G}_Y\leqslant G_Y$ such that each vertex stabiliser of $\dot{G}_Y$ in $Y$ is equal to its incident edge stabilisers (recall that subgroup separability of $G$ implies subgroup separability of $G_Y$), this $\dot{G}_Y$ will admit a product splitting as above where the $\mathbb{Z}$ factor is equal to any vertex or edge stabiliser.
	
	Any finite index subgroup of $\dot{G}_Y$ will admit a similar product splitting, so we may apply Proposition \ref{prop:sepvertexstabs} to the action of $G$ on the tree of cylinders $T_c$ and a set of $G$-orbit representatives of cylinder vertices, and this will produce a finite index subgroup of $G$ satisfying the conclusions of the lemma.
\end{proof}

\begin{prop}\label{prop:hyprelcylinders}
	Let $G$ be a group acting on a tree $T$ with two-ended edge stabilisers and hyperbolic vertex stabilisers. Then $G$ is hyperbolic relative to its cylinder stabilisers.
\end{prop}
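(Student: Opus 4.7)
The plan is to verify Bowditch's fine hyperbolic graph criterion for relative hyperbolicity, by assembling a suitable graph from the tree of cylinders $T_c$: blow up each vertex of $V_0T_c$ into a fine hyperbolic ``model'' while leaving the cylinder vertices of $V_1T_c$ intact.

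First I would check that for each $v\in V_0T_c$, the family $\mathcal{P}_v$ of $G_v$-stabilisers of incident edges of $T_c$, taken one per $G_v$-orbit, is an almost malnormal collection of two-ended subgroups of the hyperbolic group $G_v$. Two-endedness and the fact that each member of $\mathcal{P}_v$ equals its normaliser in $G_v$ are Lemma~\ref{lem:cyledgetwoended}, while non-commensurability between representatives coming from distinct $G_v$-orbits of incident $T_c$-edges is immediate from the definition of cylinders (edges in different cylinders have non-commensurable stabilisers). By Bowditch's malnormal combination result this gives that $G_v$ is hyperbolic relative to $\mathcal{P}_v$.

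Applying Bowditch's criterion in the converse direction to each $(G_v,\mathcal{P}_v)$ then furnishes a connected, fine, hyperbolic graph $K_v$ on which $G_v$ acts cocompactly with finite edge stabilisers and with vertex stabilisers either finite or conjugate to a member of $\mathcal{P}_v$. Build a $G$-graph $K$ by taking the disjoint union $\bigsqcup_v G\cdot K_v$ (one copy per $G$-orbit in $V_0T_c$) and, for each cylinder $Y\in V_1T_c$ and each $v\in Y\cap V_0T_c$, identifying the peripheral vertex of $K_v$ corresponding to $Y$ with a single new cylinder vertex $y_Y$. Using Lemma~\ref{lem:cyledgetwoended} together with transitivity of $G_Y$ on the relevant peripheral vertices being identified, one verifies that the stabiliser of $y_Y$ is the full cylinder stabiliser $G_Y$, while every other vertex and every edge of $K$ retains finite stabiliser; cocompactness of the action on $K$ follows from cocompactness for each $K_v$ together with finiteness of $T_c/G$.

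The main obstacle is showing that $K$ is connected, hyperbolic and fine. Connectedness follows from connectedness of $T_c$ and of each $K_v$. Fineness is a local condition and inherits from the fineness of the $K_v$'s at non-cylinder vertices, while at a cylinder vertex $y_Y$ it reduces to a fineness statement for each $K_v$ with $v\in Y$. Hyperbolicity is a tree-of-hyperbolic-spaces argument in the style of Bestvina--Feighn/Bowditch: the pieces $K_v$ are joined along the single vertices $y_Y$, which are trivially quasi-convex, and the tree structure of $T_c$ forces every geodesic triangle in $K$ to decompose into sub-triangles that either lie in a single $K_v$ or degenerate along $T_c$, so $\delta$-thinness of each $K_v$ propagates to $K$. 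Once these properties are in place, Bowditch's criterion immediately yields that $G$ is hyperbolic relative to the stabilisers of the cylinder vertices, i.e.\ the cylinder stabilisers $G_Y$.
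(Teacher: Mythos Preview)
Your argument is correct and complete in outline, but it takes a genuinely different route from the paper's proof. Both proofs begin identically: using Lemma~\ref{lem:cyledgetwoended} and the definition of cylinders to show that, for each $v\in V_0T_c$, the incident $T_c$-edge stabilisers form an almost malnormal family of two-ended subgroups of the hyperbolic group $G_v$, whence $G_v$ is hyperbolic relative to this family (via Bowditch). From there the approaches diverge.

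The paper proceeds by quoting Dahmani's combination theorem \cite[Theorem 0.1]{Dahmani03} twice: first to amalgamate each $G_u$ with its neighbouring cylinder groups, obtaining star-shaped subgraphs of groups $(G^u,\Gamma^u)$ that are hyperbolic relative to their cylinder vertex groups; and then to assemble these stars over $\Gamma$ into $(G,\Gamma)$. This is short and entirely black-box.

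You instead construct the Bowditch fine hyperbolic graph for $(G,\{G_Y\})$ directly, by gluing the fine graphs $K_v$ along the tree $T_c$ at single peripheral vertices. This is essentially a hands-on proof of the special case of Dahmani's theorem needed here. The verification that circuits in $K$ are trapped in a single $K_v$ (because every $y_Y$ is a cut-point coming from the tree structure of $T_c$) simultaneously gives fineness and reduces hyperbolicity to the elementary fact that a wedge of $\delta$-hyperbolic graphs along a tree of cut-points is $\delta$-hyperbolic. One small point to tighten: almost malnormality needs each $H\in\mathcal{P}_v$ to equal its own \emph{commensurator}, not just its normaliser; this is in fact what the proof of Lemma~\ref{lem:cyledgetwoended} shows (and is automatic for a maximal two-ended subgroup of a hyperbolic group), but your citation of the lemma's statement alone is slightly imprecise. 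Also, the phrase ``transitivity of $G_Y$ on the relevant peripheral vertices'' is not needed and may not hold; the stabiliser of $y_Y$ equals $G_Y$ simply because the $y_Y$ are in $G$-equivariant bijection with the cylinders. With those cosmetic fixes your proof stands on its own, trading a citation for a transparent geometric construction.
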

\begin{proof}
	Let $T_c$ be the tree of cylinders corresponding to $T$, and let $(G,\Gamma)$ be the quotient graph of groups for the action of $G$ on $T_c$. The partition $VT_c=V_0T_c\sqcup V_1 T_c$ induces a partition $V\Gamma=V_0\Gamma\sqcup V_1\Gamma$. We wish to show that $G$ is hyperbolic relative to its vertex groups $G_v$ for $v\in V_1\Gamma$ -- which we call its cylinder vertex groups. For the original tree $T$, two stabilisers of edges in different cylinders will have finite intersection, so for $u\in V_0\Gamma$ Lemma \ref{lem:cyledgetwoended} implies that different $G_u$-conjugates of edge groups incident at $G_u$ also have finite intersection. Then by \cite[Theorem 7.11]{Bowditch12} and Lemma \ref{lem:cyledgetwoended}, $G_u$ is hyperbolic relative to its incident edge groups. Next, for each $u\in V_0\Gamma$, let $(G^u,\Gamma^u)$ be the graph of groups obtained by amalgamating $G_u$ with its neighbouring cylinder vertex groups in $(G,\Gamma)$. By \cite[Theorem 0.1(2)]{Dahmani03}, $G^u$ is hyperbolic relative to its cylinder vertex groups in $(G^u,\Gamma^u)$. We can then join together the graphs of groups $(G^u,\Gamma^u)$ via a sequence of amalgamations and HNN extensions to recover the graph of groups $(G,\Gamma)$, and this will be hyperbolic relative to its cylinder vertex groups by \cite[Theorem 0.1(3)+(3')]{Dahmani03}.
\end{proof}

\begin{remk} \label{rem:RemovingCyclicPeripherals}
 Given the conclusion of Proposition~\ref{prop:hyprelcylinders}, we note that if a cylinder stabiliser is virtually infinite cyclic (and is therefore a hyperbolic group), we can remove it from the family of peripheral subgroups.
 (This is a special case of a more general result. See~\cite[Corollary 1.14]{DrutuSapir05}.)
 \end{remk}

\begin{proof}[Proof of Theorem~\ref{thm:separableBalancedRelHyp}]

 The equivalence of \ref{item:separable} and \ref{item:balanced} is Theorem~\ref{thm:torsionbalancedseparable}.
 %
%
 It remains to show the equivalence of  \ref{item:balanced}, \ref{item:relHyperbolic} and \ref{item:special}. Fix an action of $G$ on a tree $T$ with two-ended edge stabilisers and virtually free vertex stabilisers and let $(G,\Gamma)$ be the quotient graph of groups.

 Let's start by showing the equivalence of \ref{item:balanced}, that $(G,\Gamma)$ is balanced, and \ref{item:relHyperbolic}, that $G$ is hyperbolic relative to peripheral subgroups that are virtually  $\mathbb{Z}\times\mathbb{F}_n$ ($n\geq0$). \ref{item:balanced} implies \ref{item:relHyperbolic} by combining Lemma \ref{lem:cylinderproduct} and Proposition \ref{prop:hyprelcylinders}. Conversely, suppose for contradiction we have \ref{item:relHyperbolic} but not \ref{item:balanced}, then Lemma~\ref{lem:balanceequiv} gives us infinite order elements $h,g$ such that $gh^pg^{-1} = h^q$ with $|p|\neq|q|$.
 By~\cite[Corollary 4.21]{Osin06} the element $h$ must lie in a (conjugate of a) peripheral subgroup, call it $P$.
 Moreover, $g$ will also belong to $P$, otherwise $\langle gh^pg^{-1}\rangle= \langle h^q\rangle\leqslant P\cap gPg^{-1}$, contradicting the almost malnormality of the peripheral subgroups. 
 But then we contradict $P$ being virtually $\mathbb{Z}\times\mathbb{F}_n$.

 Next we'll show the equivalence of \ref{item:balanced}, that $(G,\Gamma)$ is balanced, and  \ref{item:special}, that $G$ is virtually special. Firstly suppose that $(G,\Gamma)$ is balanced, and let $T$ be the corresponding Bass-Serre tree; by Lemma \ref{lem:cylinderproduct} we may assume that $G$ is torsion-free and that its cylinder stabilisers are isomorphic to $\mathbb{Z}\times\mathbb{F}_n$, with all stabilisers of edges in the cylinder being equal to the $\mathbb{Z}$ factor. By \cite{HsuWise10}, $G$ is the fundamental group of a non-positively curved cube complex $X$; moreover, the $v$-arcs from \cite[Definition 10.1]{HsuWise10} are hyperplanes that correspond to the edge groups in $(G,\Gamma)$, so $X$ decomposes as a graph of cube complexes in the sense of \cite{HuangWise19} corresponding to $(G,\Gamma)$. We want to show that $X$ is virtually special. By \cite[Theorem 1.4]{HuangWise19} it is enough to show that $G$ has finite stature with respect to its vertex stabilisers in $T$ (finite stature is defined in \cite[Definition 1.2]{HuangWise19}). It suffices to show that for any $e_1,e_2\in ET$ either $G_{e_1}\cap G_{e_2}=G_{e_1}$ or $G_{e_1}\cap G_{e_2}=\{1\}$. Indeed if $e_1$ and $e_2$ belong to the same cylinder then $G_{e_1}\cap G_{e_2}=G_{e_1}$ by our assumption on the cylinders, and otherwise the edge groups are not commensurable so intersect trivially.
 
 Finally, suppose that $(G,\Gamma)$ is not balanced. Again, by Lemma~\ref{lem:balanceequiv}, $G$ contains infinite order elements $g,h$ with $gh^pg^{-1} = h^q$ and $|p|\neq|q|$, hence so will any finite index subgroup of $G$. 
 This implies that $G$ is not virtually cubulated -- as this would contradict the conjugation invariance of the combinatorial translation length of isometries of a CAT(0) cube complex (see~\cite{HaglundSemiSimple, Woodhouse17}).
\end{proof}

 \begin{remk} \label{rem:peripheralIsCylindrical}
 We observe that if we know $G$ is hyperbolic relative to a family $\mathcal{P}$ of virtually abelian peripheral subgroups (where $\mathcal{P}$ might not be the family of cylinder stabilisers), then the cylinder stabilisers will also be virtually abelian.
 Indeed by Theorem~\ref{thm:separableBalancedRelHyp} we know that the cylinder stabilisers are virtually $\mathbb{Z}\times\mathbb{F}_n$, so we just need to show that $n\leq 1$. Each cylinder stabiliser is undistorted (because $G$ is hyperbolic relative to its cylinder stabilisers) and unconstricted, so we may apply~\cite[Theorem 1.7]{DrutuSapir05} to conclude that each cylinder stabiliser is contained in a neighbourhood of a conjugate of some $P\in\mathcal{P}$ (note that \cite[Theorem 1.7]{DrutuSapir05} has a typo, $G'\to G$ should be a quasi-isometric embedding rather than a quasi-isometry).
 The observation then follows because there is no quasi-isometric embedding $\mathbb{Z}\times\mathbb{F}_n\to P$ if $n\geq2$ (for example because $\mathbb{Z}\times\mathbb{F}_n$ has exponential growth and $P$ has polynomial growth).
\end{remk}

\bigskip
\section{Leighton's theorem for graphs with coloured fins} \label{sec:LeightonAgain}

Leighton's Theorem for graphs with fins was proven by the second author \cite[Theorem 0.1]{FinLeighton}; in this section we build on this result by adding colours and orientations to the fins and arranging for the common finite cover to satisfy a symmetry property.
The orientations of the fins are particularly important.
In Sections \ref{sec:GraphOfSpaces} and \ref{sec:CommonFiniteCover} we will construct graphs of spaces by taking graphs with fins and gluing the ends of certain fins together by homeomorphisms. 
The homotopy type of such a graph of spaces will not only depend on which fins you glue together, but on the orientations of the fins that get matched up by the gluing.

\subsection{Definitions}

\begin{defn}(Graph with coloured fins)\label{graphcolfin}\\
 Let $X$ be a graph, which we now consider to be a $1$-dimensional cube complex. 
 Let $\Delta$ be a collection of combinatorial immersions $\gamma:S\to X$, where each $S$ is a circle or a bi-infinite line subdivided into $\ell(S)$ edges ($\ell(S)=\infty$ if $S$ is a bi-infinite line).
 A \emph{graph with fins} $\bfX$ is a non-positively curved square complex obtained by taking the mapping cylinder of  $$\cup_\Delta \gamma:\bigsqcup_\Delta S\to X.$$
 A graph with fins $\bfX$ is finite if it is a finite cube complex. The subset $$\bigsqcup_\Delta S \times \{1\} \subseteq \bfX$$
 is the \emph{boundary} of the graph with fins. Each component of the boundary, $S \times \{1\}$, is called a \emph{fin} -- for ease of notation we will always write $S$ instead of $S \times \{1\}$. The collection of fins is denoted $\partial \bfX$. Meanwhile, the subsets $S\times\{0\}$ lie in $X$. The natural retraction $r: \bfX \rightarrow X$ restricted to the boundary allows us to recover the collection $\Delta$.
 
 A fin $S\in\partial\bfX$ is a 1-manifold, so can be given an orientation $\tto$. The pair $(S,\tto)$ is an \emph{oriented fin}, and will often be written as $\bbS$. If $\bbS=(S,\tto)$ then we write $\bar{\bbS}=(S,\bar{\tto})$ for the fin with opposite orientation. The \emph{length} of $\bbS$ is $\ell(\bbS):=\ell(S)$. The collection of oriented fins is denoted $\partial_\tto\bfX$. If we have a colouring $\lambda: \partial_\tto \bfX \rightarrow \mathcal{C}$, then we say that $\bfX$ is a \emph{graph with coloured fins}.

\end{defn}

\begin{defn}(Coverings and automorphisms of graphs with coloured fins)\label{defn:covergraphfin}\\
	A \emph{covering of graphs with fins} $\Phi:\widehat{\mathbf{X}}\to \bfX$ is a covering of square complexes that restricts to a graph covering $\widehat{X}\to X$ -- we require $\bfX$ to be connected but $\widehat{\bfX}$ doesn't need to be. 
	
	The restriction of $\Phi$ to a fin $\hat{S}\in\partial\widehat{\mathbf{X}}$ is a covering $\hat{S}\to S$ of a fin $S\in\partial\bfX$. If $\hat{\bbS}=(\hat{S},\hat{\tto})$ and $\bbS=(S,\tto)$ are orientations respected by the covering, then we say that $\hat{\bbS}\to\bbS$ is a \emph{covering of oriented fins} (we will usually just say that $\hat{\bbS}\to\bbS$ is a covering). Thus we get a map $\Phi:\partial_\tto\widehat{\bfX}\to\partial_\tto \bfX$ where each $\hat{\bbS}\to\Phi(\hat{\bbS})$ is a covering. We call $\Phi:\widehat{\mathbf{X}}\to \bfX$ a \emph{covering of graphs with coloured fins} if the induced map $\Phi:\partial_\tto\widehat{\bfX}\to\partial_\tto \bfX$ preserves colours (both $\bfX$ and $\widehat{\bfX}$ must use the same set of colours $\mathcal{C}$). 
	
	A covering $\widehat{\mathbf{X}}\to \bfX$ is an \emph{isomorphism} if it is an isomorphism of square complexes. An isomorphism $\bf{X}\to\bf{X}$ is an \emph{automorphism}. 
	Let $\Aut(\bf X)$ denote the group of automorphisms of $\bf X$.
	We note that any automorphism of $\bfX$ also induces an automorphism on $\partial_\tto \bfX$.
	A covering $\widetilde{\mathbf{X}}\to \bfX$ is a \emph{universal covering} if $\widetilde{X}$ is a tree, or equivalently if $\widetilde{\bfX} \to \bfX$ is a universal covering of square complexes. In this case, the deck transformations of $\wt{\bfX} \to \bfX$ induce a subgroup of $\Aut(\widetilde{\mathbf{X}})$. 
\end{defn}

\begin{exmp}
Let $X$ be the bouquet of two circles -- the graph given by a single vertex and two edges.
We fix a generating set $\pi_1X = \langle x, y \rangle$ so that the generators $x$ and $y$ correspond to the two edges.
Let $\bfX$ be the graph with fins determined by the geodesic paths given by the set $\{ x, y, xy\}$.
In this example the oriented fins can be written out as $\partial_{\tto} \bfX = \{x, x^{-1}, y, y^{-1}, xy, y^{-1}x^{-1} \}$.
See Figure~\ref{fig:graphwithfins} for an illustration of $\bfX$.

 \begin{figure}[H]
 \centering
	\begin{overpic}[width=.55\textwidth,tics=5,]{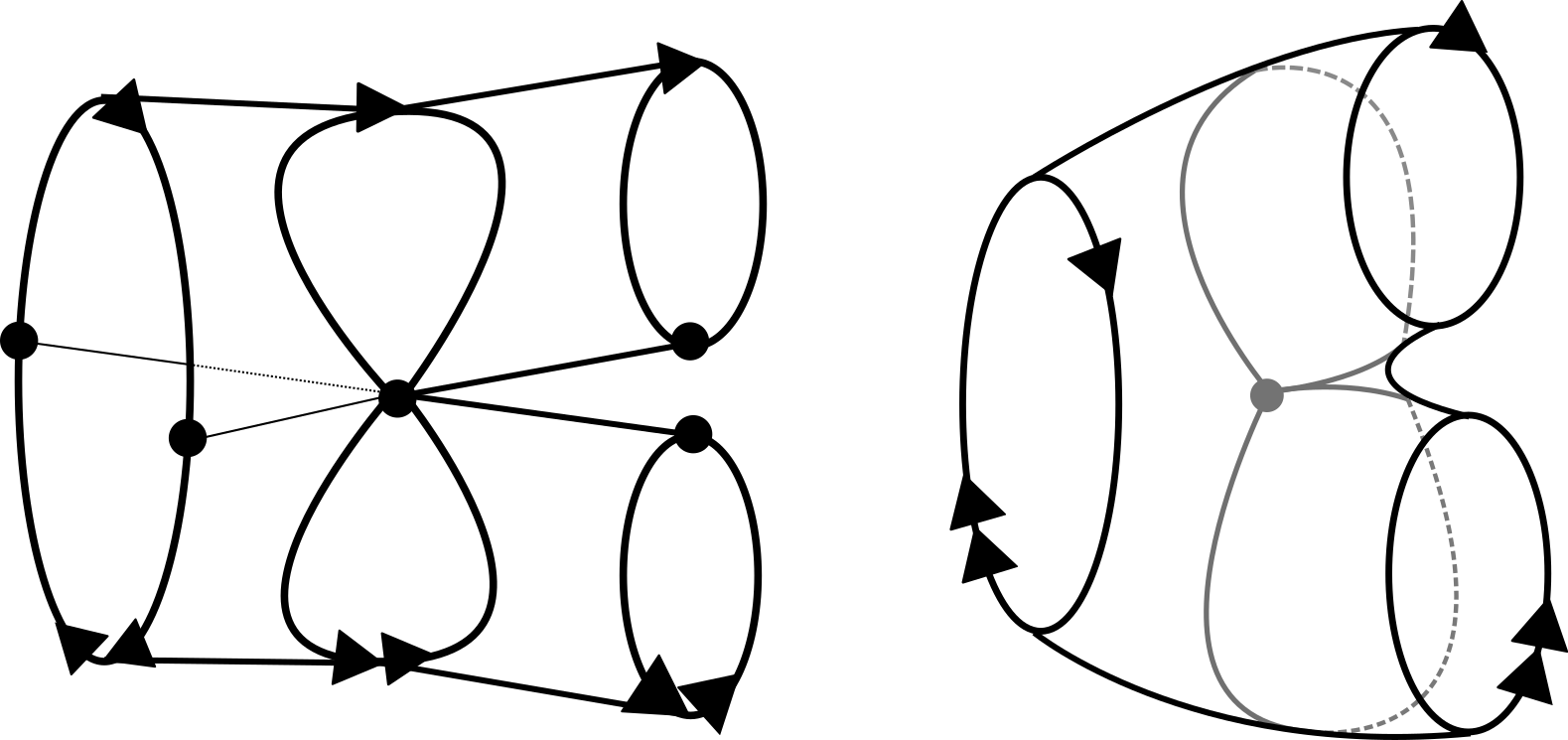} 
		\put(25,43){\Large$x$}
		\put(25,0){\Large$y$}
          \end{overpic}
	\caption{A graph with fins -- drawn again on the right to emphasize in this case it is homeomorphic to a surface with boundary.}
	\label{fig:graphwithfins}
    \end{figure}
 
\end{exmp}

\begin{remk}
	A graph with coloured fins $\mathbf{X}$ and a graph covering $\widehat{X}\to X$ uniquely determine a coloured fin structure $\widehat{\mathbf{X}}$ on $\widehat{X}$ and a covering $\widehat{\mathbf{X}}\to\mathbf{X}$.
\end{remk}

\begin{defn}
	If $\Phi_i:\widehat{\bfX}\to\bfX_i$ are coverings for $i=1,2$, and $\bbS_i\in\partial_\tto\bbX_i$ are oriented fins, then we write
	$$\partial_\tto\widehat{\bfX}(\bbS_1,\bbS_2):=\Phi_1^{-1}(\bbS_1)\cap\Phi_2^{-1}(\bbS_2)$$
	for the collection of oriented fins in $\widehat{\bfX}$ that cover both $\bbS_1$ and $\bbS_2$.
\end{defn} 

\begin{defn}(Density) \label{defn:density} \\
For $\bf X$ a finite graph with coloured fins and $c\in\mathcal{C}$ a colour, define the \emph{density} $\rho_c$ by
	\begin{equation}\label{density}
	\rho_c:=\sum_{\lambda(\bbS)=c}\ell(\bbS)/|X|,
	\end{equation}
	where $|X|$ is the number of vertices in $X$.
	Note that densities $\rho_c$ are preserved by finite coverings, and are therefore invariants of the commensurability class of $\mathbf{X}$.
\end{defn}

\subsection{The theorem} \label{sec:comfingraph}

\begin{thm}(Leighton's Theorem for graphs with coloured fins)\label{Leighton}\\
	Let $\mathbf{X}_1$ and $\mathbf{X}_2$ be graphs with coloured fins that have a common universal cover $\widetilde{\mathbf{X}}$. Denote the covering maps by $\Psi_i:\wt{\bfX}\to\bfX_i$ and let $\Gamma_1,\Gamma_2\leqslant\rm{Aut}(\widetilde{\mathbf{X}})$ be the corresponding deck transformation groups. Suppose that $\Aut(\widetilde{\mathbf{X}})$ acts transitively on the oriented fins of each colour in $\widetilde{\mathbf{X}}$.
	Then $\mathbf{X}_1$ and $\mathbf{X}_2$ have a common finite cover $\widehat{\mathbf{X}}$ such that 
	\begin{equation}\label{finequation}
	\sum_{\hat{\bbS}\in\partial_\tto\widehat{\bfX}(\bbS_1,\bbS_2)}\ell(\hat{\bbS})=\left(\frac{|\widehat{X}|}{\rho_c|X_1||X_2|}\right)\ell(\bbS_1)\ell(\bbS_2),
	\end{equation}
	for any $\bbS_i\in\partial_\tto\bfX_i$ of the same colour $c$.
\end{thm}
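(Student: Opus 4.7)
The plan is to invoke Leighton's Theorem for graphs with fins \cite[Theorem 0.1]{FinLeighton} to produce the common finite cover $\widehat{\bfX}$, then to verify the counting identity (\ref{finequation}) by unpacking the Haar-measure construction inside that theorem. The colour structure is automatically respected by any covering of the underlying graphs with fins, since colours on oriented fins are constant on $H$-orbits in $\partial_\tto\widetilde{\bfX}$.

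The main work is showing that the resulting common cover satisfies (\ref{finequation}). I would translate both sides into Haar-measure quantities on the closure $\bar H$ of $H$ in $\Aut(\widetilde{\bfX})$ (with compact-open topology), using a Haar measure $\mu$ on $\bar H$. Fix a basepoint vertex $\widetilde x^*$ with compact stabilizer $K=\operatorname{Stab}_{\bar H}(\widetilde x^*)$, and for each colour $c$ fix a basepoint oriented fin $\widetilde{\bbS}^*_c$ with stabilizer $M_c$ and an oriented edge on it with stabilizer $N_c\leqslant M_c$. Standard lattice covolume formulas give $|X_i|=\mu(\Gamma_i\backslash\bar H)/\mu(K)$, and analogous formulas express the total length $\rho_c|X_i|$ of colour-$c$ fins in terms of $\mu(\Gamma_i\backslash\bar H)$, $\mu(M_c)$ and $\mu(N_c)$; transitivity of $H$ on colour-$c$ fins guarantees that $M_c$ and $N_c$ are well defined up to conjugacy, so $\rho_c$ depends only on $c$, consistent with its invariance under finite covers.

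To compute $\sum_{\hat\bbS}\ell(\hat\bbS)$, I would view it as counting $\Gamma$-orbits of oriented edges of $\widetilde{\bfX}$ that lie in a colour-$c$ lift whose $\Gamma_i$-orbit represents $\bbS_i$, where $\Gamma$ is the deck group of $\widehat{\bfX}$. A double-coset computation rewrites this as a Haar-measure expression of the form
\[\frac{\mu(\Gamma\backslash\bar H)\,\mu(M_c)}{\mu(L_1)\,\mu(L_2)\,\mu(N_c)},\]
where $L_i=\operatorname{Stab}_{\Gamma_i}(\widetilde{\bbS}_i)$ for fixed lifts $\widetilde{\bbS}_i$ of $\bbS_i$. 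Substituting the identities of the previous paragraph then produces exactly $(|\widehat X|/(\rho_c|X_1||X_2|))\,\ell(\bbS_1)\ell(\bbS_2)$, as required.

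The main obstacle will be justifying the double-coset factorisation in the last step: it is valid only because $H$ acts transitively on colour-$c$ fins, and because the common cover from \cite{FinLeighton} is built from polyhedral pieces weighted by Haar measure, which distribute colour-$c$ fins uniformly across pairs $(\bbS_1,\bbS_2)$ rather than merely matching global totals. This uniformity, the symmetry of the polyhedral pieces flagged in the paper's introduction, is the technical heart of the proof; the rest is bookkeeping between combinatorial counts and Haar measures.
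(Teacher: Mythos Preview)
Your proposal has the right ingredients---Haar measure and the polyhedral construction from \cite{FinLeighton}---and you correctly identify the crux: (\ref{finequation}) is not a general fact about any common cover, but a consequence of the specific Haar-weighted assembly. However, your global framework via a deck group $\Gamma\leqslant\bar H$ and covolume/double-coset formulas is not the right lens, and this is where the proposal has a gap rather than just a stylistic difference.

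The common cover $\widehat{\bfX}$ produced by the polyhedral construction is assembled from admissible polyhedral pairs $(P,\phi_1,\phi_2)$, each carrying its own element $h\in H$ from the admissibility diagram; there is no single global $g\in H$ such that the covering $\widehat{\bfX}\to\bfX_2$ is given by conjugating the $\bfX_1$-deck group by $g$. Consequently, your count of ``$\Gamma$-orbits of oriented edges whose $\Gamma_i$-orbit represents $\bbS_i$'' depends on an element $g\in\Aut(\widetilde{\bfX})$ relating the two covering maps, and your displayed formula $\mu(\Gamma\backslash\bar H)\mu(M_c)/(\mu(L_1)\mu(L_2)\mu(N_c))$---which makes no reference to $g$---is not a valid identity for an arbitrary common cover. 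Proving it for \emph{this} cover forces you back into the polyhedral construction anyway, at which point the global $\Gamma$ is adding nothing.

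The paper's argument works entirely locally. One sums the left side of (\ref{finequation}) over the polyhedral pairs that make up $\widehat{\bfX}$: fix an oriented subarc $\bbA_1$ of $\bbS_1$, count for each oriented subarc $\bbA_2$ of $\bbS_2$ the admissible polyhedral pairs sending a single arc to both $\bbA_1$ and $\bbA_2$, and weight by the Haar measure of the pointwise stabiliser of a lifted polyhedron. Two short lemmas---that the count $|P(\bbA_1,\bbA_2)|$ is $|H_{(\tilde A)}/H_{(\widetilde P)}|$ when nonempty, and that the proportion of $\bbA_2$ for which it is nonempty depends only on $c$ (this is where $H$-transitivity on colour-$c$ fins is used)---show the sum equals $K_c\,\ell(\bbS_1)\ell(\bbS_2)$ with $K_c$ depending only on $c$. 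One then determines $K_c$ by summing over all $\bbS_1,\bbS_2$ of colour $c$ and matching against $\rho_c|\widehat{X}|$. This arc-by-arc count is precisely the ``uniformity of the polyhedral pieces'' you flagged, made concrete; it replaces, rather than justifies, your double-coset formula.
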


The rest of this section is devoted to proving this theorem, so fix $\mathbf{X}_1$, $\mathbf{X}_2$,  $\widetilde{\mathbf{X}}$, and $\Gamma_1,\Gamma_2< \Aut(\widetilde{\mathbf{X}})$ as above. For brevity we will write $H=\Aut(\widetilde{\mathbf{X}})$ for the rest of this section.
We will assume that the graphs $X_i$ are simplicial, and that $H$ doesn't invert edges in $\widetilde{X}$. 
We can achieve these properties by subdividing the edges of the graphs $X_i$ (and passing to an index two subgroup of $H$ if the $X_i$ are circles). Note that equation (\ref{finequation}) is preserved by subdividing edges in underlying graphs; indeed $|\widehat{X}|/|X_2|$ is the degree of $\widehat{X}\to X_2$, so is unchanged, and the quantities $\ell(\hat{\bbS})$, $\ell(\bbS_1)$, $\ell(\bbS_2)$ and $\rho_c|X_1|$ all increase by a factor of two.

\begin{defn}(Polyhedra and faces)\\
Let $\bfX$ be a graph with coloured fins.
A hyperplane in $\bfX$ is \emph{vertical} if it is dual to an edge in $X$ -- let $\mathscr{H}$ denote the set of vertical hyperplanes. 
Let ${\dot \bfX}$ denote the square complex obtained from $\bfX$ by subdividing along the vertical hyperplanes. 
A \emph{polyhedron} $(P,\phi)$ is a square complex $P$ equipped with a cubical embedding $\phi:P\to {\dot \bfX}$ such that $\phi(P)$ is the cubical neighbourhood in ${\dot{\bfX}}$ of a vertex $x\in X$. Alternatively, we can think of $\phi(P)$ as the closure of the component of $\bfX -\mathscr{H}$ containing $x$. 
We call $x$ the \emph{centre} of $\phi(P)$. 
A \emph{face} $(F,\varphi)$ is a finite tree $F$ equipped with a cubical embedding $\varphi:F\to {\dot \bfX}$ such that $\varphi(F)$ is a vertical hyperplane in $\bfX$ (which is a subcomplex in ${\dot \bfX}$). We say that $(F,\varphi)$ is a \emph{face} of $(P,\phi)$ if there is a commutative diagram of cubical embeddings
\begin{equation}
\begin{tikzcd}[
ar symbol/.style = {draw=none,"#1" description,sloped},
isomorphic/.style = {ar symbol={\cong}},
equals/.style = {ar symbol={=}},
subset/.style = {ar symbol={\subset}}
]
F\ar{r}\ar{dr}[swap]{\varphi}&P\ar{d}{\phi}\\
&{\dot \bfX}.
\end{tikzcd}
\end{equation}
Fixing an orientation on each edge in $X$, we have a notion of being on the \emph{left} or \emph{right} of a vertical  hyperplane in $\bfX$. We say that $(P,\phi)$ is on the \emph{left} (resp. \emph{right}) of a face $(F,\varphi)$ if $\phi(P)$ is on the left (resp. right) of $\varphi(F)$ (there is no ambiguity as we have assumed $X$ is simplicial). Up to isomorphism there is a unique polyhedron on the left and right of each face.\par 
 If $(P,\phi)$ and $(P',\phi')$ are polyhedra on the left and right of a face $(F,\varphi)$, then the polyhedra can be glued together along the embeddings of $F$ to make a new complex $P\cup P'$ that maps into $\bfX$ via $\phi\cup\phi'$.
\end{defn}

\begin{defn}(Polyhedral pairs and face pairs)\label{polypair}\\
	 A \emph{polyhedral pair} is a triple $\mathbf{P}=(P,\phi_1,\phi_2)$ where each pair $(P,\phi_i)$ is a polyhedron for $\mathbf{X}_i$. We say that $\bf P$ is $H$-\emph{admissible} if there is a commutative diagram as follows, which we will refer to as the \emph{admissibility diagram},
	\begin{equation}\label{Padmisible}
	\begin{tikzcd}[
	ar symbol/.style = {draw=none,"#1" description,sloped},
	isomorphic/.style = {ar symbol={\cong}},
	equals/.style = {ar symbol={=}},
	subset/.style = {ar symbol={\subset}}
	]
	{\widetilde{\bfX}}\ar{dd}[swap]{\Psi_1}\ar{rr}{h}&& {\widetilde{\bfX}}\ar{dd}{\Psi_2}\\
	&P\ar{ul}{\tilde{\phi}_1}\ar{ur}[swap]{\tilde{\phi}_2}\ar{dl}{\phi_1}\ar{dr}[swap]{\phi_2}\\
	\bfX_1&&\bfX_2,
	\end{tikzcd}
	\end{equation}
	where $\tilde{\phi}_i$ are lifts of the maps $\phi_i$ and $h\in H$. Note that the lifts $\tilde{\phi}_i$ are unique up to post-composition by $g_i\in\Gamma_i$, so if $\bf P$ is admissible then the diagram (\ref{Padmisible}) can be constructed for any lifts $\tilde{\phi}_i$.
	
	Similarly, a \emph{face pair} is a triple $\mathbf{F}=(F,\varphi_1,\varphi_2)$ where each pair $(F,\varphi_i)$ is a face for $\mathbf{X}_i$. We say that $\bf F$ is $H$-\emph{admissible} if there is a commutative diagram
	\begin{equation}\label{Fadmisible}
	\begin{tikzcd}[
	ar symbol/.style = {draw=none,"#1" description,sloped},
	isomorphic/.style = {ar symbol={\cong}},
	equals/.style = {ar symbol={=}},
	subset/.style = {ar symbol={\subset}}
	]
	\widetilde{\bfX}\ar{dd}[swap]{\Psi_1}\ar{rr}{h}&&\widetilde{\bfX}\ar{dd}{\Psi_2}\\
	&F\ar{ul}{\tilde{\varphi}_1}\ar{ur}[swap]{\tilde{\varphi}_2}\ar{dl}{\varphi_1}\ar{dr}[swap]{\varphi_2}\\
	\bfX_1&&\bfX_2,
	\end{tikzcd}
	\end{equation}
	where $\tilde{\varphi}_i$ are lifts of the maps $\varphi_i$ and $h\in H$. We say that a polyhedral pair $\mathbf{P}=(P,\phi_1,\phi_2)$ is on the \emph{left} (resp. \emph{right}) of a face pair $\mathbf{F}=(F,\varphi_1,\varphi_2)$ if $(P,\phi_i)$ is on the left (resp. right) of $(F,\varphi_i)$ for $i=1,2$ and with respect to the same embedding $F\to P$. Note that it is impossible for $(P,\phi_1)$ to be on the left of $(F,\varphi_1)$ and for $(P,\phi_2)$ to be on the right of $(F,\varphi_2)$ with respect to the same embedding $F\to P$ because $H$ has no edge-inversions. Let $\overleftarrow{\bf F}$ (resp. $\overrightarrow{\bf F}$) denote the set of admissible polyhedral pairs on the left (resp. right) of $\bf F$. Note that  $\overleftarrow{\bf F}$ and $\overrightarrow{\bf F}$ are finite since $\mathbf{X}_1$ and $\mathbf{X}_2$ are. If $\bf P\in\overleftarrow{\bf F}$ and $\mathbf{P}' \in\overrightarrow{\bf F}$ then we can glue together $P$ and $P'$ along the embeddings of $F$ to obtain a complex $P\cup P'$ with maps $\phi_1\cup\phi'_1$ and $\phi_2\cup\phi'_2$ to $\bfX_1$ and $\bfX_2$.
\end{defn}
\bigskip

Given a polyhedron $(P,\phi_1)$ for $\mathbf{X}_1$, we will be interested in counting the ways it can be extended to an admissible polyhedral pair $\mathbf{P}=(P,\phi_1,\phi_2)$, subject to forcing $\mathbf{P}\in\overleftarrow{\bf F}$ for a fixed face pair $\bf F$.

\begin{lem}\label{leftFchoice}
Let $(P,\phi_1)$ be a polyhedron for $\mathbf{X}_1$ and choose a lift $\tilde{\phi}_1:P\to\widetilde{\bfX}$ with image $\widetilde{P}$. 
Let $(P,\phi_1)$ be on the left (resp. right) of a face $(F,\varphi_1)$, and let $\tilde{\phi}_1(F)=\widetilde{F}$ (viewing $F$ as a subset of $P$). 
Suppose $(F,\varphi_1)$ extends to an admissible face pair $\mathbf{F}=(F,\varphi_1,\varphi_2)$. 
Then the choices $\phi_2$ such that $(P,\phi_1,\phi_2)\in\overleftarrow{\bf F}$ (resp. $\overrightarrow{\bf F}$) are in one to one correspondence with the quotient $H_{(\widetilde{F})}/H_{(\widetilde{P})}$ -- where $H_{(\widetilde{F})}$ and $H_{(\widetilde{P})}$ are the pointwise stabilisers of $\widetilde{F}$ and $\widetilde{P}$ respectively.
\end{lem}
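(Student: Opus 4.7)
The plan is to fix the lift $\tilde{\phi}_1$ (which determines $\widetilde{P}$ and $\widetilde{F}$) and parametrise the admissible extensions $\phi_2$ by analysing lifts $\tilde{\phi}_2\colon P\to\widetilde{\bfX}$, then impose the face-pair constraint. First, since $\tilde{\phi}_1|_F$ is a lift of $\varphi_1$ it agrees with any chosen lift $\tilde{\varphi}_1$ up to a deck transformation in $\Gamma_1$; using the admissibility of $\mathbf{F}$ I can therefore select $\tilde{\varphi}_1=\tilde{\phi}_1|_F$ and an $h'\in H$ with $\tilde{\varphi}_2:=h'\circ\tilde{\varphi}_1$ a lift of $\varphi_2$.

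Next, by the admissibility of the polyhedral pair, every candidate $\tilde{\phi}_2$ must have the form $h\circ\tilde{\phi}_1$ for some $h\in H$. Two elements of $H$ give the same lift $\tilde{\phi}_2$ iff they differ by right multiplication by an element of $H_{(\widetilde{P})}$, and two lifts give the same $\phi_2$ iff they differ by $\Gamma_2$ on the left. The compatibility condition that $(F,\varphi_2)$ is a face of $(P,\phi_2)$ via the same embedding $F\hookrightarrow P$ amounts to requiring $\tilde{\phi}_2|_F=g_2\circ\tilde{\varphi}_2$ for some $g_2\in\Gamma_2$; translated via $\tilde{\varphi}_2=h'\circ\tilde{\phi}_1|_F$, this becomes the coset condition $h\in\Gamma_2\cdot h'\cdot H_{(\widetilde{F})}$. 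The left/right orientation part is then automatic: because $H$ has no edge inversions, $h$ preserves the side of the vertical hyperplane through $h'(\widetilde{F})$ on which $\widetilde{P}$ sits, and these sides project correctly to $\bfX_2$.

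So the set of admissible $\phi_2$ is the double coset set $\Gamma_2\backslash(\Gamma_2 h' H_{(\widetilde{F})})/H_{(\widetilde{P})}$. To identify this with $H_{(\widetilde{F})}/H_{(\widetilde{P})}$, consider the map $H_{(\widetilde{F})}\to\Gamma_2\backslash \Gamma_2 h' H_{(\widetilde{F})}$ sending $k\mapsto \Gamma_2 h'k$; it is surjective, and the fibre condition $h'k_1=g_2 h'k_2$ rearranges to $(h')^{-1}g_2h'\in H_{(\widetilde{F})}$, i.e., $g_2\in\Gamma_2\cap H_{(h'\widetilde{F})}=\Gamma_2\cap H_{(\tilde{\varphi}_2(F))}$. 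Because $\Gamma_2$ is the deck group of a covering, it acts freely on $\widetilde{\bfX}$, so this intersection is trivial, forcing $k_1=k_2$. Thus the map is a bijection, and passing to the quotient by the right action of $H_{(\widetilde{P})}\subseteq H_{(\widetilde{F})}$ (the inclusion uses $\widetilde{F}\subset\widetilde{P}$) yields the desired bijection with $H_{(\widetilde{F})}/H_{(\widetilde{P})}$.

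The bookkeeping with multiple lifts and the interaction of $\Gamma_1,\Gamma_2,H$ is the main place to be careful; the essential input that makes everything collapse cleanly is the freeness of the deck action, which converts a potential double coset counting into a simple coset count. The argument for $\mathbf{P}\in\overrightarrow{\mathbf{F}}$ is identical, with ``left'' replaced by ``right'' throughout.
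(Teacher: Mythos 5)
Your proof is correct and takes essentially the same route as the paper: fix $\tilde\phi_1$, observe that any admissible $\tilde\phi_2$ must be $h\circ\tilde\phi_1$ for $h\in H$, and then track the coset ambiguities coming from $\Gamma_2$ (choice of lift of $\phi_2$) and $H_{(\widetilde P)}$ (choice of $h$ realizing a given $\tilde\phi_2$). The one place where you are actually more careful than the paper is the injectivity step: the paper asserts the bijection after noting that $\phi_2$ depends only on $h'H_{(\widetilde P)}$, leaving implicit why distinct cosets give distinct $\phi_2$, whereas you make the needed input explicit --- namely freeness of the deck action of $\Gamma_2$ on $\widetilde{\bfX}$, which kills $\Gamma_2\cap H_{(h'\widetilde F)}$ and collapses the double coset set $\Gamma_2\backslash(\Gamma_2 h' H_{(\widetilde F)})/H_{(\widetilde P)}$ to the single coset set $H_{(\widetilde F)}/H_{(\widetilde P)}$. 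Your treatment of the left/right orientation constraint (automatic by the no-edge-inversion assumption, as recorded in Definition 4.5) is also correct.
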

\begin{proof}
	Assume $(P,\phi_1)$ is on the left of $(F,\varphi_1)$. Now $(F,\varphi_1,\varphi_2)$ fits into a commutative diagram (\ref{Fadmisible}) for some $h\in H$ and lifts $\tilde{\varphi}_i$, and we can choose $\varphi_1=\phi_1|_F$. Then any $\phi_2$ such that $(P,\phi_1,\phi_2)\in\overleftarrow{\bf F}$ will fit into an admissibility diagram
	\begin{equation}\label{Padmissiblelem}
	\begin{tikzcd}[
	ar symbol/.style = {draw=none,"#1" description,sloped},
	isomorphic/.style = {ar symbol={\cong}},
	equals/.style = {ar symbol={=}},
	subset/.style = {ar symbol={\subset}}
	]
	\widetilde{\bfX}\ar{dd}[swap]{\Psi_1}\ar{rr}{h'}&&\widetilde{\bfX}\ar{dd}{\Psi_2}\\
	&P\ar{ul}{\tilde{\phi}_1}\ar{ur}[swap]{\tilde{\phi}_2}\ar{dl}{\phi_1}\ar{dr}[swap]{\phi_2}\\
	\bfX_1&&\bfX_2,
	\end{tikzcd}
	\end{equation}
	for some $h'\in H$. As $\phi_2|_F=\varphi_2$, we know that $\tilde{\phi}_2|_F$ and $\tilde{\varphi}_2$ differ by an element of $\Gamma_2$; so by composing $h'$ with an element of $\Gamma_2$, we may assume that $\tilde{\phi}_2|_F=\tilde{\varphi}_2$. Then $h'|_{\widetilde{F}}=h|_{\widetilde{F}}$, hence $h'\in hH_{(\widetilde{F})}$. Conversely, any $h'\in hH_{(\widetilde{F})}$ defines a polyhedral pair $(P,\phi_1,\phi_2)\in\overleftarrow{\bf F}$ via (\ref{Padmissiblelem}). Finally, the map $\phi_2$ only depends on the coset $h'H_{(\widetilde{P})}$, again because of (\ref{Padmissiblelem}). This establishes the desired bijection between the choices $\phi_2$ and the quotient $H_{(\widetilde{F})}/H_{(\widetilde{P})}$.	
\end{proof}

 We want to take appropriate numbers of copies of each admissible polyhedral pair so that we can glue them all together along face pairs (as we described at the end of Definition \ref{polypair}) to form a common finite cover of $\mathbf{X}_1$ and $\mathbf{X}_2$. We formalise this with the following definition.
 \begin{defn}(Gluing Equations)\\
Let $\mathcal{P}$ be the (finite) collection of all admissible polyhedral pairs, and let $\omega:\mathcal{P}\to\mathbb{Z}_{>0}$ denote a weight function on $\mathcal{P}$. For each admissible face pair $\bf F$ we have the following \emph{Gluing Equation}:
\begin{equation}\label{gluingeq}
\sum_{\bf P\in\overleftarrow{\bf F}}\omega(\bf P)=\sum_{\bf P\in\overrightarrow{\bf F}}\omega(\bf P).
\end{equation}	
 \end{defn} 
 Given a solution, we can take $\omega(\bf P)$ copies of each $\bf P$, and glue them together along faces according to (arbitrary) bijections 
 \begin{equation}
\{(\mathbf{P},i)\mid\mathbf{P}\in\overleftarrow{\mathbf{F}},\,1\leq i\leq\omega(\mathbf{P})\}\leftrightarrow\{(\mathbf{P},i)\mid\mathbf{P}\in\overrightarrow{\mathbf{F}},\,1\leq i\leq\omega(\bf P)\},
 \end{equation}
 and this will give us a common finite cover of $\mathbf{X}_1$ and $\mathbf{X}_2$. For the moment we won't worry about colouring the oriented fins in this finite cover.
 
 To solve the gluing equations, we will consider the Haar measure $\mu$ for the group $H$. As $H$ contains a uniform lattice -- for example $\Gamma_1$ -- $H$ is unimodular and $\mu$ is both left and right $H$-invariant.
 Note that $\mu$ is positive on every open set and finite on every compact set, both of which apply to the stabilisers $H_{(\widetilde{P})}$ and $H_{(\widetilde{F})}$. 
 There are finitely many $H$-orbits of images of polyhedra $\widetilde{P}$ in $\widetilde{\bfX}$, and so, by $H$-invariance of $\mu$, there are finitely many values $\mu(H_{(\widetilde{P})})$; furthermore, the stabilisers $H_{(\widetilde{P})}$ are all commensurable in $H$, so by rescaling we can assume that all $\mu(H_{(\widetilde{P})})$ are positive integers.
  For each $\mathbf{P}=(P,\phi_1,\phi_2)\in\mathcal{P}$, choose a lift $\tilde{\phi}_1:P\to\widetilde{\mathbf{X}}$ with image $\widetilde{P}$, and set
 \begin{equation}\label{Haarsol}
 \omega(\mathbf{P})=\mu(H_{(\widetilde{P})}).
 \end{equation}
 Observe that $\omega(\mathbf{P})$ is independent of the choice of lift $\tilde{\phi}_1$ because of the left and right $H$-invariance of $\mu$.\par 
 \begin{lem}
 	The Haar measure weight function (\ref{Haarsol}) solves the Gluing Equations (\ref{gluingeq}).
 \end{lem}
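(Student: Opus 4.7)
The plan is to evaluate both sides of the Gluing Equation (\ref{gluingeq}) by combining Lemma \ref{leftFchoice} with the standard index formula for Haar measure on the unimodular group $H$. First I would fix an admissible face pair $\mathbf{F}=(F,\varphi_1,\varphi_2)$ together with a lift $\widetilde{F}\subset\widetilde{\mathbf{X}}$ of $\varphi_1(F)$. Since up to isomorphism there is a unique polyhedron on the left of the face $(F,\varphi_1)$, call it $(P_L,\phi_1^L)$, I would lift it to a polyhedron $\widetilde{P}_L\supset\widetilde{F}$ in $\widetilde{\mathbf{X}}$. Every $\mathbf{P}\in\overleftarrow{\mathbf{F}}$ then shares this same underlying first coordinate, so they all carry the same weight $\omega(\mathbf{P})=\mu(H_{(\widetilde{P}_L)})$.

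By Lemma \ref{leftFchoice} the admissible choices of $\phi_2$ such that $(P_L,\phi_1^L,\phi_2)\in\overleftarrow{\mathbf{F}}$ are in bijection with the quotient $H_{(\widetilde{F})}/H_{(\widetilde{P}_L)}$, hence
\[
\sum_{\mathbf{P}\in\overleftarrow{\mathbf{F}}}\omega(\mathbf{P}) \;=\; [H_{(\widetilde{F})}:H_{(\widetilde{P}_L)}]\cdot\mu(H_{(\widetilde{P}_L)}).
\]
The pointwise stabilisers $H_{(\widetilde{F})}$ and $H_{(\widetilde{P}_L)}$ are compact open and commensurable in $H$, so decomposing $H_{(\widetilde{F})}$ into left cosets of $H_{(\widetilde{P}_L)}$ and invoking left-invariance of $\mu$ gives the identity $\mu(H_{(\widetilde{F})})=[H_{(\widetilde{F})}:H_{(\widetilde{P}_L)}]\cdot\mu(H_{(\widetilde{P}_L)})$. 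Therefore the left-hand sum in (\ref{gluingeq}) equals $\mu(H_{(\widetilde{F})})$.

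Running the identical argument with the unique polyhedron $(P_R,\phi_1^R)$ on the right of $(F,\varphi_1)$, lifted to some $\widetilde{P}_R\supset\widetilde{F}$, yields $\sum_{\mathbf{P}\in\overrightarrow{\mathbf{F}}}\omega(\mathbf{P})=\mu(H_{(\widetilde{F})})$ as well, and (\ref{gluingeq}) follows. I do not anticipate any real obstacle here: the proof is a direct unpacking of Lemma \ref{leftFchoice} together with the Haar measure index formula. The one point worth emphasising is that the \emph{same} lift $\widetilde{F}$ of $\varphi_1(F)$ can be used to describe both $\overleftarrow{\mathbf{F}}$ and $\overrightarrow{\mathbf{F}}$ via Lemma \ref{leftFchoice}, so that both sums collapse to $\mu(H_{(\widetilde{F})})$ with no auxiliary invariance step required.
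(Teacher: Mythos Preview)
Your proposal is correct and follows essentially the same argument as the paper: apply Lemma \ref{leftFchoice} to count $\overleftarrow{\mathbf F}$ as $[H_{(\widetilde F)}:H_{(\widetilde P_L)}]$, use the Haar measure index identity to collapse the left-hand sum to $\mu(H_{(\widetilde F)})$, and then repeat symmetrically on the right. The paper's proof is virtually identical, with the same emphasis that both sides reduce to $\mu(H_{(\widetilde F)})$ for a common lift $\widetilde F$.
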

\begin{proof}
Given an admissible face pair $\mathbf{F}=(F,\varphi_1,\varphi_2)$, let $(P,\phi_1)$ be the polyhedron on the left of $(F,\varphi_1)$. All $\mathbf{P}\in\overleftarrow{\mathbf{F}}$ can be obtained by choosing a map $\phi_2$ such that $(P,\phi_1,\phi_2)\in\overleftarrow{\bf F}$, and by Lemma \ref{leftFchoice} there are $H_{(\widetilde{F})}/H_{(\widetilde{P})}$ such choices, where $\widetilde{F}\subset\widetilde{P}\subset\widetilde{\mathbf{X}}$ comes from a lift of $(P,\phi_1)$. Substituting (\ref{Haarsol}) into the left hand side of (\ref{gluingeq}) then gives us
\begin{align*}
\sum_{\bf P\in\overleftarrow{\bf F}}\omega(\bf P)&=\sum_{\bf P\in\overleftarrow{\bf F}}\mu(H_{(\widetilde{P})})\\
&=|H_{(\widetilde{F})}:H_{(\widetilde{P})}|\mu(H_{(\widetilde{P})})\\
&=\mu(H_{(\widetilde{F})}).
\end{align*}
Observe that this only depends on $\bf F$, and so by a symmetric argument we get the same value if we substitute (\ref{Haarsol}) into the right hand side of (\ref{gluingeq}).
\end{proof}
 \bigskip
 
 We have now constructed a common finite cover of $\mathbf{X}_1$ and $\mathbf{X}_2$, call it $\widehat{\bfX}$ say. Denote the covering maps by $\Phi_i:\widehat{\bfX}\to\bfX_i$. We colour the oriented fins of $\widehat{\bfX}$ by pulling back the colours from $\bfX_1$ and $\bfX_2$, which is well-defined by the following lemma. This makes the $\Phi_i$ coverings of graphs with coloured fins.
 
 \begin{lem}
 	If we pull back the colours on $\partial_\tto\bfX_1$ to $\partial_\tto\widehat{\bfX}$, then the covering $\Phi_2:\widehat{\bfX}\to\bfX_2$ preserves colours.
 \end{lem}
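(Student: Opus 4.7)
The plan is to trace colours through the admissibility diagram that underlies the construction of $\widehat{\bfX}$. Fix an oriented fin $\hat{\bbS}\in\partial_\tto\widehat{\bfX}$; the goal is to show $\lambda(\Phi_1(\hat{\bbS}))=\lambda(\Phi_2(\hat{\bbS}))$. Since $\widehat{\bfX}$ was assembled by gluing $\omega(\mathbf{P})$ copies of each admissible polyhedral pair $\mathbf{P}=(P,\phi_1,\phi_2)$ along face pairs, I would first select a short segment $\sigma$ of $\hat{\bbS}$ lying in the image of a single such polyhedral pair; then $\phi_i(\sigma)$ is a subsegment of $\Phi_i(\hat{\bbS})\subset\bfX_i$, and since colours are constant along each fin, the colour of $\Phi_i(\hat{\bbS})$ is determined by whichever fin contains $\phi_i(\sigma)$.

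Next, I would invoke the admissibility diagram \eqref{Padmisible}. Choose lifts $\tilde{\phi}_1,\tilde{\phi}_2:P\to\widetilde{\bfX}$ with $\tilde{\phi}_2=h\circ\tilde{\phi}_1$ for some $h\in H$. Then each $\tilde{\phi}_i(\sigma)$ is contained in a unique oriented fin $\widetilde{\bbS}_i\in\partial_\tto\widetilde{\bfX}$, and since $\Psi_i\circ\tilde{\phi}_i=\phi_i$, the fin $\widetilde{\bbS}_i$ projects to $\Phi_i(\hat{\bbS})$ under $\Psi_i$. The relation $\tilde{\phi}_2=h\circ\tilde{\phi}_1$ translates directly to $\widetilde{\bbS}_2=h(\widetilde{\bbS}_1)$.

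To finish, I would observe that the standing hypothesis that $H$ acts transitively on the oriented fins of each colour forces every colour class in $\partial_\tto\widetilde{\bfX}$ to be a single $H$-orbit and hence $H$-invariant, so the colouring on $\widetilde{\bfX}$ is $H$-invariant. Consequently $\widetilde{\bbS}_1$ and $\widetilde{\bbS}_2=h(\widetilde{\bbS}_1)$ carry the same colour. Since the colouring on $\widetilde{\bfX}$ is, in the setup of the theorem, the pullback of the colouring on $\bfX_i$ via $\Psi_i$ for both $i=1,2$ (compatibility of the two pullbacks being precisely $H$-invariance together with $\Gamma_1,\Gamma_2\leqslant H$), each $\Psi_i$ preserves colours; therefore $\lambda(\Phi_i(\hat{\bbS}))$ equals the common colour of $\widetilde{\bbS}_i$, and the two pullback colours on $\hat{\bbS}$ agree.

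The only subtle point, not really an obstacle, is recognising that the element $h\in H$ furnished by admissibility is exactly what identifies the $\Psi_1$- and $\Psi_2$-pullback colourings on $\widetilde{\bfX}$; once this is in place, the transitivity hypothesis on $H$ does all the remaining work, and the argument is entirely local on a single polyhedral pair.
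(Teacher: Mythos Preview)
Your proof is correct and follows essentially the same approach as the paper's: both restrict to a single polyhedral pair crossed by the oriented fin, invoke the admissibility diagram to obtain an element $h\in H$ relating the two lifts to $\widetilde{\bfX}$, and then use that $h$ preserves colours of oriented fins to conclude. The paper simply asserts that $h$ preserves colours, whereas you spell out why (colour classes are $H$-orbits, hence $H$-invariant); this is a harmless elaboration, though note that the cleanest reading of ``$H$ acts transitively on the oriented fins of each colour'' already presupposes that each colour class is $H$-stable, so your deduction is more a restatement of the hypothesis than a consequence of it.
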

\begin{proof}
	Take an oriented fin $\hat{\bbS}=(\hat{S},\hat{\tto})\in\partial_\tto\widehat{\bfX}$ with $\Phi_i(\hat{\bbS})=\bbS_i$. We must show that $\bbS_1$ and $\bbS_2$ have the same colour. Let $\tilde{\bbS}\in\partial_\tto\widetilde{\bfX}$ be a lift of $\bbS_1$ to $\widetilde{\bfX}$. If $\hat{\bbS}$ crosses a polyhedral pair $\bfP$ in $\widehat{\bfX}$, then we have an admissibility diagram (\ref{Padmisible}) with some $h\in H$. Restricting to the fins, we get the following commutative diagram.
	
	\begin{equation}
	\begin{tikzcd}[
	ar symbol/.style = {draw=none,"#1" description,sloped},
	isomorphic/.style = {ar symbol={\cong}},
	equals/.style = {ar symbol={=}},
	subset/.style = {ar symbol={\subset}}
	]
	{\tilde{\bbS}}\ar{dd}[swap]{\Psi_1}\ar{rr}{h}&& {h\tilde{\bbS}}\ar{dd}{\Psi_2}\\
	&P\ar{ul}{\tilde{\phi}_1}\ar{ur}[swap]{\tilde{\phi}_2}\ar{dl}{\phi_1}\ar{dr}[swap]{\phi_2}\\
	\bbS_1&&\bbS_2
	\end{tikzcd}
	\end{equation}
	As $h:\widetilde{\bfX}\to\widetilde{\bfX}$ preserves colours of oriented fins, we see that $\bbS_1$, $\tilde{\bbS}$, $h\tilde{\bbS}$ and $\bbS_2$ all have the same colour, as required.
\end{proof}

\bigskip

We now turn to proving equation (\ref{finequation}) from Theorem \ref{Leighton}. We will need the following definition. 

\begin{defn}(Arcs and oriented arcs)\\
	Let $(P,\phi)$ be a polyhedron for a graph with fins $\bfX$. An \emph{arc} in $(P,\phi)$ is a component of $\phi^{-1}(\partial\bfX)$, and its image in $\bfX$ is also called an \emph{arc}. 
	If $A\subset\bfX$ is an arc, then it is contained in a unique fin $S\in\partial\bfX$, it is homeomorphic to an interval, and it contains exactly one vertex of $S$. 
	An arc $A$ can be given an orientation $\tto$ as a 1-manifold, making it an \emph{oriented arc} $\bbA = (A, \tto)$. 
	If $\bbA$ is an oriented arc contained in an oriented fin $\bbS$ such that the orientations agree, then we say that $\bbA$ is an \emph{oriented subarc of $\bbS$}.
	
	If $\Phi:\widehat{\bfX}\to\bfX$ is a covering of graphs with fins, then it maps each arc in $\widehat{\bfX}$ homeomorphically to an arc in $\bfX$. Moreover, if $\hat{A}$ is an arc in $\widehat{\bfX}$ with orientation $\hat{\bbA}$, and $A=\Phi(\hat{A})$ is its image in $\bfX$, then we get an induced orientation $\bbA$ on $A$ such that $\hat{\bbA}\to\bbA$ is an orientation preserving homeomorphism, and we write $\bbA=\Phi(\hat{\bbA})$. Note that if $\hat{\bbA}$ is an oriented subarc of $\hat{\bbS}$, then $\Phi(\hat{\bbA})$ is an oriented subarc of $\Phi(\hat{\bbS})$. 
\end{defn}

Similarly, an \emph{arc} in a polyhedral pair $\bfP=(P,\phi_1,\phi_2)$ is a component of $\phi_1^{-1}(\partial\bfX_1)=\phi_2^{-1}(\partial\bfX_2)$. 
As $\widehat{\bfX}$ is built from the polyhedral pairs in $\mathcal{P}$, we see that the arcs in $\widehat{\bfX}$ correspond exactly to the arcs in elements of $\mathcal{P}$.
	We let $\partial_{\tto} \bfP$ denote the set of oriented arcs in $\bfP$.

Fix oriented fins $\bbS_1\in\partial_\tto \bfX_1$ and $\bbS_2\in\partial_\tto \bfX_2$ of the same colour $c$. Our goal is to sum lengths of fins in $\partial_\tto\widehat{\bfX}(\bbS_1,\bbS_2)$; we will do this by counting admissible polyhedral pairs whose images contain certain oriented subarcs of $\bbS_1$ and $\bbS_2$.

\begin{defn}\label{Acount}
	Let $\bbA_i$ be oriented subarcs of the oriented fins $\bbS_i$, and define 
	\begin{equation}
	P(\bbA_1,\bbA_2):=\big\{ \bfP = (P,\phi_1,\phi_2)\in\mathcal{P} \mid \exists \bbA \in \partial_{\tto} \bfP,\, \;\phi_i(\bbA)=\bbA_i \big\}, 
	\end{equation}
	the collection of polyhedral pairs containing oriented arcs that map to the $\bbA_i$. Then define
	\begin{equation}
	A(\bbA_1,\bbS_2):=\{\bbA_2\mid \bbA_2\text{ is an oriented subarc of $\bbS_2$ with }P(\bbA_1,\bbA_2)\neq\emptyset\}.
	\end{equation}
\end{defn}

In order to enumerate the elements of $P(\bbA_1,\bbA_2)$, we fix a polyhedron $(P,\phi_1)$ for $\mathbf{X}_1$ that contains $\bbA_1$ in its image $\phi_1(P)$ (this polyhedron will be unique up to isomorphism). 
Let $\bbA = (A, \tto)$ be the (unique) oriented arc in $P$ with $\phi_1(\bbA)=\bbA_1$. 
Enumerating the elements of $P(\bbA_1,\bbA_2)$ is now equivalent to enumerating maps $\phi_2:P\to {\bfX}_2$ such that $(P,\phi_1,\phi_2) \in P(\bbA_1,\bbA_2)$. 
For the following two lemmas it will also be helpful to fix a lift $\tilde{\phi}_1:P\to\widetilde{\bfX}$ of $\phi_1$, and letting $\widetilde{P} := \tilde{\phi}_1(P)$, $\tilde{A} := \tilde{\phi}_1(A)$ and $\tilde{\bbA}:=\tilde{\phi}_1(\bbA)$.

\begin{lem}\label{Asize}
	If $P(\bbA_1,\bbA_2)$ is non-empty, then it is in bijection with $H_{(\tilde{A})}/H_{(\widetilde{P})}$ -- where $H_{(\tilde{A})}$ is the pointwise stabiliser of $\tilde{A}$.
\end{lem}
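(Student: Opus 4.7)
The plan is to mimic the argument of Lemma~\ref{leftFchoice} essentially verbatim, replacing faces with oriented arcs. Note that since $\tilde{A}\subseteq\widetilde{P}$, any element of $H$ that pointwise fixes $\widetilde{P}$ pointwise fixes $\tilde{A}$, so $H_{(\widetilde{P})}\leqslant H_{(\tilde{A})}$ and the quotient $H_{(\tilde{A})}/H_{(\widetilde{P})}$ makes sense. Enumerating $P(\bbA_1,\bbA_2)$ is equivalent to enumerating maps $\phi_2:P\to\bfX_2$ such that $(P,\phi_1,\phi_2)\in P(\bbA_1,\bbA_2)$, since the polyhedron $(P,\phi_1)$ containing $\bbA_1$ is unique up to isomorphism.

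First I would fix a base element: since $P(\bbA_1,\bbA_2)$ is non-empty, pick $\bfP_0=(P,\phi_1,\phi_2^0)\in P(\bbA_1,\bbA_2)$, and then pick a compatible admissibility diagram~(\ref{Padmisible}) with some $h_0\in H$ and some lift $\tilde{\phi}_2^0$ such that $h_0\circ\tilde{\phi}_1=\tilde{\phi}_2^0$. By definition of $P(\bbA_1,\bbA_2)$, we have $\phi_2^0(\bbA)=\bbA_2$, so $\tilde{\phi}_2^0(\bbA)$ is some oriented arc $\tilde{\bbA}_2^0\subset\widetilde{\bfX}$ mapping to $\bbA_2$ under $\Psi_2$. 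Now for any other $(P,\phi_1,\phi_2)\in P(\bbA_1,\bbA_2)$, again pick an admissibility diagram giving $h\in H$ and a lift $\tilde{\phi}_2$. Since $\phi_2(\bbA)=\bbA_2$, the oriented arc $\tilde{\phi}_2(\bbA)$ is a lift of $\bbA_2$, and the deck group $\Gamma_2$ acts transitively on the lifts; so by post-composing $\tilde{\phi}_2$ with a suitable element of $\Gamma_2$ (and absorbing it into~$h$ as in Lemma~\ref{leftFchoice}), we may assume $\tilde{\phi}_2|_A=\tilde{\phi}_2^0|_A$. Then $h$ and $h_0$ agree on $\tilde{A}$, so $h\in h_0H_{(\tilde{A})}$.

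The converse is straightforward: for any $h\in h_0H_{(\tilde{A})}$, defining $\tilde{\phi}_2=h\circ\tilde{\phi}_1$ and $\phi_2=\Psi_2\circ\tilde{\phi}_2$ produces an admissible polyhedral pair, and because $h|_{\tilde{A}}=h_0|_{\tilde{A}}$ we get $\tilde{\phi}_2(\tilde{\bbA})=\tilde{\bbA}_2^0$, so $\phi_2(\bbA)=\bbA_2$ and $(P,\phi_1,\phi_2)\in P(\bbA_1,\bbA_2)$. Finally the resulting map $\phi_2$ depends only on the coset $hH_{(\widetilde{P})}$, since post-composition of $h$ by an element of $H_{(\widetilde{P})}$ does not change $h\circ\tilde{\phi}_1$. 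This gives the desired bijection $P(\bbA_1,\bbA_2)\leftrightarrow h_0H_{(\tilde{A})}/H_{(\widetilde{P})}\cong H_{(\tilde{A})}/H_{(\widetilde{P})}$.

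The only genuinely new (but still minor) subtlety compared to Lemma~\ref{leftFchoice} is the step of adjusting the lift $\tilde{\phi}_2$ so that it agrees with $\tilde{\phi}_2^0$ on $\tilde{A}$; this uses the transitivity of $\Gamma_2$ on the $\Psi_2$-preimages of the oriented arc $\bbA_2\subset\bfX_2$, which is exactly the analogue of the step in Lemma~\ref{leftFchoice} that used transitivity of $\Gamma_2$ on the preimages of $\varphi_2(F)$. Everything else is bookkeeping, so I expect the proof to be essentially identical in form and complexity to that of Lemma~\ref{leftFchoice}.
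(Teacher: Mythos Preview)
Your proposal is correct and follows essentially the same argument as the paper's proof, which likewise mimics Lemma~\ref{leftFchoice} by fixing a base admissibility diagram, using $\Gamma_2$-transitivity on lifts of the oriented arc $\bbA_2$ to normalise the lift $\tilde{\phi}_2$, and then identifying the remaining freedom with the coset space $H_{(\tilde{A})}/H_{(\widetilde{P})}$. The only cosmetic difference is that the paper compares two arbitrary elements of $P(\bbA_1,\bbA_2)$ directly rather than fixing a basepoint $h_0$, but the content is identical.
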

\begin{proof}	
	The proof is very similar to Lemma \ref{leftFchoice}. Suppose $(P,\phi_1,\phi_2)\in P(\bbA_1,\bbA_2)$. We then get an admissibility diagram (\ref{Padmisible}) for some $h\in H$. Any other $(P,\phi_1,\phi'_2)\in P(\bbA_1,\bbA_2)$ also gives an admissibility diagram, but with $h$ replaced by $h'$ and $\tilde{\phi}_2$ replaced by $\tilde{\phi}'_2$. Since $\phi_2(\bbA)=\phi'_2(\bbA)=\bbA_2$, we know that $h(\tilde{\bbA})$ and $h'(\tilde{\bbA})$ differ by an element of $\Gamma_2$, so by composing $h'$ with an element of $\Gamma_2$ we may assume that $h(\tilde{\bbA})=h'(\tilde{\bbA})$. 
	This implies that $h'\in hH_{(\tilde{A})}$, and conversely any $h'\in hH_{(\tilde{A})}$ defines a map $\phi'_2$ with $(P,\phi_1,\phi'_2)\in P(\bbA_1,\bbA_2)$. 
	Finally, the map $\phi'_2$ only depends on the coset $h'G_{(\widetilde{P})}$, and so we obtain a bijection between the choices $\phi'_2$ and the quotient $H_{(\tilde{A})}/H_{(\widetilde{P})}$.	
\end{proof}

\begin{lem}\label{Bsize}
	The ratio $|A(\bbA_1,\bbS_2)|/\ell(\bbS_2)$ only depends on the colour $c$.
\end{lem}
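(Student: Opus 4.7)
My plan is to compute $|A(\bbA_1,\bbS_2)|$ directly in terms of a translation length determined only by the colour $c$. Fix a lift $\tilde{\bbA}_1\subset\tilde{\bbS}_1$ of $\bbA_1$ in $\widetilde{\bfX}$, and fix a lift $\tilde{\bbS}_2$ of $\bbS_2$. By the argument used in Lemma \ref{Asize}, an oriented subarc $\bbA_2$ of $\bbS_2$ lies in $A(\bbA_1,\bbS_2)$ if and only if there is $h\in H$ with $h(\tilde{\bbA}_1)$ an oriented subarc of some lift of $\bbS_2$ projecting to $\bbA_2$; since $\Gamma_2\le H$ acts transitively on the lifts of $\bbS_2$, we may replace ``some lift of $\bbS_2$'' by $\tilde{\bbS}_2$. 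Setting
\[ Y:=H\cdot\tilde{\bbA}_1\cap\{\text{oriented subarcs of }\tilde{\bbS}_2\}, \]
we obtain $A(\bbA_1,\bbS_2)=\Psi_2(Y)$, and $Y$ is non-empty by the transitivity of the $H$-action on oriented fins of colour $c$.

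Next I analyse $Y$ via the $H$-stabiliser $H_{\tilde{\bbS}_2}$ of the oriented fin $\tilde{\bbS}_2$. If $\tilde{\bbA}',\tilde{\bbA}''\in Y$, say $\tilde{\bbA}'=h'(\tilde{\bbA}_1)$ and $\tilde{\bbA}''=h''(\tilde{\bbA}_1)$, then $h''(h')^{-1}$ sends a subarc of $\tilde{\bbS}_2$ to another subarc of $\tilde{\bbS}_2$, and since distinct oriented fins contain disjoint subarcs this forces $h''(h')^{-1}\in H_{\tilde{\bbS}_2}$. Hence $Y$ is a single $H_{\tilde{\bbS}_2}$-orbit. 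Because $H$ has no edge inversions and $H_{\tilde{\bbS}_2}$ preserves the chosen orientation of $\tilde{\bbS}_2$, the action of $H_{\tilde{\bbS}_2}$ on $\tilde{\bbS}_2$ factors through $\operatorname{Isom}^+(\tilde{\bbS}_2)\cong\mathbb{Z}$. The image is nontrivial because it contains the deck translation of $\tilde{\bbS}_2\to\bbS_2$, so it is cyclic, generated by some translation $\tau_c$ of minimal positive length $t_c$. Crucially $t_c$ depends only on $c$: by the transitivity of $H$ on oriented fins of colour $c$, any two such fins have $H$-stabilisers that are conjugate via an element of $H$ restricting to an isometry between the fins, so minimal translation lengths match.

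Finally I count. Identifying $\tilde{\bbS}_2$ with $\mathbb{R}$ (with vertices at $\mathbb{Z}$) and oriented subarcs of $\tilde{\bbS}_2$ with its vertices, the $H_{\tilde{\bbS}_2}$-orbit $Y$ is a coset of $t_c\mathbb{Z}$ in $\mathbb{Z}$. The deck group of $\Psi_2\colon\tilde{\bbS}_2\to\bbS_2$ is $\langle\tau_2\rangle$, where $\tau_2$ translates by $\ell(\bbS_2)$, and $\tau_2\in H_{\tilde{\bbS}_2}$ forces $\tau_2=\tau_c^m$ with $m=\ell(\bbS_2)/t_c\in\mathbb{Z}_{>0}$. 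Therefore
\[ |A(\bbA_1,\bbS_2)|=|Y/\langle\tau_2\rangle|=m=\ell(\bbS_2)/t_c, \]
so $|A(\bbA_1,\bbS_2)|/\ell(\bbS_2)=1/t_c$ depends only on $c$, as required. The main obstacle is the bookkeeping around orientations and lifts; once one verifies that $H_{\tilde{\bbS}_2}$ acts by translations on $\tilde{\bbS}_2$ and that $Y$ is a single $H_{\tilde{\bbS}_2}$-orbit, the conclusion is immediate from comparing the two translation lattices $\langle\tau_c\rangle$ and $\langle\tau_2\rangle$.
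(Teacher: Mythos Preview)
Your proof is correct and follows essentially the same approach as the paper's: both reduce the question to the $H$-orbit of a lifted arc inside the oriented fin $\tilde{\bbS}_2$, identify $A(\bbA_1,\bbS_2)$ with the image of this orbit under the deck quotient, and observe that the orbit is governed by the minimal translation length of $H_{\tilde{\bbS}_2}$, which depends only on the colour $c$ by transitivity. Your write-up is in fact more explicit about the final counting step than the paper's.
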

\begin{proof}
	Let $\tilde{\bbS}_2\in\partial\widetilde{\bfX}$ be an oriented fin that covers $\bbS_2$, and let $\tilde{\bbA}_2$ be an oriented subarc of $\tilde{\bbS}_2$. 
	
	We claim that $\Psi_2(\tilde{\bbA}_2)\in A(\bbA_1,\bbS_2)$ if and only if $\tilde{\bbA}_2$ is a $H$-translate of $\tilde{\bbA}$. 
	Indeed, if $\bbA_2:=\Psi_2(\tilde{\bbA}_2)\in A(\bbA_1,\bbS_2)$, then there is an admissible $(P,\phi_1,\phi_2)$ with $\phi_2(\bbA)=\bbA_2$, and it has an associated admissibility diagram (\ref{Padmisible}) for some $h\in H$. Then $h(\tilde{\bbA})$ and $\tilde{\bbA}_2$ will both be a lifts of $\bbA_2$, so by composing $h$ with an element of $\Gamma_2$ we may assume that $h(\tilde{\bbA})=\tilde{\bbA}_2$. Conversely, if $h(\tilde{\bbA})=\tilde{\bbA}_2$ for some $h\in H$, then we get an admissibility diagram (\ref{Padmisible}), and $\Psi_2(\tilde{\bbA}_2)=\phi_2(\bbA)\in A(\bbA_1,\bbS_2)$.
	
	Thus the proportion of oriented subarcs of $\bbS_2$ that lie in $A(\bbA_1,\bbS_2)$ is equal to the proportion of oriented subarcs of $\tilde{S}_2$ that lie in the $H$-orbit of $\tilde{\bbS}$. In turn this is equal to the smallest positive translation length of elements of $H_{\wt{\bbS}}$. It follows that it is independent of the choice of $\tilde{\bbS}_2$ that covers $\bbS_2$, in fact it only depends on the $H$-orbit of $\tilde{\bbS}_2$, thus only depends on the colour of the oriented fin. 
\end{proof}

For $\hat{\bbS}\in\partial_\tto\widehat{\bfX}(\bbS_1,\bbS_2)$, the proportion of oriented subarcs $\bbA$ of $\hat{\bbS}$ that descend to $\bbA_1$ is $1/\ell(\bbS_1)$, and any such $\bbA$ must lie in some  $(P,\phi_1,\phi_2)\in\mathcal{P}$ that forms a piece of $\widehat{\bfX}$, with $\phi_1(\bbA)=\bbA_1$ and $\phi_2(\bbA)=\bbA_2$ some oriented subarc of $\bbS_2$. There are $\omega(P,\phi_1,\phi_2)$ copies of $(P,\phi_1,\phi_2)$ in $\widehat{\bfX}$, thus we can make the following computation:

\begin{align}\label{Kc}
\sum_{\hat{S}\in\partial_\tto\widehat{\bfX}(\bbS_1,\bbS_2)}\ell(\hat{\bbS})&=\ell(\bbS_1)\sum_{\substack{\bbA_2\subset \bbS_2\\ (P,\phi_1,\phi_2)\in P(\bbA_1,\bbA_2)}}\omega(P,\phi_1,\phi_2)\nonumber\\
&=\ell(\bbS_1)\sum_{\bbA_2\subset \bbS_2}|P(\bbA_1,\bbA_2)|\mu(H_{(\widetilde{P})})\nonumber\\
&=\ell(\bbS_1)|A(\bbA_1,\bbS_2)||H_{(\tilde{A})}:H_{(\widetilde{P})}|\mu(H_{(\widetilde{P})})&\text{by Lemma \ref{Asize}}\nonumber\\
&=\ell(\bbS_1)\ell(\bbS_2)\left(\frac{|A(\bbA_1,\bbS_2)|}{\ell(\bbS_2)}\right)\mu(H_{(\tilde{A})})\nonumber\\
&=K_{\tilde{\bbA}}\,\ell(\bbS_1)\ell(\bbS_2),
\end{align}

\noindent where $K_{\tilde{\bbA}}$ only depends on the $H$-orbit of $\tilde{\bbA}$ by Lemma \ref{Bsize}. The key point now is that the oriented fins in $\widetilde{\bfX}$ of colour $c$ are all in the same $H$-orbit. As a result, if we had chosen different $\bbS_1$ and $\bbS_2$ of colour $c$, then by choosing a suitable oriented subarc $\bbA_1$ in $\bbS_1$ we can arrange for the oriented subarc $\tilde{\bbA}$ to be in the same $H$-orbit as before. Thus $K_{\tilde{\bbA}}$ in fact only depends on $c$, and we will write it as $K_c$.

To complete the proof of equation (\ref{finequation}) it remains to compute a formula for $K_c$. We do this by summing (\ref{Kc}) over all $\bbS_1\in\partial_\tto \bfX_1$ and $\bbS_2\in\partial_\tto \bfX_2$ of colour $c$:

\begin{align*}
\sum_{\substack{\lambda_1(\bbS_1)=\lambda_2(\bbS_2)=c\\\hat{\bbS}\in\partial_\tto\widehat{\bfX}(\bbS_1,\bbS_2)}}\ell(\hat{\bbS})=K_c\left(\sum_{\lambda_1(\bbS_1)=c}\ell(\bbS_1)\right)\left(\sum_{\lambda_2(\bbS_2)=c}\ell(\bbS_2)\right)
\end{align*}
We can then substitute in the definition of the density $\rho_c$ (Definition \ref{density}), which will be the same for $\mathbf{X}_1$, $\mathbf{X}_2$ and $\widehat{\mathbf{X}}$ since they are all commensurable, to obtain:
\begin{equation*}
\rho_c|\widehat{X}|=K_c\rho_c|X_1|\rho_c|X_2|
\end{equation*}
This gives the required formula for $K_c$:
\begin{equation*}
K_c=\frac{|\widehat{X}|}{\rho_c|X_1||X_2|}
\end{equation*}

\bigskip
\section{Building graphs of spaces} \label{sec:GraphOfSpaces}

In this section and the next we prove Theorem \ref{thm:main}. In this section we build graphs of spaces for the two groups that share a number of properties, while in the next section we construct a common finite cover.

From now on let $G$ be the group from Theorem \ref{thm:main}, i.e. a group in $\mathscr{C}^\bullet$ that is hyperbolic relative to virtually abelian subgroups, and let $\psi:G\to G'$ be a fixed quasi-isometry to another finitely generated group.
Recall from the introduction that $\mathscr{C}^\bullet$ is quasi-isometrically rigid, and being hyperbolic relative to virtually abelian subgroups is preserved by quasi-isometry as well because of \cite[Theorem 1.6]{DrutuSapir05} and the quasi-isometric rigidity of abelian groups \cite{Pansu83}. Hence $G'$ is also a group in $\mathscr{C}^\bullet$ that is hyperbolic relative to virtually abelian subgroups.
Let $T$ and $T'$ be JSJ trees for $G$ and $G'$ and let $T_c$ and $T_c'$ be their associated trees of cylinders.
By Theorem \ref{thm:QIpreserveCyl}, there is an isomorphism $\hat{\psi}: T_c \rightarrow T_c'$ such that $\psi$ restricts to quasi-isometries $G_v\to G'_{\hat{\psi}(v)}$ and $G_e\to G'_{\hat{\psi}(e)}$ for $v\in VT_c$ and $e\in ET_c$. We know from Remark~\ref{rem:peripheralIsCylindrical} that the cylinder stabilisers for $G$ are virtually abelian, so if $T_c$ is just a single vertex then $G$ is either virtually free or virtually abelian, and such groups are already known to be quasi-isometrically rigid. So we may assume that $T_c$ is not a single vertex.

\begin{notation}
	Recall that the group $\scrG$ of Hausdorff equivalence classes of quasi-isometries $G\to G$ acts on the tree of cylinders $T_c$ by Corollary \ref{cor:scrGactTc}, and similarly $\scrG'$ acts on $T'_c$.	
	From now on it will be convenient to identify $\mathscr{G}$ with $\mathscr{G}'$ via the isomorphism $[f]\mapsto [\psi f \psi^{-1}]$, and to identify $T_c$ with $T'_c$ via the isomorphism $\hat \psi$ given by Theorem~\ref{thm:QIpreserveCyl}.
	
	$G$ acts on itself by left multiplication, so we have a homomorphism $G\to\scrG$.
	Any pair of edges that are not incident to a cylinder vertex in $T_c$ will have stabilizers with trivial intersection. 
	We can deduce from this that $G$ acts on $T_c$ faithfully and the homomorphism $G\to\scrG$ is injective, thus we can think of $G$ as a subgroup of $\scrG$. Similarly, $G'$ is a subgroup of $\scrG'$, so also a subgroup of $\scrG$ by the above identification (i.e. when we say $G'$ is a subgroup of $\mathscr{G}$ we mean the subgroup $\psi^{-1} G' \psi$). This means that $G'$ acts on the tree $T_c$; since this action is conjugate to the action of $G'$ on $T'_c$, we know that $T_c$ is a tree of cylinders for $G'$.
\end{notation}

\begin{notation}
	Recall that the tree of cylinders has a partition of the vertex set $VT_c=V_0 T_c\sqcup V_1 T_c$. $V_0T_c$ corresponds to vertices of $T$ that lie in more than one cylinder; in our case all vertex groups of the JSJ decomposition are rigid, so we will refer to vertices $u\in V_0 T_c$ as \emph{rigid vertices}. Note that the stabilisers of rigid vertices will all be virtually non-abelian free. The vertices $V_1 T_c$ correspond to cylinders in $T$; in Section \ref{sec:Cyl} we denoted cylinders by $Y\subset T$ or $Y\in V_1 T_c$, but as we no longer need to work with them as subtrees of $T$ we will instead denote them by $v\in V_1 T_c$, and refer to them as \emph{cylindrical vertices}. By Lemma \ref{lem:cylinderproduct} and Remark \ref{rem:peripheralIsCylindrical} the stabilisers of cylindrical vertices will be virtually $\mathbb{Z}$ or $\mathbb{Z}^2$.
\end{notation}

\subsection{Cylindrical factors and orientations}

By Theorem~\ref{thm:separableBalancedRelHyp}, we know that $G$ and $G'$ are both balanced.

\begin{lem}\label{lem:cylfactor}
	By passing to a finite index subgroup of $G$, we can assume that $G$ is torsion-free, and that for each $v\in V_1 T_c$ there is subgroup $\mathbb{Z}\cong\mathbb{Z}_v\leqslant G_v$, such that either $G_v = \mathbb{Z}_v$ or $G_v = \mathbb{Z}_v \times \mathbb{Z}$, and $G_e=\mathbb{Z}_v$ for any $e\in\lk(v)$. 
\end{lem}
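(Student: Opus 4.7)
The plan is to obtain this lemma as a direct repackaging of Lemma~\ref{lem:cylinderproduct}, Remark~\ref{rem:peripheralIsCylindrical}, and one elementary observation about stabilisers of vertices in a cylinder.

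First I would appeal to Theorem~\ref{thm:separableBalancedRelHyp} to note that $G$ is balanced, so that Lemma~\ref{lem:cylinderproduct} applies. Passing to a finite index torsion-free subgroup of $G$ (which I continue to call $G$), every cylinder $Y \subset T$ has stabiliser $G_Y = \mathbb{Z}_v \times F_n$ for some $n \geq 0$, with the $\mathbb{Z}_v$ factor pointwise fixing $Y$ and the $F_n$ factor acting freely and cocompactly on $Y$. Identifying a cylindrical vertex $v \in V_1 T_c$ with its associated cylinder $Y \subset T$, this gives $G_v = \mathbb{Z}_v \times F_n$. Remark~\ref{rem:peripheralIsCylindrical} then supplies $n \leq 1$, since $G$ is hyperbolic relative to virtually abelian subgroups and $\mathbb{Z} \times F_n$ is virtually abelian only when $n \leq 1$. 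This yields the required dichotomy $G_v = \mathbb{Z}_v$ or $G_v = \mathbb{Z}_v \times \mathbb{Z}$.

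It then remains to verify that $G_e = \mathbb{Z}_v$ for every $e \in \lk(v) \subset ET_c$. Such an edge has the form $e = (u, Y)$ where $u \in V_0 T_c$ is a vertex of $T$ lying in $Y$, so $G_e = G_v \cap G^T_u$, with $G^T_u$ the $T$-stabiliser of $u$. The inclusion $\mathbb{Z}_v \subseteq G_e$ is immediate since $\mathbb{Z}_v$ pointwise fixes $Y$ and in particular fixes $u$. For the reverse inclusion, an element of $G_v$ has the form $(z, f) \in \mathbb{Z}_v \times F_n$; since the $\mathbb{Z}_v$-component fixes $u$, the element fixes $u$ if and only if $f$ does, and since $F_n$ acts freely on $Y$ this forces $f = 1$, whence $(z,f) \in \mathbb{Z}_v$.

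I do not expect any genuine obstacle here: the substance of the lemma is already contained in Lemma~\ref{lem:cylinderproduct} and Remark~\ref{rem:peripheralIsCylindrical}, and the present statement merely recasts those results in the tree-of-cylinders language used in the remainder of the section.
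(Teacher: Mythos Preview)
Your proposal is correct and follows essentially the same route as the paper: both apply Lemma~\ref{lem:cylinderproduct} (using balancedness from Theorem~\ref{thm:separableBalancedRelHyp}) to get the product splitting, invoke Remark~\ref{rem:peripheralIsCylindrical} to force $n\leq 1$, and then verify $G_e=\mathbb{Z}_v$ by noting that $G_e$ fixes a vertex of $Y$ while the free factor acts freely on $Y$. The only cosmetic difference is that the paper has already recorded the virtual abelianness of cylinder stabilisers in the setup preceding the lemma, so its proof jumps straight to the dichotomy $G_v=\mathbb{Z}_v$ or $G_v=\mathbb{Z}_v\times\mathbb{Z}$.
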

\begin{proof}
	 By Lemma \ref{lem:cylinderproduct} we can pass to a finite index torsion-free subgroup of $G$ such that, for each $v\in V_1 T_c$ representing a cylinder $Y\subset T$, we get a product splitting $G_v = \mathbb{Z}_v$ or $G_v = \mathbb{Z}_v \times \mathbb{Z}$, where the $\mathbb{Z}_v$ factor pointwise fixes $Y$, and in the second case the second factor acts freely cocompactly on $Y$. For any $e\in\lk(v)$, $G_e$ will act elliptically on $T$, hence it will be a subgroup of $\mathbb{Z}_v$, but we also know that $\mathbb{Z}_v$ fixes $e\in EY$, so $G_e=\mathbb{Z}_v$.
\end{proof}

The subgroup $\mathbb{Z}_v$ from Lemma~\ref{lem:cylfactor} is called the \emph{cylindrical factor} of $G_v$. 
Similarly, we apply Lemma~\ref{lem:cylfactor} to $G'$ to make it torsion-free and get cylindrical factors $\mathbb{Z}'_v$ for the vertex stabilisers $G'_v$, where $v\in V_1 T_c$.

\begin{defn}(Oriented cylinders and edge groups)\\\label{defn:orientedcylinders}
	An \emph{orientation} $\cO$ on a cylindrical factor $\Zv$ is a choice of one of its two ends. The pair $(v,\cO)$ is called an \emph{oriented cylinder}. If $e\in\lk(v)$ then $G_e=\Zv$, so $\cO$ is also a choice of end of the subgroup $G_e$, and we call the pair $(e,\cO)$ an \emph{oriented edge group}.
	Let $\bar{\cO}$ denote the opposite end of $\Zv$.
	
	The group $\scrG$ of Hausdorff equivalence classes of quasi-isometries $G\to G$ acts on the tree of cylinders $T_c$ by Corollary \ref{cor:scrGactTc}. For $[f]\in\scrG$ that acts by $\hat{f}\in\Aut(T_c)$ we also have $f(G_e)\sim G_{\hat{f}(e)}$ for $e\in ET_c$ by Theorem \ref{thm:QIpreserveCyl}(3), hence $\scrG$ acts on the set of oriented edge groups, and also on the set of oriented cylinders. To avoid over-counting we will always consider edges with terminus in a cylindrical vertex, let $E_1 T_c\subset ET_c$ denote this set of edges. We denote $\scrG$-orbits using square brackets, so for $e\in E_1 T_c$ and $v\in V_1 T_c$ we have:
		\begin{align*}
		[e,\cO]:=\scrG\cdot(e,\cO)\\
		[v,\cO]:=\scrG\cdot(v,\cO)
		\end{align*}
		Let $\mathcal{C}$ denote the set of $\scrG$-orbits $[e,\cO]$ for $e\in E_1 T_c$, which we will think of as a colouring of the oriented edge groups. If $[f]\cdot(e_1,\cO_1)=(e_2,\cO_2)$ then
		\begin{equation}\label{actorientflip}
[f]\cdot(e_1,\bar{\cO}_1)=(e_2,\bar{\cO}_2),
		\end{equation}
		so for $c=[e,\cO]$ we can define $\bar{c}:=[e,\bar{\cO}]$.
\end{defn}

\begin{notation}\label{not:scrGorbits}
	We will also use square brackets to denote $\scrG$-orbits in $E_1T_c$ and $V_0 T_c$: $[e]:=\scrG\cdot e$ for $e\in E_1T_c$ and $[u]:=\scrG\cdot u$ for $u\in V_0 T_c$. Note that $[e,\cO]$ determines $[e]$.
\end{notation}

\subsection{Cylinder numbers and ratios}\label{sec:CylNumbers}

\begin{defn}(Cylinder numbers and ratios)\label{defn:torusratio}\\
	Let $v \in V_1T_c$ be a cylindrical vertex. For an orientation $\cO$ on $\Zv$ define
		$$\lk(v,\cO):=\{(e,\cO)\mid e\in\lk(v)\}.$$
	 For a colour $c\in\mathcal{C}$, define
	$$\lk(v,\cO,c):=\{(e,\cO)\mid e\in\lk(v), [e,\cO]=c\}.$$	
	
	The \emph{cylinder number} $t_c(v,\cO)$ is the number of $G_v$-orbits of oriented edges in $\lk(v,\cO,c)$.
	The \emph{cylinder ratio} of $(v,\cO)$ is
	\begin{equation*}
		t(v,\cO)=[c\mapsto t_c(v,\cO)],
	\end{equation*}
	where the brackets indicate that we only define the function up to rescaling. 
	Similarly, let $t'_c(v,\cO)$ be the number of $G'_v$-orbits of oriented edge groups in $\lk(v,\cO,c)$, and $t'(v,\cO)=[c\mapsto t'_c(v,\cO)]$.
\end{defn}

The motivation for cylinder numbers is the following, which will be made more precise later on. In a graph of spaces for the splitting of $G$ induced by $T_c$, we can take the vertex space for $G_v$ to be a circle or a torus, and the edge spaces for incident edge stabilisers to be circles; the orientation $\cO$ induces orientations on the edge spaces as 1-manifolds, so $\mathcal{C}$ gives a colouring of oriented edge spaces.
 The cylinder number $t_c(v,\cO)$ is just the number of edge spaces incident at the vertex space for $G_v$, with orientation induced by $\cO$, of colour $c$.

We note that  $v$ is a finite valence vertex in the case $G_v \cong \mathbb{Z}$, and by Lemma \ref{lem:cylfactor} the stabiliser $G_v$ fixes $\lk(v)$, so the cylinder number $t_c(v,\cO)$ is just the size of $\lk(v,\cO,c)$.
So in this case not only is $t(v,\cO) = t'(v,\cO)$, but $t_c(v,\cO) = t_c'(v,\cO)$.
In the case that $G_v \cong \mathbb{Z}^2$, although $t_c(v,\cO)$ is not in general equal to $t_c'(v,\cO)$, we will show that the cylinder ratios are in fact equal and that we can pass to finite index subgroups of $G$ and $G'$ such that the cylinder numbers are equal.

\begin{remk}\label{remk:cylstabpreserveO}
	Consider a cylindrical vertex stabiliser $G_v=\Zv\times\mathbb{Z}$ and fix an orientation $\cO$ on $\Zv$. For an edge $e\in\lk(v)$, any $g\in G_v$ maps $G_e=\mathbb{Z}_v\times\{0\}$ to some coset $\mathbb{Z}_v\times\{n\}$ by a translation of $\Zv\times\mathbb{Z}$, so the induced quasi-isometry $G_e\to G_{ge}=G_e$ is at bounded distance from the identity, and the orientation $\cO$ is preserved
	\begin{align}\label{cylstabpreserveO}
	g\cdot(e,\cO)=(ge,\cO)\\\nonumber
	g\cdot(v,\cO)=(v,\cO).
	\end{align}
	Equations (\ref{cylstabpreserveO}) also hold for $g\in G'_v$ by considering its action on $G'_v=\mathbb{Z}'_v\times\mathbb{Z}$. So $G_v$- and $G'_v$-orbits in $\lk(v,\cO)$ just correspond to $G_v$- and $G'_v$-orbits in $\lk(v)$.
\end{remk}

\begin{lem}\label{lem:torusratio}
	\begin{enumerate}
		\item $t(v,\cO)=t'(v,\cO)$ for all $\mathbb{Z}^2$ cylinders $v \in V_1 T_c$ with orientation $\cO$.
		\item $t(v,\cO)$ only depends on the $\mathscr{G}$-orbit $[v,\cO]$.
	\end{enumerate}	
\end{lem}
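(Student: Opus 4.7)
The plan is to reduce both parts to a comparison of two free cocompact $\mathbb{Z}$-actions on the same quasi-linear set $\lk(v)$, using the product structure from Lemma~\ref{lem:cylfactor} together with a density argument. For (1), Lemma~\ref{lem:cylfactor} gives $G_v = \Zv \times \mathbb{Z}$ and $G'_v = \mathbb{Z}'_v \times \mathbb{Z}$ with $\Zv$ and $\mathbb{Z}'_v$ fixing every edge of $\lk(v)$, so the actions of $G_v$ and $G'_v$ on $\lk(v)$ factor through $\bar G_v := G_v/\Zv \cong \mathbb{Z}$ and $\bar G'_v := G'_v/\mathbb{Z}'_v \cong \mathbb{Z}$, each acting freely and cocompactly. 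The cylindrical vertex $v \in V_1T_c$ corresponds to a subtree $Y \subset T$ with $VY = \lk(v)$, and the restricted path metric $d_Y$ makes $\lk(v)$ quasi-isometric to $\mathbb{Z}$, with $\bar G_v$ acting by isometric translations along an axis; by \v{S}varc--Milnor, $\bar G'_v$ also acts by quasi-isometries on $(\lk(v), d_Y)$ with bounded-diameter fundamental domains.

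I would then run a density argument on balls $B_R$ of radius $R$ in $(\lk(v), d_Y)$. Writing $L_G$ and $L_{G'}$ for the $d_Y$-translation lengths of generators of $\bar G_v$ and $\bar G'_v$ respectively, $B_R$ contains approximately $2R/L_G$ translates of a $\bar G_v$-fundamental domain and approximately $2R/L_{G'}$ translates of a $\bar G'_v$-fundamental domain; each such $\bar G_v$-fundamental domain contributes $t_c(v, \cO)$ elements of colour $c$ and $\sum_{c'} t_{c'}(v, \cO)$ elements in total, and similarly for $\bar G'_v$. Equating the asymptotic proportions,
\begin{equation*}
\frac{t_c(v, \cO)}{\sum_{c'} t_{c'}(v, \cO)} \;=\; \lim_{R \to \infty} \frac{|B_R \cap \lk(v, \cO, c)|}{|B_R|} \;=\; \frac{t'_c(v, \cO)}{\sum_{c'} t'_{c'}(v, \cO)},
\end{equation*}
so $t_c(v, \cO)/t'_c(v, \cO)$ is independent of $c$, which is exactly $t(v, \cO) = t'(v, \cO)$.

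For (2), the case $G_v = \Zv$ is immediate since then $G_v$ fixes $\lk(v)$ pointwise and $t_c(v, \cO) = |\lk(v, \cO, c)|$ is manifestly $\mathscr{G}$-equivariant. In the case $G_v = \Zv \times \mathbb{Z}$, given $[f] \in \mathscr{G}$ with $[f](v, \cO) = (v', \cO')$, the bijection $\lk(v, \cO, c) \to \lk(v', \cO', c)$ induced by $[f]$ intertwines the $G_v$-action with the $[f] G_v [f]^{-1}$-action, so $t_c(v, \cO)$ is the number of $[f] G_v [f]^{-1}$-orbits on $\lk(v', \cO', c)$. Since $[f] \Zv [f]^{-1}$ is a $\mathbb{Z}$-subgroup of $[f] G_v [f]^{-1}$ fixing $\lk(v')$ pointwise with free cocompact $\mathbb{Z}$-quotient, the density argument of (1) applies verbatim with the pair $([f] G_v [f]^{-1}, G_{v'}) \leqslant \mathscr{G}_{v'}$ in place of $(G_v, G'_v) \leqslant \mathscr{G}_v$, yielding $t(v, \cO) = t(v', \cO')$. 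The main technical hurdle will be making the density computation rigorous, in particular verifying that $\bar G'_v$-fundamental domains in $(\lk(v), d_Y)$ have bounded $d_Y$-diameter so that ball counts are asymptotically proportional to fundamental-domain counts via either action.
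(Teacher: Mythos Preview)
Your overall strategy is right, and it is essentially the same density argument the paper uses: realise $\lk(v)$ as a quasi-line, show both $\bar G_v$ and $\bar G'_v$ act on it with bounded fundamental domains, and read off the equality of cylinder ratios from asymptotic counting. However, there is a genuine gap at exactly the point you flag as the ``main technical hurdle'', and your proposed fix via \v{S}varc--Milnor does not work.

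The problem is the metric $d_Y$. The cylinder $Y$ is a subtree of the JSJ tree $T$ \emph{for $G$}, and $G'$ does not act on $T$ --- only on $T_c$, which we have identified with $T'_c$. So while $\bar G'_v$ does act on $\lk(v)$ as a set of edges of $T_c$, there is no reason this action is by isometries, or even by uniform quasi-isometries, of $d_Y$. Your appeal to \v{S}varc--Milnor is backwards: that lemma takes a proper cocompact \emph{isometric} action as input and outputs a quasi-isometry; it cannot be used to conclude that an arbitrary free cocompact action by bijections is by quasi-isometries. Without that, there is no reason $\bar G'_v$-fundamental domains have bounded $d_Y$-diameter, and the density computation collapses. (Concretely: two free cocompact $\mathbb{Z}$-actions on a coloured copy of $\mathbb{Z}$ can have different colour-orbit ratios if one of them ``stretches'' one colour class relative to another; only a common metric constraint rules this out.) A smaller inaccuracy in the same vein: $\lk(v)$ is not all of $VY$, only those vertices of $Y$ lying in at least two cylinders.

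The paper's fix is to replace $d_Y$ by the Hausdorff distance between the cosets $G(e)$ inside $G$ --- equivalently the function $n:\lk(v)\to\mathbb{Z}$ coming from the splitting $G_v=\Zv\times\mathbb{Z}$ --- and the analogous $n'$ on the $G'$ side. Each is genuinely isometric for its own action, and crucially they are quasi-isometric to each other: Theorem~\ref{thm:QIpreserveCyl}(2) gives $\psi(G(e))\sim_K G'(e)$ uniformly, and together with undistortion of peripheral subgroups this forces a monotone quasi-isometry $\nu:\mathbb{Z}\to\mathbb{Z}$ with $n'\approx\nu\circ n$. Once you have that, your density argument goes through verbatim. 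The same works for (2) with $[f]\in\scrG$ in place of $\psi$. So the missing ingredient is precisely the use of the quasi-isometry (via Theorem~\ref{thm:QIpreserveCyl}) to tie the two linear coordinates on $\lk(v)$ together.
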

\begin{proof}
	We will only give a proof of (1), but (2) can be proven by the same argument applied to a quasi-isometry $[f]\in\mathscr{G}$ instead of the quasi-isometry $\psi:G\to G'$.
	
	Let $v \in V_1T_c$ with $G_v \cong \mathbb{Z}^2$.
	Since $\psi$ induces a quasi-isometry from $G_v$ to $G'_v$ it follows $G_v' \cong \mathbb{Z}^2$ also.
	If $e \in\lk(v)$, then $G_e$ is equal to the cylindrical factor $\mathbb{Z}_v \leqslant G_v$.
	Let $e_1, \ldots, e_N \in \lk(v)$ be $G_v$-orbit representatives of the edges.
	There exists $g \in G_v$ that corresponds to the generator of the second factor in the decomposition $G_v \cong \mathbb{Z}_v \times \mathbb{Z}$.
	It follows that $G_v = \bigcup_{k} g^k \mathbb{Z}_v$ and $\lk(v) = \bigcup_{i,k} g^k e_i$.
	Then we have a function $n: \lk(v) \rightarrow \mathbb{Z}$ given by $n(g^ke_i) = k$.
	Similarly for $G_v'$ we let $e'_1, \ldots, e'_{N'} \in \lk(v)$ be $G_v'$-orbit representatives, $g' \in G_v'$ be an element generating the second factor in the decomposition $G_v' = \mathbb{Z}'_v \times \mathbb{Z}$, to obtain $n' : \lk(v) \rightarrow \mathbb{Z}$ given by $n((g')^ke'_i) = k$.
	
	We now observe that there exists some $L >0$ such that $G(e_i) \sim_L \mathbb{Z}_v \times \{0\}$ for all $i$, where $G(e_i)$ is the coset corresponding to $e_i$ from Notation \ref{not:cosets}.
	By $G$-invariance of the metric, for all $e \in \lk(v)$ we have
	\begin{equation}\label{cosetcoset}
		G(e) \sim_L \mathbb{Z}_v \times \{n(e)\}.
	\end{equation}

	We now consider the following five pseudo-metrics on $\rm{lk}(v)$.
	\begin{align}\label{lkYmetrics}
		d(e_1,e_2)&=\begin{dcases*}
			|n(e_1)-n(e_2)| &(a)\\
			d_H(G(e_1),G(e_2)) &(b)\\
			d_H(\psi(G(e_1)),\psi(G(e_2))) &(c)\\
			d_H(G'(e_1),G'(e_2))&(d)\\
			|n'(e_1)-n'(e_2)|&(e)
		\end{dcases*}
	\end{align}\\
	\begin{claim}
		Pseudo metrics (a)-(e) are all equivalent up to quasi-isometry.
	\end{claim}\\
	\begin{claimproof}
		With respect to the standard generators of $\mathbb{Z}^2$, the Hausdorff distance $d_H(\mathbb{Z}\times\{n_1\},\mathbb{Z}\times\{n_2\})$ is simply $|n_1-n_2|$. Cylinder stabilisers are quasi-isometrically embedded in $G$ (peripheral subgroups are always quasi-convex~\cite[Lemma 4.15]{DrutuSapir05}), and quasi-isometries coarsely preserve Hausdorff distance between subsets, so (\ref{cosetcoset}) implies that metrics (a) and (b) are equivalent. Similarly, (d) and (e) are equivalent. (b) and (c) are equivalent because $\psi$ is a quasi-isometry, and finally (c) and (d) are equivalent precisely because of Theorem \ref{thm:QIpreserveCyl}(2).
	\end{claimproof}\\
	
	The maps $n,n':\rm{lk}(v)\to\mathbb{Z}$ are both surjective, so the equivalence of metrics (a) and (e) gives us a quasi-isometry $\nu:\mathbb{Z}\to\mathbb{Z}$ such that
	\begin{equation}\label{nu}
		n'\approx \nu\circ n.
	\end{equation}  
	After perturbing $\nu$ by bounded distance, we can assume that it is monotonic; indeed, if $\lim_{i\to\pm\infty}\nu(i)=\pm\infty$ then $i\mapsto \max_{j\leq i} \nu(j)$ is increasing and at bounded distance from $\nu$.
	
	For each $a\in\mathbb{Z}$, $n^{-1}(a)$ has one edge from each $G_v$-orbit, and so by Remark \ref{remk:cylstabpreserveO} we have that $|\{(e,\cO)\in\rm{lk}(v,\cO,c)\mid n(e)=a\}|$ is equal to $t_c(v,\cO)$ from Definition \ref{defn:torusratio}. For $c_1,c_2\in\mathcal{C}$ (with $\rm{lk}(v,\cO,c_i)\neq\emptyset$) we have
	\begin{equation}\label{torusratioeq}
		\frac{t_{c_1}(v,\cO)}{t_{c_2}(v,\cO)}=\lim_{b-a\to\infty}\frac{|\{(e,\cO)\in\rm{lk}(v,\cO,c_1)\mid n(e)\in[a,b]\}|}{|\{(e,\cO)\in\rm{lk}(v,\cO,c_2)\mid n(e)\in[a,b]\}|},
	\end{equation}
	and a similar equation holds for $t'_{c_i}$. By (\ref{nu}), and the fact that $\nu$ is monotonic, we see that
	\begin{equation}\label{nn'ratio}
		\lim_{b-a\to\infty}\frac{|\{(e,\cO)\in\rm{lk}(v,\cO,c)\mid n(e)\in[a,b]\}|}{|\{(e,\cO)\in\rm{lk}(v,\cO,c)\mid n'(e)\in[\nu(a),\nu(b)]\}|}=1
	\end{equation}
	for any $c\in\mathcal{C}$. Combining (\ref{torusratioeq}) with (\ref{nn'ratio}) we deduce that $t_{c_1}(v,\cO)/t_{c_2}(v,\cO)=t'_{c_1}(v,\cO)/t'_{c_2}(v,\cO)$. Hence $t(v,\cO)=t'(v,\cO)$ as required.
\end{proof}
\bigskip

\noindent The next task is to prove the following lemma.

\begin{lem}\label{lem:torusmatch}
	There exist finite index subgroups $\hat{G}\trianglelefteq G$ and $\hat{G}'\trianglelefteq G'$ and integers $N_c [v,\cO]$ such that $N_c [v,\cO]=N_{\bar{c}}[v,\bar{\cO}]$, and for each oriented cylinder $(v,\cO)$ the cylinder numbers of $\hat{G}$ and $\hat{G}'$ both equal the numbers $N_c [v,\cO]$: $$\hat{t}_c(v,\cO)=\hat{t}'_c(v,\cO)=N_c [v,\cO].$$
\end{lem}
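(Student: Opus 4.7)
The plan is to treat the two possible types of cylindrical vertex stabiliser separately. For a $\mathbb{Z}$-cylinder $v$, Lemma \ref{lem:cylfactor} implies $G_v=\mathbb{Z}_v=G_e$ for every $e\in\lk(v)$, so each incident edge is its own $G_v$-orbit and $t_c(v,\cO)=|\lk(v,\cO,c)|$; the analogous formula holds for $G'$. This integer is intrinsic to the coloured tree $T_c$, hence is preserved by $\scrG$ and agrees between $G$ and $G'$, and I simply set $N_c[v,\cO]:=t_c(v,\cO)$ with no modification needed at $\mathbb{Z}$-cylinders.

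For $\mathbb{Z}^2$-cylinders I fix a $\scrG$-orbit $\Omega$ and pick $G$- and $G'$-orbit representatives $v_1,\dots,v_k$ and $v'_1,\dots,v'_{k'}$ of oriented cylinders in $\Omega$. Fixing a base colour $c_0$ with $\lk(v,\cO,c_0)\neq\emptyset$, Lemma \ref{lem:torusratio} says that the normalised ratios $s_c[\Omega]:=t_c(v_i,\cO_i)/t_{c_0}(v_i,\cO_i)=t'_c(v'_j,\cO'_j)/t'_{c_0}(v'_j,\cO'_j)$ are independent of $i,j$. Let $L[\Omega]$ be any common multiple of the integers $t_{c_0}(v_i,\cO_i)$ and $t'_{c_0}(v'_j,\cO'_j)$ across all $i,j$, set $N_c[\Omega]:=L[\Omega]\,s_c[\Omega]$, and define target multipliers $m_{v_i}:=L[\Omega]/t_{c_0}(v_i,\cO_i)$ and $m'_{v'_j}:=L[\Omega]/t'_{c_0}(v'_j,\cO'_j)$. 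Writing $G_{v_i}=\mathbb{Z}_{v_i}\times\langle g_{v_i}\rangle$ via Lemma \ref{lem:cylfactor}, the goal is to produce a finite-index normal subgroup $\hat{G}\trianglelefteq G$ with $\hat{G}_{v_i}\mathbb{Z}_{v_i}=\mathbb{Z}_{v_i}\times\langle g_{v_i}^{m_{v_i}}\rangle$ at every orbit representative, for then $[G_{v_i}:\hat{G}_{v_i}\mathbb{Z}_{v_i}]=m_{v_i}$ and so $\hat{t}_c(v_i,\cO_i)=m_{v_i}t_c(v_i,\cO_i)=N_c[\Omega]$; the same recipe inside $G'$ yields $\hat{G}'$.

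The technical heart is producing such $\hat{G}$: Proposition \ref{prop:sepvertexstabs} applied with $\dot{G}_{v_i}:=\mathbb{Z}_{v_i}\times\langle g_{v_i}^{m_{v_i}}\rangle$ only guarantees $\hat{G}_{v_i}\leq\dot{G}_{v_i}$, forcing the cylinder-number multiplier to be some \emph{multiple} of $m_{v_i}$, whereas we need equality. I expect this to be the main obstacle, and to overcome it I will use that $G$ is virtually the fundamental group of a special cube complex (Theorem \ref{thm:separableBalancedRelHyp}); this strong form of subgroup separability lets me realise any prescribed finite quotient of the virtually abelian stabiliser $G_{v_i}\cong\mathbb{Z}^2$ as a subquotient of a finite quotient of $G$, so in particular I can arrange $g_{v_i}^{m_{v_i}}\in\hat{G}_{v_i}\mathbb{Z}_{v_i}$ as required. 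Finally, the equality $N_c[v,\cO]=N_{\bar c}[v,\bar\cO]$ is immediate from (\ref{actorientflip}), which supplies a $\scrG$-equivariant bijection $\lk(v,\cO,c)\leftrightarrow\lk(v,\bar\cO,\bar c)$ compatible with the $G_v$- and $G'_v$-actions (Remark \ref{remk:cylstabpreserveO}); consequently $t_c(v,\cO)=t_{\bar c}(v,\bar\cO)$ and $t'_c(v,\cO)=t'_{\bar c}(v,\bar\cO)$, so the common multiple $L$ and the ratios $s_c$ transform consistently under the orientation swap.
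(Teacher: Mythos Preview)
Your overall strategy matches the paper's: $\mathbb{Z}$-cylinders need no modification, and for $\mathbb{Z}^2$-cylinders you use Lemma~\ref{lem:torusratio} to normalise the ratios, pick common multiples, and then try to produce a finite-index normal subgroup that scales each cylinder number by exactly the prescribed multiplier. You also correctly identify the main obstacle: Proposition~\ref{prop:sepvertexstabs} only gives $\hat{G}_{v_i}\leq\dot{G}_{v_i}$, so the multiplier could overshoot.

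The gap is in your resolution of that obstacle. The assertion that virtual specialness ``lets me realise any prescribed finite quotient of the virtually abelian stabiliser $G_{v_i}\cong\mathbb{Z}^2$ as a subquotient of a finite quotient of $G$'' is not a standard citable fact, and making it precise is essentially the entire content of this step. Even if $G_{v_i}$ is a virtual retract of some finite-index $G_0\leq G$, passing to the normal core in $G$ can shrink $\hat{G}\cap G_{v_i}$ further, and you need the result simultaneously at all orbit representatives. The paper handles this via the relatively hyperbolic version of the Malnormal Special Quotient Theorem (Einstein, Theorem~\ref{thm:RHMSQT}): one Dehn-fills along carefully chosen $\ddot{P}_i\trianglelefteq P_i=G_{v_i}$, uses Osin's injectivity result to ensure $P_i/\ddot{P}_i$ embeds in the quotient $\bar{G}$, takes a torsion-free finite-index normal $\hat{\bar{G}}\trianglelefteq\bar{G}$ (possible since $\bar{G}$ is virtually special), and pulls back. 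This guarantees $\hat{G}\cap P_i=\ddot{P}_i$ exactly. A second ingredient you omit is the freedom to rescale all of the $N_c[v,\cO]$ by a common integer: the paper first obtains $\pi_{v_i}(\ddot{P}_i)=N_i\mathbb{Z}$ for some $N_i$, then enlarges the targets so that each $N_{v_i}$ is a multiple of $N_i$, and only then intersects with $\pi_{v_i}^{-1}(N_{v_i}\mathbb{Z})$. Without this rescaling step your fixed choice of $L[\Omega]$ need not be achievable. (The paper also remarks that an elementary construction via explicit finite covers of graphs of spaces is possible; if you intend that route, it needs to be carried out rather than gestured at.)
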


\noindent We will need the following remark.

\begin{remk}\label{remk:linkflipmap}
	For an oriented cylinder $(v,\cO)$ and a colour $c\in\mathcal{C}$, we have a bijection
	\begin{align}\label{cyllinkflip}
	\lk(v,\cO,c)&\to\lk(v,\bar{\cO},\bar{c})\\\nonumber
	(e,\cO)&\mapsto(e,\bar{\cO}).
	\end{align}
	Remark \ref{remk:cylstabpreserveO} tells us that $G_v$ acts on both $\lk(v,\cO,c)$ and $\lk(v,\bar{\cO},\bar{c})$, so by (\ref{actorientflip}) we know that the map (\ref{cyllinkflip}) is $G_v$-equivariant. It follows that
	\begin{equation}
t_c(v,\cO)=t_{\bar{c}}(v,\bar{\cO}).\label{cylnumberflip}
	\end{equation}
\end{remk}

As discussed earlier, if a cylindrical vertex stabiliser is cyclic, then the cylinder numbers are already equal, and will be stable under passing to finite index subgroups, so Lemma \ref{lem:torusmatch} is all about modifying the $\mathbb{Z}^2$ cylinders.
If $v \in V_1T_c$ is a cylindrical vertex with stabiliser $G_v \cong \mathbb{Z}_v \times \mathbb{Z}$, then let $\pi_v: G_v\to\mathbb{Z}$ denote the projection onto the second factor in the product decomposition. 
Any finite index subgroup $\hat{G} \trianglelefteq G$ will have stabiliser $\hat{G}_v$ finite index in $G_v$. 
Suppose for a moment we have  $\hat{G} \trianglelefteq G$ finite index such that
 $\pi_v(\hat{G}_v) = N\mathbb{Z}$. 
Then each $G_v$-orbit of edges in $\rm{lk}(v)$ would split into $N$ many  $\hat{G}_v$-orbits, so by Remark \ref{remk:cylstabpreserveO} the cylinder numbers for $\hat{G}$ and $G$ would be related by 
\begin{equation}\label{cylnumberscale}
\hat{t}_c(v,\cO)=Nt_c(v,\cO).
\end{equation}

It follows readily from Definition \ref{defn:torusratio} that $t_c(v,\cO)$ only depends on $c$ and the $G$-orbit of $(v,\cO)$, hence there are only finitely many cylinder numbers. Furthermore, Lemma \ref{lem:torusratio} says that the cylinder ratio $t(v,\cO)$ only depends on the $\mathscr{G}$-orbit $[v,\cO]$.
Therefore, for each $[v,\cO]$ we can pick numbers $N_c [v,\cO]$ that are in the ratio $t(v,\cO)$, and that are common multiples of all cylinder numbers. By (\ref{cylnumberflip}) we can assume that $N_c[v,\cO]=N_{\bar{c}}[v,\bar{\cO}]$. Again by (\ref{cylnumberflip}), we deduce that
\begin{equation}\label{torusmatch}
N_v:=\frac{N_c[v,\cO]}{t_c(v,\cO)}=\frac{N_{\bar{c}}[v,\bar{\cO}]}{t_{\bar{c}}(v,\bar{\cO})}
\end{equation}
only depends on $v$, in fact it only depends on the $G$-orbit of $v$ because $t_c(v,\cO)$ only depends on $c$ and the $G$-orbit of $(v,\cO)$. We define integers $N'_v$ similarly, such that $N'_v$ only depends on the $G'$-orbit of $v$.

By (\ref{cylnumberscale}) and (\ref{torusmatch}), Lemma \ref{lem:torusmatch} will follow if we can construct finite index $\hat{G}\trianglelefteq G$ and $\hat{G}'\trianglelefteq G'$ such that 
\begin{align}\label{torusstretch}
	\pi_v(\hat{G}_v)=N_v\mathbb{Z}\nonumber\\
	\pi_v(\hat{G}'_v)=N'_v\mathbb{Z}
\end{align} 
for each $v$. 
Note that it is enough to have (\ref{torusstretch}) hold for a set of $G$-orbit representatives of $v \in V_1T_c$ with $G_v \cong \mathbb{Z}^2$ (and similarly for $G'$) because $\hat{G}$ is normal in $G$ (and because each map $\pi_v:G_v\to\mathbb{Z}$ is determined by the edge stabilisers incident to $G_v$, up to a factor of $\pm1$, so they are preserved by conjugation in $G$). During this construction, we are allowed to multiply all the $N_v,N'_v, N_c [v,\cO]$ by some fixed constant, as this preserves equation (\ref{torusmatch}), or in other words we are allowed to assume that they are multiples of any given finite set of integers.

This construction could be done in an elementary way by building explicit finite covers of graphs of spaces, but for a cleaner approach we will make use of the relatively hyperbolic version of the Malnormal Special Quotient Theorem, which is as follows.

\begin{thm}(Einstein \cite[Theorem 2]{Ein19})\\ \label{thm:RHMSQT}
	Let $G$ be a virtually special group that is hyperbolic relative to subgroups $\{P_1, \ldots, P_m \}$.
	Then there exist finite index subgroups $\dot{P}_i \trianglelefteq P_i$, such that for any further finite index subgroups $\ddot{P}_i \trianglelefteq \dot{P}_i$, the quotient  $G / \llangle \ddot{P}_1, \ldots, \ddot{P}_m \rrangle$ is hyperbolic and virtually special.
\end{thm}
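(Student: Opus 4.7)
The plan is to combine three ingredients: (i) the relatively hyperbolic Dehn filling theorem of Osin and Groves--Manning, (ii) a cubical Dehn filling construction for the CAT(0) cube complex on which a finite-index subgroup of $G$ acts, and (iii) the hyperbolic Malnormal Special Quotient Theorem (Theorem \ref{thm:MSQT}) applied inside the peripheral pieces.

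First I would reduce to the case that $G$ is torsion-free and compactly special. Indeed, if $\hat{G}\leqslant G$ is a finite-index torsion-free special subgroup and one proves the theorem for $\hat{G}$ with peripheral structure given by representatives of conjugates of $P_i\cap\hat{G}$, then the resulting filling kernel will be contained in a finite-index subgroup of $\llangle\ddot P_i\rrangle$, so the quotient $G/\llangle\ddot P_i\rrangle$ will be virtually isomorphic to a quotient of $\hat G$ and hence virtually special and hyperbolic. After this reduction, $G$ acts freely cocompactly on a CAT(0) cube complex $X$ whose quotient is special, and each $P_i$ is full relatively quasi-convex; by Sageev--Wise/Haglund--Wise we may pass to a further finite cover so that each $P_i$ acts cocompactly on a convex subcomplex $Y_i\subset X$.

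Second, I would apply the Osin/Groves--Manning Dehn filling theorem: there exist finite-index $\dot P_i^{(0)}\trianglelefteq P_i$ so that for any further finite-index $\ddot P_i\trianglelefteq \dot P_i^{(0)}$, the quotient $\bar G:=G/\llangle\ddot P_i\rrangle$ is hyperbolic relative to the images of the $P_i$. Since these images are finite, $\bar G$ is in fact word-hyperbolic, and the filling is Cohen--Lyndon (i.e.\ $\llangle\ddot P_i\rrangle$ is a free product of conjugates of the $\ddot P_i$). This gives the hyperbolicity half of the conclusion.

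Third, and the crux, I would produce virtual specialness by building a \emph{cubical} Dehn filling $\bar X$ on which $\bar G$ acts cocompactly, then verifying the Haglund--Wise hyperplane conditions. The construction: inside each $G$-orbit of $Y_i$, replace $Y_i/P_i$ by the finite cube complex $Y_i/\ddot P_i$ and reglue to $X/G$ along the boundary; choosing $\ddot P_i$ deep enough in a smaller $\dot P_i^{(1)}\leqslant \dot P_i^{(0)}$ supplied by Theorem \ref{thm:MSQT} applied inside each $P_i$-invariant $Y_i$ ensures that each filled piece $Y_i/\ddot P_i$ is itself virtually special with separable hyperplane subgroups, and that no new short hyperplane osculations appear. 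One then shows, using separability of hyperplane stabilizers in $G$ combined with the injectivity of the filling on each $\ddot P_i$-coset, that hyperplane stabilizers in $\bar X$ are separable in $\bar G$; passing to a finite cover then removes the remaining Haglund--Wise pathologies (self-intersection, self-osculation, inter-osculation).

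The main obstacle is the hyperplane control after filling: one must simultaneously (a) keep $\bar X$ non-positively curved with a well-defined hyperplane pattern, (b) ensure separability of the new hyperplane stabilizers in $\bar G$, and (c) kill short pathological loops introduced by the filling. This is where the assumption of virtual \emph{specialness} of $G$ (rather than mere cubulation) is essential, because it is what lets us combine separability in $G$ with the hyperbolic MSQT inside the peripheral cube complexes $Y_i$, and it is the step where Einstein's argument in \cite{Ein19} must do the most delicate work.
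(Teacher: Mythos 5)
First, a point of order: the paper does not prove this statement. It is imported verbatim as \cite[Theorem 2]{Ein19} and used as a black box, so there is no internal proof to measure your proposal against; what follows assesses the proposal on its own terms. Your overall strategy (Osin/Groves--Manning Dehn filling for hyperbolicity, plus a cubical filling construction whose quotient is then shown to satisfy the Haglund--Wise conditions) is closer in spirit to Groves and Manning's work on specializing cubulated relatively hyperbolic groups than to Einstein's actual argument, which shows that a relatively hyperbolic virtually special group admits a quasiconvex hierarchy and then transfers that hierarchy to all sufficiently deep fillings, concluding via Wise's quasiconvex hierarchy theorem. A different route would be fine if it were complete, but there are two genuine gaps.

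The first is that your invocation of Theorem \ref{thm:MSQT} ``inside the peripheral pieces'' does not typecheck: Theorem \ref{thm:MSQT} requires a \emph{hyperbolic} ambient group with an almost malnormal family of quasiconvex subgroups, whereas the peripherals $P_i$ in Theorem \ref{thm:RHMSQT} are arbitrary virtually special groups (in this paper they are $\mathbb{Z}^2$, which is not hyperbolic), and it is unclear which subgroups of $P_i$ you would be filling or what conclusion you would extract. The second, more serious gap is that the crux --- that the filled complex $\bar{X}$ is non-positively curved with a well-behaved hyperplane pattern, and that hyperplane stabilizers in $\bar{G}$ are separable --- is asserted rather than argued. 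Separability does not pass to quotients, so ``separability of hyperplane stabilizers in $G$ combined with injectivity of the filling on cosets'' does not by itself yield separability in $\bar{G}$; establishing this (or, on Einstein's route, establishing that the hierarchy descends to deep fillings with quasiconvex, almost malnormal edge groups) is precisely where all the work lies. As written, the proposal is a plausible roadmap, not a proof.
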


\begin{proof}[Proof of Lemma \ref{lem:torusmatch}]
	We apply Theorem \ref{thm:RHMSQT} to $G$ with peripheral subgroups $\{P_1, \ldots, P_m \}$ being the stabilisers of a set of $G$-orbit representatives $\{v_1,\ldots,v_m\}$ of cylinder vertices in $T_c$. Note that $G$ is virtually special by Theorem~\ref{thm:separableBalancedRelHyp} and it is hyperbolic relative to its cylinder stabilisers by Proposition \ref{prop:hyprelcylinders}. Moreover, by Remark \ref{rem:RemovingCyclicPeripherals} we may disregard any cyclic peripheral subgroups and assume that each $P_i$ is isomorphic to $\mathbb{Z}^2$.
	We now wish to find finite index subgroups $\ddot{P}_i \trianglelefteq \dot{P}_i$ such that the following two properties hold:
	\begin{enumerate}		
		\item The induced maps $P_i/\ddot{P_i}\to G/\llangle \ddot{P}_1,\ldots,\ddot{P}_m\rrangle$ are injections.
		\item $\pi_{v_i}(\ddot{P}_i)=N_{v_i}\mathbb{Z}$
	\end{enumerate}
    \cite[Theorem 1.1 (1)]{Osin07} tells us that property (1) holds provided that the subgroups $\ddot{P}_i \trianglelefteq \dot{P}_i$ miss a given finite set $\mathfrak{F}$ of non-trivial elements of $G$. This is easy to arrange since the subgroups $P_i$ are  residually finite. Suppose that after arranging property (1) we have that $\pi_{v_i}(\ddot{P}_i)=N_i\mathbb{Z}$. As discussed above, we may assume that $N_{v_i}$ is a multiple of $N_i$ for each $i$, so we can arrange property (2) by replacing each $\ddot{P}_i$ with $\ddot{P}_i\cap\pi_{v_i}^{-1}(N_{v_i}\mathbb{Z})$.
	
	We then define $\bar{G}:=G/\llangle \ddot{P}_1,\ldots,\ddot{P}_m\rrangle$. 
	Theorem \ref{thm:RHMSQT} implies that $\bar{G}$ is virtually special, hence it has a finite index, torsion-free, normal subgroup $\hat{\bar{G}}\trianglelefteq\bar{G}$. 
	Set $\hat{G}$ to be the preimage of $\hat{\bar{G}}$ under the quotient map $G\to\bar{G}$. The image of a peripheral subgroup $P_i$ in $\bar{G}$ is finite, so has trivial intersection with $\hat{\bar{G}}$. 
	Property (1) then implies that $\hat{G}\cap P_i=\ddot{P}_i$ for each $i$. And so property (2) tells us that $\hat{G}$ satisfies the first equation of (\ref{torusstretch}).
	
	By the same argument, there exists $\hat{G}'\trianglelefteq G'$ finite index that satisfies the second equation of (\ref{torusstretch}).
\end{proof}

By Lemma \ref{lem:torusmatch}, we can assume going forward that for each oriented cylinder $(v,\cO)$ the cylinder numbers of $G$ and $G'$ both equal the numbers $N_c [v,\cO]$: 
\begin{equation}\label{cylnumbermatch}
t_c(v,\cO)=t'_c(v,\cO)=N_c [v,\cO].
\end{equation}

\subsection{A tree of trees with fins}\label{sec:treeoftrees}

For each rigid vertex $u \in V_0 T_c$, recall that the incident edge groups for the stabiliser $G_u$ induce a line pattern $\mathcal{L}_u$ (Definition \ref{vertexlinepat}). By Lemma \ref{lem:RigidImpliesRigid} this line pattern will be rigid, and so by Theorem \ref{rigidequiv} there is a quasi-isometry to a tree with line pattern that is a rigid model space:
\[
\alpha_u : (G_u, \calL_u) \rightarrow (Y_u, \calL_u).
\]
Recall Lemma \ref{lem:restrictingToRigidVertices}, which says that any $[f]\in\scrG$ induces a $\approx$-class of quasi-isometries 
\begin{equation}\label{linepatternmap}
[f]_u : (G_u,\calL_u) \rightarrow (G_{\hat{f}(u)},\calL_{\hat{f}(u)})
\end{equation} 
that respect line patterns.
So for each $\scrG$-orbit of vertices $u$, the free groups with line patterns $(G_u, \calL_u)$ are all quasi-isometric, and hence we may choose the  rigid model spaces $(Y_u, \calL_u)$ to be isometric.
We can encode the line pattern $\calL_u$ in the tree $Y_u$ as a set of fins to obtain a quasi-isometry to a graph with fins (see Definition \ref{graphcolfin}):
\[
\beta_u : (G_u, \calL_u) \rightarrow (\bfY_u, \partial \bfY_u).
\]                                                                                 
Since the underlying graph $Y_u$ is a tree, we will refer to $(\bfY_u, \partial \bfY_u)$ as a \emph{tree with fins}. Note that $(\bfY_u, \partial \bfY_u)$ also serves as a rigid model space, and its group of isometries is precisely its automorphism group in the sense of Definition \ref{defn:covergraphfin}. Moreover, the isometry type of the rigid model space $(Y_u, \calL_u)$ only depends on the $\scrG$-orbit $[u]$, and so the isomorphism type of the tree with fins $(\bfY_u, \partial \bfY_u)$ also just depends on $[u]$. Combining these two facts with (\ref{linepatternmap}) yields the following lemma.

\begin{lem}\label{lem:treefinmap}
For each $[f] \in \mathscr{G}$ and $u \in V_0 T_c$, there is a unique isomorphism  
$$\morp{f}_u : (\bfY_u, \partial \bfY_u) \rightarrow (\bfY_{\hat{f}(u)}, \partial \bfY_{\hat{f}(u)})$$ 
such that $\morp{f}_u \approx \beta_{\hat{f}(u)} \circ [f]_u \circ \beta_u^{-1}$. 
\end{lem}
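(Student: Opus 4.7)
The plan is to deduce the isomorphism $\morp{f}_u$ from the rigidity of the model spaces, and then observe that the mapping-cylinder construction of Definition \ref{graphcolfin} is canonical in the underlying tree-with-line-pattern data, so that isometries of rigid trees respecting line patterns correspond bijectively to cube-complex isomorphisms of the associated trees with fins.

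First, I would form the composition $F := \beta_{\hat{f}(u)} \circ [f]_u \circ \beta_u^{-1}$. By Lemma \ref{lem:restrictingToRigidVertices}, $[f]_u$ is a quasi-isometry respecting line patterns; composing with the $\beta_v$, which by construction convert the line pattern $\calL_v$ into the boundary fins of $\bfY_v$, produces a well-defined $\approx$-class of quasi-isometries $F : (\bfY_u, \partial \bfY_u) \to (\bfY_{\hat{f}(u)}, \partial \bfY_{\hat{f}(u)})$ whose restriction to the underlying trees respects the line patterns. Recall that the rigid model spaces $(Y_u, \calL_u)$ were chosen to be isometric across each $\scrG$-orbit in $V_0 T_c$, so fix an isometry $\sigma : (Y_u, \calL_u) \to (Y_{\hat{f}(u)}, \calL_{\hat{f}(u)})$. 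Then $\sigma^{-1} \circ F$, restricted to the trees, is a self-quasi-isometry of $(Y_u, \calL_u)$ respecting the line pattern, so by Definition \ref{riglinepatdef} it is $\approx$-equivalent to a unique isometry $\tau \in \Isom(Y_u, \calL_u)$. Setting $\phi := \sigma \circ \tau$ gives a line-pattern-respecting isometry $Y_u \to Y_{\hat{f}(u)}$ with $\phi \approx F$ on the trees.

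The isometry $\phi$ then extends uniquely to an isomorphism of cube complexes $\morp{f}_u : (\bfY_u, \partial \bfY_u) \to (\bfY_{\hat{f}(u)}, \partial \bfY_{\hat{f}(u)})$ by the canonicity of the mapping-cylinder construction. Since every point of $\bfY_u$ lies within uniform distance of $Y_u$, the relation $\phi \approx F$ on the trees upgrades to $\morp{f}_u \approx F$ on the trees with fins. For uniqueness, any two such isomorphisms $\morp{f}_u^1, \morp{f}_u^2$ would restrict to isometries of the locally finite tree $Y_u$ at bounded distance from each other, and two such tree isometries must agree on the boundary at infinity and hence everywhere. I expect no substantial obstacle here: this lemma is essentially a repackaging of the rigidity of line patterns already established in Section \ref{Riglinesec}, together with the canonical correspondence between rigid trees with line patterns and their associated trees with fins.
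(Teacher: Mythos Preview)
Your proposal is correct and follows essentially the same approach as the paper. The paper's proof is more compressed: it directly invokes the observation (made in the paragraph immediately preceding the lemma) that $(\bfY_u,\partial\bfY_u)$ is itself a rigid model space whose isometry group coincides with its automorphism group, so the line-pattern-respecting quasi-isometry $\beta_{\hat f(u)}\circ[f]_u\circ\beta_u^{-1}$ is at bounded distance from a unique isomorphism in one stroke, whereas you unpack this by first working on the underlying tree $(Y_u,\calL_u)$ and then extending canonically to the fin structure.
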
 
\begin{proof}
We know that $u$ and $\hat{f}(u)$ are in the same $\scrG$-orbit, so $(\bfY_u, \partial \bfY_u)$ and $(\bfY_{\hat{f}(u)}, \partial \bfY_{\hat{f}(u)})$ are isomorphic. As these trees with fins are rigid model spaces, the line-pattern-preserving quasi-isometry (or more precisely $\approx$-class of quasi-isometries) $\beta_{\hat{f}(u)} \circ [f]_u \circ \beta_u^{-1}:(\bfY_u, \partial \bfY_u) \rightarrow (\bfY_{\hat{f}(u)}, \partial \bfY_{\hat{f}(u)})$ between them is finite Hausdorff distance from a unique isometry $\morp{f}_u$.
\end{proof}

This gives us the data to define an action of $\scrG$ on the disjoint union of the $\bfY_u$ -- which we think of as a ``tree of trees with fins''.

\begin{lem}\label{lem:scrGfinaction}
	The maps $\morp{f}_u$ define an action of $\scrG$ on the graph with fins $\bfY:=\sqcup_{u\in V_0 T_c} \bfY_u$.
\end{lem}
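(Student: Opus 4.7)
My plan is to verify the two axioms of a group action by exploiting the uniqueness clause of Lemma \ref{lem:treefinmap}. The action will be defined on $\mathbf{Y} = \sqcup_{u \in V_0 T_c} \mathbf{Y}_u$ by declaring that $[f] \in \mathscr{G}$ sends a point $y \in \mathbf{Y}_u$ to $\morp{f}_u(y) \in \mathbf{Y}_{\hat{f}(u)}$. Since $\scrG$ already acts on $T_c$ by $[f] \mapsto \hat{f}$ (Corollary \ref{cor:scrGactTc}), compatibility with the decomposition $\mathbf{Y} = \sqcup \mathbf{Y}_u$ is automatic once we check that the $\morp{f}_u$ compose correctly.

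For the identity axiom, I would observe that $[\mathrm{id}]_u$ is the $\approx$-class of $\mathrm{id}_{G_u}$, so $\beta_u \circ [\mathrm{id}]_u \circ \beta_u^{-1} \approx \mathrm{id}_{\mathbf{Y}_u}$. The identity map $\mathrm{id}_{\mathbf{Y}_u}$ is itself an isomorphism of graphs with fins at bounded distance from this composition, and by the uniqueness assertion in Lemma \ref{lem:treefinmap}, $\morp{\mathrm{id}}_u = \mathrm{id}_{\mathbf{Y}_u}$.

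For the composition axiom, let $[f],[g] \in \scrG$ and $u \in V_0 T_c$, and write $v = \hat{g}(u)$ so that $\widehat{fg}(u) = \hat{f}(v)$. I would chain together the defining quasi-isometry approximations:
\begin{align*}
\morp{f}_v \circ \morp{g}_u
&\approx \bigl(\beta_{\hat{f}(v)} \circ [f]_v \circ \beta_v^{-1}\bigr) \circ \bigl(\beta_v \circ [g]_u \circ \beta_u^{-1}\bigr) \\
&\approx \beta_{\widehat{fg}(u)} \circ \bigl([f]_v \circ [g]_u\bigr) \circ \beta_u^{-1}.
\end{align*}
The restriction $[\,\cdot\,]_u$ in Lemma \ref{lem:restrictingToRigidVertices} comes from Theorem \ref{thm:QIpreserveCyl}(3), and composition of quasi-isometries of $G$ restricts coarsely to composition on the vertex cosets, so $[f]_v \circ [g]_u \approx [fg]_u$. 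Hence $\morp{f}_v \circ \morp{g}_u$ is an isomorphism of trees with fins at bounded distance from $\beta_{\widehat{fg}(u)} \circ [fg]_u \circ \beta_u^{-1}$. The uniqueness in Lemma \ref{lem:treefinmap} then forces $\morp{f}_{\hat{g}(u)} \circ \morp{g}_u = \morp{fg}_u$, which is exactly the cocycle relation needed.

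The whole argument is essentially a bookkeeping exercise; there is no real obstacle, because the heavy lifting (rigidity of the line pattern, which ensures each line-pattern-preserving quasi-isometry is approximated by a \emph{unique} isometry) has already been carried out in Lemmas \ref{lem:RigidImpliesRigid} and \ref{lem:treefinmap}. The only subtle point to be careful about is remembering that each $\morp{f}_u$ depends only on the Hausdorff class $[f]$ and not on a representative quasi-isometry --- this is implicit in the statement of Lemma \ref{lem:treefinmap} but worth remarking on so that well-definedness of the action on $\scrG$ (rather than on the set of quasi-isometries) is clear.
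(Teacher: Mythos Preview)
Your proof is correct and follows essentially the same approach as the paper's: both verify the identity and composition axioms by invoking the uniqueness clause of Lemma \ref{lem:treefinmap}, using that the restricted quasi-isometries $[f]_u$ compose up to $\approx$ and that two isometries between rigid model spaces at bounded distance must be equal. The paper's version is terser but the logic is identical.
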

\begin{proof}
	We know from Corollary \ref{cor:scrGactTc} that $\scrG$ acts on the rigid vertices $V_0 T_c$. It follows from Lemma \ref{lem:treefinmap} that we have a well-defined map $\scrG\to\Aut(\bfY)$, we must show that this is a homomorphism. It is clear that $\rm{id}_G$ maps to the identity, so it remains to show that this map respects composition. Let $[f_1],[f_2]\in\scrG$ with $\hat{f}_1(u_1)=u_2$ and $\hat{f}_2(u_2)=u_3$. We know that $[f_2\circ f_1]_{u_1}\approx [f_2]_{u_2}\circ [f_1]_{u_1}$ as these maps come from restricting the quasi-isometries to the vertex groups, so it follows from Lemma \ref{lem:treefinmap} that $\morp{f_2\circ f_1}_{u_1}\approx\morp{f_2}_{u_2}\circ\morp{f_1}_{u_1}$, but this second $\approx$ must be an equality since both sides are isometries between rigid model spaces.
\end{proof}

For $u\in V_0 T_c$ we know that the lines in $\calL_u$ correspond to the incident edge stabilisers $G_e$, and this is a one-to-one correspondence because no two incident edge stabilisers are commensurable in $G_u$ (as they come from different cylinders). In turn these lines correspond via $\beta_u$ to the fins of $\bfY_u$. Let $S_e\in\partial\bfY_u$ be the fin corresponding to $G_e$ (with $\iota(e)=u$). A choice of end $\cO$ of the edge stabiliser $G_e$ defines an oriented edge group $(e,\cO)$, which will correspond via $\beta_u$ to a choice of end of the fin $S_e$, or equivalently a choice of orientation $\bbS_e=(S_e,\tto)$ of the fin as a 1-manifold, as in Definition \ref{graphcolfin}. It follows from the way we defined the $\scrG$-action on $\bfY$ that the action of $\scrG$ on oriented edge groups is conjugate to the action of $\scrG$ on oriented fins in  $\bfY$. We defined $\mathcal{C}$ to be the set of $\scrG$-orbits of oriented edge groups, so this also corresponds to $\scrG$-orbits of oriented fins, which we will think of as a colouring of the oriented fins $\lambda:\partial_\tto\bfY\to\mathcal{C}$. This makes $\bfY$ and each of the $\bfY_u$ into graphs with coloured fins, and the $\scrG$-action obviously preserves colours.

\begin{remk}\label{remk:GuG'uaction}
	For a rigid vertex $u\in V_0 T_c$ the action of $\scrG$ on $Y$ restricts to an action of $G_u$ on $\bfY_u$, where $g\in G_u$ acts by $\morp{g}_u$. It follows from the definition of $\morp{g}_u$ that this action of $G_u$ on $\bfY_u$ is the $\beta_u$-conjugacy action in the sense of Definition \ref{phiconj}. Similarly, the quasi-isometry $\psi:G\to G'$ restricts to a quasi-isometry $\psi:G_u\to G'_u$, and the action of $G'_u$ on $\bfY_u$ is the $\beta_u \psi^{-1}$-conjugacy action. It then follows from Lemma \ref{freeco} that the actions of $G_u$ and $G'_u$ on $\bfY$ are free and cocompact, that $\beta_u:G_u\to\bfY_u$ is Hausdorff equivalent to any orbit map of $G_u$, and that $\beta_u\psi^{-1}:G'_u\to\bfY_u$ is Hausdorff equivalent to any orbit map of $G'_u$.
\end{remk}

\begin{remk}
	The space $\bfY$ is disconnected, so it is tempting to try and connect it up into some simply connected metric space that's quasi-isometric to $G$, and that admits an action of $\scrG$ quasi-conjugate to its action on $G$. The natural way to try and do this is to take a copy of $\mathbb{R}$ or $\mathbb{R}^2$ for each cylindrical vertex $v\in V_1 T$, and glue them to the appropriate fins in $\bfY$ according to how the edge stabilisers $G_e$ embed in the vertex stabilisers $G_v$.
	There is no real advantage in doing this however, because the action of $\mathscr{G}$ would not be isometric -- it would induce isometries between the vertex spaces as it does for $\bfY$, but in general it would act via ``shearing'' maps between the edge spaces.
	Such a construction was used however in~\cite{BehrstockNeumann08}.
\end{remk}

\subsection{Stretch ratio}\label{sec:stretchratio}

\begin{defn}(Stretch ratio)\\\label{defn:stretchratio}                                                                                                                                           
	Let $v \in V_1T_c$ be a cylindrical vertex and let $g \in \Zv$ be a non-trivial element.
	Let $e \in E_1T_c$ be an edge with $\tau(e) = v$ and $\iota(e) = u \in V_0T_c$, then the automorphism
	$${\morp{g}_u : (\bfY_u , \partial \bfY_u) \rightarrow (\bfY_u , \partial \bfY_u)},$$
	acts by translation on the fin $S_e$.
	
    Let $r_e$ be the translation length of $\morp{g}_u$, which is equal to the distance that it translates along the fin $S_e$. Note that $r_e\neq 0$.
    
 The \emph{stretch ratio} of $v \in V_1T_c$ is the function $\lk(v)\to\mathbb{Q}$ given by $e\mapsto r_e$ determined by $g \in \Zv$, but as we are only interested in the ratio between the $r_e$ terms we will only consider this function to be defined up to scaling. 
 We will denote this equivalence class of functions by
 \[
  \Str(v)=[e\mapsto r_e].
 \]
\end{defn}
  
The stretch ratio does not depend on the choice of non-trivial element $g \in\Zv$, since each element is a power of a fixed generator, and the translation lengths scale linearly by the power. 

We can also define the stretch ratio for $v \in V_1T_c$ with respect to $G'$ by using elements $g'\in\Zv'$.
It is a result of Cashen-Martin~\cite{CashMar} that the stretch ratios defined using $G$ and $G'$ will coincide.
Their result is more general, but the two consequences that will be relevant to us are the following. We also include a proof because the result is slightly simpler in our setting, and it highlights how we make use of rigid model spaces.

\begin{lem}(Cashen-Martin \cite[Proposition 5.14]{CashMar})\label{stretch}

	\begin{enumerate}
		\item The stretch ratio $\Str(v)$ is the same for $G$ and $G'$.
		\item There exist integers $r_{[e]}$ for $e\in E_1T_c$, where $[e]$ denotes the $\scrG$-orbit of $e$, such that Str$(v)=[e\mapsto r_{[e]}]$ for all $v\in V_1 T_c$.
	\end{enumerate}	
\end{lem}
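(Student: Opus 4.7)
The plan is to prove both parts by exploiting the rigidity of the model trees $\bfY_u$: a line-pattern-respecting quasi-isometry between rigid model spaces is at bounded distance from a unique isometry, and isometries of trees preserve translation lengths on fins exactly. Fix $v \in V_1 T_c$ with generators $g \in \Zv$ and $g' \in \mathbb{Z}'_v$, and let $\ell_g, \ell_{g'}$ denote their stable translation lengths in the word metrics on $G, G'$ respectively.

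For part (1), fix $e \in \lk(v)$ with rigid endpoint $u$. Because $\beta_u \colon G_u \to \bfY_u$ is at bounded distance from an orbit map of $G_u$ by Remark \ref{remk:GuG'uaction}, and the translation length of $g$ on $\bfY_u$ is exactly $r_e$, the asymptotic stretch factor of $\beta_u$ along the quasi-line $\Zv \subset G_u$ equals $r_e/\ell_g$. Similarly, the analogous quasi-isometry $\beta_u \psi^{-1} \colon G'_u \to \bfY_u$ stretches the quasi-line $\mathbb{Z}'_v \subset G'_u$ by $r'_e/\ell_{g'}$. On the other hand, $\beta_u \psi^{-1}$ factors as $\beta_u$ precomposed with $\psi^{-1} \colon G'_u \to G_u$, and $\psi^{-1}$ restricted to $\mathbb{Z}'_v$ has Hausdorff image in $\Zv$ with asymptotic stretch $1/K_\psi$, where $K_\psi$ is the asymptotic stretch of $\psi \colon \Zv \to \mathbb{Z}'_v$ as a quasi-isometry of $\mathbb{Z}$-quasi-lines. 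Combining these, the stretch of $\beta_u \psi^{-1}$ along $\mathbb{Z}'_v$ equals $(1/K_\psi)(r_e/\ell_g)$, and equating with $r'_e/\ell_{g'}$ yields
\[
\frac{r_e}{r'_e} \;=\; K_\psi \cdot \frac{\ell_g}{\ell_{g'}}.
\]
Since the right-hand side depends only on the global data $(\psi, g, g')$ and not on the choice of $e$, this proves $\Str_G(v) = \Str_{G'}(v)$.

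For part (2), apply the analogous procedure with $\psi$ replaced by a quasi-isometry $f \colon G \to G$ representing $[f] \in \scrG$, using the isomorphism $\morp{f}_u \colon \bfY_u \to \bfY_{\hat f(u)}$ from Lemma \ref{lem:treefinmap} as the comparison isometry between the two distinct rigid trees. If $g''$ generates $\mathbb{Z}_{v'}$ with $v' = \hat f(v)$, the same scaling computation yields $r_e/r_{\hat f(e)} = K_f \ell_g / \ell_{g''}$, again independent of $e \in \lk(v)$. Hence $\Str(v)$ pulls back correctly to $\Str(v')$ under $\hat f$, so the stretch ratio depends only on $[v]$, and the integers $r_{[e]}$ are well-defined up to overall rescaling. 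Part (1) then guarantees a single common set of integers $r_{[e]}$ works for both $G$ and $G'$.

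The principal technical obstacle is justifying the existence of the asymptotic stretch constants $K_\psi$ and $K_f$ and making the scaling computation rigorous. For this I would use that the cylindrical factors $\Zv$ and $\mathbb{Z}'_v$ are quasi-convex cyclic subgroups of the hyperbolic vertex groups $G_u$ and $G'_u$, so that a quasi-isometry restricts to a quasi-isometry of $\mathbb{Z}$-quasi-lines, which is at bounded distance from an affine map with well-defined asymptotic slope determined by the induced metrics. The rigidity of $\bfY_u$ is what forces these slopes to satisfy exact (rather than merely asymptotic) equalities, which is what ultimately produces the clean ratio identity above.
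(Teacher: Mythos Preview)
Your argument is correct and follows essentially the same route as the paper's proof: both compute that the restriction of $\psi$ (respectively $[f]\in\scrG$) to the cylindrical factor is a coarse similitude whose scaling factor is $r_e/r'_e$ (respectively $r_e/r_{\hat f(e)}$) and hence independent of $e$. The only difference is cosmetic: the paper works directly with the standard $\mathbb{Z}$-metric on $\Zv$ and the language of coarse $M$-similitudes, so it never needs your auxiliary quantities $\ell_g,\ell_{g'},K_\psi$ --- it simply observes that $\beta_u|_{\Zv}$ is a coarse $r_e$-similitude and $\beta_u\psi^{-1}|_{\mathbb{Z}'_v}$ is a coarse $r'_e$-similitude, then composes.
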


\noindent We recall that a \emph{coarse $M$-similitude} is a function $f: X \rightarrow Y$ between metric spaces such that
\[
 M d_X(x_1, x_2) - \epsilon \leq d_Y(f(x_1), f(x_2)) \leqslant M d_X(x_1, x_2) + \epsilon
\]
for all $x_1, x_2 \in X$ and some fixed $\epsilon \geq 0$. We make four remarks about such an $f$:
\begin{itemize}
	\item Any map Hausdorff equivalent to $f$ will also be a coarse $M$-similitude.	
	\item If $f:X\to Y$ is a quasi-isometry, then its quasi-inverse $f^{-1}$ will be a coarse $M^{-1}$-similitude.
	\item If $g:Y\to Z$ is a coarse $N$-similitude, then $g\circ f$ is a coarse $MN$-similitude.
	\item An equivariant quasi-isometry of $\mathbb{Z}$ into a tree will be a coarse $M$-similitude, where $M$ is determined by the translation length along the axis.
\end{itemize}

\begin{proof}[Proof of Lemma \ref{stretch}]
	Let $e \in E_1T_c$ be an edge with $\iota(e) = u\in V_0 T_c$ and $\tau(e) = v\in V_1 T_c$. We know from Remark \ref{remk:GuG'uaction} that $\beta_u:G_u\to\bfY_u$ is Hausdorff equivalent to any orbit map of $G_u$. We also know that $\Zv=G_e \leqslant G_u$ acts on $\bfY_u$ by translating along the fin $S_e$, say the translation length of a generator is $r_e$, so it follows that (up to Hausdorff equivalence) $\beta_u$ restricts to a coarse $r_e$-similitude $\Zv\to S_e$.
	
    Similarly, we know from Remark \ref{remk:GuG'uaction} that $\beta_u \circ \psi^{-1}:G_u'\to\bfY_u$ is Hausdorff equivalent to any orbit map of $G'_u$, and that $\Zv'=G'_e \leqslant G'_u$ acts on $\bfY_u$ by translating along the fin $S_e$, with translation length of a generator being $r'_e$ say. So it follows that (up to Hausdorff equivalence) $\beta_u \psi^{-1}$ restricts to a coarse $r'_e$-similitude $\Zv'\to S_e$. Composing the two coarse similitudes tells us that $\psi:\Zv\to\Zv'$ is a coarse $r_e/r'_e$-similitude. But the map $\psi:\Zv\to\Zv'$ doesn't depend on the choice of $e$, so the ratio $r_e/r'_e$ is the same for all edges $e\in\lk(v)$ -- thus proving (1).
    
    For (2), we must show that the action of $\scrG$ preserves stretch ratio. More precisely, if $ [f] \in\mathscr{G}$ and Str$(v)=[e\mapsto r_e]$, then we must show that 
    \begin{equation}\label{scrGpreservestretch}
\Str(\hat{ f }(v)) = [\hat{ f }(e)\mapsto r_e\mid e\in\rm{lk}(v)].
    \end{equation}
    
    Observe that, for $e\in\lk(v)$ with $\iota(e)=u$, we have the following diagram that commutes up to Hausdorff equivalence.

   \begin{equation}
	\begin{tikzcd}[
	ar symbol/.style = {draw=none,"#1" description,sloped},
	isomorphic/.style = {ar symbol={\cong}},
	equals/.style = {ar symbol={=}},
	subset/.style = {ar symbol={\subset}}
	]
	\Zv\ar{d}{ f }\ar[hook]{r}&G_{u}\ar{d}{ f }\ar{r}{\beta_{u}}&\bfY_{u} \ar{d}{ \morp{f}_{u}}\\
	\mathbb{Z}_{\hat{f}(v)}\ar[hook]{r} & G_{\hat{f}(u)} \ar{r}{\beta_{\hat{f}(u)}} & \bfY_{\hat{f}(v)}                                                              
	\end{tikzcd}
	\end{equation}
	
	We know that $\mathbb{Z}_{\hat{f}(v)}$ acts on $\bfY_{\hat{f}(v)} $ by translating along the fin $S_{\hat{f}(e)}$, with the translation length of a generator being $r_{\hat{f}(e)}$ say. And as before $\beta_{\hat{f}(u)}$ restricts to a coarse $r_{\hat{f}(e)}$-similitude $\mathbb{Z}_{\hat{f}(v)}\to S_{\hat{f}(e)}$. But we know that $\morp{f}_u$ restricts to an isometry $S_e\to S_{\hat{f}(e)}$, so composing coarse similitudes implies that $f:\Zv\to\mathbb{Z}_{\hat{f}(v)}$ is a coarse $r_e/r_{\hat{f}(e)}$-similitude. As before we note that the ratio $r_e/r_{\hat{f}(e)}$ must be the same for all edges $e\in\lk(v)$, which completes the proof of (\ref{scrGpreservestretch}).
\end{proof}

\begin{remk}
	The $\scrG$-invariance of the cylinder ratios and stretch ratios coming from Lemmas \ref{lem:torusratio} and \ref{stretch} can be interpreted in terms of the geometry of quasi-isometries between $\mathbb{Z}^2$ cylindrical vertex groups. It implies that a quasi-isometry $[f]\in\scrG$ with $\hat{f}(v_1)=v_2\in V_1 T_c$ restricts to a quasi-isometry $G_{v_1}=\mathbb{Z}_{v_1}\times\mathbb{Z}\to G_{v_2}=\mathbb{Z}_{v_2}\times\mathbb{Z}$ that (up to Hausdorff equivalence) sends cosets of $\mathbb{Z}_{v_1}$ to cosets of $\mathbb{Z}_{v_2}$, stretching each of them by coarse similitudes of the same factor (because stretch ratios are preserved), and the induced map between the second factors $\pi_{v_2}\circ f:\{0\}\times\mathbb{Z}\to\{0\}\times\mathbb{Z}$ is also a coarse similitude (because torus ratios are preserved), where $\pi_{v_2}:\mathbb{Z}_{v_2}\times\mathbb{Z}\to\mathbb{Z}$ is projection to the second factor. Moreover, the factors of these coarse similitudes are determined by $v$ and $v'$. This means that there is not much choice for $f:G_{v_1}\to G_{v_2}$ up to Hausdorff equivalence, in fact it is determined by the Hausdorff class of the map $\chi_{v_2}\circ f:\{0\}\times\mathbb{Z}\to\mathbb{Z}_{v_2}$, where $\chi_{v_2}:\mathbb{Z}_{v_2}\times\mathbb{Z}\to\mathbb{Z}_{v_2}$ is projection to the first factor. These observations are not important for the proof of our theorem, so we give no further explanations.
\end{remk}

\subsection{Constructing graphs of spaces from graphs with fins}\label{sec:graphsofspacesforGG'}

Consider the graphs of groups $(G,\Gamma)$ and $(G',\Gamma')$ for $G$ and $G'$ given by their respective actions on  $T_c \cong T_c'$.
The vertices in $\Gamma$ and $\Gamma'$ are either \emph{rigid}
or \emph{cylindrical} according to their lifts in $T_c$, and have corresponding vertex partitions $V\Gamma =V_0\Gamma \sqcup V_1 \Gamma$ and $V\Gamma'=V_0\Gamma' \sqcup V_1\Gamma'$. As for $T_c$, we always consider edges with terminus a cylindrical vertex, and we write $E_1\Gamma$ and $E_1\Gamma'$ for the sets of these edges. We also colour edges and rigid vertices according to the $\scrG$-orbits of their lifts in $T_c$, so we write $[e]:=[\tilde{e}]$ for $e\in E_1\Gamma\sqcup E_1\Gamma'$ with lift $\tilde{e}\in E_1T_c$, and $[u]:=[\tilde{u}]$ for $u\in V_0\Gamma\sqcup V_0\Gamma'$ with lift $\tilde{u}\in V_0 T_c$ (using Notation \ref{not:scrGorbits}).

We now build graphs of spaces $(\mathcal{X},\Gamma)$ and $(\mathcal{X}',\Gamma')$,  for $(G,\Gamma)$ and $(G',\Gamma')$ respectively, following the conventions given in Section~\ref{sec:BassSerre}. 

\begin{defn}\label{defn:GoScalXcalX'}
	(Graphs of spaces $(\mathcal{X},\Gamma)$ and $(\mathcal{X}',\Gamma')$)\\

For each rigid vertex $u\in V_0\Gamma$, take a lift $\tilde{u} \in  V_0 T_c$, and consider the action of $G_{\tilde{u}}$ on its corresponding tree with coloured fins $\mathbf{Y}_{\tilde{u}}$ as described in Remark \ref{remk:GuG'uaction}. This action is free and cocompact, so the quotient $\mathbf{X}_u:=\mathbf{Y}_{\tilde u}/G_{\tilde{u}}$ is a finite graph with coloured fins. The colouring $\lambda:\partial_\tto\bfY_{\tilde{u}}\to\mathcal{C}$ descends to a colouring $\lambda:\partial_\tto\bfX_u\to\mathcal{C}$. 
The fundamental group $\pi_1 \bfX_u$ is identified with the deck transformations $G_{\tilde{u}}$ of the covering $\bfY_{\tilde{u}}\to\bfX_u$, which in turn is identified with the vertex group $G_u$ of $(G,\Gamma)$.
We let $\mathcal{X}_u = \bfX_u$.

This is independent of the choice of lift $\tilde{u}$, because if $\tilde{u}_1$ and $\tilde{u}_2$ are two lifts of $u$ with $g(\tilde{u}_1)=\tilde{u}_2$ ($g\in G$), then $\morp{g}_{\tilde{u}_1}:\bfY_{\tilde{u}_1}\to\bfY_{\tilde{u}_2}$ is an isomorphism that is equivariant with respect to the actions of $G_{\tilde{u}_1}$ and $G_{\tilde{u}_2}$ respectively via the conjugation $h\in G_{\tilde{u}_1}\mapsto ghg^{-1}\in G_{\tilde{u}_2}$.

For each cylindrical vertex $v\in V_1\Gamma$ we let $\mathcal{X}_v$ be homeomorphic to a circle $S^1$ if $G_v \cong \mathbb{Z}$ or a torus $S^1 \times S^1$ if $G_v \cong \mathbb{Z}^2$ and identify $\pi_1 \mathcal{X}_v$ with $G_v$. We have $G_v\cong G_{\tilde{v}}=\mathbb{Z}_{\tilde{v}}\times\mathbb{Z}$ or $\mathbb{Z}_{\tilde{v}}$ for any lift $\tilde{v}\in V_1 T_c$ of $v$, and since the cylindrical factor $\mathbb{Z}_{\tilde{v}}$ is preserved by $G$-conjugation we can define the \emph{cylindrical factor} $\Zv\leqslant G_v$. We then fix a \emph{cylindrical fibre} $S_v \subseteq \calX_v$, a subspace homeomorphic to a circle whose embedding gives the embedding of the cylindrical factor.
Note that in the case $G_v \cong \mathbb{Z}$ we have $S_v = \calX_v$.

Let $e \in E_1\Gamma$ be an edge such that $\iota(e) = u\in V_0\Gamma$ and $\tau(e) = v\in V_1\Gamma$. 
By construction, the fins in $\bfX_u$ correspond to $G_{\tilde u}$-orbits of fins in $\bfY_{\tilde u}$, which in turn correspond to $G_{\tilde u}$-orbits of edges in $\lk(\tilde u)$. Hence we get one fin $S_e \in \partial \bfX_u$ for each edge $e$ with $\iota(e)=u$, and for each lift $\tilde{e}$ with $\iota(\tilde{e})=\tilde{u}$ the covering $\bfY_{\tilde{u}}\to\bfX_u$ restricts to a covering $S_{\tilde{e}}\to S_e$ of fins.
On the level of fundamental groups, the fin $S_e$ corresponds to the $G_u$-conjugacy class of the image $\zeta_{\bar{e}}(G_e) \leqslant G_u$.
Having an orientation $\bbS_e$ of the fin $S_e$ corresponds to choosing an orientation of the fin $S_{\tilde{e}}$, which corresponds to a choice of end $\cO$ of $G_{\tilde{e}}$. Then the colour of the oriented fin is $\lambda(\bbS_e)=[\tilde{e},\cO]$, while the colour of the edge is $[e]=[\tilde{e}]$ -- in particular $\lambda(\bbS_e)$ determines $[e]$.
Let $\mathcal{X}_e$ be homeomorphic to the circle and let $\phi_{\bar{e}}: \mathcal{X}_e \rightarrow \mathcal{X}_u$ be the homeomorphism onto $S_e \subseteq \bfX_u$ that induces $\zeta_{\bar{e}}$, and let $\phi_e : \mathcal{X}_e \rightarrow \mathcal{X}_v$ be the homeomorphism onto the cylindrical fibre $S_v \subseteq \mathcal{X}_v$ that induces $\zeta_e$.
Having determined the vertex spaces $\{\mathcal{X}_v \mid v \in V\Gamma\}$, edge spaces $\{\mathcal{X}_e \mid e \in E\Gamma \}$, and attaching maps $\{ \phi_e \mid e \in E\Gamma\}$, we obtain the graph of spaces $(\mathcal{X}, \Gamma)$.

We construct $(\mathcal{X}', \Gamma')$ similarly. So we have a vertex space $\calX'_{u'}=\bfX_{u'}:=\bfY_{\tilde{u}'}/G_{\tilde{u}'}$ for a rigid vertex $u'\in V_0 \Gamma'$ with a lift $\tilde{u}'\in V_0 T_c$, and for $e'$ with $\iota(e')=u'$ we have a fin $S_{e'}\in\partial\bfX_{u'}$. For a cylindrical vertex $v'\in V_1\Gamma'$ we let $\calX'_{v'}$ be a torus containing a cylindrical fibre $S_{v'}\cong S^1$ corresponding to the cylindrical factor $\mathbb{Z}'_{v'}\leqslant G_{v'}$. For $e'\in E_1\Gamma'$ an edge with $\iota(e')=u'\in V_0\Gamma'$ and $\tau(e')=v'\in V_1\Gamma'$, we let $\calX'_{e'}$ be a circle, and $\phi'_{\bar{e}'}:\calX'_{e'}\to\calX'_{u'}$, $\phi'_{e'}:\calX'_{e'}\to\calX'_{v'}$ maps that are homeomorphisms onto $S_{e'}$ and $S_{v'}$ respectively.
\end{defn}

\begin{defn}\label{defn:orientations}
	(Orientations)\\
	Let $v\in V_1 \Gamma$ be a cylindrical vertex with lift $\tilde{v}\in V_1 T_c$. Because we have identified $\pi_1 S_v$ with $\mathbb{Z}_{\tilde{v}}$, a choice of end $\cO$ on $\mathbb{Z}_{\tilde{v}}$ induces an orientation $\tto$ on $S_v$ as a 1-manifold. In keeping with Definition \ref{graphcolfin}, we use the notation $\bbS_v=(S_v,\tto)$, and we call this an \emph{oriented cylindrical fibre}. We colour oriented cylindrical fibres according to the $\scrG$-orbit of the corresponding oriented cylinders, and denote these colours with square brackets, so $[\bbS_v]:=[\tilde{v},\cO]$ for any lift $\tilde{v}$ of $v$ and choice of end $\cO$ of $\mathbb{Z}_{\tilde{v}}$ that induces the orientation $\bbS_v$. Note that different lifts $\tilde{v}$ will give oriented cylinders in the same $G$-orbit, so the colouring on $\bbS_v$ is well-defined. 
	
	Similarly, we can put orientations on the edge spaces $\bbX_e=(\calX_e,\tto)$, and of course we already have the notion of oriented fin $\bbS_e=(S_e,\tto)$. We colour oriented edge spaces according to the $\scrG$-orbit of the corresponding oriented edge groups, and denote these colours with square brackets, so $[\bbX_e]:=[\tilde{e},\cO]\in\mathcal{C}$ for any lift $\tilde{e}$ of $e$ and choice of end $\cO$ of $G_{\tilde{e}}$ that induces the orientation $\bbX_e$. As for oriented fins we use bars to denote the opposite orientation, so $\bar{\bbS}_v$ is the opposite orientation to $\bbS_v$ and $\bar{\bbX}_e$ is the opposite orientation to $\bbX_e$. When $\phi_e:\bbX_e\to\bbS_v$ is orientation preserving we write $\phi_e(\bbX_e)=\bbS_e$, and when $\phi_{\bar{e}}:\bbX_e\to\bbS_e$ is orientation preserving we write $\phi_{\bar{e}}(\bbX_e)=\bbS_e$. We make analogous definitions for $\bbS_{v'}=(S_{v'},\tto)$ and $\bbX_{e'}=(\calX'_{e'},\tto)$ in $\calX'$.
\end{defn}

At this point we have ways of defining orientations on several different objects, so we should take a moment to check that these orientations are compatible by chasing the definitions. Suppose $\tilde{e}$ is a lift of an edge $e\in E_1\Gamma$ with $\tau(e)=v\in V_1\Gamma$, $\tau(\tilde{e})=\tilde{v}$, $\iota(e)=u\in V_0\Gamma$ and $\iota(\tilde{e})=\tilde{u}$. If $\cO$ is a choice of end of $\mathbb{Z}_{\tilde{v}}=G_{\tilde{e}}$ then we get an oriented cylindrical fibre $\bbS_v$ as above, but also an oriented fin $\bbS_{\tilde{e}}$ as in Section \ref{sec:treeoftrees}, which descends to an oriented fin $\bbS_e\in\partial_\tto\bfX_u$. So we have a diagram

\begin{equation}
\begin{tikzcd}[
ar symbol/.style = {draw=none,"#1" description,sloped},
isomorphic/.style = {ar symbol={\cong}},
equals/.style = {ar symbol={=}},
subset/.style = {ar symbol={\subset}}
]
(\tilde{v},\cO)\ar[dashed]{d}&(\tilde{e},\cO)\ar[dashed]{l}\ar[dashed]{r}&\bbS_{\tilde{e}}\ar{d}\\
\bbS_v&\bbX_e\ar{l}[swap]{\phi_e}\ar{r}{\phi_{\bar{e}}}&\bbS_e,                                                
\end{tikzcd}
\end{equation}
where the dotted arrows represent one orientation inducing another, and the solid arrows are orientation preserving maps of 1-manifolds. The colours are also compatible, so $[\bbS_v]=[\tilde{v},\cO]$ and $[\bbX_e]=[\tilde{e},\cO]=\lambda(\bbS_{\tilde{e}})=\lambda(S_e)$.

\begin{defn}\label{defn:stretchratiodown}
	(Stretch ratio)\\
	For a rigid vertex $\tilde{u}\in V_0 T_c$ and $\iota(\tilde{e})=\tilde{u}$, in Definition \ref{defn:stretchratio} we set $r_{\tilde{e}}$ to be the translation length of a generator of $g\in G_{\tilde{e}}$ acting on $\bfY_{\tilde{u}}$. We know that $G_{\tilde{e}}$ is the $G_{\tilde{u}}$-stabiliser of the fin $S_{\tilde{e}}$, and that the quotient of $S_{\tilde{e}}$ is the fin $S_e\in\partial\bfX_u$, where $\tilde{u}$ and $\tilde{e}$ descend to $u$ and $e$ in $\Gamma$, and so $r_{\tilde{e}}=\ell(S_e)$. For $\tilde{v}\in V_1 T_c$, we defined the stretch ratio $\Str(\tilde{v})$ to be the ratio of the numbers $r_{\tilde{e}}$ for $\tilde{e}\in\lk(\tilde{v})$, thus it makes sense to define the  \emph{stretch ratio of $v\in V_1 \Gamma\sqcup V_1\Gamma'$} to be the class of functions $\Str(v):=[e\in$ $\lk(v)\mapsto\ell(S_e)]$.
\end{defn}

Lemma \ref{stretch} tells us that the stretch ratio depends only on the $\scrG$-orbits of the edges. More precisely, there are numbers $r_{[\tilde{e}]}$ such that $\Str(\tilde{v})=[\tilde{e}\mapsto r_{[\tilde{e}]}]$ for $\tilde{v}\in V_1 T_c$, which implies that
\begin{equation}\label{stretchequation}
\Str(v)=[e\mapsto r_{[e]}]
\end{equation}
for $v\in V_1\Gamma\sqcup V_1\Gamma'$.

\subsection{Density coefficients}

\begin{defn}
Given our graph of spaces $(\mathcal{X}, \Gamma)$ 
we define the \emph{volume} of $\mathcal{X}$ to be the following sum (recall that $|X_u|$ is the number of vertices in the graph $X_u$):
\begin{align}
|\mathcal{X}|&:=\sum_{u\in V_0\Gamma}|X_u|.\nonumber
\end{align}

\noindent Given a rigid vertex $u\in V_0\Gamma$, we define the \emph{density} of the colour $[u]$ in $(\mathcal{X}, \Gamma)$, denoted 
$\rho_{[u]}$, to be the value
\begin{align}\label{udensity}
\rho_{[u]}&:=\sum_{u_*\in V_0\Gamma,\,[u_*]=[u]}|X_{u_*}|/|\mathcal{X}|.
\end{align}
\end{defn}

\begin{remk}
We can also consider the density of $[u]$ in $(\mathcal{X}', \Gamma')$, since the vertices of $V_0\Gamma'$ are labelled with the same colours, but \emph{prima facie} there is no reason to believe that they will be equal.
However, because density is preserved by finite covers of graphs of spaces, after we have constructed a common finite cover $\widehat{\mathcal{X}}$ we will know that $\rho_{[u]}$ gives the same value whether defined with $\Gamma$ or $\Gamma'$.
\end{remk}

We recall Definition~\ref{defn:density}, the notion of the density $\rho_c$ of a colour $c$ given a graph with coloured fins. 
The following lemma relates the local notion of density of a colour in a particular vertex space $\mathbf{X}_u$, with the global density of the vertex spaces of that particular colour.

\begin{lem} \label{lem:densityEquation}
Let $\bbS_e\in\partial_\tto\bfX_u$ be an oriented fin of colour $c$.
Then 
 \begin{align}
  \sum_{\lambda(\bbS_{e_*})=c,\,e_*\in E_1\Gamma}\ell(\bbS_{e_*}) = \rho_c\rho_{[u]}|\mathcal{X}|
 \end{align}
\end{lem}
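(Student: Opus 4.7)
The plan is to break the left hand side up by grouping the edges $e_* \in E_1\Gamma$ according to their initial rigid vertex $u_* = \iota(e_*) \in V_0\Gamma$. For each such $u_*$, the inner sum is, by Definition~\ref{defn:density} applied to the graph with coloured fins $\mathbf{X}_{u_*}$, exactly $\rho_c^{(u_*)} |X_{u_*}|$, where $\rho_c^{(u_*)}$ is the density of colour $c$ computed inside $\mathbf{X}_{u_*}$. So the problem reduces to identifying this local density with the global constant $\rho_c$ on the orbit $[u]$, and showing it vanishes off that orbit.

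First I would argue that $\rho_c^{(u_*)} = 0$ unless $[u_*] = [u]$. A fin of colour $c$ in $\mathbf{X}_{u_*}$ lifts to an oriented fin $\mathbb{S}_{\tilde{e}_*} \in \partial_\tto \mathbf{Y}_{\tilde{u}_*}$ whose colour, by construction, is the $\mathscr{G}$-orbit $[\tilde{e}_*,\mathcal{O}_*] = c = [\tilde{e},\mathcal{O}]$ for the chosen lift of $e$. Any $[f] \in \mathscr{G}$ witnessing this equality sends $\tilde{u} = \iota(\tilde{e})$ to $\tilde{u}_* = \iota(\tilde{e}_*)$, so $[u_*] = [u]$.

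The main step is to show that $\rho_c^{(u_*)} = \rho_c^{(u)}$ whenever $[u_*] = [u]$, so that this common value can legitimately be denoted $\rho_c$. Given such $u_*$, pick $[f] \in \mathscr{G}$ with $\hat{f}(\tilde{u}) = \tilde{u}_*$. By Lemma~\ref{lem:treefinmap}, $\morp{f}_{\tilde{u}}$ is an isomorphism of coloured graphs with fins $\mathbf{Y}_{\tilde{u}} \to \mathbf{Y}_{\tilde{u}_*}$, so after identifying via this map, both $\mathbf{X}_u$ and $\mathbf{X}_{u_*}$ become quotients of the same rigid model space $\mathbf{Y}_{\tilde{u}_*}$ by free cocompact deck transformation groups $fG_{\tilde{u}}f^{-1}$ and $G_{\tilde{u}_*}$ respectively. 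The $\mathscr{G}$-stabiliser $H$ of $\tilde{u}_*$ sits inside $\operatorname{Aut}(\mathbf{Y}_{\tilde{u}_*})$, contains both deck groups, preserves colours (the colouring is defined by $\mathscr{G}$-orbits), and acts transitively on the oriented fins of each colour in $\mathbf{Y}_{\tilde{u}_*}$ (any $\mathscr{G}$-element moving one fin based at $\tilde{u}_*$ to another necessarily fixes $\tilde{u}_*$). Hence Theorem~\ref{Leighton} applies and yields a common finite cover of $\mathbf{X}_u$ and $\mathbf{X}_{u_*}$, which implies $\rho_c^{(u_*)} = \rho_c^{(u)}$ since density is preserved by finite covers.

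Putting the pieces together, the left hand side equals
\[
\sum_{u_* \in V_0\Gamma} \rho_c^{(u_*)} |X_{u_*}| \;=\; \rho_c \sum_{\substack{u_* \in V_0\Gamma \\ [u_*]=[u]}} |X_{u_*}| \;=\; \rho_c\, \rho_{[u]}\, |\mathcal{X}|,
\]
where the last equality is the definition (\ref{udensity}) of $\rho_{[u]}$. The only real subtlety is the middle step; verifying that Theorem~\ref{Leighton} genuinely applies to $\mathbf{X}_u$ and $\mathbf{X}_{u_*}$ with $H$ the point-stabiliser of $\tilde{u}_*$ in $\mathscr{G}$ is the place where one must be most careful.
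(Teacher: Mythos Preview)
Your proof is correct and follows essentially the same approach as the paper: group the sum by rigid vertex $u_*$, observe that a colour-$c$ fin forces $[u_*]=[u]$, and then use Theorem~\ref{Leighton} to produce a common finite cover of $\mathbf{X}_u$ and $\mathbf{X}_{u_*}$, which equates their densities $\rho_c$. Your verification that Theorem~\ref{Leighton} applies (identifying the universal covers via $\morp{f}_{\tilde{u}}$ and taking $H$ to be the $\mathscr{G}$-stabiliser) is more explicit than the paper's, but the argument is the same.
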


\begin{proof}
	All $\mathbf{X}_{u_*}$ containing an oriented fin of colour $c$ have $[u_*]=[u]$ and are covered by $\mathbf{Y}_{\tilde{u}}$ for some $\tilde{u}\in V_0 T_c$ a lift of $u$. Hence by Theorem \ref{Leighton}, all these $\mathbf{X}_{u_*}$ have a common finite cover, and so they all have the same density $\rho_c$. We can then make the following computation:
 \begin{align*}
  \sum_{\lambda(\bbS_{e_*})=c,\,e_*\in E_1\Gamma}\ell(\bbS_{e_*}) 
          & = \sum_{\substack{u_* \in V_0\Gamma, \\ [u_*] = [u]} } \; \Bigg[ \; \sum_{\substack{\bbS_{e_*}\in\partial_\tto\bfX_{u_*}, \\ \lambda(\bbS_{e_*}) = c}} \ell(\bbS_{e_*}) \Bigg]\\
   & = \sum_{\substack{u_* \in V_0\Gamma, \\ [u_*] = [u]} } \rho_c | X_{u_*} | \\
   & = \rho_c \rho_{[u]} |\mathcal{X}|.\qedhere
 \end{align*}
   
\end{proof}

\bigskip
\section{A common finite cover} \label{sec:CommonFiniteCover}

In this section we complete the proof of Theorem \ref{thm:main} by constructing a common finite cover of the graphs of spaces $\mathcal{X}$ and $\mathcal{X}'$ from the previous section.

\subsection{A template for our desired common cover} \label{sec:template}

More precisely, we will construct finite covers $\widehat{\mathcal{X}} \rightarrow \mathcal{X}$ and $\widehat{\mathcal{X}}' \rightarrow \mathcal{X}'$ such that 
$\widehat{\calX}$ and $\widehat{\calX}'$ are homotopy equivalent.
This will be achieved by constructing $\widehat{\calX}$ and $\widehat{\calX}'$ such that their induced decompositions are over graphs $\widehat{\Gamma}$ and $\widehat{\Gamma}'$ that are type and colour preserving.
Indeed if we identify $\widehat{\Gamma}$ and $\widehat{\Gamma}'$, then we will have homeomorphic vertex spaces $\widehat{\calX}_v \cong \widehat{\calX}'_v$ for all $v \in V\widehat{\Gamma}$ and homeomorphic edge spaces $\widehat{\calX}_e \cong \widehat{\calX}'_e$ for all $e \in E\widehat{\Gamma}$.
The attaching maps $\hat{\phi}_e, \hat{\phi}_e' : \widehat{\calX}_e \rightarrow \widehat{\calX}_v$ will be homotopic for all $e \in E \widehat{\Gamma}$.
By a standard result in topology the graphs of spaces will therefore be homotopic.
Commensurability of $G$ and $G'$ will follow.

\subsection{Common covers of vertex and edge spaces}\label{sec:vertexedgecovers}

In this section we define the vertex and edge spaces of $\widehat{\calX}$ and $\widehat{\calX}'$.

\begin{defn}(Common covers of rigid vertex spaces)\\
For rigid vertices $u\in V_0\Gamma$ and $u'\in V_0\Gamma'$ of the same colour $[u]=[u']$, we describe how to produce a common cover $\widehat{\bfX}_{u,u'}$ of the graphs with fins $\mathcal{X}_ u = \mathbf{X}_u$ and $ \mathcal{X}'_{u'} = \mathbf{X}_{u'}$. These two graphs with fins are defined by the quotients $\mathbf{Y}_{\tilde u}/G_{\tilde u}$ and $\mathbf{Y}_{\tilde{u}'}/G'_{\tilde{u}'}$, where $\tilde{u}$ and $\tilde{u}'$ are lifts of $u$ and $u'$ respectively to $T_c$. As $[\tilde u]=[u]=[u']=[\tilde{u}']$, we know that there exists $[f]\in\scrG$ with $\hat{f}(\tilde{u})=\tilde{u}'$ and $\morp{f}_{\tilde{u}}:\bfY_{\tilde{u}}\to\bfY_{\tilde{u}'}$ an isomorphism. We know that the action of $[f]^{-1}G'_{\tilde{u}'}[f]\leqslant\scrG_{\tilde{u}}$ on $\bfY_{\tilde{u}}$ is conjugate to the action of $G'_{\tilde{u}'}$ on $\bfY_{\tilde{u}'}$ via $\morp{f}_{\tilde{u}}$, so $\bfX_{u'}\cong \bfY_{\tilde{u}}/[f]^{-1}G'_{\tilde{u}'}[f]$. 
We can then apply Theorem \ref{Leighton} to $\bfY_{\tilde{u}}$, with $\Gamma_1,\Gamma_2 \leqslant\Aut(\bfY_{\tilde{u}})$ the images of $G_{\tilde{u}},[f]^{-1}G'_{\tilde{u}'}[f]\leqslant\scrG_{\tilde{u}}$ under the homomorphism $\scrG_{\tilde{u}}\to\Aut(\bfY_{\tilde{u}})$, to produce a common finite cover $\widehat{\bfX}_{u,u'}$ of $\bfX_u$ and $\bfX_{u'}$ that satisfies equation (\ref{finequation}). Note that the colours of oriented fins in $\bfY_{\tilde{u}}$ were defined to correspond to $\scrG$-orbits (Section \ref{sec:treeoftrees}), so $\scrG_{\tilde{u}}$ acts transitively on the oriented fins of each colour in $\bfY_{\tilde{u}}$.
Additionally note that, while the definitions of $\bfX_u$ and $\bfX_{u'}$ did not depend on the choice of lifts $\tilde{u}$ and $\tilde{u}'$, the definition of $\widehat{\bfX}_{u,u'}$ does depend on these choices, and also on the choice of $[f]\in\scrG$.
\end{defn}

The following lemma is a direct application of omnipotence of free groups.

\begin{lem}\label{lem:samefinlength}
 We can choose integers $\ell_{[e]}$ for $e\in E_1\Gamma\sqcup E_1\Gamma'$ and replace each $\widehat{\bfX}_{u,u'}$ with a finite cover, such that the length of a fin $\hat{S}\in\partial\widehat{\bfX}_{u,u'}$ that covers a fin $S_e\in\partial\bfX_u\sqcup\partial\bfX_{u'}$ is $\ell_{[e]}$.
 Moreover, for a vertex $v \in V_1\Gamma \sqcup V_1\Gamma'$ we have $\Str(v) = [e\mapsto \ell_{[e]}]$, or equivalently there is an integer $d_v$ such that
  \begin{equation}\label{omnistretch}
\ell_{[e]}=d_v\ell(S_e),
  \end{equation}
  for all $e\in\lk(v)$ -- so the degree of the covering $\hat{S}\to S_e$ is $d_v$ and depends only on $v$.
\end{lem}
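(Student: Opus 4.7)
The plan is to apply Wise's omnipotence theorem \cite{omnipotence} to each free group $\pi_1 \widehat{\bfX}_{u,u'}$. To set up the right numerics, first combine (\ref{stretchequation}) with Definition \ref{defn:stretchratiodown}: for each $v \in V_1\Gamma \sqcup V_1\Gamma'$ there is a positive rational $m_v$ with $\ell(S_e) = m_v\, r_{[e]}$ for every $e \in \lk(v)$. Because $\Str(v)$ is only defined up to scaling, I may multiply all the $r_{[e]}$ by a common positive integer and thereby assume every $m_v$ is a positive integer. I will then set $\ell_{[e]} := M\, r_{[e]}$ for a single positive integer $M$, to be fixed below. Whenever $m_v \mid M$, the moreover clause holds with $d_v := M/m_v$, since for $e \in \lk(v)$ we compute $\ell_{[e]} = M\, r_{[e]} = (M/m_v)\, m_v\, r_{[e]} = d_v\, \ell(S_e)$.

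Each $\widehat{\bfX}_{u,u'}$ deformation retracts onto its underlying graph, so $F_{u,u'} := \pi_1 \widehat{\bfX}_{u,u'}$ is a finitely generated free group, and for each fin $\hat{S} \in \partial \widehat{\bfX}_{u,u'}$ the subgroup $\pi_1(\hat{S})$ is maximal cyclic in $F_{u,u'}$, with some generator $g_{\hat{S}}$. Distinct fins correspond to distinct $F_{u,u'}$-orbits of fins in the universal cover, hence to non-conjugate maximal cyclic subgroups of $F_{u,u'}$; commensurability of two such subgroups would force a common non-trivial element whose centraliser in a free group is itself a maximal cyclic subgroup, and hence would equal both subgroups by maximality, contradicting non-conjugacy. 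The generators $g_{\hat{S}}$ are therefore pairwise non-commensurable, so omnipotence provides an integer $K_{u,u'} \geq 1$ such that for any choice of positive integers $\{n_{\hat{S}}\}$ there is a finite quotient of $F_{u,u'}$ in which each $g_{\hat{S}}$ has order exactly $n_{\hat{S}} K_{u,u'}$. The kernel yields a normal finite cover $\widetilde{\bfX}_{u,u'} \to \widehat{\bfX}_{u,u'}$ in which every fin over $\hat{S}$ has length $n_{\hat{S}} K_{u,u'} \cdot \ell(\hat{S})$.

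To force this length to equal $\ell_{[e]}$ whenever $\hat{S}$ covers $S_e$, I need $n_{\hat{S}} K_{u,u'} = M\, r_{[e]}/\ell(\hat{S})$. Writing $\ell(\hat{S}) = k_{\hat{S}} \ell(S_e) = k_{\hat{S}} m_v\, r_{[e]}$, where $k_{\hat{S}}$ is the positive integer degree of $\hat{S} \to S_e$ and $v = \tau(e)$, this rewrites as $n_{\hat{S}} = M/(k_{\hat{S}} m_v K_{u,u'})$. Only finitely many triples $(u, u', \hat{S})$ arise, so I set $M$ to be the LCM of the values $k_{\hat{S}} m_v K_{u,u'}$; this automatically ensures $m_v \mid M$. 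Applying omnipotence to each $F_{u,u'}$ with these values of $n_{\hat{S}}$ then produces the required $\widetilde{\bfX}_{u,u'}$, which I use in place of $\widehat{\bfX}_{u,u'}$. The compositions $\widetilde{\bfX}_{u,u'} \to \widehat{\bfX}_{u,u'} \to \bfX_u, \bfX_{u'}$ remain coverings of graphs with coloured fins, every fin over $S_e$ has length $\ell_{[e]}$, and the covering degree $\hat{S} \to S_e$ equals $d_v$ as required. The main substantive step is the verification of the non-commensurability hypothesis above; everything else is routine divisibility bookkeeping.
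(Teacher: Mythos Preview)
Your approach is the same as the paper's---apply omnipotence of free groups to each $\pi_1\widehat{\bfX}_{u,u'}$ and pass to a further normal cover in which every fin over $S_e$ has length $\ell_{[e]}:=Mr_{[e]}$---and you supply considerably more detail than the paper does. In particular your verification that the generators $g_{\hat S}$ are pairwise non-commensurable (distinct fins upstairs have distinct underlying geodesics, hence non-conjugate maximal cyclic stabilisers, hence no conjugate non-trivial powers) is correct and is something the paper simply takes for granted.

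There is one slip in the bookkeeping. You claim that by multiplying all the $r_{[e]}$ by a common positive integer you may assume each $m_v$ is a positive integer, but the direction is wrong: replacing $r_{[e]}$ by $Cr_{[e]}$ turns $m_v=\ell(S_e)/r_{[e]}$ into $m_v/C$, and in general no common rescaling of the $r_{[e]}$ will simultaneously make all the $m_v$ integral while keeping the $r_{[e]}$ integral. Fortunately you do not need $m_v\in\mathbb{Z}$. What you actually need is that each
\[
n_{\hat S}=\frac{Mr_{[e]}}{\ell(\hat S)\,K_{u,u'}}\quad\text{and}\quad d_v=\frac{Mr_{[e]}}{\ell(S_e)}
\]
is a positive integer. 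Both follow at once if you simply take $M$ to be any common multiple of the finitely many integers $\ell(\hat S)K_{u,u'}$, since then $\ell(\hat S)K_{u,u'}\mid M\mid Mr_{[e]}$ and $\ell(S_e)\mid\ell(\hat S)\mid Mr_{[e]}$. With that adjustment the argument goes through exactly as you wrote it.
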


\begin{proof}
By omnipotence of free groups \cite[Theorem 3.5]{omnipotence}, there exists $N>0$ such that for any $k:\partial\widehat{\bfX}_{u,u'}\to\mathbb{N}$ there exists a normal cover $\Phi:\overline{\bfX}\to\widehat{\bfX}_{u,u'}$ such that the length of any fin in $\Phi^{-1}(S)$ is $Nk(S)$. If $\hat{S}\in\partial\widehat{\bfX}_{u,u'}$ covers fins $S_e\in\partial\bfX_u$ and $S_{e'}\in\partial\bfX_{u'}$ then $[e]=[e']$, because $S_e$ and $S_{e'}$ will have orientations of the same colour, and the colour of an oriented fin determines the colour of the corresponding edge (see Definition \ref{defn:GoScalXcalX'}).
Therefore, we can replace the $\widehat{\bfX}_{u,u'}$ with further finite covers and assume that the length of a fin covering $S_e$ is $\ell_{[e]}$. We know from (\ref{stretchequation}) that $\Str(v)=[e\mapsto r_{[e]}]$, so if we set $\ell_{[e]}=Nr_{[e]}$, then we have that
\begin{equation}\label{Xhatfinratio}
\Str(v) = [e\mapsto \ell_{[e]}]
\end{equation}
for a vertex $v\in V_1\Gamma \sqcup V_1\Gamma'$.
Note that equation (\ref{finequation}) from Theorem \ref{Leighton} is preserved by passing to a further finite cover.
\end{proof}
\bigskip

We also need common finite covers for the cylindrical vertex spaces.

\begin{defn}(Common covers of cylindrical vertex spaces)\\\label{defn:comcovercylvertex}
	Given cylindrical vertices $v\in V_1\Gamma$ and $v'\in V_1\Gamma'$ and oriented cylindrical fibres $\bbS_v$ and $\bbS_{v'}$ of the same colour (see Definition \ref{defn:orientations}), we let $\hat{\bbS}(\bbS_v,\bbS_{v'})$ be an oriented circle equipped with orientation preserving covering maps to $\bbS_v$ and $\bbS_{v'}$ of degrees $d_v$ and $d_{v'}$ respectively (where $d_v$ and $d_{v'}$ come from (\ref{omnistretch})). We extend each  $\hat{\bbS}(\bbS_v,\bbS_{v'})$ to a common cover $\widehat{\calX}(\bbS_v,\bbS_{v'})$ of the vertex spaces $\calX_v$ and $\calX'_{v'}$. If $G_v\cong G'_{v'}\cong\mathbb{Z}$ then no extension is necessary, while if $G_v\cong G'_{v'}\cong\mathbb{Z}^2$ then we make $\widehat{\calX}(\bbS_v,\bbS_{v'})$ a torus containing $\hat{\bbS}(\bbS_v,\bbS_{v'})$ as an embedded circle, so that $\widehat{\calX}(\bbS_v,\bbS_{v'})$ is the cover corresponding to the subgroups $d_v\Zv\times\mathbb{Z}\leqslant\Zv\times\mathbb{Z}=G_v=\pi_1(\calX_v)$ and $d_{v'}\mathbb{Z}_{v'}\times\mathbb{Z}\leqslant\mathbb{Z}_{v'}\times\mathbb{Z}=G_{v'}=\pi_1(\calX'_{v'})$. We consider $\hat{\bbS}(\bar{\bbS}_v,\bar{\bbS}_{v'})$ to be the same embedded circle as $\hat{\bbS}(\bbS_v,\bbS_{v'})$ but with orientation reversed, while $\widehat{\calX}(\bbS_v,\bbS_{v'})=\widehat{\calX}(\bar{\bbS}_v,\bar{\bbS}_{v'})$ is just a space with no orientation.
	Thus we obtain a pair of common covers for each pair of vertices $v$ and $v'$. 
	See Figure~\ref{fig:commonCylinderCover} for an illustration.
	\end{defn}

	\begin{figure}[H]
 \centering
	\begin{overpic}[width=.3\textwidth,tics=5,]{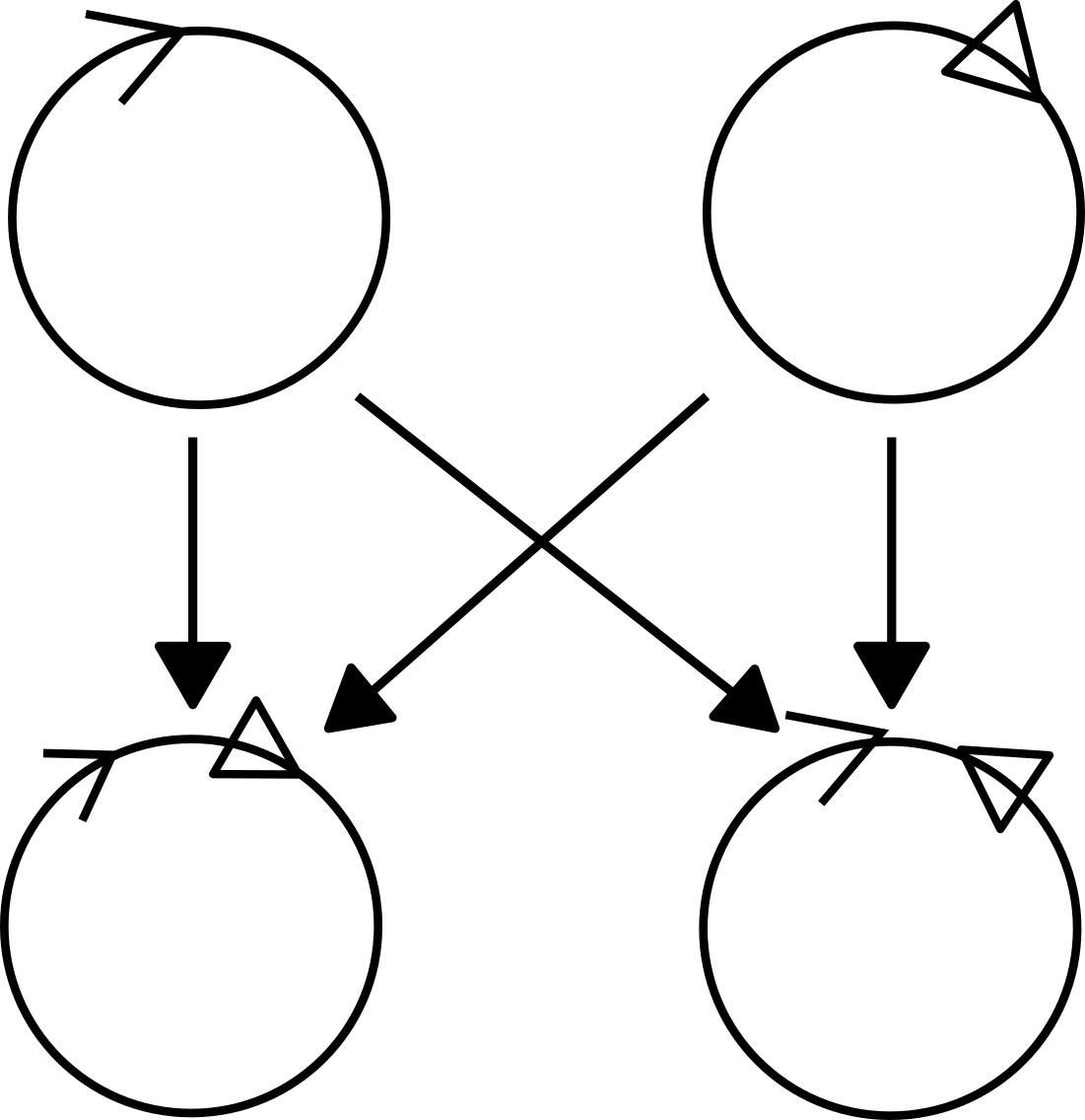} 
	  \put(-9,16){$\bbS_v$}
  	  \put(99,16){$\bbS_{v'}$}
      \put(-32,80){$\hat{\bbS}(\; \bbS_v,\bbS_{v'})$}
      \put(100,80){$\hat{\bbS}(\; \bbS_v,\bar{\bbS}_{v'})$}
          \end{overpic}
	\caption{Each cylindrical fibre has the clockwise orientation and the covering maps are determined by the arrows.
	Note that if we take the anticlockwise orientations we obtain $\hat{\bbS}(\; \bar{\bbS}_v,\bar{\bbS}_{v'})$ and $\hat{\bbS}(\; \bar{\bbS}_v, {\bbS}_{v'})$.
	Thus $\widehat{\calX}(\bbS_v,\bbS_{v'})=\widehat{\calX}(\bar{\bbS}_v,\bar{\bbS}_{v'})$ and $\widehat{\calX}(\bbS_v,\bar{\bbS}_{v'})=\widehat{\calX}(\bar{\bbS}_v,{\bbS}_{v'})$.}
	\label{fig:commonCylinderCover}
    \end{figure}

\begin{defn}(Common covers of edge spaces)\\\label{defn:comcoveredge}
	If $e\in E_1\Gamma$ and $e'\in E_1\Gamma'$ are edges with $\tau(e)=v\in V_1\Gamma$ and $\tau(e')=v'\in V_1 \Gamma'$, and $\bbX_e$ and $\bbX_{e'}$ are orientations of the same colour, then we define $\widehat{\bbX}(\bbX_e,\bbX_{e'})$ to be an oriented circle equipped with orientation preserving covering maps to $\bbX_e$ and $\bbX_{e'}$ of degrees $d_v$ and $d_{v'}$ respectively. We identify $\widehat{\bbX}(\bbX_e,\bbX_{e'})$ and $\widehat{\bbX}(\bar{\bbX}_e,\bar{\bbX}_{e'})$ as two orientations of the same space $\widehat{\calX}(\bbX_e,\bbX_{e'})=\widehat{\calX}(\bar{\bbX}_e,\bar{\bbX}_{e'})$.
	So again we obtain a pair of common covers for each pair of edges.
\end{defn}

\subsection{Link maps}

Having defined common covers of the edge and vertex spaces, we now need to glue them together, or rather enumerate the possible ways of gluing them together. The following definition will be used to describe the ways of gluing a cylindrical vertex space $\widehat{\calX}(\bbS_v,\bbS_{v'})$ to edge spaces $\widehat{\calX}(\bbX_e,\bbX_{e'})$.

\begin{defn}(Link maps)\\\label{den:linkmaps}
	Let $v\in V_1\Gamma$ and $v'\in V_1\Gamma'$ be cylindrical vertices and consider oriented cylindrical fibres $\bbS_v$ and $\bbS_{v'}$ of the same colour. This induces orientations $\bbX_e$ and $\bbX_{e'}$ on the incident edge spaces. A \emph{link map} from $\bbS_v$ to $\bbS_{v'}$ is a colour preserving bijection between the incident oriented edge spaces, so in symbols it is a bijection
	$$\sigma:\lk(v)\to\lk(v')$$
	such that $[\bbX_e]=[\bbX_{\sigma(e)}]$ for all $e\in\lk(v)$. We let $\rm{LkMap}(\bbS_v,\bbS_{v'})$ be the set of all link maps from $\bbS_v$ to $\bbS_{v'}$.
\end{defn}

\begin{lem}\label{lem:linkmapsexist}
Let $c\in\mathcal{C}$. The number of $e\in\lk(v)$ with $[\bbX_e]=c$ is equal to the number of $e'\in\lk(v')$ with $[\bbX_{e'}]=c$ is equal to $N_c[\bbS_v]$. In particular, $\rm{LkMap}(\bbS_v,\bbS_{v'})$ is non-empty.
\end{lem}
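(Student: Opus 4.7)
The approach is to translate between the oriented-edge combinatorics in the vertex spaces $\calX_v, \calX'_{v'}$ and the orbit structure in $T_c$, and then invoke the cylinder-number matching arranged in Section~\ref{sec:CylNumbers}. No new ideas are needed — this is essentially a bookkeeping lemma, and the hardest part is just making sure the orientations match up correctly.

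First I would fix a lift $\tilde v\in V_1T_c$ of $v$ together with the end $\cO$ of $\mathbb{Z}_{\tilde v}$ that induces the orientation $\bbS_v$, so that $[\bbS_v]=[\tilde v,\cO]$ by Definition~\ref{defn:orientations}. Edges of $\lk(v)$ correspond bijectively to $G_v$-orbits in $\lk(\tilde v)$, and Remark~\ref{remk:cylstabpreserveO} says these are in natural bijection with $G_v$-orbits in $\lk(\tilde v,\cO)$. Moreover, the orientation $\bbX_e$ on the edge space $\calX_e$ is the one induced (through $\phi_e$) by the end $\cO$ of $G_{\tilde e}=\mathbb{Z}_{\tilde v}$, so the colour $[\bbX_e]=[\tilde e,\cO]$ matches exactly the colour used to define $\lk(\tilde v,\cO,c)$. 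Hence the number of $e\in\lk(v)$ with $[\bbX_e]=c$ equals the number of $G_v$-orbits in $\lk(\tilde v,\cO,c)$, which by Definition~\ref{defn:torusratio} is $t_c(\tilde v,\cO)$. By our standing assumption (\ref{cylnumbermatch}) this equals $N_c[\tilde v,\cO]=N_c[\bbS_v]$.

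Next I would apply the identical argument on the $G'$-side, choosing a lift $\tilde v'$ of $v'$ and an end $\cO'$ of $\mathbb{Z}'_{\tilde v'}$ inducing $\bbS_{v'}$. Since $\bbS_v$ and $\bbS_{v'}$ have the same colour we have $[\tilde v,\cO]=[\tilde v',\cO']$ as $\scrG$-orbits of oriented cylinders, and so $N_c[\bbS_{v'}]=N_c[\bbS_v]$. Using the $G'$-version of (\ref{cylnumbermatch}), the number of $e'\in\lk(v')$ with $[\bbX_{e'}]=c$ is $t'_c(\tilde v',\cO')=N_c[\tilde v',\cO']=N_c[\bbS_v]$, as claimed.

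For the last sentence, partition $\lk(v)$ and $\lk(v')$ by the colours $c\in\mathcal{C}$ of their incident oriented edge spaces. The argument above shows that the blocks of colour $c$ on either side are finite sets of the same size, so we may choose a bijection within each colour class. Taking the disjoint union of these bijections gives a colour preserving bijection $\sigma:\lk(v)\to\lk(v')$, that is, an element of $\rm{LkMap}(\bbS_v,\bbS_{v'})$.
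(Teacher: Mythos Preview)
Your proof is correct and follows essentially the same approach as the paper: lift to $T_c$, identify the oriented edge spaces of colour $c$ with $G_{\tilde v}$-orbits in $\lk(\tilde v,\cO,c)$, recognise this count as the cylinder number $t_c(\tilde v,\cO)$, and invoke (\ref{cylnumbermatch}) on both the $G$ and $G'$ sides. Your write-up is slightly more explicit in unpacking the correspondence via Remark~\ref{remk:cylstabpreserveO} and in constructing the bijection at the end, but the argument is the same.
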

\begin{proof}
The oriented cylindrical fibre $\bbS_v$ corresponds to a choice of end $\cO$ of a cylindrical factor $\mathbb{Z}_{\tilde{v}}$ for $\tilde{v}$ a lift of $v$. The incident oriented edge spaces $\bbX_e$ correspond to $G_{\tilde{v}}$-orbits of oriented edge groups $(\tilde{e},\cO)$ with $\tilde{e}\in\lk(\tilde{v})$. Moreover, for $c\in\mathcal{C}$, the number of $G_{\tilde{v}}$-orbits of oriented edge groups $(\tilde{e},\cO)$ of colour $c$ is equal to the cylinder number $t_c(v,\cO)$ by Definition \ref{defn:torusratio}. Hence the number of incident oriented edge spaces $\bbX_e$ of colour $c$ is also equal to $t_c(v,\cO)$, and by (\ref{cylnumbermatch}) we have
$$t_c(v,\cO)=N_c[v,\cO]=N_c[\bbS_v],$$
so it only depends on the colours $c$ and $[\bbS_v]$. Again by (\ref{cylnumbermatch}), we know that the number of oriented edge spaces incident to $\bbS_{v'}$ of colour $c$ is equal to $N_c[\bbS_{v'}]=N_c[\bbS_v]$.
\end{proof}

\begin{remk}\label{remk:linkmapflip}
	$\sigma:\lk(v)\to\lk(v')$ defines a link map from $\bbS_v$ to $\bbS_{v'}$ if and only if it defines a link map from $\bar{\bbS}_v$ to $\bar{\bbS}_{v'}$. This is because $\bar{\bbS}_v$ and $\bar{\bbS}_{v'}$ induce the orientations $\bar{\bbX}_e$ and $\bar{\bbX}_{e'}$ on the incident edge spaces, so if $\sigma$ defines a link map from $\bbS_v$ to $\bbS_{v'}$ then $[\bar{\bbX}_e]=\overline{[\bbX_e]}=\overline{[\bbX_{\sigma(e)}]}=[\bar{\bbS}_{\sigma(e)}]$ for each $e\in\lk(v)$.
\end{remk}
\bigskip

Given a link map $\sigma:\lk(v)\to\lk(v')$ from $\bbS_v$ to $\bbS_{v'}$ and $e\in\lk(v)$ with $\sigma(e)=e'$, suppose $\iota(e)=u$ and $\iota(e')=u'$. Let $\phi_{\bar{e}}(\bbX_e)=\bbS_e\in\partial_\tto\bfX_u$ and $\phi_{\bar{e}'}(\bbX_{e'})=\bbS_{e'}\in\partial_\tto\bfX_{u'}$. The fins $\bbS_e$ and $\bbS_{e'}$ both have colours equal to $[\bbX_e]=[\bbX_{e'}]$, so equation (\ref{finequation}) implies that there exists $\hat{\bbS}\in \partial_\tto \widehat{\bfX}_{u,u'}(\bbS_e,\bbS_{e'})$ that covers both of them. Equation (\ref{omnistretch}) tells us that these covering maps of fins have degrees $d_v$ and $d_{v'}$ respectively, so we get two commutative diagrams as follows.

\begin{equation}\label{linkgluev}
\begin{tikzcd}[
ar symbol/.style = {draw=none,"#1" description,sloped},
isomorphic/.style = {ar symbol={\cong}},
equals/.style = {ar symbol={=}},
subset/.style = {ar symbol={\subset}}
]
\widehat{\calX}(\bbS_v,\bbS_{v'})\ar{d}&\hat{\bbS}(\bbS_v,\bbS_{v'}) \ar{d}{d_v} \ar[l,hook'] & \widehat{\bbX}(\bbX_e,\bbX_{e'}) \ar{l}[swap]{\sim} \ar{r}{\sim} \ar{d}{d_v} &\hat{\bbS}\ar[r,hook]\ar{d}{d_v} &\widehat{\mathbf{X}}_{u,u'}\ar{d}\\
\calX_v&\bbS_v\ar[l,hook'] & \bbX_e \ar{l}{\sim}[swap]{\phi_e}\ar{r}{\phi_{\bar{e}}}[swap]{\sim}& \bbS_e\ar[r,hook]&\bfX_u
\end{tikzcd}
\end{equation}
\begin{equation}\label{linkgluev'}
\begin{tikzcd}[
ar symbol/.style = {draw=none,"#1" description,sloped},
isomorphic/.style = {ar symbol={\cong}},
equals/.style = {ar symbol={=}},
subset/.style = {ar symbol={\subset}}
]
\widehat{\calX}(\bbS_v,\bbS_{v'})\ar{d}&\hat{\bbS}(\bbS_v,\bbS_{v'}) \ar{d}{d_{v'}} \ar[l,hook']  & \widehat{\bbX}(\bbX_e,\bbX_{e'}) \ar{l}[swap]{\sim} \ar{r}{\sim} \ar{d}{d_{v'}} &\hat{\bbS}\ar[r,hook]\ar{d}{d_{v'}} &\widehat{\mathbf{X}}_{u,u'}\ar{d}\\
\calX'_{v'} &\bbS_{v'}\ar[l,hook'] & \bbX_{e'} \ar{l}{\sim}[swap]{\phi'_{e'}}\ar{r}{\phi'_{\bar{e}'}}[swap]{\sim}& \bbS_{e'}\ar[r,hook]&\bfX_{u'}
\end{tikzcd}
\end{equation}

In these diagrams a homeomorphism is indicated by $\sim$. Also note that the middle six spaces in each diagram have associated orientations, which are preserved by the maps between them.

These diagrams give us the right local data to define edge maps in the covers $\widehat{\calX}$ and $\widehat{\calX}'$. The vertical maps are the coverings from vertex and edge spaces of $\widehat{\calX}$ and $\widehat{\calX}'$ to vertex and edge spaces of $\calX$ and $\calX'$, as defined in Section \ref{sec:vertexedgecovers}. Then the top row of (\ref{linkgluev}) can be used to define edge maps of $\widehat{\calX}$ while the top row of (\ref{linkgluev'}) can be used to define edge maps of $\widehat{\calX}'$. The two maps from $\widehat{\bbX}(\bbX_e,\bbX_{e'})$ to $\hat{\bbS}$ are both orientation preserving homeomorphisms of circles, hence they are homotopic, similarly the two maps from $\widehat{\bbX}(\bbX_e,\bbX_{e'})$ to $\hat{\bbS}(\bbS_v,\bbS_{v'})$ are homotopic. 
See Figure~\ref{fig:coverConstruction}.
From now on we will only care about these edge maps up to homotopy, so we will just talk about a single cover $\widehat{\calX}$ of $\calX$ and $\calX'$.

\begin{figure}[H]
 \centering
	\begin{overpic}[width=.6\textwidth,tics=5,]{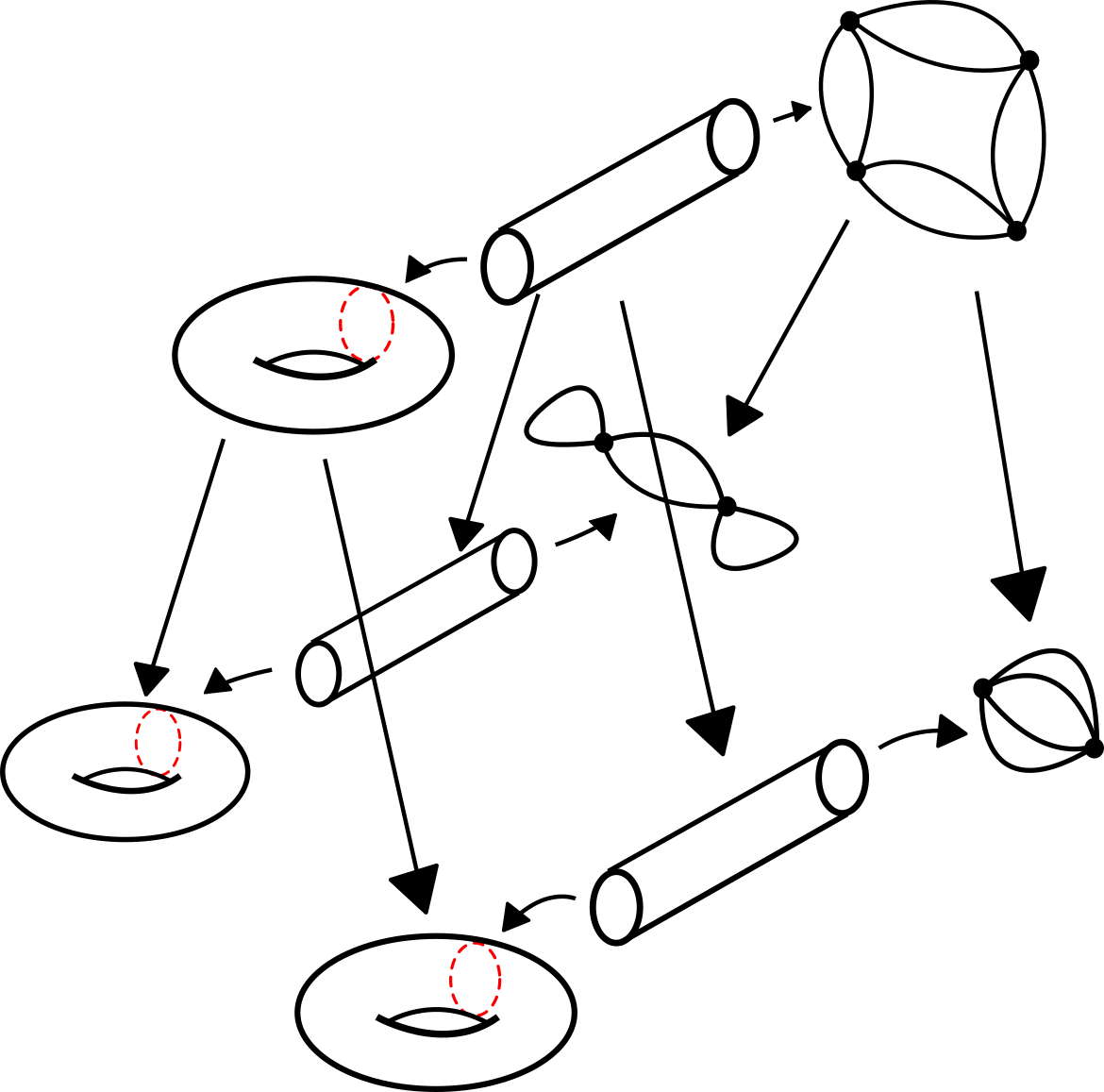} 
	 \put(20,5){$\calX_v$}
	 \put(-7,27){$\calX_{v'}'$}
	 \put(-3,67){$\widehat{\calX}(\bbS_v, \bbS_{v'})$}
	 \put(73,53){$\calX_{u'}'$}
	 \put(103,30){$\calX_{u}$}
	 \put(97,85){$\widehat{\bfX}_{u,u'}$}
	\end{overpic}

	\caption{An illustration of how the common cover is constructed.
	The arrows in the diagram commute, and the dashed lines in the tori denote the cylindrical fibres.}
	\label{fig:coverConstruction}
    \end{figure}

\begin{remk}\label{remk:linkglueflip}
	Under the replacement $\bbS_v,\bbS_{v'},\bbX_e,\bbX_{e'},\bbS_e,\bbS_{e'},\hat{\bbS}\mapsto\bar{\bbS}_v,\bar{\bbS}_{v'},\bar{\bbX}_e,\bar{\bbX}_{e'},\bar{\bbS}_e,\bar{\bbS}_{e'},\bar{\hat{\bbS}}$, diagrams (\ref{linkgluev}) and (\ref{linkgluev'}) will consist of the same spaces and maps, the orientations of the spaces will just reverse. So when using $\sigma$ to construct the local data of edge maps in $\widehat{\calX}$, it doesn't matter whether we regard $\sigma$ as a link map from $\bbS_v$ to $\bbS_{v'}$ or as a link map from $\bar{\bbS}_v$ to $\bar{\bbS}_{v'}$.
\end{remk}

\subsection{From local common covers to global}\label{sec:LocaltoGlobal}

A finite common cover $\mathcal{\widehat{X}}$ of $\mathcal{X}$ and $\mathcal{X}'$ will be constructed by taking as vertex spaces $\omega(u,u')$ copies of each $\widehat{\mathbf{X}}_{u,u'}$ and $\omega(\bbS_v,\bbS_{v'})$ copies of each $\widehat{\calX}(\bbS_v,\bbS_{v'})$, and as edge spaces $\omega(\bbX_e,\bbX_{e'})$ copies of each $\widehat{\calX}(\bbX_e,\bbX_{e'})$. We require $\omega(\bbS_v,\bbS_{v'})=\omega(\bar{\bbS}_v,\bar{\bbS}_{v'})$ and $\omega(\bbX_e,\bbX_{e'})=\omega(\bar{\bbX}_e,\bar{\bbX}_{e'})$ because $\widehat{\calX}(\bbS_v,\bbS_{v'})=\widehat{\calX}(\bar{\bbS}_v,\bar{\bbS}_{v'})$ and $\widehat{\calX}(\bbX_e,\bbX_{e'})=\widehat{\calX}(\bar{\bbX}_e,\bar{\bbX}_{e'})$. To each copy of $\widehat{\calX}(\bbS_v,\bbS_{v'})$ we associate a link map $\sigma\in\rm{LkMap}(\bbS_v,\bbS_{v'})$, and then for each $e\in\lk(v)$ we glue an edge space $\widehat{\calX}(\bbX_e,\bbX_{e'})$ to $\widehat{\calX}(\bbS_v,\bbS_{v'})$ and also to a vertex space $\widehat{\mathbf{X}}_{u,u'}$, all according to the diagrams (\ref{linkgluev}) and (\ref{linkgluev'}) (so $e'=\sigma(e)$, $u=\iota(e)$ and $u'=\iota(e')$). By Remark \ref{remk:linkglueflip} it doesn't matter whether we regard $\sigma$ as lying in $\rm{LkMap}(\bbS_v,\bbS_{v'})$ or $\rm{LkMap}(\bar{\bbS}_v,\bar{\bbS}_{v'})$. The different $\sigma\in\rm{LkMap}(\bbS_v,\bbS_{v'})$ will be evenly distributed across the $\omega(\bbS_v,\bbS_{v'})$ copies of $\widehat{\calX}(\bbS_v,\bbS_{v'})$ (so in particular $|\rm{LkMap}(\bbS_v,\bbS_{v'})|$ will divide $\omega(\bbS_v,\bbS_{v'})$).

For this to form a cover of $\calX$ and $\calX'$, we must ensure that each edge space $\widehat{\calX}(\bbX_e,\bbX_{e'})$ gets used exactly once, and that each fin in each vertex space $\widehat{\mathbf{X}}_{u,u'}$ has exactly one edge space glued to it. This requirement can be captured by a set of Gluing Equations, which we describe in the following lemma.

\begin{lem}(Gluing Equations)\\\label{lem:GluingEquations}
	We can form a common finite cover $\hat{\calX}$ of $\calX$ and $\calX'$ by the above gluing instructions if the following Gluing Equations have a positive solution:
	
	\begin{equation}\label{GluingEquations}
	\frac{\omega(\bbS_v,\bbS_{v'})}{N_c[\bbS_v]}=\omega(\bbX_e,\bbX_{e'})=\omega(u,u')|\partial_\tto \widehat{\bfX}_{u,u'}(\bbS_e,\bbS_{e'})|
	\end{equation}
	Here $\bbS_v$ and $\bbS_{v'}$ are oriented cylindrical fibres from $\calX$ and $\calX'$ of the same colour; $e\in\lk(v)$ and $e'\in\lk(v')$ are edges such that the edge spaces with induced orientations $\bbX_e$ and $\bbX_{e'}$ have the same colour $c\in\mathcal{C}$; $\iota(e)=u$ and $\iota(e')=u'$; and $\phi_{\bar{e}}(\bbX_e)=\bbS_e\in\partial_\tto\bfX_u$ and $\phi_{\bar{e}'}(\bbX_{e'})=\bbS_{e'}\in\partial_\tto\bfX_{u'}$ are the oriented fins corresponding to $\bbX_e$ and $\bbX_{e'}$.
\end{lem}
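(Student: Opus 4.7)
The plan is to construct $\widehat{\calX}$ directly from a positive integer solution $\omega$ to the Gluing Equations, by laying down labelled copies of each cover piece and then exhibiting bijections between the incoming and outgoing edge-space gluings at every vertex space. First I would multiply $\omega$ by a common positive integer so that $|\rm{LkMap}(\bbS_v,\bbS_{v'})|$ divides $\omega(\bbS_v,\bbS_{v'})$ for every pair of oriented cylindrical fibres of matching colour; this preserves positivity and the equations (\ref{GluingEquations}). Then for each $[u]=[u']$ lay down $\omega(u,u')$ copies of $\widehat{\bfX}_{u,u'}$; for each pair of cylindrical vertices $v,v'$ and each pair of orientations $\bbS_v,\bbS_{v'}$ of matching colour, lay down $\omega(\bbS_v,\bbS_{v'})$ copies of $\widehat{\calX}(\bbS_v,\bbS_{v'})$, assigning each link map $\sigma\in\rm{LkMap}(\bbS_v,\bbS_{v'})$ to exactly $\omega(\bbS_v,\bbS_{v'})/|\rm{LkMap}(\bbS_v,\bbS_{v'})|$ of these copies; and for each pair of oriented edge spaces $\bbX_e,\bbX_{e'}$ of matching colour, lay down $\omega(\bbX_e,\bbX_{e'})$ copies of $\widehat{\calX}(\bbX_e,\bbX_{e'})$.

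Next I would check that the two sides of each prospective gluing have equal multiplicity, so that an arbitrary bijection realises the gluing. Fix a colour $c$, oriented fibres $\bbS_v$ and $\bbS_{v'}$ of some common colour, and edges $e\in\lk(v)$, $e'\in\lk(v')$ with $[\bbX_e]=[\bbX_{e'}]=c$. By Lemma \ref{lem:linkmapsexist} the link maps decompose as products of colour-by-colour bijections, so $|\rm{LkMap}(\bbS_v,\bbS_{v'})|=\prod_{c''}N_{c''}[\bbS_v]!$ and the number of $\sigma$ with $\sigma(e)=e'$ equals $|\rm{LkMap}(\bbS_v,\bbS_{v'})|/N_c[\bbS_v]$. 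Hence the total number of copies of $\widehat{\calX}(\bbX_e,\bbX_{e'})$ requested by the cylindrical side at $(\bbS_v,\bbS_{v'})$ is $\omega(\bbS_v,\bbS_{v'})/N_c[\bbS_v]$, which by the first equality of (\ref{GluingEquations}) equals $\omega(\bbX_e,\bbX_{e'})$. Similarly, the number of copies requested by the rigid side at $(u,u')=(\iota(e),\iota(e'))$ is $\omega(u,u')|\partial_\tto\widehat{\bfX}_{u,u'}(\bbS_e,\bbS_{e'})|$, which by the second equality equals $\omega(\bbX_e,\bbX_{e'})$. So we can choose a bijection between the edge-space copies and each of the two request sets, and glue using the diagrams (\ref{linkgluev}) and (\ref{linkgluev'}).

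I would then verify consistency with orientation reversal. Because $\omega(\bbS_v,\bbS_{v'})=\omega(\bar{\bbS}_v,\bar{\bbS}_{v'})$ and $\omega(\bbX_e,\bbX_{e'})=\omega(\bar{\bbX}_e,\bar{\bbX}_{e'})$, and by Remarks \ref{remk:linkmapflip} and \ref{remk:linkglueflip} a link map from $\bbS_v$ to $\bbS_{v'}$ is identified with one from $\bar{\bbS}_v$ to $\bar{\bbS}_{v'}$ and yields the same gluing data, the above bijections are well defined on unoriented objects $\widehat{\calX}(\bbX_e,\bbX_{e'})=\widehat{\calX}(\bar{\bbX}_e,\bar{\bbX}_{e'})$ and $\widehat{\calX}(\bbS_v,\bbS_{v'})=\widehat{\calX}(\bar{\bbS}_v,\bar{\bbS}_{v'})$.

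Finally I would verify that the glued complex $\widehat{\calX}$ covers $\calX$ and $\calX'$. The vertical maps in (\ref{linkgluev}) define covering maps from each vertex and edge space of $\widehat{\calX}$ to the appropriate vertex and edge space of $\calX$, and likewise (\ref{linkgluev'}) does so for $\calX'$. Each square in these diagrams commutes, so the downstairs attaching maps $\phi_e:\calX_e\to\calX_v$ and $\phi_{\bar{e}}:\calX_e\to\bfX_u$ are recovered (up to homotopy, as in Section \ref{sec:template}) from the upstairs ones via the vertical covering maps; this yields covering maps $\widehat{\calX}\to\calX$ and $\widehat{\calX}\to\calX'$. Since there are finitely many orbits $[u],[e],[v,\cO]$ and each $\omega$-value is finite, $\widehat{\calX}$ is compact, so these are finite covers and the proof is complete. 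The main piece of bookkeeping, and the only real obstacle, is the counting argument in the second paragraph showing that both sides of each gluing have the same multiplicity; everything else is formal.
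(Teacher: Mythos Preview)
Your proposal is correct and follows essentially the same approach as the paper: both arguments reduce the construction to a counting check, computing that the proportion of link maps $\sigma\in\mathrm{LkMap}(\bbS_v,\bbS_{v'})$ with $\sigma(e)=e'$ is $1/N_c[\bbS_v]$, and then matching the resulting edge-space multiplicities against the fin count $\omega(u,u')|\partial_\tto\widehat{\bfX}_{u,u'}(\bbS_e,\bbS_{e'})|$ on the rigid side. Your treatment is slightly more explicit about laying down the pieces, choosing bijections, and verifying orientation-reversal consistency and the covering property, but the substance is the same.
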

\begin{proof}
By Lemma \ref{lem:linkmapsexist}, there are $N_c[\bbS_v]$ edges $e'_*\in\lk(v')$ whose oriented edge spaces $\bbX_{e'_*}$ have colour $c$, and any choice $e\mapsto e'_*$ can be extended to a link map $\sigma:\lk(v)\to\lk(v')$. Moreover, the number of possible extensions is independent of $e'_*$, thus the proportion of link maps $\sigma\in\rm{LkMap}(\bbS_v,\bbS_{v'})$ with $\sigma(e)=e'$ is $1/N_c[\bbS_v]$. By the local gluing data of (\ref{linkgluev}) and (\ref{linkgluev'}), a copy of $\widehat{\calX}(\bbX_e,\bbX_{e'})$ is used in the construction of $\widehat{\calX}$ precisely when a link map $\sigma\in\rm{LkMap}(\bbS_v,\bbS_{v'})$ with $\sigma(e)=e'$ is associated to a vertex space $\widehat{\calX}(\bbS_v,\bbS_{v'})$. This explains the first equality in (\ref{GluingEquations}).

For the second equality in (\ref{GluingEquations}), note that the local gluing data of (\ref{linkgluev}) and (\ref{linkgluev'}) glues each copy of an oriented edge space $\widehat{\bbX}(\bbX_e,\bbX_{e'})$ to an oriented fin $\hat{\bbS}\in\partial_\tto \widehat{\bfX}_{u,u'}(\bbS_e,\bbS_{e'})$, for one of the $\omega(u,u')$ copies of $\widehat{\bfX}_{u,u'}$; and these are the only edge spaces that could be glued to $\hat{\bbS}$ because $\bbX_e$ and $\bbX_{e'}$ are the unique oriented edge spaces that attach to the oriented fins $\bbS_e$ and $\bbS_{e'}$.

Of course we also need $\omega(\bbS_v,\bbS_{v'})$, $\omega(\bbX_e,\bbX_{e'})$ and $\omega(u,u')$ to be positive integers, and for $|\rm{LkMap}(\bbS_v,\bbS_{v'})|$ to divide $\omega(\bbS_v,\bbS_{v'})$, but this can be achieved by scaling the solution suitably.
\end{proof}

\bigskip

Lemma \ref{lem:samefinlength} tells us that all fins in $\widehat{\mathbf{X}}_{u,u'}$ that cover $S_e\in\partial\bfX_u$ have length $\ell_{[e]}$, so Theorem \ref{Leighton} tells us that we can substitute
\begin{align*}
|\partial_\tto\widehat{\bfX}_{u,u'}(\bbS_e,\bbS_{e'})|=\left(\frac{|\widehat{X}_{u,u'}|}{\rho_c|X_u||X_{u'}|}\right)\frac{\ell(\bbS_e)\ell(\bbS_{e'})}{\ell_{[e]}}
\end{align*}
into equations (\ref{GluingEquations}). Thus we can solve the gluing equations by taking
\begin{equation}\label{gluesolu}
 \omega(u,u') =\frac{|X_u||X_{u'}|}{\rho_{[u]}|\widehat{X}_{u,u'}|}\textrm{, and }
 \frac{\omega(\bbS_v,\bbS_{v'})}{N_c[\bbS_v]}=\omega(\bbX_e,\bbX_{e'})= \frac{\ell(\bbS_e)\ell(\bbS_{e'})}{\ell_{[e]}\rho_c\rho_{[u]}}.
\end{equation}

It remains to show that this solution is well-defined. Note that the replacement $\bbS_v,\bbS_{v'}\mapsto \bar{\bbS}_v,\bar{\bbS}_{v'}$ will flip the orientations on the fins $\bbS_e$ and $\bbS_{e'}$, and the colour $c$ will turn to $\bar{c}$; but this will not change the lengths of the fins, and $N_{\bar{c}}[\bar{\bbS}_v]=N_c[\bbS_v]$ by Lemma \ref{lem:torusmatch}; and $\rho_{\bar{c}}=\rho_c$ because by definition this is proportional to the sum of lengths of oriented fins of colour $\bar{c}$, which equals the sum of lengths of oriented fins of colour $c$ since these are different orientations of the same fins. Hence $\omega(\bbS_v,\bbS_{v'})=\omega(\bar{\bbS}_v,\bar{\bbS}_{v'})$ and $\omega(\bbX_e,\bbX_{e'})=\omega(\bar{\bbX}_e,\bar{\bbX}_{e'})$ as required.

It is easy to see that the formula for $\omega(u,u')$ depends only on $u$ and $u'$, and that the formula for $\omega(\bbX_e,\bbX_{e'})$ depends only on $\bbX_e$ and $\bbX_{e'}$; but the reason that the formula for $\omega(\bbS_v,\bbS_{v'})$ depends only on $\bbS_v$ and $\bbS_{v'}$ is more subtle, which is the task of our final lemma. 
\begin{lem}
	The expression
$$\frac{N_c[\bbS_v]\ell(\bbS_e)\ell(\bbS_{e'})}{\ell_{[e]}\rho_c\rho_{[u]}}$$ 
depends only on $\bbS_v$ and $\bbS_{v'}$.
\end{lem}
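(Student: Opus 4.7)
The plan is to substitute each factor in the expression using the available relations and watch the $c$-dependence collapse. First, using Lemma \ref{stretch}(2) together with Definition \ref{defn:stretchratiodown}, for each cylindrical vertex $v_* \in V_1\Gamma$ I can write the stretch ratio at $v_*$ as $\ell(S_{e_*}) = \lambda_{v_*} r_{[e_*]}$ for every $e_* \in \lk(v_*)$, where $\lambda_{v_*}$ depends only on $v_*$. Since $c = [\bbX_e]$ determines $[e] = [e']$, this gives $\ell(\bbS_e) = \lambda_v r_{[e]}$ and $\ell(\bbS_{e'}) = \lambda_{v'} r_{[e]}$, and in particular both are independent of the particular choice of $e, e'$ of colour $c$; the same observation shows that $[u] = [\iota(e)]$, hence $\rho_{[u]}$, depends only on $c$ (edges in the same $\mathscr{G}$-orbit have endpoints in the same $\mathscr{G}$-orbit). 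Finally, Lemma \ref{lem:samefinlength} writes $\ell_{[e]} = N r_{[e]}$ for a universal integer $N$.

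The crux is then to apply Lemma \ref{lem:densityEquation} to expand the denominator
\begin{equation*}
\rho_c \rho_{[u]} |\calX| = \sum_{\substack{e_* \in E_1\Gamma \\ [\bbX_{e_*}] = c}} \ell(\bbS_{e_*})
\end{equation*}
and regroup the sum by the cylindrical vertex $v_* := \tau(e_*)$. The key observation is that the colour $c$ of an oriented edge space determines the colour $\alpha(c)$ of the induced oriented cylindrical fibre at its cylindrical endpoint; since $\bbS_v$ is fixed and $\alpha(c) = [\bbS_v] =: \alpha$ for every colour $c$ arising at $\bbS_v$, the set of contributing $v_*$'s, call it $V_1^\alpha\Gamma$, is independent of $c$. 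Each contributing $v_*$ then contributes $N_c[\bbS_{v_*}]\,\lambda_{v_*}\,r_{[e]} = N_c[\bbS_v]\,\lambda_{v_*}\,r_{[e]}$, the equality of cylinder numbers holding because $[\bbS_{v_*}] = \alpha = [\bbS_v]$. This yields
\begin{equation*}
\rho_c \rho_{[u]} |\calX| = r_{[e]}\, N_c[\bbS_v]\, \Lambda_\alpha, \qquad \Lambda_\alpha := \sum_{v_* \in V_1^\alpha\Gamma} \lambda_{v_*},
\end{equation*}
in which $\Lambda_\alpha$ depends only on $\alpha$.

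Substituting this back, the factors of $N_c[\bbS_v]$ cancel and the powers of $r_{[e]}$ collapse against $\ell_{[e]} = Nr_{[e]}$, leaving
\begin{equation*}
\frac{N_c[\bbS_v]\ell(\bbS_e)\ell(\bbS_{e'})}{\ell_{[e]}\rho_c\rho_{[u]}} = \frac{\lambda_v \lambda_{v'} |\calX|}{N \Lambda_\alpha},
\end{equation*}
which depends only on $v, v'$, and $\alpha = [\bbS_v] = [\bbS_{v'}]$, hence only on $\bbS_v$ and $\bbS_{v'}$. The main conceptual point, which is what makes the identity nontrivial, is that fixing $\bbS_v$ automatically fixes the compatible cylindrical colour $\alpha$, so the entire $c$-dependence of the density sum factors cleanly through the cylinder number $N_c[\bbS_v]$ and the stretch integer $r_{[e]}$ and cancels.
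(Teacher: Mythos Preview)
Your proof is correct and follows essentially the same route as the paper's: expand $\rho_c\rho_{[u]}|\calX|$ via Lemma~\ref{lem:densityEquation}, regroup the sum over $e_*$ by the cylindrical endpoint $v_*=\tau(e_*)$, observe that the contributing $v_*$ are exactly those whose oriented cylindrical fibre has colour $[\bbS_v]$, and then use Lemma~\ref{lem:linkmapsexist} together with the stretch-ratio relations to factor out all $c$-dependence. The only difference is bookkeeping: you parametrise via the constants $\lambda_{v_*}$ with $\ell(S_{e_*})=\lambda_{v_*}r_{[e_*]}$ and $\ell_{[e]}=Nr_{[e]}$, whereas the paper uses the degrees $d_{v_*}$ from (\ref{omnistretch}) with $\ell_{[e]}=d_{v_*}\ell(S_{e_*})$; since $d_{v_*}\lambda_{v_*}=N$, your final expression $\lambda_v\lambda_{v'}|\calX|/(N\Lambda_\alpha)$ agrees with the paper's $|\calX|\big(\sum_{v_*} d_vd_{v'}/d_{v_*}\big)^{-1}$.
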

\begin{proof}

\begin{align*}
\ell_{[e]}\rho_c\rho_{[u]}|\mathcal{X}| & = \ell_{[e]}\sum_{\lambda(\bbS_{e_*})=c,\,e_*\in E_1\Gamma}\ell(\bbS_{e_*})&\text{by Lemma \ref{lem:densityEquation},} \\
&=\ell_{[e]}\sum_{\substack{[\bbS_{v_*}]=[\bbS_v],\,v_*\in V_1\Gamma\\ \phi_{e_*}\phi^{-1}_{\bar{e}_*}(\bbS_{e_*})=\bbS_{v_*},\,\lambda(\bbS_{e_*})=c,\,e_*\in\lk(v)}}\ell(\bbS_{e_*})\\
&\ell_{[e]}\sum_{[\bbS_{v_*}]=[\bbS_v],\,v_*\in V_1\Gamma}N_c[\bbS_v]\ell(\bbS_{e_*})&\text{by Lemma \ref{lem:linkmapsexist},}\\
&=\sum_{[\bbS_{v_*}]=[\bbS_v],\,v_*\in V_1\Gamma}\frac{N_c[\bbS_v]\ell_{[e]}^2}{d_{v_*}}&\text{by (\ref{omnistretch}),}\\
&=\sum_{[\bbS_{v_*}]=[\bbS_v],\,v_*\in V_1\Gamma}\frac{N_c[\bbS_v]d_v d_{v'}\ell(\bbS_e)\ell(\bbS_{e'})}{d_{v_*}}&\text{again by (\ref{omnistretch}).}
\end{align*}
And so our required expression
\begin{equation*}
\frac{N_c[\bbS_v]\ell(\bbS_e)\ell(\bbS_{e'})}{\ell_{[e]}\rho_c\rho_{[u]}}=|\calX|\left(\sum_{[\bbS_{v_*}]=[\bbS_v],\,v_*\in V_1\Gamma}\frac{d_v d_{v'}}{d_{v_*}}\right)^{-1},
\end{equation*}
only depends on $\bbS_v$ and $\bbS_{v'}$.
\end{proof}

We conclude that (\ref{gluesolu}) gives a well-defined solution to the Gluing Equations, and so by Lemma \ref{lem:GluingEquations} we can form a common finite cover $\hat{\calX}$ of $\calX$ and $\calX'$. Thus $G$ and $G'$ are commensurable, completing the proof of Theorem \ref{thm:main}.

\bigskip
\section{Counter example for higher rank cylinders} \label{sec:counterExample}

We now consider the wider class of groups $ \mathscr{C}^\bullet$ of all subgroup separable, one-ended, finitely presented groups with JSJ decomposition consisting of virtually free vertex groups, and no QH vertex groups.
By Theorem \ref{thm:separableBalancedRelHyp} such groups are hyperbolic relative to virtually free-by-cyclic vertex groups.

We present the following pair of groups which we assert are quasi-isometric, but not commensurable.

Let $w \in \mathbb{F}_2 = \langle x, y \rangle$ be a word that induces a rigid line pattern in $\mathbb{F}_2$.
We consider the following groups:
\[
G = \mathbb{F}_2 *_{\mathbb{Z}}(\mathbb{F}_2 \times \mathbb{Z}) \; = \; \langle x,y, a,b ,z\mid w=z,\, [a,z] = [b,z] =1 \rangle,
\]
and 
\[
G' = \mathbb{F}_2 *_{\mathbb{Z}}(\mathbb{F}_3 \times \mathbb{Z}) \; = \; \langle x,y, a,b,c,z \mid w=z,\, [a,z] = [b,z] = [c,z] = 1 \rangle.
\]
We note that $\mathcal{X}(G) = \mathcal{X}(G') = -1$ since the free-by-cyclic factors contribute nothing.
These groups are torsion-free, and in the language of Guiradel and Levitt~\cite{JSJ}, the given splitting corresponds to the canonical tree of cylinders with respect to a JSJ decomposition.
We also note that these groups are virtually special.

\begin{lem} \label{lem:qiExamples}
 $G$ and $G'$ are quasi-isometric.
\end{lem}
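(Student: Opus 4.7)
The plan is to exploit the essentially identical tree-of-spaces structure of $G$ and $G'$, performing a ``swap'' of the flexible vertex space $\mathbb{F}_2 \times \mathbb{Z}$ for $\mathbb{F}_3 \times \mathbb{Z}$ in a quasi-isometry-preserving way. The key local input is that $\mathbb{F}_2$ and $\mathbb{F}_3$ are quasi-isometric (their Cayley graphs are both quasi-isometric to the trivalent tree), so we can fix a quasi-isometry $\phi : \mathbb{F}_2 \to \mathbb{F}_3$ and form the product $\Phi := \phi \times \mathrm{id}_{\mathbb{Z}} : \mathbb{F}_2 \times \mathbb{Z} \to \mathbb{F}_3 \times \mathbb{Z}$. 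The crucial property is that $\Phi$ is the identity on the central subgroup $\langle z \rangle$, and sends each coset of $\langle z \rangle$ at bounded Hausdorff distance to a unique coset of $\langle z \rangle$ on the target side, via a coarse bijection that (after quotienting by the centre) agrees with $\phi$.

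Next I would assemble a global quasi-isometry $\Psi : G \to G'$ from these local pieces. Both Bass-Serre trees $T$ and $T'$ are bipartite trees whose vertices all have countably infinite valence (cosets of $\langle w\rangle$ at rigid vertices; cosets of $\langle z\rangle$ at flexible ones), so they admit a type-preserving abstract isomorphism $f : T \to T'$. Use this as a template: on each flexible vertex space, use a translate of $\Phi$ to map to the corresponding flexible vertex space of $G'$; on each rigid vertex space, use the identity on $\mathbb{F}_2$, after relabelling the $\mathbb{F}_2$-cosets of $\langle w\rangle$ corresponding to incident edges so that they are consistent with the coarse bijection of $\langle z\rangle$-cosets induced by $\Phi$. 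Since $\Phi$ restricts to the identity on the edge spaces, the local quasi-isometries agree along edge spaces and glue into a well-defined coarse map $\Psi : G \to G'$.

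The main obstacle is verifying that $\Psi$ is actually a quasi-isometry globally, i.e.\ controlling distances between points in different vertex spaces in terms of the tree distance and the local quasi-isometry constants. This is a standard exercise in the theory of quasi-isometries of trees of spaces, going back to Papasoglu and Mosher-Sageev-Whyte: provided the local quasi-isometries are uniform (which they are, as there are only finitely many orbit types of vertex and edge spaces) and coarsely agree on edge spaces (which they do by construction of $\Phi$), one obtains a global quasi-isometry. The bookkeeping in matching up the combinatorial structure of $T$ and $T'$ via $f$ is the most delicate part, but is made flexible by the fact that all vertex valences are infinite on both sides and by the transitive action of $\mathbb{F}_2$ on cosets of $\langle w\rangle$, which absorbs the mismatches produced by the coarse nature of $\phi$.
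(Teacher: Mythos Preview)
Your overall strategy matches the paper's, but the ``relabelling'' step conceals a genuine gap.

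The paper uses a bi-Lipschitz \emph{bijection} $f:\mathbb{F}_2\to\mathbb{F}_3$ (from \cite{Papasoglu95}, strictly stronger than a mere quasi-isometry). Writing $g\in G$ in normal form $\alpha_1\beta_1\cdots\alpha_k\beta_k$ with $\alpha_i\in\langle x,y\rangle$ and $\beta_i\in\langle a,b\rangle$, one sets $\psi(g)=\alpha_1 f(\beta_1)\cdots\alpha_k f(\beta_k)$ and verifies directly that $\psi$ is a bi-Lipschitz bijection. Because $f$ is an honest bijection it induces an exact bijection of $\langle z\rangle$-cosets, and this \emph{determines} a specific tree isomorphism $T\to T'$; on rigid vertex spaces the map is then literally the identity on $\langle x,y\rangle$, with no relabelling needed.

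You instead fix an abstract tree isomorphism first and try to patch the local maps to match it. This does not work as stated. The ``relabelling'' at a rigid vertex is a permutation of $\langle w\rangle$-cosets dictated by your abstract $f$, and for the glued map to be a quasi-isometry with uniform constants, each such permutation must be realized by a quasi-isometry of $\mathbb{F}_2$ preserving the $\langle w\rangle$-line pattern, with uniformly bounded constants across all rigid vertices. But $w$ was chosen precisely so that this line pattern is rigid: pattern-preserving quasi-isometries are at bounded distance from isometries of the model space, and these realize only a tiny subgroup of the permutation group of $\langle w\rangle$-cosets---nowhere near enough to accommodate an arbitrary abstract $f$. A second issue is that your $\phi$ is only a quasi-isometry, so $\Phi$ does not even induce a well-defined bijection of $\langle z\rangle$-cosets, making the ``coarse bijection'' you want to reconcile with $f$ ill-defined at the combinatorial level. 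The fix is to reverse the order: upgrade $\phi$ to a bi-Lipschitz bijection, let it (together with the identity on rigid pieces) determine the tree isomorphism, and verify distances via normal forms---which is exactly the paper's argument.
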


\begin{proof}
 Let $f:\mathbb{F}_2=\langle a,b\rangle\to\mathbb{F}_3=\langle a,b,c\rangle$ be a bi-Lipschitz bijection with bi-Lipschitz constant $C\geq1$ -- this exists by \cite{Papasoglu95}.\par 
 Write an element of $G$ as $g=\alpha_1\beta_1\alpha_2\beta_2\cdots\alpha_k\beta_k$, where $\alpha_i\in\langle x,y\rangle$, $\beta_i\in\langle a,b\rangle$, and $\alpha_i,\beta_i\neq 1$, $\alpha_i\notin\langle w\rangle$ (except possibly $\alpha_1$ and $\beta_k$). Define a map $\psi:G\to G'$ by $\psi(g):=\alpha_1f(\beta_1)\alpha_2f(\beta_2)\cdots\alpha_kf(\beta_k)$ (viewing $\langle x,y\rangle$ as a common subgroup of $G$ and $G'$). Note that the $\beta_i$ are uniquely determined by $g$, and the only ambiguity in the $\alpha_i$ comes from making replacements $(\alpha_i,\alpha_{i+1})\mapsto(\alpha_iw^j,w^{-j}\alpha_{i+1})$, which does not change $\psi(g)$, thus $\psi$ is well-defined.\par 
 We claim that $\psi$ is a quasi-isometry. Take elements $g,\bar{g}\in G$ written in the above normal form, and make replacements as above so that they agree on an initial subword of maximum possible length. If the first term where they differ is an $\alpha_i$ term then we can write $g=\alpha_1\beta_1\alpha_2\beta_2\cdots\alpha_k\beta_k$ and $\bar{g}=\alpha_1\beta_1\cdots\alpha_{l-1}\beta_{l-1}\bar{\alpha}_l\bar{\beta}_l\cdots\bar{\alpha}_m\bar{\beta}_m$ with $\bar{\alpha}_l\notin\alpha_l\langle w\rangle$. Working with respect to the given generators for $G$ and $G'$, we use $d$ to denote the metrics on $G$ and $G'$ and $|\cdot|$ to denote the distance to the identity. Then for appropriate choices of the $\alpha_i$ and $\bar{\alpha}_i$ we have
 \begin{align}
 d(g,g')&=|\beta_k^{-1}\alpha_k^{-1}\cdots\beta_l^{-1}\alpha_l^{-1}\bar{\alpha}_l\bar{\beta}_l\cdots\bar{\alpha}_m\bar{\beta}_m|\nonumber\\
 &=|\beta_k^{-1}|+|\alpha_k^{-1}|+\dots +|\beta_l^{-1}|+|\alpha_l^{-1}\bar{\alpha}_l|+|\bar{\beta}_l|+\dots+|\bar{\alpha}_m|+|\bar{\beta}_m|\nonumber\\
 &=|\beta_k|+|\alpha_k|+\dots +|\beta_l|+|\alpha_l^{-1}\bar{\alpha}_l|+|\bar{\beta}_l|+\dots+|\bar{\alpha}_m|+|\bar{\beta}_m|.
 \end{align}
 On the other hand
 \begin{align}
 d(\psi(g),\psi(g'))&=|f(\beta_k)^{-1}\alpha_k^{-1}\cdots f(\beta_l)^{-1}\alpha_l^{-1}\bar{\alpha}_l f(\bar{\beta}_l)\cdots\bar{\alpha}_m f(\bar{\beta}_m)|\nonumber\\
 &\leq|f(\beta_k)^{-1}|+|\alpha_k^{-1}|+\dots +|f(\beta_l)^{-1}|+|\alpha_l^{-1}\bar{\alpha}_l|+|f(\bar{\beta}_l)|+\dots+|\bar{\alpha}_m|+|f(\bar{\beta}_m)|\nonumber\\
 &=|f(\beta_k)|+|\alpha_k|+\dots +|f(\beta_l)|+|\alpha_l^{-1}\bar{\alpha}_l|+|f(\bar{\beta}_l)|+\dots+|\bar{\alpha}_m|+|f(\bar{\beta}_m)|\nonumber\\
 &\leq C(|\beta_k|+\dots|\beta_l|+|\bar{\beta}_l|+\dots+|\bar{\beta}_m|)+|\alpha_k|+\dots+|\alpha_l^{-1}\bar{\alpha}_l|+\dots|\bar{\alpha}_m|\nonumber\\
 &\leq Cd(g,g').
 \end{align}
 A similar argument works if the first term where $g$ and $\bar{g}$ differ is a $\beta_i$ term rather than an $\alpha_i$ term. Using $f^{-1}$ we can define an inverse to $\psi$ (so in particular $\psi$ is a bijection), and by the same argument as above we get $d(g,g')\leq Cd(\psi(g),\psi(g'))$ for any $g,\bar{g}\in G$.
\end{proof}

\begin{lem}
 $G$ and $G'$ are not commensurable.
\end{lem}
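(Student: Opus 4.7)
The plan is to derive a contradiction by extracting from the JSJ quotient graph of groups a commensurability invariant that distinguishes $G$ and $G'$. Suppose for contradiction that $G$ and $G'$ were commensurable; then there would exist finite-index torsion-free subgroups $H \leqslant G$ and $H' \leqslant G'$ with $H \cong H'$ (torsion-freeness using that both $G$ and $G'$ are virtually torsion-free, by Theorem~\ref{thm:separableBalancedRelHyp} or the balanced argument of Section~\ref{sec:virtuallyTorsionFree}). Since the tree of cylinders is canonical (the commensurability relation on two-ended subgroups is unchanged on passage to a finite-index overgroup), the action of $H$ on the JSJ tree of cylinders $T_c$ of $G$ realises the JSJ tree of cylinders of $H$, and similarly for $H'$ and $T'_c$; thus the quotient graphs of groups $H \backslash T_c$ and $H' \backslash T'_c$ are isomorphic under the identification $H \cong H'$.

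The first key step is to identify the isomorphism type of a cylindrical vertex stabiliser $K = H \cap gBg^{-1}$, where $B = \mathbb{F}_n \times \mathbb{Z}$ is the cylinder stabiliser in the ambient group. Projection $\pi\colon B \to \mathbb{F}_n$ restricts on $K$ to a map with kernel $K \cap Z(B) \cong \mathbb{Z}$ and image a finite-index subgroup of $\mathbb{F}_n$, necessarily of the form $\mathbb{F}_{j(n-1)+1}$ for some $j \geq 1$. The resulting central extension $1 \to \mathbb{Z} \to K \to \mathbb{F}_{j(n-1)+1} \to 1$ splits because free groups have cohomological dimension $1$ (so $H^2(\mathbb{F}_r;\mathbb{Z})=0$), giving $K \cong \mathbb{F}_{j(n-1)+1} \times \mathbb{Z}$. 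Applied with $n = 2$ (from the $G$ side) and with $n = 3$ (from the $G'$ side), this exhibits a single cylindrical stabiliser of $H \cong H'$ as both $\mathbb{F}_{j+1} \times \mathbb{Z}$ and $\mathbb{F}_{2j'+1} \times \mathbb{Z}$; in particular its free rank $r$ satisfies $r \geq 2$.

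The second key step is an incident-edge count. The edges of $T_c$ at a chosen cylindrical lift $\tilde v$ form a single $B$-orbit $B/C$ where $C = \mathbb{Z}$ is the edge stabiliser, and since $C$ is the centre of $B$, the action of $K=H_v$ on $B/C$ factors through $\pi$, so $K \backslash (B/C) \cong \pi(K) \backslash \mathbb{F}_n$, of size $j = [\mathbb{F}_n : \pi(K)]$. Thus the cylindrical vertex in the quotient has $j = r-1$ incident edges from the $G$-viewpoint and $j' = (r-1)/2$ incident edges from the $G'$-viewpoint. These counts must coincide as invariants of the common graph of groups $H \backslash T_c \cong H' \backslash T'_c$, forcing $r - 1 = (r-1)/2$ and hence $r = 1$, contradicting $r \geq 2$.

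The main obstacle I would need to be careful about is justifying the canonicity step -- that the JSJ tree of cylinders of $G$ restricted to $H$ gives the JSJ tree of cylinders of $H$, so that the two incident-edge counts genuinely describe the same abstract graph of groups. Once this is in place, the central extension splitting and the double-coset count are essentially routine.
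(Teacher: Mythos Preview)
Your proposal is correct and follows essentially the same approach as the paper: both arguments use canonicity of the tree of cylinders to identify the quotient graphs of groups, split the cylindrical vertex stabiliser as $\mathbb{F}_r\times\mathbb{Z}$ (you via $H^2(\mathbb{F}_r;\mathbb{Z})=0$, the paper via an explicit section), and then count incident edges as the index of the projected image in the free factor to obtain the contradictory valences $r-1$ versus $(r-1)/2$. Two minor remarks: the torsion-freeness reduction is unnecessary since $G$ and $G'$ are already torsion-free, and for the canonicity step the paper simply cites \cite[Corollary~7.4]{JSJ}.
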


\begin{proof}
 Suppose that there exist finite index subgroups $\hat{G} \leqslant G$ and $\hat{G}' \leqslant G'$, such that $\hat{G} \cong \hat{G}'$.
 
 There is an induced graph of groups decomposition of $\hat{G}$ from the decomposition of $G$.
 Let $(\hat{G}, \hat{\Gamma})$ denote that decomposition.
 There is also an induced decomposition of $\hat{G}'$, that we can denote by $(\hat{G}', \hat{\Gamma}')$, but at this point we argue from the uniqueness of these tree of cylinders decompositions (\cite{JSJ}[Corollary 7.4]) that they are the same decomposition.

 We now consider the vertex groups in $(\hat{G}, \hat{\Gamma})$ that cover the free-by-cyclic vertex group $\mathbb{F}_2\times\mathbb{Z}=\langle a,b\rangle\times\langle z\rangle$ in $G$.
 If $\hat{G}_v$ is such a vertex group, then we have an embedding $\hat{G}_v \hookrightarrow \langle a,b\rangle\times\langle z\rangle$ as a finite index subgroup. We know that $\langle z\rangle$ is the edge group incident at $\langle a,b\rangle\times\langle z\rangle$ in $G$, so the edges incident at $v$ correspond to double cosets $\hat{G}_vg\langle z\rangle$ for $g\in\langle a,b\rangle\times\langle z\rangle$.\par 
 Let $\pi:\langle a,b\rangle\times\langle z\rangle\to\langle a,b\rangle$ be the projection map, and consider the short exact sequence
 \begin{equation}
 1\to\hat{G}_v\cap\langle z\rangle\hookrightarrow \hat{G}_v\overset{\pi}{\to}\pi(\hat{G}_v)\to 1.
 \end{equation}
 $\pi(\hat{G}_v)$ is free, so there is a section $\sigma:\pi(\hat{G}_v)\to\hat{G}_v$, with image $F$ say. As $\langle z\rangle$ is central in $\langle a,b\rangle\times\langle z\rangle$, we see that $\hat{G}_v$ splits as a product $\hat{G}_v=F\times(\hat{G}_v\cap\langle z\rangle)$. Note that the rank $n(v)$ of $F\cong\pi(\hat{G}_v)$ is an invariant of $\hat{G}_v$ (it is one less than the rank of the abelianisation of $\hat{G}_v$ for example). A double coset $\hat{G}_vg\langle z\rangle$ must equal $\pi(\hat{G}_v)\pi(g)\times\langle z\rangle\leqslant\langle a,b\rangle\times\langle z\rangle$, so the number of such double cosets is equal to the index $|\langle a,b\rangle:\pi(\hat{G}_v)|$. But we know $\langle a,b\rangle$ and $\pi(\hat{G}_v)$ are free groups of rank 2 and $n(v)$ respectively, so this index must equal $n(v)-1$, and as discussed above this is the degree of the vertex $v$ in $\hat{\Gamma}$.\par 
 We can run exactly the same arguments for $\hat{G}'_v\cong\hat{G}_v$ embedded in $\mathbb{F}_3\times\mathbb{Z}=\langle a,b,c\rangle\times\langle z\rangle\leqslant G'$, and we get the same rank $n(v)$; the only difference is that we compute the degree of $v$ in $\Gamma$ as the index $|\langle a,b,c\rangle:\pi(\hat{G}_v)|$, which is the index between free groups of rank 3 and $n(v)$, and hence equals $(n(v)-1)/2$, a contradiction.
 \end{proof}
\bigskip

\bibliographystyle{alpha}
\bibliography{Ref}

\end{document}